\documentclass[12pt, reqno, a4paper]{amsart}

\usepackage{amssymb}
\usepackage{color}
\definecolor{darkgreen}{rgb}{0,0.45,0} 
\usepackage[pagebackref,colorlinks,citecolor=blue,linkcolor=blue]{hyperref}
\usepackage[matrix, arrow, curve]{xy}
\usepackage{bbm}
\usepackage{xcolor}
\usepackage{comment}
\usepackage{enumitem}

\usepackage[english]{babel}

\oddsidemargin=0in%
\evensidemargin=0in%
\topmargin=-30pt%
\textheight=700pt%
\textwidth=6.5in%

\binoppenalty=\maxdimen
\relpenalty=\maxdimen

\sloppy

\theoremstyle{plain}
\newtheorem{theorem}{Theorem}[section]
\newtheorem{lemma}[theorem]{Lemma}
\newtheorem{proposition}[theorem]{Proposition}
\newtheorem{corollary}[theorem]{Corollary}

\theoremstyle{remark}
\newtheorem{remark}[theorem]{Remark}
\newtheorem{remarks}[theorem]{Remarks}

\theoremstyle{definition}
\newtheorem{example}[theorem]{Example}

\newtheorem{definition}[theorem]{Definition}

\numberwithin{equation}{section}

\DeclareMathOperator{\id}{id}

\DeclareMathOperator{\End}{End}

\DeclareMathOperator{\supp}{supp}
\DeclareMathOperator{\cosupp}{cosupp}

\DeclareMathOperator{\Hom}{Hom}

\newcommand{\LIO}{\mathsf{LIO}}





\newcommand{\Ker}{{\sf Ker}\,}

\setcounter{tocdepth}{2}



\newbox\skewpullbackbox
\setbox\skewpullbackbox=\hbox{\xy 0;<1mm,0mm>: \POS(4,0)\ar@{-} (-4,0) \ar@{-} (8,4)
	\endxy}

\newbox\skwepullbackbox
\setbox\skwepullbackbox=\hbox{\xy 0;<1mm,0mm>: \POS(16,0)\ar@{-} (10,0) \ar@{-} (12,4)
	\endxy}

\newbox\ksewpullbackbox
\setbox\ksewpullbackbox=\hbox{\xy 0;<1mm,0mm>: \POS(0,-8)\ar@{-} (0,-4) \ar@{-} (4,-4)
	\endxy}

\newbox\pullbackbox
\setbox\pullbackbox=\hbox{\xy 0;<1mm,0mm>: \POS(4,0)\ar@{-} (0,0) \ar@{-} (4,4)
	\endxy}
\newcommand{\pullback}{\copy\pullbackbox}

\newbox\pullbackabox
\setbox\pullbackabox=\hbox{\xy 0;<1mm,0mm>: \POS(-4,-6)\ar@{-} (-8,-6) \ar@{-} (-4,-2)
	\endxy}

\newbox\pullbackbbox
\setbox\pullbackbbox=\hbox{\xy 0;<1mm,0mm>: \POS(-4,-4)\ar@{-} (-8,-4) \ar@{-} (-4,0)
	\endxy}

\newbox\pullbackcbox
\setbox\pullbackcbox=\hbox{\xy 0;<1mm,0mm>: \POS(4,-7)\ar@{-} (0,-7) \ar@{-} (4,-3)
	\endxy}

\newbox\pullbackdbox
\setbox\pullbackdbox=\hbox{\xy 0;<1mm,0mm>:\POS(-10,-6)\ar@{-} (-14,-6) \ar@{-} (-10,-2)
	\endxy}

\newbox\pushoutbox
\setbox\pushoutbox=\hbox{\xy 0;<1mm,0mm>: \POS(0,4)\ar@{-} (0,0) \ar@{-} (4,4)
	\endxy}

\newbox\pushoutabox
\setbox\pushoutabox=\hbox{\xy 0;<1mm,0mm>: \POS(2,8)\ar@{-} (2,4) \ar@{-} (8,8)
	\endxy}


\DeclareRobustCommand{\No}{\ifmmode{\nfss@text{\textnumero}}\else\textnumero\fi} 


\newcounter{dummy}
\makeatletter
\newcommand\myitem[1][]{\item[(#1)]\refstepcounter{dummy}\def\@currentlabel{#1}}
\makeatother


\usepackage{graphicx}
\usepackage[geometry]{ifsym}

\renewcommand{\square}{\text{\normalfont\scalebox{.6}{\SmallSquare}}}


\makeatletter
\@namedef{subjclassname@2020}{\textup{2020} Mathematics Subject Classification}
\makeatother


\begin{document}

\title[Dualities for universal (co)acting Hopf monoids]{Dualities for universal (co)acting Hopf monoids}

\author{A.L. Agore}
\address{Simion Stoilow Institute of Mathematics of the Romanian Academy, P.O. Box 1-764, 014700 Bucharest, Romania}
\email{ana.agore@gmail.com, ana.agore@imar.ro}

\author{A.S. Gordienko}
\address{Department of Higher Algebra,
	Faculty of Mechanics and Mathematics,
	M.\,V.~Lomonosov Moscow State University,
	Leninskiye Gory, d.1, Main Building, GSP-1, 119991 Moskva, Russia }
\email{alexey.gordienko@math.msu.ru}

\author{J. Vercruysse}
\address{D\'epartement de Math\'ematiques, Facult\'e des sciences, Universit\'e Libre de Bruxelles, Boulevard du Triomphe, B-1050 Bruxelles, Belgium}
\email{joost.vercruysse@ulb.be}

\keywords{Locally initial object, support, (co)monoid, (co)algebra, Hopf algebra, $H$-module (co)algebra, $H$-comodule (co)algebra, universal (co)acting Hopf algebra}

\begin{abstract}	In general, universal (co)measuring (co)monoids and universal (co)acting bi/Hopf monoids, which prove to be a useful tool in the classification of quantum symmetries, do not always exist. In order to ensure their existence, the support of a given object was recently introduced in \cite{AGV3} and used to restrict the class of objects considered when defining universal (co)acting objects. It is well-known that, in contrast with the universal coacting Hopf algebra, for actions on algebras over a field it is usually difficult to describe the universal acting Hopf algebra explicitly and this turns the duality theorem into an important investigation tool. In the present paper we establish duality results for universal (co)measuring (co)monoids and universal (co)acting bi/Hopf monoids in pre-rigid braided monoidal categories $\mathcal C$. In addition, when the base category $\mathcal C$ is closed monoidal, we provide a convenient uniform approach to the aforementioned universal objects in terms of the cosupports, which in this case become subobjects of internal hom-objects. In order to explain our constructions, we use the language of locally initial objects. Known results from the literature are recovered when the base category is the category of vector spaces over a field. New cases where our results can be applied are explored, including categories of (co)modules over (co)quasitriangular Hopf algebras, Yetter~--- Drinfel’d modules and dg-vector spaces.
\end{abstract}

\subjclass[2020]{Primary 18A30; Secondary 16T05, 16T15, 16T25, 16W22, 16W25, 16W50, 18A40, 18B35, 18C40, 18M05, 18M15.}

\thanks{The study carried out by the second author was conducted under the state assignment of Lomonosov Moscow State University. The third author thanks the FNRS for support via the research project (PDR) T.0318.25 “Redisclosure”.}

\maketitle

\tableofcontents

\section{Introduction}

The geometrical problem of classifying quantum symmetries of a given algebra $A$ has an algebraic correspondent in the classification of (co)module structures on $A$. Indeed, recall that if an affine algebraic group $G$ is acting on an affine algebraic variety $X$, the morphic action $G\times X \to X$ corresponds to a homomorphism of algebras $\mathcal O(X) \to \mathcal O(X) \otimes \mathcal O(G)$ where $\mathcal O(X)$ and $\mathcal O(G)$ are the algebras of regular functions on $X$ and $G$, respectively. Moreover, the group structure on $G$ endows $\mathcal O(G)$ with a Hopf algebra structure and $\mathcal O(X)$ becomes an $\mathcal O(G)$-comodule algebra. Furthermore, if $\mathfrak g$ is the Lie algebra  of $G$, then $\mathcal O(X)$ is a $U(\mathfrak g)$-module algebra where $U(\mathfrak g)$ denotes the universal enveloping algebra of $\mathfrak g$. Replacement of the commutative algebras $\mathcal O(G)$ and $\mathcal O(X)$ (respectively the cocommutative Hopf algebra $U(\mathfrak g)$) with an arbitrary Hopf algebra $H$ (co)acting on an arbitrary algebra $A$, leads to an action of a quantum group by quantum symmetries on an algebraic variety $X$ whose algebra of regular functions is $A$. Furthermore, if these quantum groups act on certain cohomological invariants of the variety, one could hope to obtain more geometrical information.

A typical example of a comodule algebra structure is the one defined by group gradings on an algebra over a field. These are usually classified either up to an isomorphism by considering the grading group to be fixed or up to equivalence when it is not important by elements of which group the graded components are marked, see e.g.  \cite{ElduqueKochetov}. However, the universal group of a grading allows us to recover all groups that realize a concrete grading. The corresponding notions of equivalence and universal Hopf algebras of (co)module structures on algebras were introduced in~\cite{AGV1}, generalizing the aforementioned universal group of a grading. In a certain sense (\cite[Remark 4.15]{AGV1}) the aforementioned construction provides a refinement of Manin's universal coacting Hopf algebra (\cite{Manin}), an important and intensively studied (see e.g. \cite{HNUVVW, HNUVVW2, HNUVVW3}) symmetry object in noncommutative geometry. Furthermore, a unifying theory for universal Hopf algebras of (co)module structures and universal (co)acting bi/Hopf algebras
of Sweedler~-- Manin~-- Tambara (\cite{Sweedler, Manin, Tambara}) was introduced in~\cite{AGV2}, by considering $V$-universal (co)acting bi/Hopf algebras
where $V$ is a unital subalgebra of $\End_{\mathbbm k}(A)$ and $\mathbbm k$ is the base field. Motivation comes, on the one hand, from the fact that this unified theory simplifies and in certain cases even makes it possible at all to classify (co)module structures by means of duality results. On the other hand, since the universal coacting bi/Hopf algebras
of Manin~--- Tambara do not always exist~\cite[Section 4.5]{AGV2}, $V$ provides the necessary restriction on the class of comodule structures under consideration to ensure the existence of the universal Hopf algebra for this class.

In~\cite{AGV3} the authors introduced the categorical foundations for the notion of universal bi/Hopf algebra. Sufficient conditions for the existence of universal (co)measuring (co)monoids and universal (co)acting bi/Hopf monoids over a base  (braided or symmetric monoidal) category are given. Furthermore, it was shown that the existence problem for such universal objects is a particular case of what we call the \emph{Lifting Problem for locally initial objects} (see Section~\ref{SubsectionLIO} for the precise statement) as we will briefly explain. By definition, the (co)action of a universal Hopf monoid
is an initial object in some full subcategory of the category of all (co)actions on a fixed $\Omega$-magma.
In particular, for the whole category of (co)actions this universal (co)action is a \emph{locally initial object} (i.e. an object which
admits at most one morphism into any other object). That full subcategory consists of the objects whose images under some forgetful functor admit a morphism from a certain fixed locally initial object. Therefore, the existence problem for universal Hopf monoids can be described as a lifting problem for locally initial objects. The lifting itself is carried out in several steps.

The first duality result for the (absolute) universal comeasuring algebra and measuring coalgebra, which generalizes the classical adjunction between the finite dual (of an algebra over a field) functor and the dual algebra (of a coalgebra over a field) functor (see e.g., \cite[Theorem 1.5.22]{DNR}), was established by D.~Tambara in~\cite{Tambara}. This duality was extended to $V$-universal (co)measuring (co)algebras and (co)acting bi/Hopf algebras over fields in~\cite[Theorems 3.20, 4.14, 4.15]{AGV2}. As it was proven in~\cite{AGV2}, the $V$-universal coacting Hopf algebras over fields admit a transparent description in terms of free algebras and relations, while the construction of $V$-universal acting Hopf algebras is not as explicit, since it involves subcoalgebras of cofree coalgebras. For this reason the duality theorems for $V$-universal acting and coacting Hopf algebras become here one of the main tools of investigation. In the present paper we prove duality results for universal (co)measuring (co)monoids and universal (co)acting bi/Hopf monoids in pre-rigid braided monoidal categories $\mathcal C$, generalizing the aforementioned existing results. 

We use the previously introduced terminology to describe the outline of the paper. Throughout, we use the following notation: \begin{itemize}
\item $A$ and $B$ are $\Omega$-magmas;
\item $\mathbf{MorTens}(A,B)$ and $\mathbf{TensMor}(A,B)$ are the categories of, respectively, morphisms $A\to B \otimes Q$
and morphisms $P\otimes A \to B$ for some objects $P$ and $Q$; 
\item $\mathbf{Comeas}(A,B)$ and $\mathbf{Meas}(A,B)$ are the categories of, respectively, comeasurings $A\to B \otimes Q$
and measurings $P\otimes A \to B$ for some comonoids $P$ and monoids $Q$; 
\item $\mathbf{ComodStr}(A)$ and $\mathbf{ModStr}(A)$ the categories of, respectively, comodule structures $A\to A \otimes P$ and module structures $Q\otimes A \to A$ where $P$ is a comonoid and $Q$ a monoid;  
\item $\mathbf{Coact}(A)$ and $\mathbf{Act}(A)$ are the categories of bimonoid (co)actions on $A$;
\item $\mathbf{HCoact}(A)$ and $\mathbf{HAct}(A)$ are the categories of  Hopf monoid (co)actions on $A$;
\item functors $G$, $G_1$--$G_4$, $G'$, $G_1'$--$G_4'$ are the corresponding forgetful/embedding functors;
\item $\rho^\vee \colon Q^* \otimes A \to B $ is the morphism induced by
 $\rho \colon A \to B \otimes Q$ (here we assume that $\mathcal C$ is \textit{pre-rigid}, in particular, endowed with
 a functor $(-)^* \colon \mathcal C \to \mathcal C^{\mathrm{op}}$);
 \item $\varkappa_Q \colon Q^\circ \to Q^*$ is the natural morphism that relates the finite dual $Q^\circ$
 of a monoid $Q$ with $Q^*$.
\end{itemize}
For full details we refer the reader to Sections~\ref{SectionPreliminaries}--\ref{SectionDuality}. To start with, consider the following (not necessarily commutative) diagram:

$$\xymatrix{ \mathbf{Comeas}(A,B) \ar[d]_{G_1} \ar[rr]^{(-)^\vee (\varkappa_{(\ldots)} \otimes \id_A)} & \quad & \mathbf{Meas}(A,B)^{\mathrm{op}} \ar[d]^{G'_1} \\
	\mathbf{MorTens}(A,B) \ar[rr]^{(-)^\vee} &  & \mathbf{TensMor}(A,B)^{\mathrm{op}}
}$$

In~\cite[Theorems 4.24 and 5.19]{AGV3} 
 locally initial objects were lifted along the functors $G_1$ and $G_1'$, respectively.
Theorem~\ref{Theorem(Co)monUniv(Co)measDuality} of the present paper provides sufficient conditions
for the functor $(-)^\vee (\varkappa_{(\ldots)} \otimes \id_A) \colon \mathbf{Comeas}(A,B) \to \mathbf{Meas}(A,B)^{\mathrm{op}}$
to map universal comeasurings to universal measurings.

In addition, if $A=B$, one can consider bi/Hopf monoid (co)actions. The complete picture is captured by the following diagram which commutes except, possibly, for the lower central square: 

$$\xymatrix{
	& \mathbf{HCoact}(A)  \ar[d]_{G_4}  \ar[rr]^{(-)^\vee (\varkappa_{(\ldots)} \otimes \id_A)} & \quad & \mathbf{HAct}(A)^{\mathrm{op}} \ar[d]^{G'_4} \\
\mathbf{Comeas}(A,A) \ar[d]_{G_1}	& \mathbf{Coact}(A) \ar[l]_{G_3} \ar[d]_{G_2} \ar[rr]^{(-)^\vee (\varkappa_{(\ldots)} \otimes \id_A)} & \quad & \mathbf{Act}(A)^{\mathrm{op}} \ar[d]^{G'_2} \ar[r]^{G'_3} & \mathbf{Meas}(A,A)^{\mathrm{op}} \ar[d]^{G'_1}   \\
\mathbf{MorTens}(A,A)	& \mathbf{ComodStr}(A) \ar[l]_G    \ar[rr]^{(-)^\vee} & \quad & \mathbf{ModStr}(A)^{\mathrm{op}}  
\ar[r]^{G'} & \mathbf{TensMor}(A,A)^{\mathrm{op}}
}$$	

In~\cite[Corollaries~4.37 and~5.24]{AGV3} 
 objects in $\mathbf{ComodStr}(A)$ and $\mathbf{ModStr}(A)^{\mathrm{op}}$, whose images under $G$
and $G'$  are locally initial, are lifted along the functors $G_2$ and $G_2'$, respectively.
The actual lifting is made along $G_3$ and $G_3'$.
Theorem~\ref{TheoremBimonUniv(Co)actDuality} is the corresponding duality result.

Furthermore, the locally initial objects obtained in $\mathbf{Coact}(A)$ and $\mathbf{Act}(A)^{\mathrm{op}}$,
are lifted to $\mathbf{HCoact}(A)$ and $\mathbf{HAct}(A)^{\mathrm{op}}$, respectively, in~\cite[Theorems~4.42 and~5.27]{AGV3}. 
The corresponding duality result is proved in Theorem~\ref{TheoremHopfMonUniv(Co)actDuality}.

In the dualizable theory developed in~\cite{AGV3}, coactions were restricted by their supports while actions were restricted by their cosupports. However, if $\mathcal C$ is not only pre-rigid, but closed monoidal, there exists an isomorphism of categories $$K \colon  \mathbf{TensMor}(A,B) \mathrel{\widetilde\to} (\mathcal C \downarrow [A,B]),$$
which makes it possible to segregate classes of both measurings and comeasurings in terms of 
their cosupports. The latter correspond to locally initial objects in 
the comma category $(\mathcal C \downarrow [A,B])^{\mathrm{op}}$, i.e. just monomorphisms.
This approach, which proves to be fruitful for the classification of (co)module structures on a given $\Omega$-magma,
is undertaken in Section~\ref{SectionCosupportDualityInMonoidalClosedCategories}. 

The categorical approach carried out in~\cite{AGV3} and in the present paper makes it possible not only to recover known results,
when the base category is the category of vector spaces over a field, but also to consider new cases, e.g. when the base category itself is a category of (co)modules over a (co)quasitriangular Hopf algebra, Yetter~--- Drinfel'd modules or dg-vector spaces. All these examples are considered in full detail in Section~\ref{SectionApplications}.

\section{Preliminaries}\label{SectionPreliminaries}

Throughout the paper we assume familiarity with (braided monoidal) categories as covered for instance in \cite{AHS, MacLaneCatWork} (resp. \cite{JoyStr}) and Hopf algebras where the relevant background is available in a number of sources such as \cite{DNR, Montgomery, Radford, Sweedler}.
We start by briefly recalling definitions and notation from~\cite{AGV3} that will be used in the sequel.

\subsection{Locally initial objects}\label{SubsectionLIO}
An object $x_0$ in a category $X$ is \textit{locally initial} if for every object $x$ in $X$ there exists
at most one morphism $x_0 \to x$. 

Locally initial objects obviously generalize the classical initial object of a category. However, as opposed to an initial object, locally initial objects may not be unique up to isomorphism. 

Furthermore, locally initial objects form a preorder $\LIO(X)$ by considering $x_1 \succcurlyeq x_2$ if there exists a morphism $x_1 \to x_2$. If $x_0$ is a given locally initial object we denote by $X(x_0)$ the full subcategory of $X$ consisting of all objects $x$ such that there exists a morphism $x_0 \to x$. We can easily see that $x_0$ is the initial object in $X(x_0)$.

Consider now another category $Y$ and a functor $G \colon Y \to X$. Given $x_0 \in \LIO(X)$ denote by  $Y_G(x_0)$ the full subcategory of $Y$ consisting of all objects $y$
such that $Gy$ is an object in $X(x_0)$. In this context, we formulate:

\noindent\textbf{Lifting Problem.} \textit{Given $x_0 \in \LIO(X)$, find an initial object $y_0$ in $Y_G(x_0)$.}

Note that the condition $Gy_0 = x_0$ is not assumed to hold. 

\subsection{Absolute values}\label{SubsectionLIOAbsValue}
For a given object $x$ of the category $X$ denote by $|x|$ an object in $\LIO(X)$
such that $x$ is an object in $X(|x|)$ and for any other object $x_1$ in $\LIO(X)$
such that $x$ is an object in $X(x_1)$ we have $|x| \preccurlyeq x_1$.
In other words, $|x|$ is the absolute minimum of such $x_1 \in \LIO(X)$
for which there exists a morphism $x_1 \to x$:
$$\xymatrix{ x &  \ar[l] \ar@{-->}[d] x_1 \\
	& |x| \ar[lu]
}$$
The terminology used is justified by \cite[Example 2.7]{AGV3}.

It can be easily seen that for every $x \in \LIO(X)$ we have $x = |x|$. Furthermore, note that the arrow $|x| \to x$ is just the terminal object in the comma category~$(\LIO(X)\downarrow x)$.

Now if $G \colon Y \to X$ is a functor, for a given object $y$ in $Y$ we denote $|y|_G := |Gy|$.

 For objects $y_1, y_2$ in $Y$ we write $y_1 \preccurlyeq y_2$
and say that $y_1$ is \textit{coarser} than $y_2$ and $y_2$ is \textit{finer} than $y_1$ if $|y_1|_G \preccurlyeq |y_2|_G$.
If $y_1 \preccurlyeq y_2$ and $y_2 \preccurlyeq y_1$, then we say that $y_1$ and $y_2$ are \textit{(support) equivalent}.
Since the absolute value is defined up to an isomorphism, $y_1$ and $y_2$ are equivalent if and only if $|y_1|_G = |y_2|_G$.

\begin{remark} 
	If absolute values of all objects in $X$ exist, then, given $x_0 \in \LIO(X)$, the category $Y_G(x_0)$ consists of all objects $y$ in $Y$ such that $|y| \preccurlyeq x_0$.
\end{remark}

\begin{definition}
Let $K \colon X \to X'$ be a functor for some categories~$X, X'$. We say that
\begin{enumerate}
\item[1)] $K$ \textit{preserves absolute values}
	if for every $x$ in $X$ with an absolute value $|x|$ we have $K\left(|x|\right) = |Kx|$;
\item[2)] $K$ \textit{reflects the preorder} (on locally initial objects) if the following two conditions hold:
	\begin{itemize}
		\item the functor $K$ maps $\LIO(X)$ to $\LIO(X')$;
		\item $Kx_1 \preccurlyeq Kx_2$ in $X'$ for some $x_1, x_2 \in \LIO(X)$
		if and only if $x_1 \preccurlyeq x_1$ in $X$.
	\end{itemize}
\end{enumerate}	
\end{definition}

\begin{proposition}\label{PropositionLIOKPreservesPreorder}
	Let $G \colon Y \to X$ and $K \colon X \to X'$ be functors for some categories~$X, X',Y$ such that there exist absolute values of all objects in $X$ and $K$ preserves absolute values and reflects the preorder. Let $x_0$ be an object in $X$ such that
	$Kx_0 \in \LIO(X')$.
	Then the categories $Y_G(|x_0|)$ and $Y_{KG}(Kx_0)$ coincide.
	As a consequence, if $y_0$ is the initial object in $Y_G(|x_0|)$, then  $y_0$ is the initial object in~$Y_{KG}(Kx_0)$.
\end{proposition}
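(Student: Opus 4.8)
The plan is to show the two full subcategories $Y(|x_0|)$ and $Y(Kx_0)$ have the same objects; since both are \emph{full} subcategories of $Y$, this automatically makes them coincide as categories, and the statement about initial objects is then immediate. By the Remark preceding the Definition (applied in $X$, where absolute values exist by hypothesis), $Y(|x_0|)$ consists precisely of those $y$ in $Y$ with $|Gy|_X \preccurlyeq x_0$ in $X$; here I am using that $|x_0|_X$ exists and that $X(|x_0|) = X(x_0)$ since $x_0 \in X(|x_0|)$ by definition of absolute value, together with $|x_0| = ||x_0||$. So the first task is to rewrite the defining condition of $Y(Kx_0)$ (with respect to $KG \colon Y \to X'$) in comparable terms and match it up.

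First I would unwind $Y(Kx_0)$: by definition it consists of all $y$ in $Y$ such that $(KG)y = K(Gy)$ lies in $X'(Kx_0)$, i.e. there is a morphism $Kx_0 \to K(Gy)$ in $X'$. The key reduction is to replace $K(Gy)$ by $K(|Gy|_X)$: since $G y \in X(|Gy|_X)$ there is a morphism $|Gy|_X \to Gy$ in $X$, hence a morphism $K(|Gy|_X) \to K(Gy)$ in $X'$; conversely $|Gy|_X \in \LIO(X)$ so, as $K$ reflects the preorder, $K(|Gy|_X) \in \LIO(X')$, and by the hypothesis that $K$ preserves absolute values, $K(|Gy|_X) = |K(Gy)|_{X'}$. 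Therefore $K(Gy) \in X'(Kx_0)$ if and only if there is a morphism $Kx_0 \to |K(Gy)|_{X'}$ in $X'$ — using here that $|K(Gy)|_{X'}$ is the absolute minimum among locally initial objects mapping to $K(Gy)$, and $Kx_0$ is assumed to be such a locally initial object, so a morphism $Kx_0 \to K(Gy)$ forces, by minimality of the absolute value, a morphism (the dashed arrow in the absolute-value diagram) $Kx_0 \to |K(Gy)|_{X'}$, while conversely a morphism $Kx_0 \to |K(Gy)|_{X'}$ composes with $|K(Gy)|_{X'} \to K(Gy)$. Thus membership in $Y(Kx_0)$ is equivalent to: $Kx_0 \preccurlyeq |K(Gy)|_{X'} = K(|Gy|_X)$ in $\LIO(X')$.

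Now both $x_0$ — or rather $|x_0|_X$ — wait: I need $Kx_0$ compared with $K(|Gy|_X)$, but $x_0$ need not be locally initial in $X$. To bring the preorder-reflection hypothesis to bear I should compare locally initial objects of $X$. Note $Kx_0 \preccurlyeq K(|Gy|_X)$; since $Kx_0 \in \LIO(X')$ and $K(|Gy|_X) \in \LIO(X')$, and $K$ reflects the preorder, I first need $x_0$ itself — or a locally initial proxy — in $\LIO(X)$. The clean way: $Y(|x_0|)$ is defined with respect to $G$ using the locally initial object $|x_0|_X \in \LIO(X)$, so $y \in Y(|x_0|)$ iff $|x_0|_X \preccurlyeq |Gy|_X$ in $\LIO(X)$ (both sides locally initial, using the Remark). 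Apply $K$: since $K$ reflects the preorder, $|x_0|_X \preccurlyeq |Gy|_X$ in $\LIO(X)$ $\iff$ $K(|x_0|_X) \preccurlyeq K(|Gy|_X)$ in $\LIO(X')$. And $K(|x_0|_X) = |Kx_0|_{X'} = Kx_0$, the last equality because $Kx_0 \in \LIO(X')$ so equals its own absolute value. Hence $y \in Y(|x_0|) \iff Kx_0 \preccurlyeq K(|Gy|_X) \iff y \in Y(Kx_0)$, as required; and a common initial object of a category shared by both follows trivially.

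The main obstacle — really the only subtle point — is the careful bookkeeping around absolute values: making sure at each step that the objects being compared genuinely lie in $\LIO$ (so that "reflects the preorder" applies), that $x_0$ is legitimately replaced by $|x_0|_X$ (which needs $X(x_0) = X(|x_0|)$, valid because $x_0 \in X(|x_0|)$), and that $Kx_0 = K(|x_0|_X)$ via "preserves absolute values" combined with $Kx_0 \in \LIO(X')$ being self-absolute. One also needs to justify passing from "there exists a morphism $Kx_0 \to K(Gy)$" to "$Kx_0 \preccurlyeq |K(Gy)|_{X'}$", which is exactly the universal property of the absolute value $|K(Gy)|_{X'}$ as terminal object in $(\LIO(X') \downarrow K(Gy))$ — routine, but worth spelling out once.
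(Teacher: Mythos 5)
Your proof is correct and follows essentially the same route as the paper's: the easy inclusion via $Kx_0 = |Kx_0| = K|x_0|$, and the converse by factoring the arrow $Kx_0 \to KGy$ through $K\left(|Gy|_X\right) = |KGy|_{X'}$ (the universal property of the absolute value) and then reflecting the resulting preorder relation along $K$. The only blemish is that in your final chain the inequalities are written in the direction opposite to the paper's convention (since $x_1 \succcurlyeq x_2$ means there is a morphism $x_1 \to x_2$, a morphism $Kx_0 \to |KGy|_{X'}$ gives $|KGy|_{X'} \preccurlyeq Kx_0$, not $Kx_0 \preccurlyeq |KGy|_{X'}$); this is harmless because the reflection hypothesis is an equivalence, but it is inconsistent with your own first paragraph.
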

\begin{proof} Note that $Y_G(|x_0|)$ is a full subcategory of $Y_{KG}(Kx_0)$
	since $ Kx_0=|Kx_0|=K|x_0|$.
	Suppose now that $y$ is an object in $Y_{KG}(Kx_0)$. Then there exists an arrow
	$Kx_0 \to KGy$ in $X'$. By our assumptions, $K|Gy|$ is the absolute value of $KGy$ in $X'$.
	Hence $Kx_0 \to KGy$ factors through $K|Gy|$. In particular,
	there exists an arrow $K|x_0|\to K|Gy|$ in $X'$
	and $ K|Gy| \preccurlyeq K|x_0|$. Thus
	$|Gy|  \preccurlyeq |x_0|$ and $y$ is an object in $Y_G(|x_0|)$.
\end{proof}

\subsection{$\Omega$-magmas}\label{SubsectionOmegaMagmas}

Let $\Omega$ be a set together with maps $s,t \colon \Omega \to \mathbb Z_+$. 

An \textit{$\Omega$-magma} in a monoidal category $\mathcal C$ is 
	an object $A$ endowed with morphisms $\omega_A \colon A^{\otimes s(\omega)} \to
	A^{\otimes t(\omega)}$ for every $\omega \in \Omega$. We will usually drop the subscript $A$ and denote
	the map just by $\omega$. We use the convention $A^{\otimes 0} := \mathbbm{1}$, the monoidal unit in $\mathcal C$.
Note that here we do not require from $\omega_A$ to satisfy any identities. $\Omega$-magmas in $\mathbf{Vect}_\mathbbm{k}$ are called \textit{$\Omega$-algebras} over $\mathbbm{k}$ (\cite{AGV2}).

Examples include many familiar algebraic structures: algebras (either unital or nonunital, associative or non-associative) and coalgebras over a field are $\Omega$-magmas in $\mathbf{Vect}_\mathbbm{k}$ for different $\Omega$'s, ordinary monoids are $\Omega$-magmas in $\mathbf{Sets}$, any object $A$ endowed with a braiding $c_A \colon A \otimes A \to A \otimes A$ is an $\Omega$-magma (see \cite[Examples 3.5]{AGV3} for more details). 





\subsection{(Co)measurings}\label{Subsection(Co)Measurings}

Fix a braided monoidal category $\mathcal C$ with a braiding $c$ and a monoidal unit~$\mathbbm{1}$. Let $P$ be a comonoid in $\mathcal C$ with a comultiplication
$\Delta \colon P \to P \otimes P$ and a counit $\varepsilon \colon P \to \mathbbm{1}$. Consider the monoidal category $\mathsf{TensMor}(P)$ where the objects are morphisms $P\otimes A \to B$ and morphisms between objects $\psi_1 \colon P\otimes A_1 \to B_1$ and $\psi_2 \colon P\otimes A_2 \to B_2$
are pairs of morphisms $\alpha \colon A_1 \to A_2$ and $\beta \colon B_1 \to B_2$ making the diagram below
commutative:

$$\xymatrix{  P\otimes A_1 \ar[d]_{\id_P\, \otimes \, \alpha }\ar[r]^(0.6){\psi_1} & B_1 \ar[d]^{\beta } \\
	P\otimes A_2 \ar[r]^(0.6){\psi_2} & B_2
}$$

The monoidal product $\psi_1 \mathbin{\widetilde\otimes} \psi_2 \colon P \otimes (A_1 \otimes A_2) \to B_1 \otimes B_2$
of objects $\psi_1 \colon P\otimes A_1 \to B_1$ and $\psi_2 \colon P\otimes A_2 \to B_2$
in $\mathsf{TensMor}(P)$ is defined as the composition of the morphisms below:

$$\xymatrix{ P \otimes (A_1 \otimes A_2) \ar[rr]^(0.45){\Delta \, \otimes \,  \id_{A_1 \otimes A_2}} &  & (P \otimes P) \otimes (A_1 \otimes A_2) \ar[rr]^{\id_{P} \, \otimes \,  c_{P, A_1} \, \otimes \,  \id_{A_2}} & \quad & (P \otimes A_1) \otimes (P \otimes A_2)
	\ar[d]^{\psi_1 \otimes \psi_2} \\ & & & & B_1 \otimes B_2 }$$

The monoidal unit of $\mathsf{TensMor}(P)$ is the composition $P \otimes \mathbbm{1} \mathop{\widetilde{\to}} P \xrightarrow{\varepsilon} \mathbbm{1}$.
The axioms of a monoidal category for $\mathsf{TensMor}(P)$ are consequences of those for $\mathcal C$ and the fact that $P$ is a comonoid.

A \textit{measuring} of $\Omega$-magmas is an $\Omega$-magma $\psi \colon P\otimes A \to B$ in the category $\mathsf{TensMor}(P)$. Note that the structure of an $\Omega$-magma on $\psi$ endows the objects $A$ and $B$ with structures
of $\Omega$-magmas in $\mathcal C$ and the morphism $\psi$ relates these structures in a special way.

The classical definition of a measuring of (now not necessarily associative) algebras as introduced in \cite[Chapter VII]{Sweedler} can be recovered for $\mathcal C = \mathbf{Vect}_\mathbbm{k}$ for a field $\mathbbm{k}$ and a certain $\Omega$ (see \cite[Examples 3.6]{AGV3}). 



For a monoidal category~$\mathcal C$ denote by $\mathsf{Mon}(\mathcal C)$ and $\mathsf{Comon}(\mathcal C)$
the categories of monoids and comonoids in~$\mathcal C$, respectively.

Recall that if the category $\mathcal C$ is braided, then $\mathsf{Mon}(\mathcal C)$ is a monoidal category too.
Objects of the category $\mathsf{Comon}(\mathsf{Mon}(\mathcal C))$ (which is isomorphic to $\mathsf{Mon}(\mathsf{Comon}(\mathcal C))$) are called \textit{bimonoids} in $\mathcal C$.

If $P$ is a bimonoid, then the category ${}_P\mathsf{Mod}$ of left $P$-modules
is a subcategory of $\mathsf{TensMor}(P)$ that inherits from $\mathsf{TensMor}(P)$ the monoidal structure.
An $\Omega$-magma in ${}_P\mathsf{Mod}$ is called a \textit{$P$-module $\Omega$-magma}
and the corresponding morphism $\psi \colon P\otimes A \to A$ is called a \textit{$P$-action} on $A$.

The classical definition of a (now not necessarily associative) module algebra over a bialgebra can be recovered for $\mathcal C = \mathbf{Vect}_\mathbbm{k}$ for a field $\mathbbm{k}$ and a certain $\Omega$ (see \cite[Examples 3.7]{AGV3}).



Dually,  let $Q$ be a monoid in $\mathcal C$ with a multiplication
$\mu \colon Q \otimes Q \to Q$ and a unit $u \colon \mathbbm{1} \to Q$. Consider the monoidal category $\mathsf{MorTens}(Q)$ where the objects are morphisms $ A \to B \otimes Q$ and morphisms between objects $\rho_1 \colon 
A_1 \to B_1 \otimes Q$ and $\rho_2 \colon 
A_2 \to B_2 \otimes Q$
are pairs of morphisms $\alpha \colon A_1 \to A_2$ and $\beta \colon B_1 \to B_2$ making the diagram below
commutative:

$$\xymatrix{  A_1 \ar[d]_{ \alpha }\ar[r]^(0.4){\rho_1} & B_1 \otimes Q \ar[d]^{\beta \, \otimes \, \id_Q} \\
	A_2 \ar[r]^(0.4){\rho_2} & B_2 \otimes Q
}$$

The monoidal product $\rho_1 \mathbin{\widetilde\otimes} \rho_2 \colon A_1 \otimes A_2 \to (B_1 \otimes B_2) \otimes Q$
of objects $\rho_1 \colon A_1 \to B_1  \otimes Q$ and $\rho_2 \colon A_2 \to B_2  \otimes Q$
in $\mathsf{MorTens}(Q)$ is defined as the composition of the morphisms below:

$$\xymatrix{ A_1 \otimes A_2 \ar[r]^(0.35){\rho_1 \otimes \rho_2} &
	(B_1 \otimes Q) \otimes (B_2 \otimes Q) \ar[rr]^{\id_{B_1} \, \otimes \,  c_{Q, B_2} \, \otimes \,  \id_{Q}}
	&\qquad & (B_1 \otimes B_2) \otimes (Q \otimes Q)   	\ar[d]^{\id_{B_1 \otimes B_2} \, \otimes \, \mu}
	\\ & & & (B_1 \otimes B_2) \otimes Q }$$

The monoidal unit of $\mathsf{MorTens}(Q)$ is the composition $ \mathbbm{1}  \xrightarrow{u}
Q \mathop{\widetilde{\to}} \mathbbm{1} \otimes Q$.
The axioms of a monoidal category for $\mathsf{MorTens}(Q)$ are consequences of those for $\mathcal C$ and the fact that $Q$ is a monoid.

A \textit{comeasuring} of $\Omega$-magmas is an $\Omega$-magma $\rho \colon A \to B \otimes Q$ in the category $\mathsf{MorTens}(Q)$. Note that the structure of an $\Omega$-magma on $\rho$ endows the objects $A$ and $B$ with structures
of $\Omega$-magmas in $\mathcal C$ and the morphism $\rho$ relates these structures in a special way.



If $Q$ is a bimonoid, then the category $\mathsf{Comod}^Q$ of right $Q$-comodules
is a subcategory of $\mathsf{MorTens}(Q)$ that inherits from $\mathsf{MorTens}(Q)$ the monoidal structure.
An $\Omega$-magma in $\mathsf{Comod}^Q$ is called a \textit{$Q$-comodule $\Omega$-magma}
and the corresponding morphism $\rho \colon  A \to A \otimes Q$ is called a \textit{$Q$-coaction} on $A$.

The classical definition of a (now not necessarily associative) comodule algebra over a bialgebra can be recovered for $\mathcal C = \mathbf{Vect}_\mathbbm{k}$ for a field $\mathbbm{k}$ and a certain $\Omega$ (see \cite[Examples 3.9]{AGV3}).



\subsection{Conditions on the base category}\label{SubsectionSupportCoactingConditions}

Now we list the conditions on the base category $\mathcal C$ from~\cite[Section 4.2 and Section 5.1]{AGV3} we will refer to in the theorems below:
\begin{enumerate}
	\item\label{PropertySmallLimits} there exist all small limits in $\mathcal C$;
	\item\label{PropertyFiniteAndCountableColimits} there exist finite and countable colimits in $\mathcal C$;
	\item\label{PropertyEpiExtrMonoFactorizations} $\mathcal C$ is (Epi, ExtrMono)-structured;
	\item\label{PropertySubObjectsSmallSet} $\mathcal C$ is wellpowered;	
	\item\label{PropertyMonomorphism} for every monomorphism $f$ and every object $M$
	both
	$f \otimes \id_M$ and $\id_M \otimes f$ are monomorphisms too; 
	\myitem[5a]\label{PropertyExtrMonomorphism} for every extremal monomorphism $f$ the morphism
	$f \otimes f$ is an extremal monomorphism too; 
	\item\label{PropertyLimitsOfSubobjectsArePreserved} for every object $M$ the functor $ M \otimes (-)$ preserves limits (= intersections) of extremal subobjects in $ \mathcal C$;
	\item\label{PropertyTensorPullback}
	for every object~$M$ the functor
	$ M \otimes (-)$
	preserves preimages, i.e.
	for every
	pullback $$\xymatrix{ P\strut \ar[r]^{t} \ar@{>->}[d]_{h} \ar@{}[rd]|<{\pullback} & A\strut \ar@{>->}[d]^f \\
		C \ar[r]^g & B}
	$$
	where $f$ is an arbitrary monomorphism and $g$ is an arbitrary morphism having the same codomain $B$
	(recall that in this case $h$ is automatically a monomorphism too) the diagram below is a pullback too:
	$$
	\xymatrix{ M \otimes P
		\ar[r]^{\id_M \otimes\, t} \ar@{->}[d]_{\id_M \otimes\,  h} \ar@{}[rd]|<{\pullback} & M \otimes A
		\ar@{->}[d]^{\id_M \otimes\,  f} \\
		M \otimes C \ar[r]^{\id_M \otimes\,  g} & M \otimes B}
	$$
	\item\label{PropertySwitchProdTensorIsAMonomorphism}
	for any nonempty small set $\Lambda$ and any objects $M$ and $A_\alpha$, $\alpha \in \Lambda$, the morphism $$\xymatrix{M \otimes \prod\limits_{\alpha\in\Lambda}A_\alpha \ar[rr]^(0.45){({ \id_M} \otimes {\pi_\alpha})_{\alpha\in\Lambda}}
		& \qquad\quad& \prod\limits_{\alpha\in\Lambda} (M \otimes A_\alpha),}$$ where $\pi_\alpha$ is the projection from $\prod\limits_{\alpha\in\Lambda}A_\alpha$ to $A_\alpha$, $\alpha \in \Lambda$,
	is a monomorphism;	
	\item\label{PropertyEqualizers}
	for every object $M$ the functor $ M \otimes (-)$
	preserves all equalizers;
	\item\label{PropertyFreeMonoid} the forgetful functor $\mathsf{Mon}(\mathcal C) \to \mathcal C$
	has a left adjoint $\mathcal F \colon  \mathcal C \to \mathsf{Mon}(\mathcal C)$.
\end{enumerate}
\begin{remark} Property~\ref{PropertyEpiExtrMonoFactorizations}
	follows from Properties~\ref{PropertySmallLimits} and~\ref{PropertySubObjectsSmallSet}.
	Property~\ref{PropertyEqualizers} follows from 	Properties~\ref{PropertySmallLimits}, \ref{PropertyTensorPullback}  and~\ref{PropertySwitchProdTensorIsAMonomorphism} (see e.g.~\cite[Proposition 4.2]{AGV3}).
\end{remark}

\subsection{Supports of morphisms $A \to B \otimes Q$}\label{SubsectionSupportMorTensAB}

Let $\mathcal C$ be a monoidal category.
For given objects $A,B$ in $\mathcal C$ denote by $\mathbf{MorTens}(A,B)$
the comma category $(A\downarrow B\otimes(-))$, i.e.
the category where
\begin{itemize}
	\item the objects are all morphisms $\rho \colon A \to B \otimes Q$ for arbitrary objects $Q$;
	\item the morphisms between $\rho_1 \colon A \to B \otimes Q_1$ and $\rho_2 \colon   A \to  B \otimes Q_2$
	are morphisms $\tau \colon Q_1 \to Q_2$
	making the diagram below commutative:
	$$ \xymatrix{	 A \ar[r]^(0.4){\rho_1} \ar[rd]_{\rho_2} &  B \otimes Q_1 \ar[d]^{\id_{ B} \otimes\, \tau} \\
		&  B \otimes Q_2}$$
\end{itemize}

\begin{remark}
	The category $\mathbf{MorTens}(A,B)$ defined above should not be confused with the category
	$\mathsf{MorTens}(Q)$ defined in Section~\ref{Subsection(Co)Measurings} in order to introduce comeasurings. Both contain
	$\rho \colon A \to B \otimes Q$ as objects, but in $\mathbf{MorTens}(A,B)$ the objects $A$ and $B$ are fixed and 
	in $\mathsf{MorTens}(Q)$ we fix $Q$, the objects $A$ and $B$ may be arbitrary.
\end{remark}	

For morphisms $\rho \colon A \to B \otimes Q$ we are going to use the terminology and the notation from Sections~\ref{SubsectionLIO} and~\ref{SubsectionLIOAbsValue} with respect to $X=\mathbf{MorTens}(A,B)$.

\begin{definition}
	We say that a morphism $\rho \colon A \to B \otimes Q$ is a \textit{tensor epimorphism}
	if $\rho \in \LIO(\mathbf{MorTens}(A,B))$, i.e. if
	for every $f,g \colon Q \to R$, such that
	\begin{equation*}
		({\id_B} \otimes f)\rho = ({\id_B} \otimes g) \rho,\end{equation*}
	we have $f=g$.
\end{definition}

If there exists $|\rho| \colon  A \to B \otimes \tilde Q$ for some $\rho$, then we call the object $\supp \rho := \tilde Q$
the \textit{support} of $\rho$. From the definition of the absolute value it follows that $\supp \rho$ is defined up to an isomorphism compatible with $|\rho|$.

\begin{theorem}[{\cite[Theorem 4.13]{AGV3}}]\label{TheoremAbsValueSupportExistence}
	Let $\mathcal C$ be a monoidal category satisfying Properties~\ref{PropertySmallLimits}, \ref{PropertySubObjectsSmallSet}--\ref{PropertyLimitsOfSubobjectsArePreserved} and \ref{PropertyEqualizers}
	of Section~\ref{SubsectionSupportCoactingConditions}. Then for every objects $A,B$ in $\mathcal C$ there exist absolute values of all objects in the category $\mathbf{MorTens}(A,B)$ and, consequently, there exist supports for all morphisms $\rho \colon A \to B \otimes Q$ in~$\mathcal C$. 
\end{theorem}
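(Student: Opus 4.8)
The plan is to produce, for an arbitrary morphism $\rho \colon A \to B \otimes Q$, its absolute value as the factorization of $\rho$ through the \emph{smallest} extremal subobject of $Q$ through which $\rho$ factors, and then to check that this factorization is a tensor epimorphism which is minimal among all tensor epimorphisms refining $\rho$. Before starting I would record a few standard facts. Since $\mathcal C$ is complete (Property~\ref{PropertySmallLimits}) and wellpowered (Property~\ref{PropertySubObjectsSmallSet}), it carries the $(\mathrm{Epi},\mathrm{ExtrMono})$ factorization (Property~\ref{PropertyEpiExtrMonoFactorizations}), which here is a genuine orthogonal factorization system: extremal monomorphisms are closed under composition, satisfy left cancellation (if $m'f$ is an extremal mono and $m'$ a monomorphism, then $f$ is an extremal mono), and arbitrary intersections of extremal subobjects exist and are again extremal subobjects. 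I would also note the elementary observation, used twice below: a tensor epimorphism $\tau_0 \colon X \to B \otimes Y$ cannot factor through $\id_B \otimes n$ for a \emph{proper} extremal subobject $n \colon Y' \rightarrowtail Y$; indeed, were $n$ an extremal mono that is not an isomorphism it would fail to be epic, so some $u \neq v$ satisfy $un = vn$, whence $(\id_B \otimes u)\tau_0 = (\id_B \otimes v)\tau_0$, contradicting the defining property of $\tau_0$.

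Now fix $\rho \colon A \to B \otimes Q$ and let $\mathcal S$ be the set --- small by Property~\ref{PropertySubObjectsSmallSet} --- of extremal subobjects $m \colon Q_m \rightarrowtail Q$ such that $\rho$ factors through $\id_B \otimes m$ (the factorization being unique, as $\id_B \otimes m$ is a monomorphism by Property~\ref{PropertyMonomorphism}); this set is nonempty since $\id_Q \in \mathcal S$. Let $\tilde m \colon \tilde Q \rightarrowtail Q$ be the intersection of all members of $\mathcal S$; it exists by Property~\ref{PropertySmallLimits} and is an extremal subobject. By Property~\ref{PropertyLimitsOfSubobjectsArePreserved} the functor $B \otimes (-)$ sends this intersection to the intersection of the $\id_B \otimes m$ inside $B \otimes Q$, and since $\rho$ factors through each $\id_B \otimes m$ it factors through $\id_B \otimes \tilde m$; I set $|\rho| := \tilde\rho \colon A \to B \otimes \tilde Q$ and $\supp\rho := \tilde Q$. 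To see $\tilde\rho$ is a tensor epimorphism, suppose $(\id_B \otimes f)\tilde\rho = (\id_B \otimes g)\tilde\rho$ for $f,g \colon \tilde Q \to S$, let $e \colon E \rightarrowtail \tilde Q$ be their equalizer (a regular, hence extremal, monomorphism), and use Property~\ref{PropertyEqualizers} to identify $\id_B \otimes e$ with the equalizer of $\id_B \otimes f$ and $\id_B \otimes g$; then $\tilde\rho$, and therefore $\rho = (\id_B \otimes \tilde m)\tilde\rho$, factors through $\id_B \otimes (\tilde m e)$. Since $\tilde m e$ is a composition of extremal monos it belongs to $\mathcal S$, so minimality of $\tilde m$ in $\mathcal S$ yields $k$ with $\tilde m e k = \tilde m$, hence $ek = \id_{\tilde Q}$; being also a monomorphism, $e$ is then an isomorphism and $f = g$.

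The remaining point, which I expect to be the main obstacle, is minimality of $|\rho|$. Let $\sigma \colon A \to B \otimes R$ be any tensor epimorphism admitting a morphism to $\rho$, i.e. $\tau \colon R \to Q$ with $(\id_B \otimes \tau)\sigma = \rho$; I must produce a morphism $\sigma \to |\rho|$ in $\mathbf{MorTens}(A,B)$. Factor $\tau = m e'$ with $e' \colon R \to R_0$ epic and $m \colon R_0 \rightarrowtail Q$ an extremal mono, and put $\rho_0 := (\id_B \otimes e')\sigma$. Then $\rho = (\id_B \otimes m)\rho_0$, so $m \in \mathcal S$ and there is $j \colon \tilde Q \rightarrowtail R_0$ with $m j = \tilde m$; by left cancellation $j$ is an extremal mono. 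Cancelling the monomorphism $\id_B \otimes m$ in $(\id_B \otimes m)\rho_0 = (\id_B \otimes m)(\id_B \otimes j)\tilde\rho$ gives $\rho_0 = (\id_B \otimes j)\tilde\rho$, and $\rho_0$ is again a tensor epimorphism because $\sigma$ is one and $e'$ is epic. The observation from the first paragraph, applied to the tensor epimorphism $\rho_0$ factoring through the extremal mono $\id_B \otimes j$, now forces $j$ to be an isomorphism; hence $\tau'' := j^{-1} e' \colon R \to \tilde Q$ satisfies $(\id_B \otimes \tau'')\sigma = \tilde\rho$ and is the desired morphism $\sigma \to |\rho|$. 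This shows $|\rho|$ is the absolute value of $\rho$, and the existence of $\supp\rho$ for every $\rho$ follows by definition. The crux is that this entire argument goes through using only that $B \otimes (-)$ preserves monomorphisms, intersections of extremal subobjects, and equalizers --- \emph{without} any preservation of pullbacks --- and the device enabling this is to reduce a general refinement $\tau$ to its epi--extremal-mono factorization and to exploit that a tensor epimorphism never factors through a proper extremal subobject.
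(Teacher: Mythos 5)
The paper does not prove this statement itself --- it imports it from \cite[Theorem 4.12]{AGV3} --- but your argument is correct and follows what is evidently the same route as the original: you realize $|\rho|$ as the factorization of $\rho$ through the smallest extremal subobject of $Q$ through which it factors, which is exactly the structure the paper later quotes from \cite[Remark~4.14 and Lemma~4.16]{AGV3} (namely, $|\rho|$ is a tensor epimorphism with $\rho=(\id_B\otimes\tau)|\rho|$ for an extremal monomorphism $\tau$). Your use of equalizers together with Property~\ref{PropertyEqualizers} to verify that $\tilde\rho$ is a tensor epimorphism, and of the epi--extremal-mono factorization of a refining morphism to establish minimality, are the expected ingredients and are carried out correctly.
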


\subsection{Cosupports of morphisms $P \otimes A \to B$}

Cosupports of morphisms are introduced in the dual way.

Let $\mathcal C$ be a monoidal category.
For given objects $A,B$ in $\mathcal C$ denote by $\mathbf{TensMor}(A,B)$
the comma category $( (-)\otimes A \downarrow B)$, i.e.
the category where
\begin{itemize}
	\item the objects are all morphisms $\psi \colon  P \otimes A \to B$ for arbitrary objects $P$;
	\item the morphisms between $\psi_1 \colon P_1 \otimes A \to B$ and $\psi_2 \colon P_2 \otimes  A \to B$ are morphisms $\tau \colon P_1 \to P_2$
	making the diagram below commutative:
	$$ \xymatrix{	P_1 \otimes A \ar[d]_{\tau\, \otimes\, \id_{A}} \ar[r]^(0.6){\psi_1}  &  B   \\
		P_2 \otimes A \ar[ru]_{\psi_2} &  }$$
\end{itemize}

For morphisms $\psi \colon P \otimes A \to B$ we are going to use the terminology and the notation from Sections~\ref{SubsectionLIO} and~\ref{SubsectionLIOAbsValue} with respect to $X=\mathbf{TensMor}(A,B)^\mathrm{op}$. 

\begin{definition}
	We say that a morphism $\psi \colon P \otimes A \to B$ is a \textit{tensor monomorphism}
	if $\rho \in \LIO(\mathbf{TensMor}(A,B)^\mathrm{op})$, i.e. if for every $f,g \colon R \to P$ such that
	that \begin{equation*}
		\psi (f \otimes {\id_A}) = \psi (g \otimes {\id_A}) \end{equation*}
	we have $f=g$.
\end{definition}

If there exists $|\psi| \colon  \tilde P \otimes A \to B$ for some $\psi$, then we call the object $\cosupp \psi := \tilde P$
the \textit{cosupport} of $\psi$. From the definition of the absolute value it follows that $\cosupp \psi$ is defined up to an isomorphism compatible with $|\rho|$.

\begin{theorem}[{\cite[Theorem 5.15]{AGV3}}]\label{TheoremDUALAbsValueCosupportExistence}
	Let $\mathcal C$ be a monoidal category satisfying the properties dual to Properties~\ref{PropertySmallLimits}, \ref{PropertySubObjectsSmallSet}--\ref{PropertyLimitsOfSubobjectsArePreserved} and \ref{PropertyEqualizers}
	of Section~\ref{SubsectionSupportCoactingConditions}. Then for every objects $A,B$ in $\mathcal C$
	there exist absolute values of
	all objects in the category $\mathbf{TensMor}(A,B)^\mathrm{op}$ and, consequently, there exist cosupports for all morphisms 
	$\psi \colon P \otimes A \to B$ in $\mathcal C$. 
\end{theorem}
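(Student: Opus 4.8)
The plan is to obtain \thref{TheoremDUALAbsValueCosupportExistence} from \thref{TheoremAbsValueSupportExistence} purely by formal duality, rather than repeating the (long) argument of \cite[Theorem 4.12]{AGV3}. The key observation is that the category $\mathbf{TensMor}(A,B)$ is, by its very definition, the comma category $((-)\otimes A \downarrow B)$ in $\mathcal C$, and this is exactly the comma category $(A^{\mathrm{op}} \downarrow B^{\mathrm{op}}\otimes^{\mathrm{op}}(-))$ computed in the opposite monoidal category $\mathcal C^{\mathrm{op}}$ (with the reversed tensor $X\otimes^{\mathrm{op}} Y := X\otimes Y$, which is again monoidal). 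Concretely, an object $\psi\colon P\otimes A\to B$ of $\mathbf{TensMor}(A,B)$ is the same datum as a morphism $B\to A\otimes P$ in $\mathcal C^{\mathrm{op}}$, i.e. an object of $\mathbf{MorTens}(B,A)$ built inside $\mathcal C^{\mathrm{op}}$; and a morphism $\tau\colon P_1\to P_2$ in $\mathbf{TensMor}(A,B)$ with $\psi_2(\tau\otimes\id_A)=\psi_1$ is precisely a morphism in $\mathbf{MorTens}(B,A)$ over $\mathcal C^{\mathrm{op}}$. Hence there is an isomorphism of categories
$$\mathbf{TensMor}(A,B)^{\mathrm{op}} \;\cong\; \mathbf{MorTens}_{\mathcal C^{\mathrm{op}}}(B,A),$$
where the subscript indicates the ambient monoidal category.

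First I would record this isomorphism carefully, checking that the comma-category structures (objects, morphisms, composition) match up on the nose; this is the only genuinely new bookkeeping and it is routine. Second, I would verify the translation of hypotheses: since $\mathcal C$ satisfies the properties \emph{dual} to Properties~\ref{PropertySmallLimits}, \ref{PropertySubObjectsSmallSet}--\ref{PropertyLimitsOfSubobjectsArePreserved}, \ref{PropertyEqualizers}, the category $\mathcal C^{\mathrm{op}}$ satisfies Properties~\ref{PropertySmallLimits}, \ref{PropertySubObjectsSmallSet}--\ref{PropertyLimitsOfSubobjectsArePreserved}, \ref{PropertyEqualizers} themselves. (One must note that $\mathcal C^{\mathrm{op}}$ is monoidal with the same tensor product, that limits in $\mathcal C^{\mathrm{op}}$ are colimits in $\mathcal C$, that epis/extremal monos swap with monos/extremal epis, that wellpowered dualizes to well-copowered which is the dual hypothesis, and that the functor $M\otimes(-)$ on $\mathcal C^{\mathrm{op}}$ is the functor $M\otimes(-)$ on $\mathcal C$, so "preserves colimits/preimages/etc." on $\mathcal C$ becomes "preserves limits/preimages/etc." on $\mathcal C^{\mathrm{op}}$.) Third, I would apply \thref{TheoremAbsValueSupportExistence} verbatim with $\mathcal C$ replaced by $\mathcal C^{\mathrm{op}}$ and $(A,B)$ replaced by $(B,A)$, obtaining that all objects of $\mathbf{MorTens}_{\mathcal C^{\mathrm{op}}}(B,A)$ have absolute values and all morphisms $B\to A\otimes P$ in $\mathcal C^{\mathrm{op}}$ have supports.

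Finally, transporting along the isomorphism of categories above, absolute values in $\mathbf{MorTens}_{\mathcal C^{\mathrm{op}}}(B,A)$ become absolute values in $\mathbf{TensMor}(A,B)^{\mathrm{op}}$, which is precisely the $X$ against which \emph{cosupport} was defined in the paragraph preceding the theorem; a support of $\rho\colon B\to A\otimes P$ in $\mathcal C^{\mathrm{op}}$ is, by unwinding, a cosupport of the corresponding $\psi\colon P\otimes A\to B$ in $\mathcal C$. This yields exactly the assertion of \thref{TheoremDUALAbsValueCosupportExistence}. I expect the main (and only real) obstacle to be purely cosmetic: making the identification $\mathbf{TensMor}(A,B)^{\mathrm{op}}\cong\mathbf{MorTens}_{\mathcal C^{\mathrm{op}}}(B,A)$ precise, and double-checking that each of the hypotheses in Section~\ref{SubsectionSupportCoactingConditions} dualizes to the stated "dual property" under $\mathcal C\mapsto\mathcal C^{\mathrm{op}}$ (in particular the slightly subtle ones: Property~\ref{PropertySubObjectsSmallSet}, and the factorization Property~\ref{PropertyEpiExtrMonoFactorizations}, whose dual is the (ExtrEpi, Mono)-structuredness needed for the dual of \thref{TheoremAbsValueSupportExistence}). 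Everything else is a formal consequence, with no additional computation required.
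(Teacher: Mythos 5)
Your proposal is essentially the intended argument: the paper does not reprove this statement but imports it from \cite[Theorem 5.15]{AGV3}, and the surrounding text (``Cosupports of morphisms are introduced in the dual way'', the hypotheses being phrased as ``the properties dual to\dots'') makes clear that the result is meant to be exactly the formal dual of Theorem~\ref{TheoremAbsValueSupportExistence}, obtained by running that theorem in the opposite category. Two small points of care, both of which you partially flag but should get right. First, your displayed definition of the reversed tensor is mis-stated: you write $X\otimes^{\mathrm{op}}Y:=X\otimes Y$, but with the \emph{same} tensor product on $\mathcal C^{\mathrm{op}}$ the comma category $(B\downarrow A\otimes(-))$ in $\mathcal C^{\mathrm{op}}$ has objects $A\otimes Q\to B$, not $Q\otimes A\to B$, so the identification with $\mathbf{TensMor}(A,B)^{\mathrm{op}}$ fails; you need $X\otimes^{\mathrm{op}}Y:=Y\otimes X$ (which your prose, ``the reversed tensor'', suggests you intend). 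Second, and as a consequence of this reversal, the hypotheses do not dualize completely mechanically: Properties~\ref{PropertyLimitsOfSubobjectsArePreserved}, \ref{PropertyTensorPullback} and~\ref{PropertyEqualizers} are stated one-sidedly for the functor $M\otimes(-)$, and after passing to $(\mathcal C^{\mathrm{op}},\otimes^{\mathrm{op}})$ what you actually need is the dual statement for $(-)\otimes M$. So either the ``dual properties'' must be read with the tensor factor on the other side, or one must note that in all the situations where the theorem is applied the category is braided, so that $M\otimes(-)\cong(-)\otimes M$ and the distinction evaporates. With these two adjustments the argument is complete and coincides with the route the paper takes.
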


\subsection{Universal objects}\label{SubsectionUnivObjects}

\subsubsection{Universal comeasuring monoids and universal measuring comonoids}\label{SubsectionUnivComeasExistence}

Fix $\Omega$-magmas $A$ and $B$ in a braided monoidal category $\mathcal C$. 

Consider the category $\mathbf{Comeas}(A,B)$ where
\begin{itemize}
	\item the objects are all comeasurings $\rho \colon A \to B \otimes Q$ for arbitrary monoids $Q$;
	\item the morphisms from $\rho_1 \colon A \to B \otimes Q_1$
	to $\rho_2 \colon A \to B \otimes Q_2$  are monoid homomorphisms $\varphi \colon Q_1 \to Q_2$
	making the diagram below commutative:
	
	$$\xymatrix{ A \ar[r]^(0.4){\rho_1} 
		\ar[rd]_{\rho_2}
		& B \otimes Q_1 \ar[d]^{\id_B \otimes \varphi} \\
		& B \otimes Q_2} $$
\end{itemize}

Denote by $G_1$ the forgetful functor $\mathbf{Comeas}(A,B)\to \mathbf{MorTens}(A,B)$. Given a tensor epimorphism $\rho_U \colon A \to B \otimes U$ for some object $U$ in $\mathcal C$, let us call the monoid $\mathcal{A}^\square(\rho_U)$ corresponding to the initial object $\rho_U^\mathbf{Comeas} \colon A \to B \otimes \mathcal{A}^\square(\rho_U)$ in $\mathbf{Comeas}(A,B)_{G_1}(\rho_U)$ (if it exists) the \textit{$U$-universal comeasuring monoid} from $A$ to $B$.

Consider the category $\mathbf{Meas}(A,B)$ where
\begin{itemize}
	\item the objects are all measurings $\psi \colon P\otimes A \to B$ for arbitrary comonoids $P$;
	\item the morphisms from $\psi_1 \colon P_1\otimes A \to B$
	to $\psi_2 \colon P_2\otimes A \to B$  are monoid homomorphisms $\varphi \colon P_1 \to P_2$
	making the diagram below commutative:
	\begin{equation*}\xymatrix{
			{P_1} \otimes A \ar[r]^(0.6){\psi_1}  \ar[d]_{\varphi\otimes{\id_A}} & B  \\
			{P_2} \otimes A \ar[ru]_(0.6){\psi_2} &
	}\end{equation*}	
\end{itemize}

Denote by $G_1'$ the forgetful functor $\mathbf{Meas}(A,B)\to \mathbf{TensMor}(A,B)$.
Let $\psi_V \colon V \otimes A \to B$ be a tensor monomorphism  for some object $V$.
We call the comonoid ${}_\square\mathcal{C}(\psi_V)$ corresponding to the initial object $$\psi_V^\mathbf{Meas} \colon {}_\square\mathcal{C}(\psi_V) \otimes A \to B$$ in $\mathbf{Meas}(A,B)^{\mathrm{op}}_{G_1'}(\psi_V)$ (if it exists) the \textit{$V$-universal measuring comonoid} from $A$ to $B$.



\subsubsection{Universal (co)acting bi/Hopf monoids}\label{SubsectionUnivBiHopfCoactingExistence}
Fix an $\Omega$-magma $A$ in a braided monoidal category~$\mathcal C$.

Consider the category $\mathbf{Coact}(A)$ where \begin{itemize}
	\item the objects are all coactions $\rho \colon A \to A \otimes B$ for arbitrary bimonoids $B$;
	\item  the morphisms from $\rho_1 \colon A \to A \otimes B_1$
	to $\rho_2 \colon A \to A \otimes B_2$  are bimonoid homomorphisms $\varphi \colon B_1 \to B_2$
	making the diagram below commutative:
	$$\xymatrix{ A \ar[r]^(0.4){\rho_1} 
		\ar[rd]_{\rho_2}
		& A \otimes B_1 \ar[d]^{{\id_A} \otimes \varphi} \\
		& A \otimes B_2} $$
\end{itemize}

Let $U$ be a comonoid and let a tensor epimorphism $\rho_U \colon A \to A \otimes U$
define on $A$ a structure of a $U$-comodule.  Denote by $G_2$ the forgetful functor 
$\mathbf{Coact}(A) \to \mathbf{ComodStr}(A)$.
Let us call the bimonoid  corresponding to the initial object
in $\mathbf{Coact}(A)_{G_2}(\rho_U)$ (if it exists) the \textit{$U$-universal coacting bimonoid} on $A$.

Consider the category $\mathbf{Act}(A)$ where \begin{itemize}
	\item the objects are all actions $\psi \colon B\otimes A \to A$ for arbitrary bimonoids $B$;
	\item the morphisms from $\psi_1 \colon B_1\otimes A \to A$
	to $\psi_2 \colon B_2\otimes A \to A$  are bimonoid homomorphisms $\varphi \colon B_1 \to B_2$
	making the diagram below commutative:
	\begin{equation*}\xymatrix{
			{B_1} \otimes A \ar[r]^(0.6){\psi_1}  \ar[d]_{\varphi\otimes{\id_A}} & A  \\
			{B_2} \otimes A \ar[ru]_(0.6){\psi_2} &
	}\end{equation*}	
\end{itemize}

Now let $V$ be a monoid and let a tensor monomorphism $\psi_V \colon V \otimes A \to A$
define on $A$ a structure of a $V$-module. Denote by $G_2'$ the forgetful functor 
$\mathbf{Act}(A) \to \mathbf{ModStr}(A)$.
Let us call the bimonoid 
corresponding to the initial object in $\mathbf{Act}(A)^{\mathrm{op}}_{G_2'}(\psi_V)$ (if it exists) the \textit{$V$-universal acting bimonoid} on $A$.

Consider the category $\mathbf{HCoact}(A)$ where \begin{itemize}
	\item  the objects are all coactions $\rho \colon A \to A \otimes H$ for arbitrary Hopf monoids $H$;
	\item  the morphisms from $\rho_1 \colon A \to A \otimes H_1$
	to $\rho_2 \colon A \to A \otimes H_2$  are Hopf monoid homomorphisms $\varphi \colon H_1 \to H_2$
	making the diagram below commutative:
	$$\xymatrix{ A \ar[r]^(0.4){\rho_1} 
		\ar[rd]_{\rho_2}
		& A \otimes H_1 \ar[d]^{\id_A \otimes \varphi} \\
		& A \otimes H_2} $$
\end{itemize}

Let $U$ be a comonoid and let a tensor epimorphism $\rho_U \colon A \to A \otimes U$
define on $A$ a structure of a $U$-comodule.
Denote by $G_4$ the forgetful functor $\mathbf{HCoact}(A)\to \mathbf{Coact}(A)$.
 We call the Hopf monoid 
corresponding to the initial object
in $\mathbf{HCoact}(A)_{G_2G_4}(\rho_U)$ (if it exists) the \textit{$U$-universal coacting Hopf monoid} on $A$.

Consider the category $\mathbf{HAct}(A)$ where \begin{itemize}
	\item the objects are all actions $\psi \colon H\otimes A \to A$ for arbitrary Hopf monoids $H$;
	\item the morphisms from $\psi_1 \colon H_1\otimes A \to A$
	to $\psi_2 \colon H_2\otimes A \to A$  are Hopf monoid homomorphisms $\varphi \colon H_1 \to H_2$
	making the diagram below commutative:
	\begin{equation*}\xymatrix{
			{H_1} \otimes A \ar[r]^(0.6){\psi_1}  \ar[d]_{\varphi\otimes{\id_A}} & A  \\
			{H_2} \otimes A \ar[ru]_(0.6){\psi_2} &
	}\end{equation*}	
\end{itemize}

Let $V$ be a monoid and let a tensor monomorphism $\psi_V \colon V \otimes A \to A$
define on $A$ a structure of a $V$-module. Denote by $G_4'$ the forgetful functor $\mathbf{HAct}(A)\to \mathbf{Act}(A)$.
Let us call the Hopf monoid ${}_\square\mathcal{H}(\psi_V)$
corresponding to the initial object $\psi_V^\mathbf{HAct}$
in $\mathbf{HAct}(A)^{\mathrm{op}}_{G_2'G_4'}(\psi_V)$ (if it exists) the \textit{$V$-universal acting Hopf monoid} on $A$.

\subsection{Monoidal functors}

Let $(\mathcal C, \otimes, \mathbbm{1}, a, l, r)$ and $(\mathcal C^\wr, \otimes^\wr, \mathbbm{1}^\wr, a^\wr, l^\wr, r^\wr)$
be two monoidal categories. Recall that a functor $F \colon \mathcal C \to \mathcal C^\wr$ is called a \textit{(lax) monoidal functor} if there exist fixed natural transformations $J_{A,B} \colon FA \mathbin{\otimes^\wr} FB \to F(A\otimes B)$
and a morphism $\varphi \colon \mathbbm{1}^\wr \to F\mathbbm{1}$ making the diagrams below commutative for all objects $A,B,C$ in $\mathcal C$:

$$
\xymatrix{(FA \mathbin{\otimes^\wr} FB)  \mathbin{\otimes^\wr} FC \ar[rr]^{a^\wr_{FA,FB,FC}}
	\ar[d]_{J_{A,B} \mathbin{\otimes^\wr} \id_{FC}} & &
	FA \mathbin{\otimes^\wr} (FB  \mathbin{\otimes^\wr} FC) 
	\ar[d]^{  \id_{FA} \mathbin{\otimes^\wr} J_{B,C}}
	\\
	F(A \otimes B)  \mathbin{\otimes^\wr} FC
	\ar[d]_{J_{A\otimes B,C}}
	& &
	FA \mathbin{\otimes^\wr} F(B  \otimes C) 
	\ar[d]^{J_{A,B\otimes C}}
	\\
	F((A \otimes B)  \otimes C) \ar[rr]^{Fa_{A,B,C}} & &
	F(A \otimes (B  \otimes C))  \\
}
$$
$$
\xymatrix{ 
	\mathbbm{1}^\wr \mathbin{\otimes^\wr} FA \ar[d]_{\varphi \mathbin{\otimes^\wr} \id_{FA}}
	\ar[r]^{l_{FA}^\wr} & FA \\
	F\mathbbm{1} \mathbin{\otimes^\wr} FA \ar[r]^{J_{\mathbbm{1},A}} & F(\mathbbm{1} \otimes A) \ar[u]_{Fl_A}}
\qquad
\xymatrix{ FA \mathbin{\otimes^\wr} \mathbbm{1}^\wr \ar[d]_{ \id_{FA}  \mathbin{\otimes^\wr} \varphi}
	\ar[r]^{r_{FA}^\wr} & FA \\
	FA \mathbin{\otimes^\wr} 	F\mathbbm{1}  \ar[r]^{J_{A,\mathbbm{1}}} & F( A \otimes \mathbbm{1}) \ar[u]_{Fr_A}}
$$

If both $\mathcal C$ and $\mathcal C^\wr$ are braided with braidings $c$ and $c^\wr$, respectively, then
$F$ is called \textit{braided} if for every objects $A$ and $B$ in $\mathcal C$ the diagram below is commutative:
$$\xymatrix{
	FA \mathbin{\otimes^\wr} FB \ar[rr]^{c^\wr_{FA,FB}} \ar[d]_{J_{A,B}} & & FB \mathbin{\otimes^\wr} FA \ar[d]^{J_{B,A}} \\
	F(A\otimes B) \ar[rr]^{Fc_{A,B}} & & F(B\otimes A)
}$$

A functor $F$ is called \textit{op-monoidal} if $F^\mathrm{op}$ is monoidal. 

If for a monoidal functor $F$ all $J_{A,B}$ and $\varphi$ are isomorphisms, then $F$ is called \textit{strong}
and if all $J_{A,B}$ and $\varphi$ are identity morphisms, then $F$ is called \textit{strict}.

\section{Duality}\label{SectionDuality}

\subsection{Pre-rigid categories}

A monoidal category $(\mathcal C, \otimes, \mathbbm{1}, a, l, r)$ is called \textit{pre-rigid}~\cite{ArdGoyMen2022, GoyVer}
if for every object $A$ in $\mathcal C$ there exists a fixed object $A^*$
and a morphism $\mathrm{ev}_A \colon A^* \otimes A \to \mathbbm{1}$
such that for every object $B$ the map
\begin{equation}\label{EqPrerigidBijection}
\mathcal C(B, A^*) \to \mathcal C(B \otimes A, \mathbbm{1}),\qquad
f \mapsto \mathrm{ev}_A(f\otimes \id_A),
\end{equation}
is a bijection. Note that the correspondence~\eqref{EqPrerigidBijection} is automatically natural in $B$.

\begin{remark}
	Given an object $A$, the object $A^*$ is unique up to an isomorphism compatible with $\mathrm{ev}_A$ since $\mathrm{ev}_A$
	is the terminal object in the comma category $\bigl((-) \otimes A \downarrow \mathbbm{1}\bigr)$.	
\end{remark}

For a given morphism $f \colon A \to B$ in $\mathcal C$ define $f^*$ to be the morphism
$B^* \to A^*$ corresponding to $\mathrm{ev}_B(\id_{B^*}\otimes f)$ under the bijection~\eqref{EqPrerigidBijection}.
Then $(-)^*$ becomes a contravariant functor such that the bijection~\eqref{EqPrerigidBijection} is natural in $A$ too
and $\mathrm{ev}_A$ is dinatural in $A$.

Now we are going to define a monoidal structure on $(-)^*$. 

Let $\mathcal C$ be a pre-rigid braided monoidal category with a braiding $c_{A,B}\colon A\otimes B \to B\otimes A$. For given objects $A$ and $B$ in $\mathcal C$ denote by $\theta_{A,B} \colon A^* \otimes B^* \to (A\otimes B)^*$ the morphism corresponding under the bijection~\eqref{EqPrerigidBijection} to the composition $$\xymatrix{A^* \otimes B^* \otimes A\otimes B \ar[rr]^{{\id_A} \otimes {c^{-1}_{A,B^*}} \otimes {\id_B}}
& \qquad  & A^* \otimes A\otimes B^* \otimes B \ar[rr]^(0.6){{\mathrm{ev}_A} \otimes {\mathrm{ev}_B}} & &
\mathbbm{1} \otimes \mathbbm{1} \ar[r]^{\sim}
& \mathbbm{1}
}$$

By $\iota \colon \mathbbm 1 \to \mathbbm 1^*$ denote the morphism corresponding under the bijection~\eqref{EqPrerigidBijection} to the identification $\mathbbm{1} \otimes \mathbbm{1} \mathrel{\widetilde{\to}} \mathbbm{1}$.

As it was noticed in~\cite[Section 4.3]{ArdGoyMen2022}, the following theorem holds:
\begin{theorem}\label{TheoremStarBraidedLaxMonoidal} $\theta_{A,B}$ is a natural transformation and
	$(-)^* \colon \mathcal C \to \mathcal C^{\mathrm{op}}$ together with $\theta$ and $\iota$ is a braided op-monoidal functor
	where $\mathcal C^{\mathrm{op}}$ is the braided monoidal category opposite to $\mathcal C$ as an ordinary category with the same monoidal product as in $\mathcal C$, the associativity constraint $a'_{A,B,C}:= a^{-1}_{A,B,C}$ and the braiding $c'_{A,B}:= c_{B,A}$.
\end{theorem}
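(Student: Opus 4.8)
The plan is to derive everything from the single principle that, by the bijection \eqref{EqPrerigidBijection}, two parallel morphisms $u,v\colon X\to D^{*}$ coincide as soon as $\mathrm{ev}_{D}(u\otimes\id_{D})=\mathrm{ev}_{D}(v\otimes\id_{D})$. Each assertion to be proved --- naturality of $\theta$, the op-monoidal associativity diagram and the two unit triangles, and the braiding square --- is an equality of parallel morphisms whose codomain is a dual object; postcomposing with the appropriate $\mathrm{ev}_{(-)}\bigl((-)\otimes\id\bigr)$ turns it into an equality of two morphisms with codomain $\mathbbm{1}$. Unwinding the definitions of $\theta_{A,B}$ (as the morphism whose pairing with $A\otimes B$ is the displayed composite built from $\mathrm{ev}_{A}$, $\mathrm{ev}_{B}$ and $c^{-1}_{A,B^{*}}$), of $f^{*}$ (characterised by $\mathrm{ev}_{A}(f^{*}\otimes\id_{A})=\mathrm{ev}_{B}(\id_{B^{*}}\otimes f)$), and of $\iota$, both sides of every such reduced equality become a composite of the shape $(\mathrm{ev}_{A}\otimes\mathrm{ev}_{B}\otimes\cdots)\circ s$, where $s$ is assembled purely from the associativity and unit constraints of $\mathcal C$ and from $c$ and $c^{-1}$; matching the two sides is then an instance of coherence in $\mathcal C$.

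Concretely I would proceed in three steps. First, for naturality of $\theta$ with respect to $f\colon A\to A'$ and $g\colon B\to B'$, I would pair both composites $A'^{*}\otimes B'^{*}\to(A\otimes B)^{*}$ with $A\otimes B$: using the defining pairing of $\theta$ and the definition of $(-)^{*}$ on morphisms, the right-hand route rewrites as the pairing of $\theta_{A',B'}$ precomposed with $\id\otimes f\otimes g$, while the left-hand one rewrites as the pairing of $\theta_{A,B}$ precomposed with $f^{*}\otimes g^{*}\otimes\id$; one application of the naturality of the braiding together with the interchange law brings both to $(\mathrm{ev}_{A'}\otimes\mathrm{ev}_{B'})\circ(\id\otimes f\otimes\id\otimes g)\circ(\id\otimes c^{-1}_{A,B'^{*}}\otimes\id)$, and injectivity of \eqref{EqPrerigidBijection} gives the claim. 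Second, for the op-monoidal associativity diagram I would pair the two routes $A^{*}\otimes B^{*}\otimes C^{*}\to(A\otimes B\otimes C)^{*}$ with $A\otimes B\otimes C$ (suitably parenthesised); each unwinds to $\mathrm{ev}_{A}\otimes\mathrm{ev}_{B}\otimes\mathrm{ev}_{C}$ precomposed with the canonical shuffle interleaving the three starred factors with $A,B,C$, and since the two shuffles have the same underlying braid (on both sides the crossing of $C^{*}$ with $A$ precedes that of $B^{*}$ with $A$, and all remaining crossings involve disjoint pairs of strands), the coherence theorem for braided monoidal categories \cite{JoyStr} equates them; the two unit triangles reduce the same way, now also invoking the definition of $\iota$ and Mac Lane's coherence theorem \cite{MacLaneCatWork}. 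Third, the braided op-monoidal square relating $\theta$, the braiding $c$ of $\mathcal C$ and the braiding $c'_{A,B}=c_{B,A}$ of $\mathcal C^{\mathrm{op}}$ reduces, after pairing with $B\otimes A$, to the equality of two morphisms $B^{*}\otimes A^{*}\otimes A\otimes B\to\mathbbm{1}$ built from $\mathrm{ev}_{A}$, $\mathrm{ev}_{B}$ and braidings that carry the same braid, again closed by coherence.

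The step requiring genuine care --- and the only place where it matters that $\mathcal C$ is merely braided rather than symmetric --- is the braid bookkeeping in the last two steps. Because $(-)^{*}$ is contravariant and $\theta$ is defined through $c^{-1}_{A,B^{*}}$ rather than $c_{A,B^{*}}$, the strands carried by the starred factors run backwards, so when two composites of $\theta$'s and associators are compared it is not enough that they induce the same permutation of the tensor factors: one must check that the two lifts of that permutation to the braid group agree, including the order in which crossings sharing a strand occur and their handedness. In the symmetric case this is vacuous; in the genuinely braided case it is the heart of the argument, and it does go through precisely because each occurrence of $\theta$ contributes crossings of the same sign in a mutually compatible order. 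Once the braids are matched, monoidal and braided coherence in $\mathcal C$ close every diagram; this is the verification announced in \cite[Section~4.3]{ArdGoyMen2022}.
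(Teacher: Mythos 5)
The paper does not actually prove this theorem: it records it as ``noticed in [ArdGoyMen2022, Section~4.3]'' and moves on, so there is no internal argument to compare yours against. Your proposal supplies the standard direct verification, and it is correct. The reduction of every axiom to an equality of morphisms into $\mathbbm{1}$ via the injectivity of \eqref{EqPrerigidBijection}, followed by pushing the $\mathrm{ev}$'s to the outside by (di)naturality and comparing the residual structural composites, is exactly the right mechanism, and your identification of the braid bookkeeping as the only delicate point is accurate. For the associativity square your parenthetical claim checks out: after using the hexagon axioms to decompose $c^{-1}_{A\otimes B,\,C^{*}}$ and $c^{-1}_{A,\,B^{*}\otimes C^{*}}$, both routes consist of the three crossings $c^{-1}_{A,C^{*}}$, $c^{-1}_{B,C^{*}}$, $c^{-1}_{A,B^{*}}$ (same handedness on both sides), with $c^{-1}_{A,C^{*}}$ occurring first in each, and the remaining two crossings acting on disjoint pairs of strands, so the two orders differ by an interchange and the braids coincide; Joyal--Street coherence (applicable here since the six tensor factors are formally distinct letters) then closes the diagram, though at that point one could equally finish by hand with the hexagons and the interchange law. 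The only respect in which the proposal falls short of a complete proof is that these hexagon decompositions and the final braid comparison for the unit and braiding squares are asserted rather than displayed; that is a matter of writing it out, not a gap in the argument.
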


Using the braiding, we get a self-adjunction of $(-)^*$:
\begin{equation}\label{EqDualityAdjunctionStarObjects}\xymatrix{
	\mathcal C(A, B^*) \ar[r]^\sim & \mathcal C(A \otimes B, \mathbbm{1}) \ar[rr]^{\mathcal C(c_{B,A}, \id_\mathbbm{1})} & \quad &
	C(B \otimes A, \mathbbm{1})  \ar[r]^\sim & C(B, A^*)}\end{equation}

For a given morphism $f \colon A \to B^*$ define $f^\sharp \colon B \to A^*$ as the image
of $f$ under the composition of bijective natural transformations~\eqref{EqDualityAdjunctionStarObjects}.

Analogously, for a morphism
$g \colon B \to A^*$ define $g^\flat \colon A \to B^*$ as the preimage of $g$ under the composition~\eqref{EqDualityAdjunctionStarObjects} above. Obviously, $f=(f^{\sharp})^\flat$ for every $f \colon A \to B^*$
and $g=(g^{\flat})^\sharp$ for every $g \colon B \to A^*$.

Denote by $\alpha_A \colon A\to A^{**}$ the counit of the adjunction~\eqref{EqDualityAdjunctionStarObjects}, i.e. $\alpha_A := \id_{A^*}^\flat$ for every object $A$ in $\mathcal C$.

The functor  $(-)^*$ induces a contravariant functor $$\mathsf{Comon}(\mathcal C) \to \mathsf{Mon}(\mathcal C),\qquad
(C,\Delta,\varepsilon) \mapsto (C^*, \mu, u)$$ where $\mu = \Delta^*\theta_{C,C}$, $u=\varepsilon^*\iota$.
We denote this functor again by $(-)^*$. 

\begin{example}
One of the important examples of pre-rigid categories are closed monoidal categories (we refer to Section~\ref{SectionApplications} for concrete examples).
A monoidal category $\mathcal C$ is \textit{closed} if
for every object $A$ in $\mathcal C$ the functor $(-) \otimes A$ has a left adjoint $[A,-]$, which is called the \textit{internal hom}. Note that the functor $ (-) \otimes \mathbbm{1}$
is isomorphic to the identical functor $\id_{\mathcal C}$. Hence we may assume that $[\mathbbm{1},-] = \id_{\mathcal C}$ too.
Since $\otimes$ is a bifunctor  $\mathcal C \times \mathcal C \to \mathcal C$,  the adjunction
$$\mathcal C (A \otimes B, C) \cong \mathcal C (A, [B, C])$$
 defines a bifunctor $[-,-] \colon \mathcal C^\mathrm{op} \times \mathcal C \to \mathcal C$ in a unique way.
  Let $\mathrm{ev}_{B,A} \colon [B,A]\otimes B \to A$ be the counit of this adjunction.
Then $\mathcal C$ is pre-rigid with respect to $(-)^*:= [-,\mathbbm{1}]$ and $\mathrm{ev}_A := \mathrm{ev}_{A,\mathbbm{1}}$.
\end{example}

\begin{lemma}\label{LemmaFFlatEvBraiding} For every morphism $f \colon P \to U^*$ the diagram below is commutative:
	$$ 
	\xymatrix{ P\otimes U \ar[d]_(0.4){\id_P \otimes f^\flat} \ar[rr]^{f\otimes \id_U} & & U^*\otimes U \ar[d]^{\mathrm{ev}_U} \\
		P\otimes P^* \ar[r]^{c_{P,P^*}} & P^* \otimes P \ar[r]^{\mathrm{ev}_P} & \mathbbm{1}\\
	}
	$$
\end{lemma}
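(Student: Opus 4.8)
The plan is to reduce the square to the defining relations of the operations $(-)^\sharp$ and $(-)^\flat$ from~\eqref{EqDualityAdjunctionStarObjects}, together with the naturality of the braiding. Writing the lemma in the notation of Section~\ref{SectionDuality}, one views $f\colon P\to U^*$ as an element of $\mathcal C(P,U^*)$ (so, in the template for $(-)^\flat$, with ``$B$''$=P$ and ``$A$''$=U$), whence $f^\flat\colon U\to P^*$ is the preimage of $f$ under~\eqref{EqDualityAdjunctionStarObjects} and satisfies $(f^\flat)^\sharp=f$.

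First I would unwind $(f^\flat)^\sharp$ through the chain of bijections~\eqref{EqDualityAdjunctionStarObjects}, now reading it with ``$A$''$=U$ and ``$B$''$=P$. Applying~\eqref{EqPrerigidBijection} to $f^\flat\colon U\to P^*$ gives $\mathrm{ev}_P(f^\flat\otimes\id_P)\colon U\otimes P\to\mathbbm1$; precomposition with $c_{P,U}$ gives $\mathrm{ev}_P(f^\flat\otimes\id_P)\,c_{P,U}\colon P\otimes U\to\mathbbm1$; and the last (inverse) bijection of~\eqref{EqDualityAdjunctionStarObjects} characterizes $(f^\flat)^\sharp\colon P\to U^*$ as the unique morphism with $\mathrm{ev}_U\bigl((f^\flat)^\sharp\otimes\id_U\bigr)=\mathrm{ev}_P(f^\flat\otimes\id_P)\,c_{P,U}$. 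Since $(f^\flat)^\sharp=f$, this produces the identity
$$\mathrm{ev}_U(f\otimes\id_U)=\mathrm{ev}_P\,(f^\flat\otimes\id_P)\,c_{P,U},$$
i.e. the top-and-right edge of the square equals $\mathrm{ev}_P\circ(f^\flat\otimes\id_P)\circ c_{P,U}$.

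It then remains to rewrite $(f^\flat\otimes\id_P)\,c_{P,U}$ as $c_{P,P^*}\,(\id_P\otimes f^\flat)$, which is exactly the naturality square of the braiding $c$ for the morphism $f^\flat\colon U\to P^*$ placed in the second tensor factor (with $P$ fixed in the first): $c_{P,P^*}\circ(\id_P\otimes f^\flat)=(f^\flat\otimes\id_P)\circ c_{P,U}$. Substituting this into the displayed identity yields $\mathrm{ev}_U(f\otimes\id_U)=\mathrm{ev}_P\,c_{P,P^*}\,(\id_P\otimes f^\flat)$, which is precisely the commutativity asserted by the diagram.

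I do not expect a real obstacle here: the content is entirely a transcription of the definitions of $(-)^\sharp$, $(-)^\flat$ and of one instance of braiding naturality. The only point requiring care is bookkeeping — keeping straight which of the two objects plays the role of ``$A$'' and which of ``$B$'' when the same morphism $f$ is fed through $(-)^\flat$ and then through $(-)^\sharp$, and checking that the naturality square invoked is the one with $f^\flat$ in the correct tensor slot so that the braidings $c_{P,U}$ and $c_{P,P^*}$ appear with the orientations shown in the statement.
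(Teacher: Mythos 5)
Your proposal is correct and follows essentially the same route as the paper: the paper also splits the square along the diagonal $c_{P,U}\colon P\otimes U\to U\otimes P$, observing that the resulting right-hand polygon commutes by the definition of $(-)^\flat$ (which you spell out by unwinding the bijections~\eqref{EqDualityAdjunctionStarObjects} to get $\mathrm{ev}_U(f\otimes\id_U)=\mathrm{ev}_P(f^\flat\otimes\id_P)c_{P,U}$) and that the lower-left quadrilateral commutes by naturality of the braiding. Your bookkeeping of which object plays the role of ``$A$'' versus ``$B$'' is consistent with the paper's conventions, so there is no gap.
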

\begin{proof}
	Consider the following diagram:
	$$ 
\xymatrix{ P\otimes U \ar[dd]^(0.4){\id_P \otimes f^\flat} \ar[rr]^{f\otimes \id_U} \ar[rd]^{c_{P,U}} & & U^*\otimes U \ar[dd]^{\mathrm{ev}_U} \\
	& U\otimes P \ar[d]^{ f^\flat  \otimes \id_P} & \\
	P\otimes P^* \ar[r]^{c_{P,P^*}} & P^* \otimes P \ar[r]^{\mathrm{ev}_P} & \mathbbm{1}\\
}
$$
The polygon on the right is commutative by the definition of $(-)^\flat$.
The lower left quadrilateral is commutative by the naturality of the braiding.

Therefore the outer rectangle is commutative too and the lemma is proven.	
\end{proof}	

\subsection{Correspondence between morphisms}

Now we introduce several maps defined on morphisms in a pre-rigid braided monoidal category $\mathcal C$ and prove some of their properties.

For a given morphism $\rho \colon A \to B \otimes Q$, where $A,B,Q$ are some objects, define the morphism $\rho^{\vee} \colon Q^* \otimes A \to B$ by the commutative diagram
$$\xymatrix{   Q^* \otimes B \otimes Q \ar[rr]^{{c_{Q^*,B}} \otimes {\id_{Q}}} & &  B \otimes Q^* \otimes Q \ar[rr]^{ \id_B \otimes \mathrm{ev}_{Q}} & &
	B \otimes \mathbbm{1}  \ar[d]^{\sim}  \\
	Q^* \otimes A \ar[u]^{\id_{Q^*}\otimes \rho} \ar@{-->}[rrrr]^{\rho^\vee}& & & & B
}$$

For a given morphism $\rho \colon A \to B \otimes P^*$, where $A,B,P$ are some objects,
define the morphism $\rho^\nabla \colon P \otimes A \to B$
 by the commutative diagram
$$\xymatrix{  P \otimes B \otimes P^* \ar[rr]^{c_{P,B \otimes  P^*}} & & B  \otimes P^* \otimes  P \ar[rr]^{ \id_B \otimes \mathrm{ev}_{P}} & & B  \otimes \mathbbm{1} \ar[d]^\sim \\
	P \otimes A \ar@{-->}[rrrr]^{\rho^\nabla} \ar[u]^{\id_{P}\otimes \rho} & & & & B}$$

\begin{proposition}\label{PropositionDualityNablaVeeSharp} Let $\rho \colon A \to B \otimes U$ and $f \colon P \to U^*$ be some morphisms in $\mathcal C$.
	Then 
	$$\bigl((\id_B \otimes f^\flat)\rho\bigr)^\nabla = \rho^\vee (f \otimes \id_A).$$
\end{proposition}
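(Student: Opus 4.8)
The plan is to unfold both sides according to the defining commutative diagrams of $(-)^\vee$ and $(-)^\nabla$, and then to compare the two resulting composites $P\otimes A\to B$ by a short diagram chase whose cells are instances of functoriality of $\otimes$, naturality of the braiding, one hexagon identity, and Lemma~\ref{LemmaFFlatEvBraiding}. Throughout I would suppress the associativity and unit coherence isomorphisms (justified by Mac Lane's coherence theorem), reinstating them only if needed; the identification $B\otimes\mathbbm 1\cong B$ occurs at the end of both $\rho^\vee$ and $\rho^\nabla$ and so cancels out of the comparison. The whole conceptual content sits in Lemma~\ref{LemmaFFlatEvBraiding}; I expect the only genuine friction to be the routine bookkeeping needed to arrange the cells into a single commutative diagram.

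First I would rewrite the right-hand side. By definition $\rho^\vee=(\id_B\otimes\mathrm{ev}_U)(c_{U^*,B}\otimes\id_U)(\id_{U^*}\otimes\rho)$ up to $B\otimes\mathbbm 1\cong B$, so $\rho^\vee(f\otimes\id_A)=(\id_B\otimes\mathrm{ev}_U)(c_{U^*,B}\otimes\id_U)(\id_{U^*}\otimes\rho)(f\otimes\id_A)$. Since $(\id_{U^*}\otimes\rho)(f\otimes\id_A)=(f\otimes\id_{B\otimes U})(\id_P\otimes\rho)$ by functoriality of $\otimes$, and $(c_{U^*,B}\otimes\id_U)(f\otimes\id_{B\otimes U})=(\id_B\otimes f\otimes\id_U)(c_{P,B}\otimes\id_U)$ by naturality of the braiding, one gets
$$\rho^\vee(f\otimes\id_A)=\bigl(\id_B\otimes\bigl(\mathrm{ev}_U(f\otimes\id_U)\bigr)\bigr)\,(c_{P,B}\otimes\id_U)\,(\id_P\otimes\rho).$$

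Next I would unfold the left-hand side. Writing $\sigma:=(\id_B\otimes f^\flat)\rho\colon A\to B\otimes P^*$, the definition of $(-)^\nabla$ gives $\sigma^\nabla=(\id_B\otimes\mathrm{ev}_P)\,c_{P,B\otimes P^*}\,(\id_P\otimes\sigma)$, while $\id_P\otimes\sigma=(\id_{P\otimes B}\otimes f^\flat)(\id_P\otimes\rho)$. Pushing $\id_{P\otimes B}\otimes f^\flat$ past the braiding by naturality turns $c_{P,B\otimes P^*}(\id_{P\otimes B}\otimes f^\flat)$ into $\bigl((\id_B\otimes f^\flat)\otimes\id_P\bigr)c_{P,B\otimes U}$, and the hexagon identity rewrites $c_{P,B\otimes U}=(\id_B\otimes c_{P,U})(c_{P,B}\otimes\id_U)$. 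Collecting the factors yields
$$\sigma^\nabla=\bigl(\id_B\otimes\bigl(\mathrm{ev}_P(f^\flat\otimes\id_P)\,c_{P,U}\bigr)\bigr)\,(c_{P,B}\otimes\id_U)\,(\id_P\otimes\rho).$$
Comparing the two displays, the proposition reduces to the identity $\mathrm{ev}_P(f^\flat\otimes\id_P)\,c_{P,U}=\mathrm{ev}_U(f\otimes\id_U)$ of morphisms $P\otimes U\to\mathbbm 1$. This is exactly Lemma~\ref{LemmaFFlatEvBraiding}: its conclusion $\mathrm{ev}_P\,c_{P,P^*}(\id_P\otimes f^\flat)=\mathrm{ev}_U(f\otimes\id_U)$ becomes the required equation once $c_{P,P^*}(\id_P\otimes f^\flat)$ is replaced by $(f^\flat\otimes\id_P)c_{P,U}$, again by naturality of the braiding. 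Assembling the cells into one commutative diagram then completes the proof.
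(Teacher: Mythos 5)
Your proof is correct and follows essentially the same route as the paper: the paper's argument is a single diagram whose cells are exactly your rewriting steps (functoriality of $\otimes$, naturality of the braiding, the hexagon decomposition of $c_{P,B\otimes P^*}$, and Lemma~\ref{LemmaFFlatEvBraiding} tensored with $\id_B$). The only difference is presentational — equational manipulation versus one assembled diagram — so nothing further is needed.
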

\begin{proof} Consider the diagram below:
	$$\xymatrix{ P \otimes A \ar[rr]^{f\otimes \id_A} \ar[d]_{\id_P \otimes \rho} & & U^* \otimes A \ar[rr]^(0.4){\id_{U^*}\otimes \rho} & & U^* \otimes B \otimes U \ar[d]^{c_{U^*,B}\otimes \id_U} \\
		P \otimes B \otimes U \ar[urrrr]^(0.35){f\otimes \id_{B\otimes U}}
		\ar[rr]_{c_{P,B}\otimes \id_U}
			\ar[d]_{\id_{P\otimes B}\otimes f^\flat}
		 & & B \otimes P \otimes U \ar[rr]^{\id_B \otimes f \otimes \id_U}
		 \ar[d]^{\id_{B\otimes P}\otimes f^\flat}
		  & & B \otimes U^* \otimes U \ar[dd]^{\id_B \otimes
		 \mathrm{ev}_U} \\
		P \otimes B \otimes P^* \ar[rr]^{c_{P,B}\otimes \id_{P^*}} \ar[d]_{c_{P, B\otimes P^*}} & & B \otimes P \otimes P^*
		\ar[lld]^(0.3){\quad \id_B\otimes c_{P,P^*}} 
		 \\
		B \otimes P^* \otimes P \ar[rrrr]^(0.7){\id_B \otimes \mathrm{ev}_P} & & & & B \otimes \mathbbm{1} \ar[d]^\sim \\
	                          	& &                      &  & B
	}$$

The upper left triangle and the left square in the middle are commutative by the functoriality of $\otimes$,
the upper right triangle is commutative by the naturality of the braiding. The lower left triangle is commutative by the corresponding property of the braiding and the lower right polygon is commutative by Lemma~\ref{LemmaFFlatEvBraiding}.
	Therefore the outer square is commutative too, which implies the proposition since the compositions on the left and on the right equal
	$\bigl((\id_B \otimes f^\flat)\rho\bigr)^\nabla$ and $\rho^\vee (f \otimes \id_A)$, respectively.
\end{proof}	
The proposition below can be proved directly by a similar diagram chase, however now it is easier to deduce it from the proposition above:
\begin{proposition}\label{PropositionVeeUnderComposition}
	Let $\rho \colon A \to B \otimes P$ and $f \colon P \to U$ be some morphisms in $\mathcal C$.
	Then $$\bigl((\id_B \otimes f)\rho\bigr)^\vee = \rho^\vee(f^* \otimes \id_A).$$
\end{proposition}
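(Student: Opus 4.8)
The plan is to deduce this identity from Proposition~\ref{PropositionDualityNablaVeeSharp}, which I shall apply twice, after recording one naturality property of the pre-rigid structure. The point is that the right-hand side $\rho^\vee(f^*\otimes\id_A)$ already has the shape of the right-hand side in Proposition~\ref{PropositionDualityNablaVeeSharp}, whereas the left-hand side $\bigl((\id_B\otimes f)\rho\bigr)^\vee$ can be rewritten as a ``$\nabla$'' by the same proposition applied to an identity morphism.

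First I would apply Proposition~\ref{PropositionDualityNablaVeeSharp} to the given $\rho\colon A\to B\otimes P$ and to the morphism $f^*\colon U^*\to P^*$ in the role of its ``$f$'' (so that ``$U$'' is replaced by $P$ and ``$P$'' by $U^*$); this gives $\bigl((\id_B\otimes(f^*)^\flat)\rho\bigr)^\nabla=\rho^\vee(f^*\otimes\id_A)$. It then remains to identify $\bigl((\id_B\otimes(f^*)^\flat)\rho\bigr)^\nabla$ with $\bigl((\id_B\otimes f)\rho\bigr)^\vee$. For this I would prove the auxiliary identity $(f^*)^\flat=\alpha_U\circ f$ for every morphism $f\colon P\to U$ --- this is simply the naturality of $\alpha$, verified by a short comparison of the defining bijections~\eqref{EqPrerigidBijection} and~\eqref{EqDualityAdjunctionStarObjects} together with the definition $\alpha_U=\id_{U^*}^\flat$ --- so that $(\id_B\otimes(f^*)^\flat)\rho=(\id_B\otimes\alpha_U)\bigl((\id_B\otimes f)\rho\bigr)$. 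Applying Proposition~\ref{PropositionDualityNablaVeeSharp} a second time, now to $\sigma:=(\id_B\otimes f)\rho\colon A\to B\otimes U$ and to $\id_{U^*}\colon U^*\to U^*$ (whose $\flat$ is $\alpha_U$ by definition), yields $\bigl((\id_B\otimes\alpha_U)\sigma\bigr)^\nabla=\sigma^\vee(\id_{U^*}\otimes\id_A)=\bigl((\id_B\otimes f)\rho\bigr)^\vee$. Putting the pieces together,
$$\bigl((\id_B\otimes f)\rho\bigr)^\vee=\bigl((\id_B\otimes\alpha_U)(\id_B\otimes f)\rho\bigr)^\nabla=\bigl((\id_B\otimes(f^*)^\flat)\rho\bigr)^\nabla=\rho^\vee(f^*\otimes\id_A),$$
where the two outer equalities are the two applications of Proposition~\ref{PropositionDualityNablaVeeSharp} and the middle one is the auxiliary identity.

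The only non-formal ingredient is the identity $(f^*)^\flat=\alpha_U\circ f$, and that is where I expect the bookkeeping to be most delicate: since $\mathcal C$ is merely braided, one must check that each use of naturality of the braiding and of the hexagon axiom is carried out with the braiding in the correct orientation, so that no inverse braiding is introduced. Everything else is a formal manipulation of Proposition~\ref{PropositionDualityNablaVeeSharp} and of the notation already in place, with no additional computation.
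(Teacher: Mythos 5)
Your proof is correct and is essentially identical to the paper's: the paper also derives $f^{*\flat}=(\id_{U^*})^\flat f=\alpha_U f$ from the naturality of $(-)^\flat$ and then applies Proposition~\ref{PropositionDualityNablaVeeSharp} twice, once to $\rho$ with $f^*$ and once to $(\id_B\otimes f)\rho$ with $\id_{U^*}$. Your only caveat — that the auxiliary identity might require delicate braiding bookkeeping — is unfounded, since it follows formally from the already-established naturality of the bijection~\eqref{EqDualityAdjunctionStarObjects}.
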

\begin{proof} By the naturality of $(-)^\flat$ we have 
	$$f^{*\flat} = (f^{*}\id_{U^*})^\flat = (\id_{U^*})^\flat f.$$
	Applying Proposition~\ref{PropositionDualityNablaVeeSharp} two times, we get
	\begin{equation*}\begin{split}
	\rho^\vee(f^* \otimes \id_A) =\bigl((\id_B \otimes f^{*\flat})\rho\bigr)^\nabla
	= \bigl((\id_B \otimes  (\id_{U^*})^\flat f)\rho\bigr)^\nabla
	\\= \bigl((\id_B \otimes (\id_{U^*})^\flat)(\id_B \otimes f)\rho\bigr)^\nabla
	= ((\id_B \otimes f)\rho\bigr)^\vee ( \id_{U^*} \otimes \id_A) = ((\id_B \otimes f)\rho\bigr)^\vee.
	\end{split}
	\end{equation*}
\end{proof}		

Below we deduce from the universal property of $\mathrm{ev}$ some cancellation property for morphisms. Recall that $\alpha_B \colon B \to B^{**}$ is a monomorphism for every object $B$ in $\mathcal C$ if and only if the functor $(-)^*$ is faithful.

\begin{lemma}\label{LemmaRhoABPStarPartialTensoring} Let $\rho_1, \rho_2 \colon A \to B \otimes P^*$ be morphisms in $\mathcal C$ for some objects $A,B,P$
	such that $\alpha_B \otimes \id_{P^*}$ and $\theta_{B^*,P}$ are monomorphisms
	and \begin{equation}\label{EqRhoABPStarPartialTensoring}
	(\rho_1 \otimes \id_{P})(\id_B \otimes \mathrm{ev}_{P}) = (\rho_2 \otimes \id_{P})(\id_B \otimes \mathrm{ev}_{P}).
	\end{equation}
	Then $\rho_1 = \rho_2$.
\end{lemma}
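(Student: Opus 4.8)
Note first that, by naturality of the braiding, $(\rho\otimes\id_P)(\id_B\otimes\mathrm{ev}_P)=\rho^\nabla\,c^{-1}_{P,A}$ for every $\rho\colon A\to B\otimes P^*$, so that hypothesis~\eqref{EqRhoABPStarPartialTensoring} is equivalent to $\rho_1^\nabla=\rho_2^\nabla$; thus the lemma asserts the injectivity of $\rho\mapsto\rho^\nabla$ under the stated hypotheses. The plan is to prove this by cancelling a monomorphism and then invoking injectivity of the defining bijection~\eqref{EqPrerigidBijection}. First I would set
\[
\mu := \theta_{B^*,P}\,(\alpha_B\otimes\id_{P^*})\colon B\otimes P^*\longrightarrow(B^*\otimes P)^*,
\]
which is a monomorphism, being a composite of the two monomorphisms assumed in the statement. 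Hence it suffices to prove $\mu\rho_1=\mu\rho_2$ in $\mathcal C\bigl(A,(B^*\otimes P)^*\bigr)$. Applying the bijection~\eqref{EqPrerigidBijection} for the object $B^*\otimes P$, which identifies $\mathcal C\bigl(A,(B^*\otimes P)^*\bigr)$ with $\mathcal C\bigl(A\otimes(B^*\otimes P),\mathbbm{1}\bigr)$, this in turn reduces to
\[
\mathrm{ev}_{B^*\otimes P}\bigl((\mu\rho_1)\otimes\id_{B^*\otimes P}\bigr)=\mathrm{ev}_{B^*\otimes P}\bigl((\mu\rho_2)\otimes\id_{B^*\otimes P}\bigr)\colon A\otimes B^*\otimes P\to\mathbbm{1}.
\]

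The key step is to verify, for an arbitrary $\rho\colon A\to B\otimes P^*$, the identity
\[
\mathrm{ev}_{B^*\otimes P}\bigl((\mu\rho)\otimes\id_{B^*\otimes P}\bigr)=\mathrm{ev}_B\,\bigl(\id_{B^*}\otimes\bigl[(\rho\otimes\id_P)(\id_B\otimes\mathrm{ev}_P)\bigr]\bigr)\,\bigl(c_{A,B^*}\otimes\id_P\bigr),
\]
where the bracketed arrow $A\otimes P\to B$ is exactly the one figuring in~\eqref{EqRhoABPStarPartialTensoring}. To establish this I would unfold the left-hand side using: the definition of $\theta_{B^*,P}$ as the morphism corresponding under~\eqref{EqPrerigidBijection} to $(\mathrm{ev}_{B^*}\otimes\mathrm{ev}_P)(\id_{B^{**}}\otimes c^{-1}_{B^*,P^*}\otimes\id_P)$, which gives $\mathrm{ev}_{B^*\otimes P}(\theta_{B^*,P}\otimes\id_{B^*\otimes P})=(\mathrm{ev}_{B^*}\otimes\mathrm{ev}_P)(\id_{B^{**}}\otimes c^{-1}_{B^*,P^*}\otimes\id_P)$; the identity $\mathrm{ev}_{B^*}\,(\alpha_B\otimes\id_{B^*})=\mathrm{ev}_B\,c^{-1}_{B^*,B}$, which follows from Lemma~\ref{LemmaFFlatEvBraiding} applied to $f=\id_{B^*}$ (whose $f^\flat$ is $\alpha_B$, by the very definition of $\alpha_B$) together with naturality of the braiding; and functoriality of $\otimes$ together with naturality of $c$ to pull $\alpha_B$ past the braidings and rearrange the tensor factors. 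Granting the displayed identity, the right-hand side depends on $\rho$ only through $(\rho\otimes\id_P)(\id_B\otimes\mathrm{ev}_P)$, and these morphisms coincide for $\rho_1$ and $\rho_2$ by~\eqref{EqRhoABPStarPartialTensoring}; hence $\mu\rho_1=\mu\rho_2$, and, $\mu$ being a monomorphism, $\rho_1=\rho_2$.

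I expect essentially all of the work to be in the verification of the displayed identity: it is a bookkeeping exercise in the tensor factors and in the several occurrences of $c$ and $c^{-1}$ needed to line up the definition of $\theta_{B^*,P}$ with $\alpha_B\otimes\id_{P^*}$, with $\rho$, and with $\mathrm{ev}_B$ and $\mathrm{ev}_P$. As in the proof of Proposition~\ref{PropositionDualityNablaVeeSharp}, I would organize it as a single large commutative diagram whose cells commute by functoriality of $\otimes$, naturality and the braiding axioms for $c$, the definition of $\theta_{B^*,P}$, and Lemma~\ref{LemmaFFlatEvBraiding}; no idea beyond careful diagram chasing should be required.
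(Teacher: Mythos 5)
Your proposal is correct and follows essentially the same route as the paper: the paper's proof is exactly one large commutative diagram establishing your key identity (with $\mathrm{ev}_{B^*\otimes P}\bigl((\mu\rho_i)\otimes\id_{B^*\otimes P}\bigr)$ along one outer path and a composite factoring through $(\rho_i\otimes\id_P)(\id_B\otimes\mathrm{ev}_P)$ along the other), after which the monomorphy of $\alpha_B\otimes\id_{P^*}$ and $\theta_{B^*,P}$ together with the universal property of $\mathrm{ev}$ give $\rho_1=\rho_2$. One small correction: unfolding the definitions of $\theta_{B^*,P}$ and $\alpha_B$ produces $c^{-1}_{B^*,A}\otimes\id_P$ rather than $c_{A,B^*}\otimes\id_P$ in your displayed identity (these agree only in the symmetric case), but this does not affect the argument, since the right-hand side still depends on $\rho$ only through $(\rho\otimes\id_P)(\id_B\otimes\mathrm{ev}_P)$.
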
	
\begin{proof}
	Consider the following diagram:
 $$\xymatrix{
	A \otimes B^* \otimes P 
	\ar[d]^{\id_{A}\otimes c_{B^*,P}} \ar@<2pt>[rrrr]^{\rho_1 \otimes \id_{B^*\otimes P}} \ar@<-2pt>[rrrr]_{\rho_2\otimes \id_{B^*\otimes P}}
	& & 
		& & B \otimes P^* \otimes B^* \otimes P \ar[ddd]^{\alpha_B \otimes \id_{P^*\otimes B^* \otimes P}} 
	\ar[lllldd]^{\qquad\id_{B\otimes P^*}\otimes c_{B^*,P}}
	\\
	A \otimes P \otimes B^* 
	\ar@<2pt>[d]^{\rho_2\otimes \id_{P\otimes B^*}}
	\ar@<-2pt>[d]_{\rho_1\otimes \id_{P\otimes B^*}}\\
	B \otimes P^* \otimes P \otimes B^* \ar[d]^{\id_B \otimes \mathrm{ev}_P \otimes \id_{B^*}}
	\ar[rrd]^{\quad\alpha_B \otimes \id_{P^* \otimes P \otimes B^*}}
	\\
	B  \otimes \mathbbm{1} \otimes B^* \ar[d]^{\alpha_B \otimes \id_{\mathbbm 1 \otimes B^*}} & & B^{**}  \otimes P^* \otimes P \otimes B^* \ar[lld]^(0.45){\qquad \id_{B^{**}}\otimes \mathrm{ev}_P \otimes \id_{B^*}}
	\ar[d]^(0.4){\id_{B^{**}}\otimes c^{-1}_{B^*, P^* \otimes P}}
	&\quad & B^{**}  \otimes P^* \otimes B^* \otimes P
	\ar[lld]^(0.4){\qquad \id_{B^{**}}\otimes c^{-1}_{B^*, P^*} \otimes \id_P}
	\ar[ll]_(0.55){\qquad\id_{B^{**}\otimes P^*}\otimes c_{B^*,P}}
	\ar[d]^{\theta_{B^*, P} \otimes \id_{B^* \otimes P}}
	\\
	B^{**}  \otimes \mathbbm{1} \otimes B^* \ar[d]^(0.4){\id_{B^{**}}\otimes c^{-1}_{B^*, \mathbbm{1}}} & & B^{**} \otimes B^*  \otimes P^* \otimes P \ar[lld]^(0.45){\qquad\id_{B^{**} \otimes B^*}\otimes \mathrm{ev}_P}
	\ar[d]^{\mathrm{ev}_{B^*} \otimes \mathrm{ev}_P}
	& & (B^{*}  \otimes P)^* \otimes B^* \otimes P \ar[d]^{\mathrm{ev}_{B^{*}  \otimes P}}  \\
	B^{**} \otimes B^*  \otimes \mathbbm{1} \ar[rr]_{\mathrm{ev}_{B^*}\otimes \id_{\mathbbm 1}} &  &
	\mathbbm{1}  \otimes \mathbbm{1} \ar[rr]^\sim & &
	\mathbbm{1} \\
}$$

By~\eqref{EqRhoABPStarPartialTensoring} the composition on the left and lower edges does not depend on whether we take $\rho_1$ or $\rho_2$.
The lower left quadrilateral is commutative by the naturality of the braiding.
The triangle in the lower right part of the diagram is commutative by the corresponding property of the braiding.
The lower right polygon is commutative by the definition of $\theta$.
The other inner polygons are commutative by the functoriality of $\otimes$.
Therefore, the composition on the right and upper edges does not depend on whether we take $\rho_1$ or $\rho_2$ either.
Now recall that both $\alpha_B\otimes \id_{P^*}$ and $\theta_{B^*,P}$ are monomorphisms. Hence the universal property of $\mathrm{ev}$
implies that $\rho_1=\rho_2$.
\end{proof}	
\begin{corollary}\label{CorollaryRhoABPStarEqualAfterNabla} Let $\rho_1, \rho_2 \colon A \to B \otimes P^*$ be morphisms in $\mathcal C$ for some objects $A,B,P$
	such that $\alpha_B \otimes \id_{P^*}$ and $\theta_{B^*,P}$ are monomorphisms
	and $\rho_1^\nabla = \rho_2^\nabla$.
	Then $\rho_1 = \rho_2$.
\end{corollary}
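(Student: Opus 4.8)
The plan is to reduce this immediately to Lemma~\ref{LemmaRhoABPStarPartialTensoring}. Indeed, the hypotheses of the corollary — that $\alpha_B \otimes \id_{P^*}$ and $\theta_{B^*,P}$ are monomorphisms — are exactly the standing assumptions of that lemma, so it suffices to check that the equation $\rho_1^\nabla = \rho_2^\nabla$ is equivalent to the identity~\eqref{EqRhoABPStarPartialTensoring}; then the lemma delivers $\rho_1 = \rho_2$ directly.

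To establish that equivalence, I would first read off from the defining diagram of $(-)^\nabla$ the formula
$$\rho^\nabla = (\id_B \otimes \mathrm{ev}_P)\, c_{P, B\otimes P^*}\, (\id_P \otimes \rho),$$
where the canonical isomorphism $B \otimes \mathbbm{1} \cong B$ has been suppressed. Applying naturality of the braiding in its second variable to the morphism $\rho \colon A \to B \otimes P^*$ gives $c_{P, B\otimes P^*}(\id_P \otimes \rho) = (\rho \otimes \id_P)\, c_{P,A}$, so that
$$\rho^\nabla = (\id_B \otimes \mathrm{ev}_P)\, (\rho \otimes \id_P)\, c_{P,A}.$$
Since $c_{P,A}$ is an isomorphism (as is the suppressed unit isomorphism $B \otimes \mathbbm{1} \cong B$), the equality $\rho_1^\nabla = \rho_2^\nabla$ is equivalent, after cancelling these isomorphisms, to $(\id_B \otimes \mathrm{ev}_P)(\rho_1 \otimes \id_P) = (\id_B \otimes \mathrm{ev}_P)(\rho_2 \otimes \id_P)$, which is precisely condition~\eqref{EqRhoABPStarPartialTensoring}. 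Lemma~\ref{LemmaRhoABPStarPartialTensoring} then yields $\rho_1 = \rho_2$.

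I do not anticipate any genuine obstacle: the whole argument is a one-line reduction once the braiding has been transported across $\rho$. The only steps requiring (routine) care are fixing the correct orientation of the naturality square for $c$ and confirming that the two morphisms being cancelled, namely the unit isomorphism $B \otimes \mathbbm{1} \cong B$ and the braiding $c_{P,A}$, are invertible — which they are — so that the cancellations are legitimate.
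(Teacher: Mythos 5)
Your proposal is correct and follows exactly the route the paper intends: the corollary is stated without proof as an immediate consequence of Lemma~\ref{LemmaRhoABPStarPartialTensoring}, and your verification --- transporting the braiding across $\rho$ by naturality to rewrite $\rho^\nabla$ as $(\id_B\otimes\mathrm{ev}_P)(\rho\otimes\id_P)c_{P,A}$ up to the unit isomorphism, then cancelling the invertible $c_{P,A}$ to recover condition~\eqref{EqRhoABPStarPartialTensoring} --- is precisely the routine step left to the reader. Note only that the paper writes~\eqref{EqRhoABPStarPartialTensoring} in diagrammatic (left-to-right) order, which you have correctly interpreted.
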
	
\begin{proposition}\label{PropositionDualToTensorEpiIsMono}
	Let $\rho \colon A \to B \otimes U$
	be a tensor epimorphism in~$\mathcal C$ such that $\alpha_B \otimes \id_{P^*}$ and $\theta_{B^*,P}$ are monomorphisms for all objects $P$. Then $\rho^\vee \colon U^* \otimes A \to B$ is a tensor monomorphism.
\end{proposition}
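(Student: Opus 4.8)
The plan is to unwind the definition of a tensor monomorphism and then reduce the required equality, via the correspondence results already established, to the hypothesis that $\rho$ is a tensor epimorphism. Recall that $\rho^\vee \colon U^* \otimes A \to B$ is a tensor monomorphism precisely when, for every object $R$ and every pair of morphisms $f,g \colon R \to U^*$ satisfying $\rho^\vee(f \otimes \id_A) = \rho^\vee(g \otimes \id_A)$, one has $f=g$. So I would fix such an $R$ and such $f,g$ and show $f=g$.

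The first step is to rewrite both sides using Proposition~\ref{PropositionDualityNablaVeeSharp} with $P=R$: this gives $\rho^\vee(f \otimes \id_A) = \bigl((\id_B \otimes f^\flat)\rho\bigr)^\nabla$ and likewise for $g$, where $f^\flat, g^\flat \colon U \to R^*$ and $(\id_B \otimes f^\flat)\rho, (\id_B \otimes g^\flat)\rho \colon A \to B \otimes R^*$. Hence the assumed equality becomes $\bigl((\id_B \otimes f^\flat)\rho\bigr)^\nabla = \bigl((\id_B \otimes g^\flat)\rho\bigr)^\nabla$. The second step is to cancel $(-)^\nabla$: this is exactly Corollary~\ref{CorollaryRhoABPStarEqualAfterNabla} applied with $P=R$, whose hypotheses — $\alpha_B \otimes \id_{R^*}$ and $\theta_{B^*,R}$ monomorphisms — hold by assumption, so we obtain $(\id_B \otimes f^\flat)\rho = (\id_B \otimes g^\flat)\rho$. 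Since $f^\flat$ and $g^\flat$ are morphisms $U \to R^*$ and $\rho \colon A \to B \otimes U$ is a tensor epimorphism, this forces $f^\flat = g^\flat$; finally, because $(-)^\flat$ and $(-)^\sharp$ are mutually inverse bijections coming from the adjunction~\eqref{EqDualityAdjunctionStarObjects}, applying $(-)^\sharp$ yields $f = (f^\flat)^\sharp = (g^\flat)^\sharp = g$, as desired.

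The only mildly delicate point is the bookkeeping of objects and the directions of the various dualities — making sure the substitution $P = R$ is the correct one in both Proposition~\ref{PropositionDualityNablaVeeSharp} and Corollary~\ref{CorollaryRhoABPStarEqualAfterNabla}, and that $f^\flat$ genuinely lands in $\mathcal C(U, R^*)$ so that the tensor-epimorphism property of $\rho$ applies verbatim. There is no real computational obstacle here, since all of the diagram chases have already been discharged in Proposition~\ref{PropositionDualityNablaVeeSharp}, Lemma~\ref{LemmaRhoABPStarPartialTensoring} and its corollary; the present statement is essentially a formal assembly of those ingredients.
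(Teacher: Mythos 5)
Your proof is correct and coincides with the paper's own argument: both reduce $\rho^\vee(f\otimes\id_A)=\rho^\vee(g\otimes\id_A)$ via Proposition~\ref{PropositionDualityNablaVeeSharp} to an equality of $(-)^\nabla$'s, cancel $(-)^\nabla$ by Corollary~\ref{CorollaryRhoABPStarEqualAfterNabla}, invoke the tensor-epimorphism property of $\rho$ to get $f^\flat=g^\flat$, and conclude $f=g$ from the bijectivity of $(-)^\flat$. No changes are needed.
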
	
\begin{proof} Let $f_1, f_2 \colon P \to U^*$ be two morphisms such that 
	$\rho^\vee(f_1 \otimes \id_A) = \rho^\vee(f_2 \otimes \id_A)$. We claim that $f_1 = f_2$.
	
By Proposition~\ref{PropositionDualityNablaVeeSharp}
we have
$$ \bigl((\id_B \otimes f_1^\flat)\rho\bigr)^\nabla = \rho^\vee(f_1 \otimes \id_A) = \rho^\vee(f_2 \otimes \id_A)
= \bigl((\id_B \otimes f_2^\flat)\rho\bigr)^\nabla.$$
Now Corollary~\ref{CorollaryRhoABPStarEqualAfterNabla} implies that $(\id_{B}\otimes f_1^\flat)\rho=(\id_{B}\otimes f_2^\flat)\rho$. Thus $f_1^\flat=f_2^\flat$ since $\rho$ is a tensor epimorphism. Therefore $f_1=f_2$ and $\rho^\vee$ is a tensor monomorphism.
\end{proof}	

Below we prove the commutativity of diagrams that relate operations in a comonoid $P$ and the monoid $P^*$
dual to $P$:

\begin{lemma}\label{LemmaPStarPStarPDifferentWays}
	Let $(P,\Delta,\varepsilon)$ be a comonoid in~$\mathcal C$
	and let $(P^*,\mu,u)$ be the dual monoid. Then the diagrams below are commutative:
	\begin{equation}\label{LemmaPStarPStarPDifferentWaysUnity}\xymatrix{
		\mathbbm{1}\otimes P \ar[r]^(0.6)\sim \ar[d]_{u \otimes \id_P} & P  \ar[d]^\varepsilon\\
		P^* \otimes P \ar[r]^(0.6){\mathrm{ev}_{P}} &  \mathbbm{1}\\
}\end{equation}
	\begin{equation}\label{LemmaPStarPStarPDifferentWaysMul}\xymatrix{P^* \otimes P^* \otimes P \ar[rr]^(0.45){\id_{P^*\otimes P^*} \otimes \Delta}
		\ar[dd]_{\mu \otimes \id_P}
		& & P^* \otimes P^* \otimes P \otimes P \ar[d]^{\theta_{P,P} \otimes \id_{P\otimes P}} \\
		& &  (P \otimes P)^* \otimes P \otimes P \ar[d]^{\mathrm{ev}_{P \otimes P}} \\
		P^* \otimes P \ar[rr]^{\mathrm{ev}_P } & & \mathbbm{1}
	}\end{equation}
\end{lemma}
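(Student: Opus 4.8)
The plan is to unfold the definitions of the monoid structure on $P^*$ provided by the functor $(-)^* \colon \mathsf{Comon}(\mathcal C) \to \mathsf{Mon}(\mathcal C)$, namely $\mu = \Delta^* \theta_{P,P}$ and $u = \varepsilon^* \iota$, and then to reduce each of the two claimed identities to purely formal manipulations: the defining property $\mathrm{ev}_A(f^* \otimes \id_A) = \mathrm{ev}_B(\id_{B^*} \otimes f)$ of $f^*$ for a morphism $f \colon A \to B$; the defining property of $\iota$, that $\mathrm{ev}_{\mathbbm 1}(\iota \otimes \id_{\mathbbm 1})$ is the canonical identification $\mathbbm 1 \otimes \mathbbm 1 \mathrel{\widetilde{\to}} \mathbbm 1$; the interchange law $(g_1 g_2) \otimes (h_1 h_2) = (g_1 \otimes h_1)(g_2 \otimes h_2)$ for $\otimes$; and naturality of the unitor $l$. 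No appeal to injectivity of the pre-rigid bijection~\eqref{EqPrerigidBijection} is needed: in each case one composite will simply be rewritten into the other.

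For~\eqref{LemmaPStarPStarPDifferentWaysUnity} I would start from $\mathrm{ev}_P(u \otimes \id_P) = \mathrm{ev}_P\bigl((\varepsilon^*\iota) \otimes \id_P\bigr) = \mathrm{ev}_P(\varepsilon^* \otimes \id_P)(\iota \otimes \id_P)$, replace $\mathrm{ev}_P(\varepsilon^* \otimes \id_P)$ by $\mathrm{ev}_{\mathbbm 1}(\id_{\mathbbm 1^*} \otimes \varepsilon)$ using the definition of $\varepsilon^*$, and then apply the interchange law twice to obtain $\mathrm{ev}_{\mathbbm 1}(\iota \otimes \varepsilon) = \mathrm{ev}_{\mathbbm 1}(\iota \otimes \id_{\mathbbm 1})(\id_{\mathbbm 1} \otimes \varepsilon)$. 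Substituting the definition of $\iota$ turns this into $l_{\mathbbm 1}(\id_{\mathbbm 1} \otimes \varepsilon)$, and naturality of $l$ rewrites it as $\varepsilon \, l_P$; since the identification $\mathbbm 1 \otimes P \mathrel{\widetilde{\to}} P$ in the diagram is $l_P$, this is exactly the commutativity of the square.

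For~\eqref{LemmaPStarPStarPDifferentWaysMul} I would write $\mathrm{ev}_P(\mu \otimes \id_P) = \mathrm{ev}_P(\Delta^* \otimes \id_P)(\theta_{P,P} \otimes \id_P)$, replace $\mathrm{ev}_P(\Delta^* \otimes \id_P)$ by $\mathrm{ev}_{P \otimes P}(\id_{(P\otimes P)^*} \otimes \Delta)$ using the definition of $\Delta^*$, and then use the interchange law to rewrite $(\id_{(P\otimes P)^*} \otimes \Delta)(\theta_{P,P} \otimes \id_P) = \theta_{P,P} \otimes \Delta = (\theta_{P,P} \otimes \id_{P \otimes P})(\id_{P^* \otimes P^*} \otimes \Delta)$. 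This gives $\mathrm{ev}_P(\mu \otimes \id_P) = \mathrm{ev}_{P\otimes P}(\theta_{P,P} \otimes \id_{P\otimes P})(\id_{P^*\otimes P^*} \otimes \Delta)$, which is precisely the outer commutativity asserted by the second diagram.

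The computations are entirely formal, so I do not expect a genuine obstacle; the only thing requiring care is the bookkeeping with the suppressed coherence isomorphisms (associators and unitors) and applying the interchange law to the correct tensor factors. In particular one should observe that the identification $\mathbbm 1 \otimes \mathbbm 1 \mathrel{\widetilde{\to}} \mathbbm 1$ produced by $\mathrm{ev}_{\mathbbm 1}(\iota \otimes \id_{\mathbbm 1})$ is the same one ($= l_{\mathbbm 1} = r_{\mathbbm 1}$) that is read implicitly in the diagrams, which follows from the definition of $\iota$ together with monoidal coherence.
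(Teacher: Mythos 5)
Your proposal is correct and follows essentially the same route as the paper's proof: the paper's diagram chase decomposes into exactly the cells you use (the definitions $u=\varepsilon^*\iota$ and $\mu=\Delta^*\theta_{P,P}$, the dinaturality of $\mathrm{ev}$ giving $\mathrm{ev}_A(f^*\otimes\id_A)=\mathrm{ev}_B(\id_{B^*}\otimes f)$, functoriality of $\otimes$, the definition of $\iota$, and naturality of the unitor). The only difference is presentational — you write the argument as an equational chain where the paper pastes commutative polygons — so there is nothing further to add.
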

\begin{proof}
	Consider the diagram
	$$ \xymatrix{
		\mathbbm{1}\otimes P \ar[rdd]^{\iota\otimes \id_P} \ar[rrrd]^{\mathbbm{1}\otimes \varepsilon} \ar[rrrr]^\sim \ar[ddd]_{u \otimes \id_P} & & & & P  \ar[ddd]^\varepsilon\\
		& & &  \mathbbm{1} \otimes  \mathbbm{1} \ar[d]_{\iota \otimes \mathbbm{1}} \ar[rdd]^\sim & \\
		& \mathbbm{1}^*\otimes P \ar[ld]_(0.4){\varepsilon^* \otimes \id_P} \ar[rr]^{\id_{\mathbbm{1}^*} \otimes \varepsilon} & & \mathbbm{1}^*\otimes \mathbbm{1}
		\ar[rd]_{\mathrm{ev}_{\mathbbm{1}}} & \\
		P^* \otimes P  \ar[rrrr]^{\mathrm{ev}_{P}} & & & & \mathbbm{1}\\
	}
	$$
	The triangle on the left is commutative by the definition of $u$, the lower quadrilateral is commutative by the dinaturality
	of $\mathrm{ev}$, the central quadrilateral is commutative by the functoriality of $\otimes$, the upper right quadrilateral is commutative by the naturality of the transformation $\mathbbm{1} \otimes  M  \mathrel{\widetilde\to} M$,
	the lower right triangle is commutative by the definition of $\iota$.
	Hence the outer square is commutative too, which yields the commutativity of~\eqref{LemmaPStarPStarPDifferentWaysUnity}.
	
	Now consider the diagram
	$$\xymatrix{P^* \otimes P^* \otimes P \ar[rrr]^{\id_{P^*\otimes P^*} \otimes \Delta}
		\ar[d]_{\theta_{P,P} \otimes \id_P}
		& & & P^* \otimes P^* \otimes P \otimes P 
		\ar[d]^{\theta_{P,P} \otimes \id_{P\otimes P}\quad}
		\\
		(P \otimes P)^* \otimes P \ar[d]_{\Delta^* \otimes \id_P}
		\ar[rrr]^{\id_{(P \otimes P)^*}\otimes \Delta} & & & (P \otimes P)^* \otimes P \otimes P
		\ar[d]^{\mathrm{ev}_{P\otimes P}}
		\\
		P^* \otimes P \ar[rrr]^{\mathrm{ev}_P } & & & \mathbbm{1}
	}$$
	The upper square is commutative by the functoriality of $\otimes$.
	The lower square is commutative by the dinaturality of $\mathrm{ev}$.
	Hence the outer square is commutative too, which yields the commutativity of~\eqref{LemmaPStarPStarPDifferentWaysMul}.
\end{proof}

\begin{lemma}\label{LemmaDiagramsPPStarOperations}
Let $(P,\Delta,\varepsilon)$ be a comonoid in~$\mathcal C$
and let $(P^*,\mu,u)$ be the dual monoid.
Then the diagrams~\eqref{EqDiagramsPPStarOperations(Co)unit} and~\eqref{EqDiagramsPPStarOperations(Co)multiplication} below are commutative:
\begin{equation}\label{EqDiagramsPPStarOperations(Co)unit}
\xymatrix{
P \otimes \mathbbm{1} \ar[r]^\sim \ar[d]_{\id_P \otimes u} & P \ar[r]^{\varepsilon}& \mathbbm{1} \\
P\otimes P^* \ar[d]_\sim & & \mathbbm{1} \otimes \mathbbm{1} \ar[u]_\sim \\
P\otimes \mathbbm{1} \otimes  P^* \ar[rr]^{c_{P,\mathbbm{1}\otimes P^*}} & & \mathbbm{1} \otimes  P^* \otimes P
\ar[u]_{\id_{\mathbbm{1}}\otimes \mathrm{ev}_P} \\
}
\end{equation}
\begin{equation}\label{EqDiagramsPPStarOperations(Co)multiplication}
\xymatrix{P \otimes P^* \otimes P^* \ar[rrrrr]^{\id_P \otimes \mu}
	\ar[d]_{\Delta \otimes \id_{P^* \otimes P^*}}
	 & & & & & P\otimes P^* \ar[d]^{c_{P,P^*}}\\
P \otimes P \otimes P^* \otimes P^* \ar[d]_{\id_P \otimes c_{P,P^*} \otimes \id_{P^*}} & & & & & P^*\otimes P \ar[d]^{\mathrm{ev}_P} \\
P \otimes P^* \otimes P \otimes P^*\ar[rr]^{c_{P,P^*} \otimes c_{P,P^*}} & & P^* \otimes P \otimes P^* \otimes P
\ar[rr]^(0.6){\mathrm{ev}_P \otimes \mathrm{ev}_P}
 & & \mathbbm{1} \otimes \mathbbm{1} \ar[r]^\sim & \mathbbm{1}
}
\end{equation}
\end{lemma}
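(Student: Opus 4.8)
The plan is to derive both diagrams from Lemma~\ref{LemmaPStarPStarPDifferentWays}, using nothing beyond naturality of the braiding, functoriality of $\otimes$, coherence for the unit constraints, the hexagon axioms, and the defining property of $\theta$ recalled just before Theorem~\ref{TheoremStarBraidedLaxMonoidal}. Concretely, each of \eqref{EqDiagramsPPStarOperations(Co)unit} and \eqref{EqDiagramsPPStarOperations(Co)multiplication} can be presented as one large commutative diagram whose faces are all of the above type; I describe the essential content rather than grinding through the full chase.

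For \eqref{EqDiagramsPPStarOperations(Co)unit}: discarding the unit isomorphisms, the lower-left path of the square is $\mathrm{ev}_P\circ c_{P,P^*}\circ(\id_P\otimes u)$. By naturality of the braiding applied to $u\colon\mathbbm{1}\to P^*$ we have $c_{P,P^*}\circ(\id_P\otimes u)=(u\otimes\id_P)\circ c_{P,\mathbbm{1}}$, and $c_{P,\mathbbm{1}}$ is the canonical isomorphism $P\otimes\mathbbm{1}\cong\mathbbm{1}\otimes P$ by coherence. So the path equals $\mathrm{ev}_P\circ(u\otimes\id_P)$ precomposed with that isomorphism, and \eqref{LemmaPStarPStarPDifferentWaysUnity} rewrites $\mathrm{ev}_P\circ(u\otimes\id_P)$ as $\varepsilon$ up to the identification $\mathbbm{1}\otimes P\cong P$; this is exactly the upper path. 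The only point requiring care is to check, again by coherence for braided monoidal categories, that $\id_{\mathbbm{1}}\otimes\mathrm{ev}_P$ preceded by $c_{P,\mathbbm{1}\otimes P^*}$ agrees with $\mathrm{ev}_P$ preceded by $c_{P,P^*}$ under the identification $\mathbbm{1}\otimes P^*\cong P^*$ — this is what lets one replace the diagram as drawn by the argument above.

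For \eqref{EqDiagramsPPStarOperations(Co)multiplication} I would proceed in three steps. \emph{Step 1.} Naturality of the braiding applied to $\mu\colon P^*\otimes P^*\to P^*$ gives $c_{P,P^*}\circ(\id_P\otimes\mu)=(\mu\otimes\id_P)\circ c_{P,P^*\otimes P^*}$, so the right-hand path equals $\mathrm{ev}_P\circ(\mu\otimes\id_P)\circ c_{P,P^*\otimes P^*}$. \emph{Step 2.} Apply \eqref{LemmaPStarPStarPDifferentWaysMul} to replace $\mathrm{ev}_P\circ(\mu\otimes\id_P)$ by $\mathrm{ev}_{P\otimes P}\circ(\theta_{P,P}\otimes\id_{P\otimes P})\circ(\id_{P^*\otimes P^*}\otimes\Delta)$. \emph{Step 3.} Expand $\mathrm{ev}_{P\otimes P}\circ(\theta_{P,P}\otimes\id_{P\otimes P})$ by the definition of $\theta_{P,P}$ into $(\mathrm{ev}_P\otimes\mathrm{ev}_P)\circ(\id_{P^*}\otimes c^{-1}_{P,P^*}\otimes\id_P)$ (up to canonical isomorphisms). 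After these substitutions the right-hand path reads
\[
(\mathrm{ev}_P\otimes\mathrm{ev}_P)\circ(\id_{P^*}\otimes c^{-1}_{P,P^*}\otimes\id_P)\circ(\id_{P^*\otimes P^*}\otimes\Delta)\circ c_{P,P^*\otimes P^*},
\]
and this must be matched with the left-hand path $(\mathrm{ev}_P\otimes\mathrm{ev}_P)\circ(c_{P,P^*}\otimes c_{P,P^*})\circ(\id_P\otimes c_{P,P^*}\otimes\id_{P^*})\circ(\Delta\otimes\id_{P^*\otimes P^*})$.

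Since both composites end in $\mathrm{ev}_P\otimes\mathrm{ev}_P$, it suffices to compare the two ``shuffle'' morphisms $P\otimes P^*\otimes P^*\to P^*\otimes P\otimes P^*\otimes P$ obtained by deleting that last arrow. I would prove their equality by first splitting $c_{P,P^*\otimes P^*}=(\id_{P^*}\otimes c_{P,P^*})\circ(c_{P,P^*}\otimes\id_{P^*})$ via the hexagon axiom (up to associators), and then invoking naturality of the braiding with respect to $\Delta\colon P\to P\otimes P$ to pull the two copies of the $P$-strand past the two $P^*$-strands; the inverse crossing $c^{-1}_{P,P^*}$ supplied by the definition of $\theta$ precisely undoes one of the two positive crossings of $c_{P,P^*\otimes P^*}$, so that the underlying braid word becomes the same as the one assembled from the three $c_{P,P^*}$'s on the left, and the two sides then coincide by coherence. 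I expect this last piece of crossing-bookkeeping to be the main obstacle; everything else is formal. Alternatively, one can simply record the whole argument as a single large commutative diagram in the style of the preceding proofs.
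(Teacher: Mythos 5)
Your proposal is correct and follows essentially the same route as the paper: the paper's proof also assembles each diagram from Lemma~\ref{LemmaPStarPStarPDifferentWays}, the naturality of the braiding applied to $u$, $\mu$ and $\Delta$, the definition of $\theta$, and the hexagon/coherence properties of the braiding, merely presenting the argument as one large commutative diagram rather than as a sequence of substitutions. The crossing-bookkeeping you flag at the end is exactly the content of the paper's appeal to "the properties of the braiding" and works out as you expect.
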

\begin{proof} First, consider the diagram
$$\xymatrix{
	P \otimes \mathbbm{1} \ar[r]^\sim \ar[rd]^{c_{P,\mathbbm{1}}} \ar[dd]_{\id_P \otimes u} & P \ar[r]^{\varepsilon} & \mathbbm{1} \\
	 & \mathbbm{1} \otimes  P \ar[u]^\sim \ar[d]_{u\otimes \id_P} \\
	P\otimes P^* \ar[d]_\sim \ar[r]^{c_{P,P^*}} &  P^* \otimes P \ar[rd]^\sim \ar[ruu]_{\mathrm{ev}_P} & \mathbbm{1} \otimes \mathbbm{1} \ar[uu]_\sim \\ 
	P\otimes \mathbbm{1} \otimes  P^* \ar[rr]^{c_{P,\mathbbm{1}\otimes P^*}} & & \mathbbm{1} \otimes  P^* \otimes P
	\ar[u]_{\id_{\mathbbm{1}}\otimes \mathrm{ev}_P} \\
}$$	
The inner polygons are commutative by the properties of the braiding and Lemma~\ref{LemmaPStarPStarPDifferentWays}. Therefore, the outer square is commutative too, which yields the commutativity of~\eqref{EqDiagramsPPStarOperations(Co)unit}.

Now consider the diagram
\begin{equation*}
\xymatrix{P \otimes P^* \otimes P^* \ar[rrd]^{c_{P, P^* \otimes P^*}}
		 \ar[rrrrr]^{\id_P \otimes \mu}
	\ar[dd]^{\Delta \otimes \id_{P^* \otimes P^*}}
	& & & & & P\otimes P^* \ar[d]^{c_{P,P^*}}
	 \\
	& & P^* \otimes P^* \otimes P \ar[d]_{\id_{P^* \otimes P^*} \otimes \Delta} \ar[rrr]^{\mu \otimes \id_P} & & & P^*\otimes P \ar[dd]^{\mathrm{ev}_P}\\
	P \otimes P \otimes P^* \otimes P^* \ar[d]^{\id_P \otimes c_{P,P^*} \otimes \id_{P^*}} 
	\ar[rr]^{c_{P \otimes P, P^* \otimes P^*}}
	& &
	 P^* \otimes P^* \otimes P \otimes P \ar[d]^{\id_{P^*} \otimes c^{-1}_{P,P^*} \otimes \id_P} \ar[rr]^{\theta_{P,P} \otimes \id_{P\otimes P}}
	 & & (P \otimes P)^* \otimes P \otimes P \ar[rd]^{\mathrm{ev}_{P\otimes P}} &  \\
	P \otimes P^* \otimes P \otimes P^*\ar[rr]^{c_{P,P^*} \otimes c_{P,P^*}} & & P^* \otimes P \otimes P^* \otimes P
	\ar[rr]^(0.6){\mathrm{ev}_P \otimes \mathrm{ev}_P}
	& & \mathbbm{1} \otimes \mathbbm{1} \ar[r]^\sim & \mathbbm{1}
}
\end{equation*}

The inner polygons are commutative by Lemma~\ref{LemmaPStarPStarPDifferentWays}, the definition of $\theta$ and the properties of the braiding. Therefore, the outer square is commutative too, which yields the commutativity of~\eqref{EqDiagramsPPStarOperations(Co)multiplication}.
\end{proof}	

Applying properties of the braiding, we deduce Lemma~\ref{LemmaNablaStrictMonoidalFunctor} below
from Lemma~\ref{LemmaDiagramsPPStarOperations}:

\begin{lemma}\label{LemmaNablaStrictMonoidalFunctor} For every comonoid $P$
	$$(-)^\nabla \colon \mathsf{MorTens}(P^*) \to \mathsf{TensMor}(P)$$
	is a strict monoidal functor.
	\end{lemma}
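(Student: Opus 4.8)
The plan is to verify the three conditions that make $(-)^\nabla$ strict monoidal --- that it is a functor, that it sends the monoidal unit of $\mathsf{MorTens}(P^*)$ to the monoidal unit of $\mathsf{TensMor}(P)$, and that it sends $\mathbin{\widetilde\otimes}$ to $\mathbin{\widetilde\otimes}$ on the nose --- after which the coherence (hexagon and unit) axioms for a strict monoidal functor are automatic. Indeed, both $\mathsf{MorTens}(P^*)$ and $\mathsf{TensMor}(P)$ carry associativity and unit constraints induced from those of $\mathcal C$, while $(-)^\nabla$ leaves the underlying pair $(\alpha,\beta)$ of a morphism unchanged; hence it sends the associators and unitors of $\mathsf{MorTens}(P^*)$ to those of $\mathsf{TensMor}(P)$ verbatim, and the structure morphisms $J=\id$, $\varphi=\id$ satisfy the axioms of a (strict) monoidal functor trivially. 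Preservation of identities and composition is immediate for the same reason.

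For well-definedness on morphisms, let $(\alpha\colon A_1\to A_2,\ \beta\colon B_1\to B_2)$ be a morphism $\rho_1\to\rho_2$ in $\mathsf{MorTens}(P^*)$, i.e.\ $(\beta\otimes\id_{P^*})\rho_1=\rho_2\alpha$. Expanding $\rho_i^\nabla$ through its defining diagram and applying, in turn, naturality of the unitor $B\otimes\mathbbm{1}\xrightarrow{\sim}B$, the functoriality of $\otimes$ (to commute $\beta\otimes\id_{\mathbbm{1}}$ past $\id_{B_1}\otimes\mathrm{ev}_P$), naturality of the braiding $c_{P,-}$ in its second argument (to commute $\beta\otimes\id_{P^*}$ past $c_{P,B_1\otimes P^*}$), and finally the identity $(\beta\otimes\id_{P^*})\rho_1=\rho_2\alpha$, one obtains $\beta\,\rho_1^\nabla=\rho_2^\nabla(\id_P\otimes\alpha)$, which says precisely that $(\alpha,\beta)$ is a morphism $\rho_1^\nabla\to\rho_2^\nabla$ in $\mathsf{TensMor}(P)$. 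This is a short chase with no real content.

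For the unit, recall that the monoidal unit of $\mathsf{MorTens}(P^*)$ is $\mathbbm{1}\xrightarrow{u}P^*\xrightarrow{\sim}\mathbbm{1}\otimes P^*$ and that of $\mathsf{TensMor}(P)$ is $P\otimes\mathbbm{1}\xrightarrow{\sim}P\xrightarrow{\varepsilon}\mathbbm{1}$. Unwinding $(-)^\nabla$ applied to the former and inserting the coherence isomorphism $P\otimes P^*\xrightarrow{\sim}P\otimes\mathbbm{1}\otimes P^*$, the resulting morphism $P\otimes\mathbbm{1}\to\mathbbm{1}$ is exactly the composite along the left, bottom and right edges of diagram~\eqref{EqDiagramsPPStarOperations(Co)unit} of Lemma~\ref{LemmaDiagramsPPStarOperations}, whose top composite is $\varepsilon$ precomposed with the unitor $P\otimes\mathbbm{1}\xrightarrow{\sim}P$. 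Thus the commutativity of~\eqref{EqDiagramsPPStarOperations(Co)unit} is exactly the statement that $(-)^\nabla$ preserves units.

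The substantive step is the identity $(\rho_1\mathbin{\widetilde\otimes}\rho_2)^\nabla=\rho_1^\nabla\mathbin{\widetilde\otimes}\rho_2^\nabla$ for $\rho_i\colon A_i\to B_i\otimes P^*$. Writing out the left-hand side by composing the formula for $\mathbin{\widetilde\otimes}$ in $\mathsf{MorTens}(P^*)$ (which involves the multiplication $\mu$ of $P^*$ and the braiding $c_{P^*,B_2}$) with the formula for $(-)^\nabla$, and the right-hand side by specializing the formula for $\mathbin{\widetilde\otimes}$ in $\mathsf{TensMor}(P)$ (which involves the comultiplication $\Delta$ of $P$ and the braiding $c_{P,A_1}$) to the morphisms $\rho_1^\nabla,\rho_2^\nabla$, the claimed equality reduces to the commutativity of a large diagram whose only non-formal ingredient is the identity $\mathrm{ev}_P\,c_{P,P^*}(\id_P\otimes\mu)=\bigl(\mathbbm{1}\otimes\mathbbm{1}\xrightarrow{\sim}\mathbbm{1}\bigr)(\mathrm{ev}_P\otimes\mathrm{ev}_P)(c_{P,P^*}\otimes c_{P,P^*})(\id_P\otimes c_{P,P^*}\otimes\id_{P^*})(\Delta\otimes\id_{P^*\otimes P^*})$, i.e.\ diagram~\eqref{EqDiagramsPPStarOperations(Co)multiplication}. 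Note that the $\Delta$ appearing on the right of that identity is exactly the comultiplication needed in $\rho_1^\nabla\mathbin{\widetilde\otimes}\rho_2^\nabla$, and the two copies of $\mathrm{ev}_P$ distribute over the two factors $\rho_1^\nabla$, $\rho_2^\nabla$. Everything else is bookkeeping with the hexagon identities (used to decompose $c_{P,(B_1\otimes B_2)\otimes P^*}$ and the $c_{P,B_i\otimes P^*}$ into the basic braidings $c_{P,B_1}$, $c_{P,B_2}$, $c_{P,P^*}$), naturality of the braiding, and functoriality of $\otimes$. I expect the main obstacle to be precisely this braiding bookkeeping: ordering the rewrites so that the pattern of~\eqref{EqDiagramsPPStarOperations(Co)multiplication} becomes visible. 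No idea beyond Lemma~\ref{LemmaDiagramsPPStarOperations} and coherence for braided monoidal categories is needed.
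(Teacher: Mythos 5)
Your proposal is correct and follows essentially the same route as the paper: the paper's own proof is a one-line reduction to Lemma~\ref{LemmaDiagramsPPStarOperations} via properties of the braiding, and your argument simply makes explicit that diagram~\eqref{EqDiagramsPPStarOperations(Co)unit} is exactly unit preservation and diagram~\eqref{EqDiagramsPPStarOperations(Co)multiplication} is exactly compatibility with $\mathbin{\widetilde\otimes}$, with the remaining steps being naturality and coherence bookkeeping.
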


\begin{corollary}\label{CorrollaryDiagramsPPStarOperations} Let $\rho \colon A \to B \otimes P^*$ be a morphism in $\mathcal C$ for some comonoid $P$
	and objects $A$ and $B$. Then for every $m\in\mathbb Z_+$ the diagram below is commutative:
	$$\xymatrix{
		P \otimes A^{\otimes m} \ar[dd]_{\id_P \otimes \rho^{{}\mathbin{\widetilde\otimes} m}} \ar[rr]^{\left(\rho^\nabla\right)^{{}\mathbin{\widetilde\otimes} m}} & & B^{\otimes m}\\
		& & B^{\otimes m} \otimes \mathbbm{1} \ar[u]_{\sim}\\
		P \otimes B^{\otimes m} \otimes P^* \ar[rr]^{c_{P, B^{\otimes m} \otimes P^*} } &\qquad &  B^{\otimes m} \otimes P^* \otimes P
		\ar[u]_{\id_{B^{\otimes m}} \otimes \mathrm{ev}_P}
	}$$	
	where in the expressions $\left(\rho^\nabla\right)^{{}\mathbin{\widetilde\otimes} m}$ and $ \rho^{{}\mathbin{\widetilde\otimes} m}$ we use the monoidal products in the categories $\mathsf{TensMor}(P)$ and $\mathsf{MorTens}(P^*)$ defined in Section~\ref{Subsection(Co)Measurings}.
\end{corollary}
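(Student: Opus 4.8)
The plan is to derive this corollary with essentially no new computation, by combining Lemma~\ref{LemmaNablaStrictMonoidalFunctor} with the explicit definition of $(-)^\nabla$. First I would pin down the two monoidal powers occurring in the statement: iterating the monoidal product of $\mathsf{MorTens}(P^*)$ recalled in Section~\ref{Subsection(Co)Measurings} shows that $\rho^{{}\mathbin{\widetilde\otimes} m}$ is a morphism $A^{\otimes m} \to B^{\otimes m}\otimes P^*$ (the last tensor factor of the target remains $P^*$, while the sources and the $B$-parts of the targets get tensored together), and dually $\left(\rho^\nabla\right)^{{}\mathbin{\widetilde\otimes} m}$ is a morphism $P\otimes A^{\otimes m} \to B^{\otimes m}$ in $\mathsf{TensMor}(P)$.

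The key input is Lemma~\ref{LemmaNablaStrictMonoidalFunctor}, which says that $(-)^\nabla\colon \mathsf{MorTens}(P^*)\to\mathsf{TensMor}(P)$ is a strict monoidal functor. I would use that a strict monoidal functor commutes with $m$-fold monoidal powers, so that $\left(\rho^{{}\mathbin{\widetilde\otimes} m}\right)^\nabla = \left(\rho^\nabla\right)^{{}\mathbin{\widetilde\otimes} m}$; this is a short induction on $m$, using $(\sigma_1\mathbin{\widetilde\otimes}\sigma_2)^\nabla = \sigma_1^\nabla\mathbin{\widetilde\otimes}\sigma_2^\nabla$ for the inductive step, preservation of the monoidal unit for $m=0$, and the fact that the chosen bracketing is irrelevant because both $\mathsf{MorTens}(P^*)$ and $\mathsf{TensMor}(P)$ inherit their associativity constraints from $\mathcal C$ and those constraints are preserved by a strict monoidal functor.

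Next I would simply unravel the left-hand side: applying the commutative diagram that \emph{defines} $(-)^\nabla$ to the morphism $\rho^{{}\mathbin{\widetilde\otimes} m}\colon A^{\otimes m}\to B^{\otimes m}\otimes P^*$ (that is, with $A^{\otimes m}$ in place of $A$, $B^{\otimes m}$ in place of $B$ and the same $P$) shows that $\left(\rho^{{}\mathbin{\widetilde\otimes} m}\right)^\nabla$ equals the composite of $\id_P\otimes\rho^{{}\mathbin{\widetilde\otimes} m}$, the braiding $c_{P,\,B^{\otimes m}\otimes P^*}$, the map $\id_{B^{\otimes m}}\otimes\mathrm{ev}_P$ and the identification $B^{\otimes m}\otimes\mathbbm{1}\xrightarrow{\sim}B^{\otimes m}$ --- i.e.\ exactly the composition along the left, bottom and right edges of the square in the statement. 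Combining this with the identity from the previous paragraph identifies the top edge $\left(\rho^\nabla\right)^{{}\mathbin{\widetilde\otimes} m}$ with that composite, which is the asserted commutativity.

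I do not anticipate a genuine obstacle; the only points that require a little care are purely bookkeeping. One must check that the iterated products on the two sides are formed with matching bracketings, which is harmless by strictness of $(-)^\nabla$, and one should verify the boundary case $m=0$ separately: there the claimed square is exactly diagram~\eqref{EqDiagramsPPStarOperations(Co)unit}, since the monoidal unit of $\mathsf{MorTens}(P^*)$ is $\mathbbm{1}\xrightarrow{u}P^*\xrightarrow{\sim}\mathbbm{1}\otimes P^*$ and its image under $(-)^\nabla$ is the monoidal unit $P\otimes\mathbbm{1}\xrightarrow{\sim}P\xrightarrow{\varepsilon}\mathbbm{1}$ of $\mathsf{TensMor}(P)$ --- and that diagram is already established in Lemma~\ref{LemmaDiagramsPPStarOperations}.
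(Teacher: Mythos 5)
Your argument is correct and is exactly the route the paper intends: the corollary is stated without proof immediately after Lemma~\ref{LemmaNablaStrictMonoidalFunctor}, the implicit argument being precisely that strict monoidality gives $\left(\rho^{{}\mathbin{\widetilde\otimes} m}\right)^\nabla = \left(\rho^\nabla\right)^{{}\mathbin{\widetilde\otimes} m}$ and the defining diagram of $(-)^\nabla$ applied to $\rho^{{}\mathbin{\widetilde\otimes} m}$ yields the left--bottom--right composite. Your separate check of the case $m=0$ via diagram~\eqref{EqDiagramsPPStarOperations(Co)unit} is a sensible bit of bookkeeping that the paper leaves tacit.
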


\begin{lemma}\label{LemmaNablaMeasComeas}
	Let $\rho \colon A \to B \otimes P^*$ be a morphism for some $\Omega$-magmas $A$ and $B$
	and comonoid $P$ in~$\mathcal C$.
	
	I. If $\rho$ is a comeasuring, then 
	$\rho^\nabla \colon P \otimes A \to B$ is a measuring.
	
	II. If $\rho^\nabla$ is a measuring and $\alpha_{B^{{} \otimes t(\omega)}} \otimes \id_{P^*}$ and $\theta_{\left(B^{{} \otimes t(\omega)}\right)^*,P}$ are monomorphisms for every $\omega\in\Omega$, then $\rho$ is a comeasuring.
\end{lemma}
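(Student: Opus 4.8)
The plan is to read off both implications from the fact (Lemma~\ref{LemmaNablaStrictMonoidalFunctor}) that $(-)^\nabla \colon \mathsf{MorTens}(P^*) \to \mathsf{TensMor}(P)$ is a \emph{strict} monoidal functor, combined, for the converse, with the cancellation statement of Corollary~\ref{CorollaryRhoABPStarEqualAfterNabla}. Recall that by definition a comeasuring is an $\Omega$-magma in $\mathsf{MorTens}(P^*)$ and a measuring is an $\Omega$-magma in $\mathsf{TensMor}(P)$, and that on morphisms $(-)^\nabla$ sends the arrow of $\mathsf{MorTens}(P^*)$ underlain by a pair $(\alpha,\beta)$ to the arrow of $\mathsf{TensMor}(P)$ underlain by the same pair (a two-line check from the defining diagram of $(-)^\nabla$ using naturality of the braiding and of the right unitor).

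\textbf{Part I.} A strict monoidal functor carries $\Omega$-magmas to $\Omega$-magmas: if $\rho$ is an $\Omega$-magma in $\mathsf{MorTens}(P^*)$ with structure morphisms $\omega_\rho \colon \rho^{\widetilde\otimes s(\omega)} \to \rho^{\widetilde\otimes t(\omega)}$, then, using strictness in the form $(\rho^{\widetilde\otimes m})^\nabla = (\rho^\nabla)^{\widetilde\otimes m}$, the images $(\omega_\rho)^\nabla \colon (\rho^\nabla)^{\widetilde\otimes s(\omega)} \to (\rho^\nabla)^{\widetilde\otimes t(\omega)}$ turn $\rho^\nabla$ into an $\Omega$-magma in $\mathsf{TensMor}(P)$, i.e. a measuring; because $(-)^\nabla$ preserves underlying pairs, the $\Omega$-magma structures induced on $A$ and on $B$ are unchanged. (Alternatively, one substitutes the formula of Corollary~\ref{CorrollaryDiagramsPPStarOperations} for $(\rho^\nabla)^{\widetilde\otimes m}$ into the measuring identity $\omega_B\,(\rho^\nabla)^{\widetilde\otimes s(\omega)} = (\rho^\nabla)^{\widetilde\otimes t(\omega)}(\id_P \otimes \omega_A)$ and reduces it to the comeasuring identity for $\rho$ by naturality of $c$, functoriality of $\otimes$, and naturality of the unitor.)

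\textbf{Part II.} Fix $\omega \in \Omega$ and set $\rho_1 := (\omega_B \otimes \id_{P^*})\,\rho^{\widetilde\otimes s(\omega)}$ and $\rho_2 := \rho^{\widetilde\otimes t(\omega)}\,\omega_A$, two morphisms $A^{\otimes s(\omega)} \to B^{\otimes t(\omega)} \otimes P^*$; since the comeasuring identity for $\omega$ (with the $\Omega$-magma structures on $A,B$ supplied by the measuring $\rho^\nabla$) reads exactly $\rho_1 = \rho_2$, this is what must be shown. The pairs $(\id,\omega_B)\colon \rho^{\widetilde\otimes s(\omega)} \to \rho_1$ and $(\omega_A,\id)\colon \rho_2 \to \rho^{\widetilde\otimes t(\omega)}$ are morphisms of $\mathsf{MorTens}(P^*)$; applying $(-)^\nabla$ and using strictness gives $\rho_1^\nabla = \omega_B\,(\rho^\nabla)^{\widetilde\otimes s(\omega)}$ and $\rho_2^\nabla = (\rho^\nabla)^{\widetilde\otimes t(\omega)}(\id_P \otimes \omega_A)$. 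Since $\rho^\nabla$ is a measuring, the identity $\omega_B\,(\rho^\nabla)^{\widetilde\otimes s(\omega)} = (\rho^\nabla)^{\widetilde\otimes t(\omega)}(\id_P \otimes \omega_A)$ holds, i.e. $\rho_1^\nabla = \rho_2^\nabla$. Now Corollary~\ref{CorollaryRhoABPStarEqualAfterNabla}, applied with the object called $B$ there equal to $B^{\otimes t(\omega)}$ — whence the required monomorphisms are exactly $\alpha_{B^{\otimes t(\omega)}} \otimes \id_{P^*}$ and $\theta_{(B^{\otimes t(\omega)})^*,P}$, which are the standing hypotheses — yields $\rho_1 = \rho_2$, as desired.

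The difficulties are purely organizational: one must keep straight that the monoidal product on $\mathsf{MorTens}(P^*)$ is built from the dual-monoid multiplication on $P^*$ while that on $\mathsf{TensMor}(P)$ is built from the comultiplication on $P$, and that the source/target conventions for the auxiliary arrows of the comma categories are arranged so that $(-)^\nabla$ reproduces precisely the two sides of the (co)measuring identity. All of this is already packaged into Lemma~\ref{LemmaNablaStrictMonoidalFunctor}, Corollary~\ref{CorrollaryDiagramsPPStarOperations} and Corollary~\ref{CorollaryRhoABPStarEqualAfterNabla}, so no genuinely new argument is required.
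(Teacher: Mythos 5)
Your proof is correct and follows essentially the same route as the paper's: both directions rest on identifying $\left(\rho^\nabla\right)^{\mathbin{\widetilde\otimes} m}$ with $\left(\rho^{\mathbin{\widetilde\otimes} m}\right)^\nabla$ (Lemma~\ref{LemmaNablaStrictMonoidalFunctor}, Corollary~\ref{CorrollaryDiagramsPPStarOperations}) and, for Part II, on the cancellation property of Corollary~\ref{CorollaryRhoABPStarEqualAfterNabla} applied with $B^{\otimes t(\omega)}$ in place of $B$ (the paper invokes its source, Lemma~\ref{LemmaRhoABPStarPartialTensoring}, directly). The only difference is presentational: where the paper chases one large explicit diagram, you package the same computation as functoriality of $(-)^\nabla$ on the arrows $(\id,\omega_B)$ and $(\omega_A,\id)$, which is a valid shortcut.
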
	
\begin{proof}
		For every $\omega \in \Omega$ consider the following diagram where $m=s(\omega)$ and $n=t(\omega)$:
	\begin{equation}\label{EqMeasComeasBijection}
	\hspace{-1cm}\xymatrix{
		P \otimes A^{\otimes m} \ar[ddddd]^{\id_P \otimes \omega_A} \ar[rdd]^{\id_P \otimes \rho^{{}\mathbin{\widetilde\otimes} m}} \ar[rrrrr]^{\left(\rho^\nabla\right)^{{}\mathbin{\widetilde\otimes} m}} & & & & & B^{\otimes m} \ar[ddddd]^{\omega_B}\\
		& & & & B^{\otimes m} \otimes \mathbbm{1} \ar[ru]_{\sim} \ar[ddd]^{\omega_B \otimes \id_\mathbbm{1}}\\
		& P \otimes B^{\otimes m} \otimes P^* 
		\ar[d]^{\id_P \otimes \omega_B \otimes \id_{P^*}}
		\ar[rr]^{c_{P, B^{\otimes m} \otimes P^*}} &\qquad &  B^{\otimes m} \otimes P^* \otimes P
		\ar[d]^{\omega_B \otimes \id_{P^* \otimes P}}
		\ar[ru]^{\id_{B^{\otimes m}} \otimes \mathrm{ev}_P}\\
		& P \otimes B^{\otimes n} \otimes P^* \ar[rr]^{c_{P, B^{\otimes n} \otimes P^*}} &\qquad &  B^{\otimes n} \otimes P^* \otimes P
		\ar[rd]_{\id_{B^{\otimes n}} \otimes \mathrm{ev}_P}\\
		& & & & B^{\otimes n} \otimes \mathbbm{1} \ar[rd]^{\sim}\\
		P \otimes A^{\otimes n} \ar[ruu]_{\id_P \otimes \rho^{{}\mathbin{\widetilde\otimes} n}} \ar[rrrrr]^{\left(\rho^\nabla\right)^{{}\mathbin{\widetilde\otimes} n}} & & & & & B^{\otimes n}\\
	}\end{equation}	
	
	The upper and the lower quadrilaterals are commutative by Corollary~\ref{CorrollaryDiagramsPPStarOperations}.
	The central square is commutative by the naturality of the braiding. The two quadrilaterals on the right are commutative
	by the functoriality of $\otimes$ and the naturality of the transformation $\xymatrix{C\otimes \mathbbm{1} \ar[r]^(0.6)\sim & C}$.
	
	Now we notice that the left quadrilateral of~\eqref{EqMeasComeasBijection} is the diagram~\eqref{EqRhoPStarComeasuring} below tensored by~$P$ from the left:
	
	\begin{equation}\label{EqRhoPStarComeasuring}
	\xymatrix{ A^{\otimes m} \ar[d]_{\omega_A} \ar[r]^(0.4){\rho^{{}\mathbin{\widetilde\otimes} m}}&  B^{\otimes m} \otimes P^* \ar[d]^{\omega_B \otimes \id_{P^*}}  \\
		A^{\otimes n} \ar[r]^(0.4){\rho^{{}\mathbin{\widetilde\otimes} n}} &  B^{\otimes n} \otimes P^*
	}
	\end{equation}
	
	Hence if $\rho$ is a comeasuring, then~\eqref{EqRhoPStarComeasuring}
	is commutative, the outer square of~\eqref{EqMeasComeasBijection} is commutative too,
	$\rho^\nabla \colon P \otimes A \to B$ is a measuring, and Part I is proved.
	
	Suppose now that $\rho^\nabla$ is a measuring and $\alpha_{B^{\otimes n}} \otimes \id_{P^*}$ and $\theta_{\left(B^{\otimes n}\right)^*,P}$ are monomorphisms. Then the outer square of~\eqref{EqMeasComeasBijection} is commutative and, by the naturality of the braiding and Lemma~\ref{LemmaRhoABPStarPartialTensoring}, \eqref{EqRhoPStarComeasuring}
	is commutative for every $\omega\in\Omega$, i.e. $\rho$ is a comeasuring.
\end{proof}	

Now we are ready to prove that under some conditions the map $\rho \mapsto \rho^\nabla$ is a bijection:

\begin{lemma}\label{Lemma(Co)monMeasComeasBijection}
	 Let $\rho_U \colon A \to B \otimes U$
be a tensor  epimorphism in~$\mathcal C$
for 
$\Omega$-magmas $A$ and $B$
and an object $U$. Suppose $\alpha_M \otimes \id_N$ and $\theta_{M,N}$ are monomorphisms
for all objects $M,N$. For a given comonoid $P$ in~$\mathcal C$
denote by $ \mathrm{Meas}(P,\rho_U^\vee)$
	the set of measurings $\psi \colon P \otimes A \to B$  such that there exists a morphism $\psi \to \rho_U^\vee$
	in $\mathbf{TensMor}(A,B)$
	and by $\mathrm{Comeas}(P^*,\rho_U)$
	the set of comeasurings 
	$\rho \colon A \to B \otimes P^*$
	  such that there exists a morphism $\rho_U \to \rho$ in $\mathbf{MorTens}(A,B)$.
	  Then the map $\rho \mapsto \rho^\nabla$ defines a bijection
	$\mathrm{Comeas}(P^*,\rho_U) \cong \mathrm{Meas}(P,\rho_U^\vee)$
	natural in the comonoid $P$ if we regard $\mathrm{Meas}(-,\rho_U^\vee)$ and $\mathrm{Comeas}((-)^*,\rho_U)$
	as functors $\mathsf{Comon}(\mathcal C) \to \mathbf{Sets}$.
\end{lemma}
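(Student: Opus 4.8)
The plan is to transport the self-adjunction bijection $(-)^{\sharp}\colon\mathcal{C}(U,P^{*})\xrightarrow{\sim}\mathcal{C}(P,U^{*})$ of~\eqref{EqDualityAdjunctionStarObjects} along $\rho_{U}$ on the one side and along $\rho_{U}^{\vee}$ on the other, and to identify the map $\rho\mapsto\rho^{\nabla}$ with it.

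First I would record two parametrizations. By the definition of $\mathbf{MorTens}(A,B)$, a comeasuring $\rho\colon A\to B\otimes P^{*}$ lies in $\mathrm{Comeas}(P^{*},\rho_{U})$ precisely when $\rho=(\id_{B}\otimes\tau)\rho_{U}$ for some $\tau\colon U\to P^{*}$, and since $\rho_{U}$ is a tensor epimorphism such a $\tau$ is unique. Hence, setting $T(P):=\{\tau\in\mathcal{C}(U,P^{*})\mid(\id_{B}\otimes\tau)\rho_{U}\text{ is a comeasuring}\}$, the assignment $\tau\mapsto(\id_{B}\otimes\tau)\rho_{U}$ is a bijection $T(P)\xrightarrow{\sim}\mathrm{Comeas}(P^{*},\rho_{U})$. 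Dually, the blanket hypotheses on $\alpha$ and $\theta$ put us in the setting of Proposition~\ref{PropositionDualToTensorEpiIsMono}, so $\rho_{U}^{\vee}\colon U^{*}\otimes A\to B$ is a tensor monomorphism; combined with the definition of $\mathbf{TensMor}(A,B)$ this exhibits $\sigma\mapsto\rho_{U}^{\vee}(\sigma\otimes\id_{A})$ as a bijection $S(P)\xrightarrow{\sim}\mathrm{Meas}(P,\rho_{U}^{\vee})$, where $S(P):=\{\sigma\in\mathcal{C}(P,U^{*})\mid\rho_{U}^{\vee}(\sigma\otimes\id_{A})\text{ is a measuring}\}$.

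Next I would apply Proposition~\ref{PropositionDualityNablaVeeSharp} with $\rho=\rho_{U}$ and $f=\tau^{\sharp}$ (so that $f^{\flat}=\tau$), obtaining $\bigl((\id_{B}\otimes\tau)\rho_{U}\bigr)^{\nabla}=\rho_{U}^{\vee}(\tau^{\sharp}\otimes\id_{A})$ for every $\tau\colon U\to P^{*}$. Read through the two parametrizations above, this is exactly the statement that $\rho\mapsto\rho^{\nabla}$ agrees with the restriction of $(-)^{\sharp}$. It therefore remains to check that $(-)^{\sharp}$ carries $T(P)$ onto $S(P)$: the implication $\tau\in T(P)\Rightarrow\tau^{\sharp}\in S(P)$ is Part~I of Lemma~\ref{LemmaNablaMeasComeas} applied to the comeasuring $(\id_{B}\otimes\tau)\rho_{U}\colon A\to B\otimes P^{*}$, while conversely, for $\sigma\in S(P)$ one sets $\tau:=\sigma^{\flat}$, notes $\tau^{\sharp}=\sigma$ and $\bigl((\id_{B}\otimes\tau)\rho_{U}\bigr)^{\nabla}=\rho_{U}^{\vee}(\sigma\otimes\id_{A})$ is a measuring, and invokes Part~II of Lemma~\ref{LemmaNablaMeasComeas} (whose monomorphism hypotheses on $\alpha_{B^{\otimes t(\omega)}}\otimes\id_{P^{*}}$ and $\theta_{(B^{\otimes t(\omega)})^{*},P}$ are again instances of the blanket assumption) to conclude $\tau\in T(P)$. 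Thus $(-)^{\sharp}$ restricts to a bijection $T(P)\xrightarrow{\sim}S(P)$, and composing with the two parametrizations gives the desired bijection $\mathrm{Comeas}(P^{*},\rho_{U})\xrightarrow{\sim}\mathrm{Meas}(P,\rho_{U}^{\vee})$, $\rho\mapsto\rho^{\nabla}$; its injectivity could alternatively be read off directly from Corollary~\ref{CorollaryRhoABPStarEqualAfterNabla}.

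Finally, for naturality in the comonoid $P$ I would note that a comonoid homomorphism $g\colon P_{1}\to P_{2}$ acts on $\mathrm{Comeas}((-)^{*},\rho_{U})$ by $\rho\mapsto(\id_{B}\otimes g^{*})\rho$, hence on $T(-)$ by $\tau\mapsto g^{*}\tau$, compatibly with $\tau\mapsto(\id_{B}\otimes\tau)\rho_{U}$ since $(\id_{B}\otimes g^{*})(\id_{B}\otimes\tau)\rho_{U}=(\id_{B}\otimes g^{*}\tau)\rho_{U}$ and postcomposition of a comeasuring with the monoid homomorphism $g^{*}$ is again a comeasuring; symmetrically, $g$ acts on $\mathrm{Meas}(-,\rho_{U}^{\vee})$ by $\psi\mapsto\psi(g\otimes\id_{A})$ and on $S(-)$ by $\sigma\mapsto\sigma g$, precomposition of a measuring with the comonoid homomorphism $g$ being again a measuring. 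Since $(-)^{\sharp}$ is natural, being the composite of the natural bijections making up~\eqref{EqDualityAdjunctionStarObjects}, chasing these squares yields the claimed naturality. The genuine content has already been packaged into Propositions~\ref{PropositionDualityNablaVeeSharp} and~\ref{PropositionDualToTensorEpiIsMono} and Lemma~\ref{LemmaNablaMeasComeas}; the points demanding care are merely that the tensor epimorphism and tensor monomorphism hypotheses are exactly what promote the two parametrizations from surjections to bijections, and that the functorial conventions on the two sides be matched so that the naturality squares are the ones described.
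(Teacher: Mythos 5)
Your proposal is correct and follows essentially the same route as the paper's proof: both identify $\mathrm{Comeas}(P^*,\rho_U)$ and $\mathrm{Meas}(P,\rho_U^\vee)$ with subsets of $\mathcal C(U,P^*)$ and $\mathcal C(P,U^*)$ via the tensor epi/mono properties, use Proposition~\ref{PropositionDualityNablaVeeSharp} to recognize $(-)^\nabla$ as a restriction of $(-)^\sharp$, and use Lemma~\ref{LemmaNablaMeasComeas} to see that comeasurings correspond exactly to measurings. Your write-up is somewhat more explicit than the paper's about the naturality squares and about why $(-)^\sharp$ maps $T(P)$ onto $S(P)$, but the substance is identical.
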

\begin{proof} By Proposition~\ref{PropositionDualToTensorEpiIsMono}, 
	$\rho_U^\vee$ is a tensor monomorphism. Hence $\mathrm{Meas}(P,\rho_U^\vee)$ can 
	be identified with the subset of $\mathcal C(P,U^*)$ consisting 
	of such morphisms $g \colon P \to U^*$ that $\psi=\rho_U^\vee(g\otimes \id_A)$
	is a measuring. Analogously, $\mathrm{Comeas}(P^*,\rho_U)$ can 
	be identified with the subset of $\mathcal C(U,P^*)$ consisting 
	of such morphisms $f \colon U \to P^*$ that $\rho=(\id_B \otimes f) \rho_U$
	is a comeasuring. By Proposition~\ref{PropositionDualityNablaVeeSharp},
	\begin{equation}\label{Eq(Co)monMeasComeasBijection}
	\rho^\nabla=\bigl((\id_B \otimes f) \rho_U\bigr)^\nabla = 
	\rho_U^\vee (f^\sharp \otimes \id_B).
	\end{equation}
		 By Lemma~\ref{LemmaNablaMeasComeas}
	a morphism
	 $\rho \colon A \to B \otimes P^*$
	is a comeasuring if and only if $\rho^\nabla \colon P\otimes A \to B$ is a measuring.
		Now~\eqref{Eq(Co)monMeasComeasBijection} implies that
		the map $(-)^\nabla$ is a restriction of the natural bijection $(-)^\sharp \colon \mathcal C(U,P^*) \mathrel{\widetilde\to} \mathcal C(P,U^*)$.
\end{proof}	

\subsection{Correspondence between supports and cosupports}\label{SubsectionCorrespondenceSuppCosupp}

The naturality of the bijection $(-)^\flat$ implies Proposition~\ref{PropositionEpimorphismStarIsAMonomorphism} below:

\begin{proposition}\label{PropositionEpimorphismStarIsAMonomorphism}
	Let $f \colon A \to B$ be an epimorphism in a pre-rigid braided monoidal category $\mathcal C$. Then $f^* \colon B^* \to A^*$
	is a monomorphism.
\end{proposition}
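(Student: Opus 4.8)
The plan is to exploit the naturality of the bijection $(-)^\flat$ established in the discussion after Theorem~\ref{TheoremStarBraidedLaxMonoidal}. Recall that for a morphism $g \colon B \to A^*$ one has $g^\flat \colon A \to B^*$, obtained by transporting $g$ through the composite of natural bijections~\eqref{EqDualityAdjunctionStarObjects}, and that $\alpha_A = \id_{A^*}^\flat \colon A \to A^{**}$ is the counit of the self-adjunction. The key observation I would record first is that $f^* \colon B^* \to A^*$ fits into a naturality square: applying $(-)^\flat$ to the identity $f^* \circ \id_{B^*} = \id_{A^*} \circ f^*$ and using naturality of~\eqref{EqDualityAdjunctionStarObjects} in both variables yields $\alpha_B \circ f = f^{**} \circ \alpha_A$, i.e. the familiar double-dual naturality square. (Alternatively this is just the statement that $\alpha$ is a natural transformation $\id_{\mathcal C} \Rightarrow (-)^{**}$.)

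Next I would bring in the hypothesis that $f$ is an epimorphism. Since $(-)^*$ is a contravariant functor sending $\mathcal C$ to $\mathcal C^{\mathrm{op}}$, and since a functor between arbitrary categories need not preserve epis, the point of the pre-rigid structure is precisely the adjunction~\eqref{EqPrerigidBijection}: the functor $(-)^*$ has, in a suitable sense, a (co)adjoint, and contravariant adjoint functors turn colimits into limits — in particular epimorphisms (which are instances of the colimit/coequalizer shape) into monomorphisms. Concretely, to show $f^*$ is mono I would take two morphisms $h_1, h_2 \colon C \to B^*$ with $f^* h_1 = f^* h_2$ and argue $h_1 = h_2$. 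Transporting through the bijection~\eqref{EqPrerigidBijection} (naturality in the first variable), the equation $f^* h_i \colon C \to A^*$ corresponds to a morphism $C \otimes A \to \mathbbm{1}$, and I would check that precomposing $h_i^\flat \colon$ (the transpose $C \otimes B \to \mathbbm 1$ of $h_i$) with $\id_C \otimes f$ recovers exactly the transpose of $f^* h_i$. Thus the transposes of $h_1$ and $h_2$ agree after composing with $\id_C \otimes f$; since $f$ is epi and $\id_C \otimes (-)$ need not preserve epis in general — here is the subtle point — I must instead observe that $\mathcal C(C \otimes A, \mathbbm 1) \cong \mathcal C(C, A^*)$ is natural, so the equality $\operatorname{ev}_A(h_i^\flat{}' \otimes \id_A)$-type expressions coincide iff the corresponding maps into $A^*$ coincide, and I reduce directly to: two maps $C \otimes B \to \mathbbm 1$ whose composites with $\operatorname{ev}$-transpose agree are determined by maps $C \to B^*$, and the epi condition on $f$ together with naturality of~\eqref{EqPrerigidBijection} in the variable $B$ forces $h_1 = h_2$.

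The cleanest route, which I would actually write up, avoids the element-chase: from the naturality square $\alpha_B \circ f = f^{**}\circ \alpha_A$ one sees that for any test object $C$ the map $\mathcal C(C, B^*) \to \mathcal C(C, A^*)$ induced by $f^*$ is identified, via the adjunction bijections~\eqref{EqPrerigidBijection} applied at $B^*$ and $A^*$, with the map $\mathcal C(C \otimes B, \mathbbm 1) \to \mathcal C(C \otimes A, \mathbbm 1)$ given by precomposition with $\id_C \otimes f$; but precomposition with $\id_C \otimes f$ need not be injective unless we first use that this map equals precomposition with $f$ after further adjunction, i.e. $\mathcal C(C, B^*) \cong \mathcal C(B, C^*) \xrightarrow{(-)\circ f} \mathcal C(A, C^*)$, which is injective precisely because $f$ is an epimorphism. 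Chaining the bijections from~\eqref{EqDualityAdjunctionStarObjects} gives that $f^* \colon \mathcal C(C, B^*) \to \mathcal C(C, A^*)$ is injective for all $C$, which is exactly the statement that $f^*$ is a monomorphism. The main obstacle, and the reason one cannot simply say "functors preserve nothing," is precisely assembling the chain of natural bijections correctly so that the single use of "$f$ epi" lands on a genuine precomposition $(-)\circ f \colon \mathcal C(B, C^*) \to \mathcal C(A, C^*)$; once the bookkeeping of braidings in~\eqref{EqDualityAdjunctionStarObjects} is handled, the conclusion is immediate.
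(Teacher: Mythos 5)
Your final argument is exactly the paper's: the paper proves Proposition~\ref{PropositionEpimorphismStarIsAMonomorphism} in one line by invoking the naturality of $(-)^\flat$, i.e.\ the naturality square identifying $\mathcal C(C,f^*)$ with precomposition $(-)\circ f \colon \mathcal C(B,C^*)\to\mathcal C(A,C^*)$, which is injective since $f$ is epi. Your earlier detours (worrying about $\id_C\otimes f$ being epi) are correctly discarded, and the ``cleanest route'' you settle on is the intended proof.
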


However, below we require a somewhat dual property.
Namely, in Theorems~\ref{TheoremSupportsUnderVee}---\ref{TheoremVeeReflectsCoarserFiner} below $\mathcal C$
is a pre-rigid braided monoidal category  satisfying Properties~\ref{PropertySmallLimits},
 \ref{PropertySubObjectsSmallSet}--\ref{PropertyLimitsOfSubobjectsArePreserved},  \ref{PropertyEqualizers} of Section~\ref{SubsectionSupportCoactingConditions} and their duals, such that
 the functor $(-)^*$ maps extremal monomorphisms to extremal epimorphisms.  
 
 \begin{remark} The conditions above hold in $\mathcal C = \mathbf{Vect}_\mathbbm k$ for a field $\mathbbm k$.
	In Theorems~\ref{TheoremHModSatisfiesProperties} and~\ref{TheoremDgVectSatisfiesProperties}
	below we show that these conditions hold in $\mathcal C={}_H \mathsf{Mod}$, the category
	of left $H$-modules, for a quasitriangular Hopf algebra $H$ 
	and in $\mathcal C=\mathbf{dgVect}_\mathbbm{k}$, the category of differential graded vector spaces over a field $\mathbbm k$.
	In addition, the conditions above hold in $\mathcal C = \mathsf{Comod}^{\mathbbm kG}$, the category of $G$-graded vector spaces, for a field $\mathbbm k$ and a group $G$ (see Section~\ref{SubsectionComodCoquasitriangularHopfAlgebras}).
	In Remarks~\ref{left_YD_rem} (\ref{RemarkYDPolyMonoStarIsNotEpi}) and Remarks~\ref{Comod_coquasi_rmk} (\ref{RemarkPolyMonoStarIsNotEpiRComod}) below we give examples of categories where $f^*$ is not an epimorphism
	for some extremal monomorphism $f$.
\end{remark}

 These theorems will not be used until Section~\ref{SectionCosupportDualityInMonoidalClosedCategories}:

\begin{theorem}\label{TheoremSupportsUnderVee} 
	Let $\rho \colon A \to B \otimes Q$ be a morphism in $\mathcal C$ such that
	$\alpha_B \otimes \id_{P^*}$ and $\theta_{B^*,P}$ are monomorphisms for all objects $P$.
	Let $|\rho| \colon A \to B \otimes (\supp \rho)$ be the absolute value of $\rho$.
	Then $\cosupp(\rho^\vee) = (\supp \rho)^*$
	and $|\rho|^\vee \colon (\supp \rho)^* \otimes A \to B$
	is the absolute value of $\rho^\vee$.
\end{theorem}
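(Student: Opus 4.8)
The plan is to transport the universal property characterizing $|\rho|$ through the operation $(-)^\vee$, using the bijection between comeasurings (more precisely, morphisms in $\mathbf{MorTens}$) and measurings established in the previous subsection. First I would recall that by \thref{TheoremAbsValueSupportExistence} and its dual, supports and cosupports exist under the stated hypotheses; so it suffices to verify that the specific object $(\supp\rho)^*$ with the specific morphism $|\rho|^\vee$ satisfies the defining property of the absolute value of $\rho^\vee$ in $\mathbf{TensMor}(A,B)^{\mathrm{op}}$. The defining property has two parts: (a) $|\rho|^\vee \colon (\supp\rho)^* \otimes A \to B$ is a tensor monomorphism, i.e.\ lies in $\LIO(\mathbf{TensMor}(A,B)^{\mathrm{op}})$; and (b) it is the \emph{minimal} such object through which $\rho^\vee$ factors, in the sense of \seref{SubsectionLIOAbsValue}.

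For part (a): since $|\rho|$ is the absolute value of $\rho$, it is in particular a tensor epimorphism, so by \prref{PropositionDualToTensorEpiIsMono} (whose hypotheses $\alpha_B \otimes \id_{P^*}$, $\theta_{B^*,P}$ monomorphisms are exactly what we assumed) $|\rho|^\vee$ is a tensor monomorphism. For part (b), I first need that $\rho^\vee$ factors through $|\rho|^\vee$. By definition of $|\rho|$ there is a morphism $\tau \colon \supp\rho \to Q$ in $\mathbf{MorTens}(A,B)$, i.e.\ $(\id_B \otimes \tau)|\rho| = \rho$. Applying \prref{PropositionVeeUnderComposition} gives $\rho^\vee = |\rho|^\vee(\tau^* \otimes \id_A)$, so $\tau^* \colon Q^* \to (\supp\rho)^*$ is a morphism $\rho^\vee \to |\rho|^\vee$ in $\mathbf{TensMor}(A,B)^{\mathrm{op}}$; this shows $\cosupp(\rho^\vee) \preccurlyeq (\supp\rho)^*$ is at least defined and $(\supp\rho)^*$ sits above it. For the reverse, suppose $\psi_V \colon V \otimes A \to B$ is any tensor monomorphism through which $\rho^\vee$ factors, say $\rho^\vee = \psi_V(g \otimes \id_A)$ for some $g \colon V \to (\supp\rho)^*$. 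I want to produce a morphism $(\supp\rho)^* \to V$ in $\mathbf{TensMor}(A,B)^{\mathrm{op}}$, equivalently a morphism $V \to (\supp\rho)^*$, that is compatible; the natural candidate is built by dualizing. Using \leref{Lemma(Co)monMeasComeasBijection}-style reasoning — or directly \prref{PropositionDualityNablaVeeSharp} — one converts the factorization of $\rho^\vee$ through $\psi_V$ into a factorization of $\rho$ (equivalently $|\rho|$) through some morphism $A \to B \otimes V^*$, then invokes the universal (minimality) property of $|\rho|$ to get $\tilde\tau \colon \supp\rho \to V^*$, and finally dualizes and precomposes with $\alpha_V \colon V \to V^{**}$ to obtain the desired morphism $V \to (\supp\rho)^*$. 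One must then check this is a morphism in the comma category, i.e.\ that it is compatible with the factorizations of $\rho^\vee$; this is a diagram chase combining \prref{PropositionVeeUnderComposition}, the naturality of $\alpha$, and the cancellation afforded by the monomorphism hypotheses (via \leref{LemmaRhoABPStarPartialTensoring}/\coref{CorollaryRhoABPStarEqualAfterNabla}).

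The main obstacle I anticipate is \textbf{the minimality half of (b)}: showing that any tensor monomorphism $\psi_V$ dominating $\rho^\vee$ in fact dominates $|\rho|^\vee$. The subtlety is that the correspondence $(-)^\vee$ (or $(-)^\nabla$) is only a bijection onto its image under additional hypotheses, and in general one does not have full faithfulness of $(-)^*$, so $V$ need not be of the form $P^*$ and the passage "factorization of $\rho^\vee$ through $\psi_V$ $\Rightarrow$ factorization of $\rho$ through something over $V^*$" requires care — one works with $\psi_V^\sharp \colon A \to B \otimes V^*$ or the analogous gadget and uses that $\psi_V$ being a tensor monomorphism forces uniqueness of the comparison map. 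Once the comparison $\supp\rho \to V^*$ is obtained from minimality of $|\rho|$, dualizing sends it to $V^{**} \to (\supp\rho)^*$, and composing with $\alpha_V$ lands in $V \to (\supp\rho)^*$; the compatibility check then reduces, after applying \prref{PropositionVeeUnderComposition} to both sides, to an identity that holds because $|\rho|$ is a tensor epimorphism. I would organize the write-up so that all the "$\alpha$ and $\theta$ monomorphism" hypotheses are used exactly where \prref{PropositionDualToTensorEpiIsMono} and \coref{CorollaryRhoABPStarEqualAfterNabla} are invoked, keeping the logical dependencies transparent.
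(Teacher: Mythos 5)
The first half of your argument coincides with the paper's: $|\rho|^\vee$ is a tensor monomorphism by Proposition~\ref{PropositionDualToTensorEpiIsMono}, and $\rho^\vee=|\rho|^\vee(\tau^*\otimes\id_A)$ by Proposition~\ref{PropositionVeeUnderComposition}. The gap is in the minimality half, and it is twofold. First, the transfer you rely on does not exist: from an arbitrary tensor monomorphism $\psi_V\colon V\otimes A\to B$ together with a factorization $\rho^\vee=\psi_V(g\otimes\id_A)$, $g\colon Q^*\to V$, there is no way in a merely pre-rigid category to manufacture a morphism $A\to B\otimes V^*$ through which $\rho$ (or $|\rho|$) factors. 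The operation $(-)^\sharp$ is defined only on morphisms of the form $A\to B^*$, the operation $(-)^\nabla$ is defined only on morphisms $A\to B\otimes P^*$ and has no inverse, and $g$ points the wrong way to yield anything of the form $Q\to V^*$ (you would need $g\colon V\to Q^*$ to form $g^\sharp\colon Q\to V^*$). So the step ``invoke the minimality of $|\rho|$'' has nothing to be applied to, and no amount of cancellation via Corollary~\ref{CorollaryRhoABPStarEqualAfterNabla} repairs this. Second, and symptomatically, your argument never uses the standing hypothesis of Section~\ref{SubsectionCorrespondenceSuppCosupp} that $(-)^*$ maps extremal monomorphisms to extremal epimorphisms; the paper points to categories (Yetter--Drinfel'd modules, comodules over a Hopf algebra) where this fails, so a proof that does not invoke it cannot be complete.

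The paper's proof avoids comparing with arbitrary tensor monomorphisms altogether. By \cite[Remark~4.14 and Lemma~4.16]{AGV3} the comparison $\tau\colon\supp\rho\rightarrowtail Q$ is not just some morphism but an \emph{extremal monomorphism}; by the standing hypothesis $\tau^*$ is then an extremal epimorphism; and the proposition dual to \cite[Proposition~4.18]{AGV3} recognizes a composite of a tensor monomorphism with an extremal epimorphism tensored by $\id_A$ as exhibiting the absolute value of its target. Applied to $\rho^\vee=|\rho|^\vee(\tau^*\otimes\id_A)$, this identifies $|\rho|^\vee$ with $|\rho^\vee|$ in one step. If you want to retain your universal-property framing, the missing ingredient is exactly this recognition principle, and you would still need the extremality of $\tau$ and the hypothesis on $(-)^*$ to feed into it.
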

\begin{proof}
	Recall that $|\rho|$ is a tensor epimorphism
	and
	$\rho = (\id_B \otimes \tau)|\rho|$
	for some extremal monomorphism $\tau \colon \supp \rho \rightarrowtail Q$ (see~\cite[Remark~4.15 and Lemma~4.17]{AGV3}).
	Hence by Proposition~\ref{PropositionVeeUnderComposition}
	$$\rho^\vee = \bigl((\id_B \otimes \tau)|\rho|\bigr)^\vee
	= |\rho|^\vee (\tau^* \otimes \id_A).$$
	By Proposition~\ref{PropositionDualToTensorEpiIsMono}
	the morphism $|\rho|^\vee$ is a tensor monomorphism
	and, by our assumptions, the morphism $\tau^*$ is an extremal epimorphism.
	Hence, by the proposition dual to~\cite[Proposition~4.19]{AGV3}, 
	 the morphism $|\rho|^\vee$
	indeed corresponds to the cosupport of $\rho^\vee$.
\end{proof}	

\begin{theorem}\label{TheoremVeePreservesCoarserFiner}
	Let $\rho_i \colon A \to B \otimes Q_i$, $i=1,2$, be some morphisms in $\mathcal C$
	such that
	$\alpha_B \otimes \id_{P^*}$ and $\theta_{B^*,P}$ are monomorphisms for all objects $P$. Then
	$\rho_1 \succcurlyeq \rho_2$ implies $\rho_1^\vee \succcurlyeq \rho_2^\vee$. 
\end{theorem}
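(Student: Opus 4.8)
The plan is to unwind the definition of $\succcurlyeq$ on locally initial objects and transport a witnessing morphism through the $(-)^\vee$ construction. Recall that $\rho_1 \succcurlyeq \rho_2$ means $|\rho_1| \succcurlyeq |\rho_2|$ in $\LIO(\mathbf{MorTens}(A,B))$, i.e. there is a morphism $|\rho_1| \to |\rho_2|$ in $\mathbf{MorTens}(A,B)$; equivalently there is a morphism $\tau \colon \supp\rho_1 \to \supp\rho_2$ with $(\id_B \otimes \tau)|\rho_1| = |\rho_2|$. By \thref{SupportsUnderVee}, the absolute values of $\rho_1^\vee$ and $\rho_2^\vee$ are $|\rho_1|^\vee \colon (\supp\rho_1)^* \otimes A \to B$ and $|\rho_2|^\vee \colon (\supp\rho_2)^* \otimes A \to B$, with cosupports $(\supp\rho_1)^*$ and $(\supp\rho_2)^*$ respectively. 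So to prove $\rho_1^\vee \succcurlyeq \rho_2^\vee$ it suffices to produce a morphism $|\rho_1|^\vee \to |\rho_2|^\vee$ in $\mathbf{TensMor}(A,B)^{\mathrm{op}}$, that is, a morphism $\tau^* \colon (\supp\rho_2)^* \to (\supp\rho_1)^*$ in $\mathcal C$ making the evident triangle commute.

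First I would apply $(-)^*$ to $\tau$ to get $\tau^* \colon (\supp\rho_2)^* \to (\supp\rho_1)^*$. Next I would apply Proposition~\ref{PropositionVeeUnderComposition} to the identity $|\rho_2| = (\id_B \otimes \tau)|\rho_1|$, obtaining
$$|\rho_2|^\vee = \bigl((\id_B \otimes \tau)|\rho_1|\bigr)^\vee = |\rho_1|^\vee(\tau^* \otimes \id_A).$$
This equation says precisely that $\tau^*$ is a morphism from $|\rho_2|^\vee$ to $|\rho_1|^\vee$ in the category $\mathbf{TensMor}(A,B)$, hence a morphism $|\rho_1|^\vee \to |\rho_2|^\vee$ in $\mathbf{TensMor}(A,B)^{\mathrm{op}}$. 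Therefore $|\rho_1|^\vee \succcurlyeq |\rho_2|^\vee$, which is the definition of $\rho_1^\vee \succcurlyeq \rho_2^\vee$.

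There is one subtlety: the existence of absolute values $|\rho_1|$, $|\rho_2|$ and the clean description of their $(-)^\vee$-images requires the ambient hypotheses of the running convention (Properties \ref{PropertySmallLimits}, \ref{PropertySubObjectsSmallSet}--\ref{PropertyLimitsOfSubobjectsArePreserved}, \ref{PropertyEqualizers} and their duals, plus $(-)^*$ sending extremal monos to extremal epis), which is why the statement sits in the block of Theorems~\ref{TheoremSupportsUnderVee}--\ref{TheoremVeeReflectsCoarserFiner}. If one prefers to avoid invoking absolute values, the argument still goes through more directly: $\rho_1 \succcurlyeq \rho_2$ with $\rho_i \colon A \to B \otimes Q_i$ means (after passing to the locally initial representatives, or directly if one defines $\succcurlyeq$ via some $\tau \colon Q_1 \to Q_2$ with $(\id_B \otimes \tau)\rho_1 = \rho_2$) that there is such a $\tau$, and then Proposition~\ref{PropositionVeeUnderComposition} gives $\rho_2^\vee = \rho_1^\vee(\tau^* \otimes \id_A)$ verbatim, exhibiting $\tau^*$ as the required morphism $\rho_1^\vee \to \rho_2^\vee$ in $\mathbf{TensMor}(A,B)^{\mathrm{op}}$. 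I expect no real obstacle here: the whole point is that Proposition~\ref{PropositionVeeUnderComposition} already packages the contravariant functoriality of $(-)^\vee$ in the "coefficient" variable, and $\succcurlyeq$ is by design just the existence of a comparison morphism in that variable. The only care needed is bookkeeping of which category is opposite to which, so that a morphism witnessing $\rho_1 \succcurlyeq \rho_2$ downstairs becomes a morphism witnessing $\rho_1^\vee \succcurlyeq \rho_2^\vee$ in $\mathbf{TensMor}(A,B)^{\mathrm{op}}$ and not its opposite.
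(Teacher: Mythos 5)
Your proof is correct and follows exactly the paper's argument: extract $\tau\colon \supp\rho_1 \to \supp\rho_2$ from $|\rho_1|\succcurlyeq|\rho_2|$, apply Proposition~\ref{PropositionVeeUnderComposition} to get $|\rho_2|^\vee = |\rho_1|^\vee(\tau^*\otimes\id_A)$, and conclude via Theorem~\ref{TheoremSupportsUnderVee} that this witnesses $\rho_1^\vee\succcurlyeq\rho_2^\vee$. The only slight inaccuracy is the closing aside suggesting one could ``avoid invoking absolute values'': since $\succcurlyeq$ on non-locally-initial objects is defined through absolute values, they are not optional here, but this does not affect the main argument.
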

\begin{proof} If $\rho_1 \succcurlyeq \rho_2$,
	then there exists a morphism $$\tau \colon \supp \rho_1 \to \supp \rho_2$$ such that $(\id_B \otimes \tau)|\rho_1| = |\rho_2|$.
	By Proposition~\ref{PropositionVeeUnderComposition} 
	we have 
	$$|\rho_1|^\vee(\tau^* \otimes \id_A) = |\rho_2|^\vee.$$
	Now Theorem~\ref{TheoremSupportsUnderVee} implies that $\rho_1^\vee \succcurlyeq \rho_2^\vee$.
\end{proof}

\begin{theorem}\label{TheoremVeeReflectsCoarserFiner}
	Let $\rho_i \colon A \to B \otimes Q_i$, $i=1,2$, be some morphisms in $\mathcal C$
	such that
	$\alpha_P$ is an extremal monomorphism and $\theta_{B^*,P}$ is a monomorphism for all objects $P$. Then
	$\rho_1^\vee \succcurlyeq \rho_2^\vee$ implies $\rho_1 \succcurlyeq \rho_2$. 
\end{theorem}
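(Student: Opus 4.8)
The plan is to run the argument of Theorem~\ref{TheoremVeePreservesCoarserFiner} in reverse: starting from a comparison of the \emph{cosupports} of $\rho_1^\vee$ and $\rho_2^\vee$, I will use the operations $(-)^\vee$ and $(-)^\nabla$ to recover a comparison of the \emph{supports} of $\rho_1$ and $\rho_2$.

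First I would record that the present hypotheses subsume those needed earlier. Since $\alpha_B$ is an extremal monomorphism it is a monomorphism, hence $\alpha_B\otimes\id_{P^*}$ is a monomorphism by Property~\ref{PropertyMonomorphism}; together with the assumed monomorphy of $\theta_{B^*,P}$ for all $P$ this makes Theorems~\ref{TheoremAbsValueSupportExistence} and~\ref{TheoremSupportsUnderVee} and Corollary~\ref{CorollaryRhoABPStarEqualAfterNabla} available (in particular $\supp \rho_1$ and $\supp \rho_2$ exist). Now $\rho_1^\vee\succcurlyeq\rho_2^\vee$ means, by the description of the $\LIO$-preorder relative to $\mathbf{TensMor}(A,B)^{\mathrm{op}}$, that there is a morphism $|\rho_2^\vee|\to|\rho_1^\vee|$ in $\mathbf{TensMor}(A,B)$, i.e.\ a morphism $\sigma\colon \cosupp(\rho_2^\vee)\to\cosupp(\rho_1^\vee)$ with $|\rho_1^\vee|(\sigma\otimes\id_A)=|\rho_2^\vee|$. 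By Theorem~\ref{TheoremSupportsUnderVee} we may write $\cosupp(\rho_i^\vee)=(\supp \rho_i)^*$ and $|\rho_i^\vee|=|\rho_i|^\vee$, so $\sigma\colon(\supp \rho_2)^*\to(\supp \rho_1)^*$ and $|\rho_1|^\vee(\sigma\otimes\id_A)=|\rho_2|^\vee$.

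Next I would transport this equality from the ``$\vee$''-side to the ``$\nabla$''-side. Proposition~\ref{PropositionDualityNablaVeeSharp}, applied to $\rho=|\rho_1|$ and $f=\sigma$, gives $\bigl((\id_B\otimes\sigma^\flat)|\rho_1|\bigr)^\nabla=|\rho_1|^\vee(\sigma\otimes\id_A)=|\rho_2|^\vee$; applied to $\rho=|\rho_2|$ and $f=\id_{(\supp \rho_2)^*}$ (so that $f^\flat=\alpha_{\supp \rho_2}$, by the very definition of $\alpha$) it gives $\bigl((\id_B\otimes\alpha_{\supp \rho_2})|\rho_2|\bigr)^\nabla=|\rho_2|^\vee$. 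Both $(\id_B\otimes\sigma^\flat)|\rho_1|$ and $(\id_B\otimes\alpha_{\supp \rho_2})|\rho_2|$ are morphisms $A\to B\otimes(\supp \rho_2)^{**}$, so Corollary~\ref{CorollaryRhoABPStarEqualAfterNabla} yields $(\id_B\otimes\sigma^\flat)|\rho_1|=(\id_B\otimes\alpha_{\supp \rho_2})|\rho_2|=:\chi$.

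Finally I would extract the required morphism $\supp \rho_1\to\supp \rho_2$ from $\chi$. Take an (Epi, ExtrMono)-factorization $\sigma^\flat=m\circ e$ (Property~\ref{PropertyEpiExtrMonoFactorizations}). Since $|\rho_1|$ is a tensor epimorphism and $e$ is an epimorphism, $(\id_B\otimes e)|\rho_1|$ is again a tensor epimorphism, so $\chi=(\id_B\otimes m)\bigl((\id_B\otimes e)|\rho_1|\bigr)$ presents $\chi$ as a tensor epimorphism followed by $\id_B\otimes(\text{an extremal monomorphism})$; the factorization $\chi=(\id_B\otimes\alpha_{\supp \rho_2})|\rho_2|$ has exactly the same shape, because $\alpha_{\supp \rho_2}$ is an extremal monomorphism by hypothesis. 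By the characterization of supports (\cite[Proposition~4.18]{AGV3}, whose dual is used in the proof of Theorem~\ref{TheoremSupportsUnderVee}) each of these is an absolute-value decomposition of $\chi$, so by uniqueness of the absolute value up to a compatible isomorphism there is an isomorphism $j$ with $(\id_B\otimes(j\circ e))|\rho_1|=|\rho_2|$. Then $\tau:=j\circ e\colon\supp \rho_1\to\supp \rho_2$ is a morphism $|\rho_1|\to|\rho_2|$ in $\mathbf{MorTens}(A,B)$, whence $|\rho_1|\succcurlyeq|\rho_2|$, i.e.\ $\rho_1\succcurlyeq\rho_2$. The hard part will be this last step: pinning down that both decompositions of $\chi$ are ``the'' support decomposition and that the comparison isomorphism is compatible with the tensor-epimorphism parts. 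An alternative that sidesteps the uniqueness statement is to form the pullback of $\alpha_{\supp \rho_2}$ along $\sigma^\flat$, tensor it with $B$ using Property~\ref{PropertyTensorPullback}, lift $|\rho_1|,|\rho_2|$ to the pullback, and observe that the pullback projection onto $\supp \rho_1$ is an extremal monomorphism which, by the tensor-epimorphism property of $|\rho_1|$, is also an epimorphism, hence an isomorphism.
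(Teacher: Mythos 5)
Your proposal is correct and follows essentially the same route as the paper's proof: identify $|\rho_i^\vee|=|\rho_i|^\vee$ via Theorem~\ref{TheoremSupportsUnderVee}, transport the relation through Proposition~\ref{PropositionDualityNablaVeeSharp} and Corollary~\ref{CorollaryRhoABPStarEqualAfterNabla} using $(\id_{(\supp\rho_2)^*})^\flat=\alpha_{\supp\rho_2}$, and conclude from the extremal monomorphy of $\alpha_{\supp\rho_2}$. The only difference is cosmetic: where you extract the comparison morphism by hand via an (Epi, ExtrMono)-factorization of $\sigma^\flat$, the paper simply observes that $(\id_B\otimes\sigma^\flat)|\rho_1|$ receives a morphism from $|\rho_1|$ and invokes \cite[Proposition~4.18]{AGV3} to see that postcomposition with $\id_B\otimes\alpha_{\supp\rho_2}$ does not change the support.
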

\begin{proof}  By Theorem~\ref{TheoremSupportsUnderVee},
	$|\rho_i^\vee|=|\rho_i|^\vee$. Suppose that $\rho_1^\vee \succcurlyeq \rho_2^\vee$.
		Then $$|\rho_1|^\vee(f \otimes \id_A) = |\rho_2|^\vee$$
	for some morphism $f \colon (\supp \rho_2)^* \to (\supp \rho_1)^*$.
		Proposition~\ref{PropositionDualityNablaVeeSharp} implies
	\begin{equation*}\begin{split}\bigl((\id_B \otimes f^\flat)|\rho_1| \bigr)^\nabla
			= |\rho_1|^\vee (f\otimes \id_A) = |\rho_2|^\vee 
			=  |\rho_2|^\vee(\id_{(\supp \rho_2)^*} \otimes \id_A) \\
			= \bigl((\id_B \otimes (\id_{\supp \rho_2})^\flat)|\rho_2| \bigr)^\nabla
			= \bigl((\id_B \otimes \alpha_{\supp \rho_2})|\rho_2| \bigr)^\nabla.
	\end{split}\end{equation*}
	
	By Corollary~\ref{CorollaryRhoABPStarEqualAfterNabla},
	$$(\id_B \otimes f^\flat)|\rho_1| = (\id_B \otimes \alpha_{\supp \rho_2})|\rho_2|.$$
	Hence $(\id_B \otimes \alpha_{\supp \rho_2})|\rho_2| \preccurlyeq \rho_1$.
	Since by our assumptions $\alpha_{\supp \rho_2}$ is an extremal monomorphism,
	\cite[Proposition~4.19]{AGV3} implies that
	$$|(\id_B \otimes \alpha_{\supp \rho_2})|\rho_2|\bigr| = 
	|\rho_2|,$$ and we get $\rho_2 \preccurlyeq \rho_1$.
\end{proof}

\subsection{Finite dual}
Let $\mathcal C$ be a pre-rigid braided monoidal category. Suppose that the functor $(-)^* \colon \mathsf{Comon}(\mathcal C) \to \mathsf{Mon}(\mathcal C)$ has an adjoint functor $(-)^\circ \colon \mathsf{Mon}(\mathcal C) \to \mathsf{Comon}(\mathcal C)$
such that there exists a natural bijection
$$\mathsf{Mon}(\mathcal C)(A, C^*) \cong  \mathsf{Comon}(\mathcal C)(C, A^\circ).$$

As usual, given an object $A$ in $\mathsf{Mon}(\mathcal C)$ the object $A^\circ$ is called the \textit{finite} or the \textit{Sweedler dual} of $A$.

\begin{remark}\label{RemarkCircSufficientConditionsForExistence}
	By \cite[Theorem 5.9]{AGV3} 
	 and \cite[Theorem 1.5]{ArdGoyMen1} the functor $(-)^\circ$ exists, for example, if
	$\mathcal C$ satisfies Properties~\ref{PropertySubObjectsSmallSet}, \ref{PropertyMonomorphism} and the properties dual to Properties~\ref{PropertySmallLimits}--\ref{PropertyEpiExtrMonoFactorizations} and \ref{PropertyFreeMonoid} of Section~\ref{SubsectionSupportCoactingConditions}.
\end{remark}

Let $A$ be a monoid in $\mathcal C$. Denote by $\varkappa_A \colon A^\circ \to A^*$ the morphism in $\mathcal C$
corresponding to $\id_{A^\circ}$ under the following composition of bijections and an embedding:
$$\xymatrix{\mathsf{Comon}(\mathcal C)(A^\circ,A^\circ) \cong  \mathsf{Mon}(\mathcal C)(A,A^{\circ *})
	\subseteq \mathcal C(A,A^{\circ *}) \ar[r]^(0.78){(-)^\sharp}& \mathcal C(A^{\circ},A^*)}.$$
Note that $\varkappa$ is natural in $A$ by the naturality of the maps above.

\begin{lemma}\label{LemmaQcircKappa} For every monoid $Q$ in $\mathcal C$
	the diagram below is commutative:
	$$ 
	\xymatrix{ Q^\circ\otimes Q \ar[d]_{\id_{Q^\circ} \otimes {\varkappa_Q^\flat}} \ar[rrr]^{{\varkappa_Q}\otimes \id_Q} & & & Q^*\otimes Q \ar[d]^{\mathrm{ev}_Q} \\
		{Q^\circ}\otimes {Q^{\circ*}} \ar[rr]^{c_{Q^\circ,Q^{\circ *}}} & & {Q^{\circ*}} \otimes {Q^\circ} \ar[r]^(0.6){\mathrm{ev}_{Q^\circ}} & \mathbbm{1}\\
	}
	$$
\end{lemma}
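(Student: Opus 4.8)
The plan is to recognize that the square in the statement is nothing but a direct instance of Lemma~\ref{LemmaFFlatEvBraiding}. That lemma asserts the commutativity of a square attached to an arbitrary morphism $f \colon P \to U^*$, and by its very construction $\varkappa_Q$ is a morphism $Q^\circ \to Q^*$ in $\mathcal C$, so it has precisely this shape, with $P := Q^\circ$ and $U := Q$. Thus I would not attempt a fresh diagram chase but simply feed $f := \varkappa_Q$ into Lemma~\ref{LemmaFFlatEvBraiding}.

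Carrying this out, one has $f^\flat = \varkappa_Q^\flat \colon Q \to (Q^\circ)^* = Q^{\circ *}$, and the data of Lemma~\ref{LemmaFFlatEvBraiding} specialize as follows: the top arrow $f \otimes \id_U$ becomes $\varkappa_Q \otimes \id_Q$; the left arrow $\id_P \otimes f^\flat$ becomes $\id_{Q^\circ} \otimes \varkappa_Q^\flat$; the right arrow $\mathrm{ev}_U$ becomes $\mathrm{ev}_Q$; and the lower path $\mathrm{ev}_P \circ c_{P,P^*}$ becomes $\mathrm{ev}_{Q^\circ} \circ c_{Q^\circ, Q^{\circ *}}$. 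Hence the square asserted to commute here is literally the square supplied by Lemma~\ref{LemmaFFlatEvBraiding}, and its commutativity follows at once.

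I do not expect any genuine obstacle. The entire content is the substitution above, and the only things to verify are that the codomain of $\varkappa_Q$ is genuinely of the form $U^*$ with $U = Q$ (immediate from the definition of $\varkappa$) and that the operation $(-)^\flat$ occurring in Lemma~\ref{LemmaFFlatEvBraiding} and in the present statement is the same operation applied to the same morphism $\varkappa_Q$. The monoid structure on $Q$ is used only to make sense of the finite dual $Q^\circ$ and of $\varkappa_Q$; once these are fixed, the argument is purely formal and occupies essentially a single line.
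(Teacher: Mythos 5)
Your proposal is correct and coincides with the paper's own proof, which is exactly the one-line specialization of Lemma~\ref{LemmaFFlatEvBraiding} to $P=Q^\circ$, $U=Q$, $f=\varkappa_Q$. Nothing further is needed.
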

\begin{proof}
	Apply Lemma~\ref{LemmaFFlatEvBraiding} for $P=Q^\circ$, $U=Q$, $f=\varkappa_Q$.
\end{proof}	

\begin{proposition}\label{PropositionQcircMeasuring}
Let $\rho \colon A \to B \otimes Q$ be a comeasuring for some $\Omega$-magmas $A$ and $B$ and a monoid $Q$
in $\mathcal C$. Then $\rho^\vee(\varkappa_Q \otimes \id_A) \colon Q^\circ \otimes A \to B$
is a measuring.
\end{proposition}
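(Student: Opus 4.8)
The plan is to rewrite $\rho^\vee(\varkappa_Q \otimes \id_A)$ in the form $\tilde\rho^\nabla$ for a suitable comeasuring $\tilde\rho$, and then to invoke the first part of Lemma~\ref{LemmaNablaMeasComeas}. Put $\eta_Q := \varkappa_Q^\flat \colon Q \to Q^{\circ *}$. Unwinding the definition of $\varkappa_Q$: by construction $\varkappa_Q \colon Q^\circ \to Q^*$ is the image under $(-)^\sharp$ of the monoid homomorphism $Q \to Q^{\circ *}$ corresponding to $\id_{Q^\circ}$ under the adjunction $\mathsf{Mon}(\mathcal C)(Q, Q^{\circ *}) \cong \mathsf{Comon}(\mathcal C)(Q^\circ, Q^\circ)$; since $(-)^\sharp$ and $(-)^\flat$ are mutually inverse, $\eta_Q = \varkappa_Q^\flat$ is precisely that monoid homomorphism. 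In particular $\eta_Q$ is a morphism of monoids $Q \to Q^{\circ *}$, where $Q^{\circ *} = (Q^\circ)^*$ carries the monoid structure dual to the comonoid $Q^\circ$.

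Now apply Proposition~\ref{PropositionDualityNablaVeeSharp} with $U := Q$, $P := Q^\circ$ and $f := \varkappa_Q$ (our $\rho$ playing the role of the morphism denoted $\rho$ there) to obtain
$$\rho^\vee(\varkappa_Q \otimes \id_A) = \bigl((\id_B \otimes \varkappa_Q^\flat)\rho\bigr)^\nabla = \bigl((\id_B \otimes \eta_Q)\rho\bigr)^\nabla.$$
Hence it suffices to show that $\tilde\rho := (\id_B \otimes \eta_Q)\rho \colon A \to B \otimes (Q^\circ)^*$ is a comeasuring (with respect to the monoid $(Q^\circ)^*$); granting this, the first part of Lemma~\ref{LemmaNablaMeasComeas}, applied with the comonoid $P = Q^\circ$, yields that $\tilde\rho^\nabla \colon Q^\circ \otimes A \to B$ is a measuring, which is exactly the assertion.

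To see that $\tilde\rho$ is a comeasuring, I would use the general fact that the pushforward of a comeasuring along a monoid homomorphism is again a comeasuring. Indeed, a monoid homomorphism $\varphi \colon Q \to R$ induces a strict monoidal functor $\mathsf{MorTens}(Q) \to \mathsf{MorTens}(R)$, $\rho \mapsto (\id_B \otimes \varphi)\rho$; its compatibility with the monoidal products is the identity $\bigl((\id_B \otimes \varphi)\rho_1\bigr) \mathbin{\widetilde\otimes} \bigl((\id_B \otimes \varphi)\rho_2\bigr) = (\id \otimes \varphi)(\rho_1 \mathbin{\widetilde\otimes} \rho_2)$, which follows from $\mu_R(\varphi \otimes \varphi) = \varphi \mu_Q$ together with the naturality of the braiding, the unit condition being $\varphi u_Q = u_R$. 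Since a strict monoidal functor carries $\Omega$-magmas to $\Omega$-magmas without altering the underlying $\Omega$-magma structures of $A$ and $B$, the object $\tilde\rho = (\id_B \otimes \eta_Q)\rho$ is a comeasuring. Equivalently and more explicitly, one shows $\tilde\rho^{{}\mathbin{\widetilde\otimes} m} = (\id_{B^{\otimes m}} \otimes \eta_Q)\,\rho^{{}\mathbin{\widetilde\otimes} m}$ for all $m \in \mathbb Z_+$ by induction on $m$, and then post-composes, for each $\omega \in \Omega$, the comeasuring square of $\rho$ with $\id_{B^{\otimes t(\omega)}} \otimes \eta_Q$.

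The only delicate points are bookkeeping: the identification of $\varkappa_Q^\flat$ with the monoid homomorphism $\eta_Q$ extracted from the adjunction that defines $\varkappa_Q$, and the verification that pushing a comeasuring forward along a monoid homomorphism stays a comeasuring. Neither presents a genuine obstacle; the substance of the proposition is carried by Proposition~\ref{PropositionDualityNablaVeeSharp} and Lemma~\ref{LemmaNablaMeasComeas}.
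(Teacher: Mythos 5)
Your proposal is correct and follows essentially the same route as the paper: both apply Proposition~\ref{PropositionDualityNablaVeeSharp} to rewrite $\rho^\vee(\varkappa_Q\otimes\id_A)$ as $\bigl((\id_B\otimes\varkappa_Q^\flat)\rho\bigr)^\nabla$, use that $\varkappa_Q^\flat$ is a monoid homomorphism so that the pushforward remains a comeasuring, and conclude by Lemma~\ref{LemmaNablaMeasComeas}. The only difference is that you spell out the identification of $\varkappa_Q^\flat$ with the unit of the adjunction and the pushforward step, which the paper leaves implicit.
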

\begin{proof} By Proposition~\ref{PropositionDualityNablaVeeSharp},
	$$\bigl((\id_B \otimes \varkappa_Q^\flat)\rho\bigr)^\nabla = \rho^\vee (\varkappa_Q \otimes \id_A).$$
	Now we use the fact that $ \varkappa_Q^\flat$ is a monoid homomorphism and apply Lemma~\ref{LemmaNablaMeasComeas}.
\end{proof}	

\subsection{Duality theorem for (co)measurings}

In Theorem~\ref{Theorem(Co)monUniv(Co)measDuality} below we not only show that 
$\mathcal{A}^\square(\rho_U)^\circ \cong {}_\square \mathcal{C}(\rho_U^\vee)$
but also provide an explicit isomorphism:

\begin{theorem}\label{Theorem(Co)monUniv(Co)measDuality}
Let~$\mathcal C$ be a pre-rigid braided monoidal category such that
\begin{itemize}
	\item there exists the functor $(-)^\circ$;
	\item $\alpha_M \otimes \id_N$ and $\theta_{M,N}$ are monomorphisms
	for all objects $M,N$.
\end{itemize}
	 Let $\rho_U \colon A \to B \otimes U$
	 be a tensor  epimorphism in~$\mathcal C$
	  for 
	 $\Omega$-magmas $A$ and $B$
	 and an object~$U$ such that there exist	 
	  $\mathcal{A}^\square(\rho_U)$ and ${}_\square \mathcal{C}(\rho_U^\vee)$.
	Then $$\left(\rho_U^\mathbf{Comeas}\right)^\vee(\varkappa_{\mathcal{A}^\square(\rho_U)}\otimes \id_A)
	\colon \mathcal{A}^\square(\rho_U)^\circ \otimes A \to B$$ is a measuring and
	 the unique comonoid homomorphism $\beta \colon \mathcal{A}^\square(\rho_U)^\circ \to {}_\square \mathcal{C}(\rho_U^\vee)$ making
	the diagram
\begin{equation}\label{Eq(Co)monUniv(Co)measDuality} \xymatrix{
\mathcal{A}^\square(\rho_U)^\circ \otimes A  \ar@{-->}[d]_{\beta \otimes \id_A}
\ar[rr]^{\varkappa_{\mathcal{A}^\square(\rho_U)}\otimes \id_A}
& & \mathcal{A}^\square(\rho_U)^* \otimes A \ar[d]^{\left(\rho_U^\mathbf{Comeas}\right)^\vee} \\
		 {}_\square \mathcal{C}(\rho_U^\vee) \otimes A \ar[rr]^(0.6){\left(\rho_U^\vee\right)^\mathbf{Meas}}& & B \\} 
	 \end{equation}  commutative is a comonoid isomorphism.
\end{theorem}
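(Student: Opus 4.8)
The plan is to exhibit $\mathcal{A}^\square(\rho_U)^\circ$ and ${}_\square \mathcal{C}(\rho_U^\vee)$ as two representing objects, in $\mathsf{Comon}(\mathcal C)$, of one and the same functor, and then to read off $\beta$ as the canonical isomorphism between them, tracking the universal element through a chain of bijections to see that it is the one appearing in~\eqref{Eq(Co)monUniv(Co)measDuality}. That $\left(\rho_U^\mathbf{Comeas}\right)^\vee(\varkappa_{\mathcal{A}^\square(\rho_U)}\otimes \id_A)$ is a measuring is immediate: $\rho_U^\mathbf{Comeas}\colon A\to B\otimes\mathcal{A}^\square(\rho_U)$ is a comeasuring over a monoid, so Proposition~\ref{PropositionQcircMeasuring} applies verbatim.

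For the core statement I would first unwind the relative subcategories. Since $\rho_U$ is a tensor epimorphism, the objects of $\mathbf{Comeas}(A,B)(\rho_U)$ (taken with respect to $G_1$) are exactly the comeasurings admitting a morphism from $\rho_U$ in $\mathbf{MorTens}(A,B)$; hence the universal property of $\rho_U^\mathbf{Comeas}$ yields a bijection $\mathsf{Mon}(\mathcal C)(\mathcal{A}^\square(\rho_U),Q)\cong\mathrm{Comeas}(Q,\rho_U)$, natural in the monoid $Q$, sending $\varphi$ to $(\id_B\otimes\varphi)\rho_U^\mathbf{Comeas}$. Dually, the objects of $\mathbf{Meas}(A,B)^{\mathrm{op}}(\rho_U^\vee)$ (with respect to $G_1'$) are exactly the measurings admitting a morphism to $\rho_U^\vee$ in $\mathbf{TensMor}(A,B)$, and the universal property of $\left(\rho_U^\vee\right)^\mathbf{Meas}$ gives a bijection $\mathsf{Comon}(\mathcal C)(P,{}_\square \mathcal{C}(\rho_U^\vee))\cong\mathrm{Meas}(P,\rho_U^\vee)$, natural in the comonoid $P$, sending $\varphi$ to $\left(\rho_U^\vee\right)^\mathbf{Meas}(\varphi\otimes\id_A)$. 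Combining these with the adjunction defining $(-)^\circ$ and with the bijection $(-)^\nabla$ of Lemma~\ref{Lemma(Co)monMeasComeasBijection} (whose hypotheses on $\alpha$, $\theta$ and on $\rho_U$ being a tensor epimorphism are precisely those assumed here), one obtains a chain of bijections, natural in the comonoid $P$,
\begin{align*}
\mathsf{Comon}(\mathcal C)\bigl(P, \mathcal{A}^\square(\rho_U)^\circ\bigr) &\cong \mathsf{Mon}(\mathcal C)\bigl(\mathcal{A}^\square(\rho_U), P^*\bigr) \cong \mathrm{Comeas}(P^*,\rho_U) \\
&\cong \mathrm{Meas}(P,\rho_U^\vee) \cong \mathsf{Comon}(\mathcal C)\bigl(P, {}_\square \mathcal{C}(\rho_U^\vee)\bigr).
\end{align*}
By the Yoneda lemma in $\mathsf{Comon}(\mathcal C)$ there is then a unique comonoid isomorphism $\beta\colon \mathcal{A}^\square(\rho_U)^\circ\to{}_\square \mathcal{C}(\rho_U^\vee)$ inducing this natural isomorphism.

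It remains to identify $\beta$. Feeding $\id_{\mathcal{A}^\square(\rho_U)^\circ}$ into the chain: by the definition of $\varkappa$ it becomes $\varkappa_{\mathcal{A}^\square(\rho_U)}^\flat\in\mathsf{Mon}(\mathcal C)(\mathcal{A}^\square(\rho_U),\mathcal{A}^\square(\rho_U)^{\circ *})$, then the comeasuring $(\id_B\otimes\varkappa_{\mathcal{A}^\square(\rho_U)}^\flat)\rho_U^\mathbf{Comeas}$, and then, by Proposition~\ref{PropositionDualityNablaVeeSharp} with $f=\varkappa_{\mathcal{A}^\square(\rho_U)}$, the measuring $\left(\rho_U^\mathbf{Comeas}\right)^\vee(\varkappa_{\mathcal{A}^\square(\rho_U)}\otimes\id_A)$. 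On the other end $\beta$ corresponds to $\left(\rho_U^\vee\right)^\mathbf{Meas}(\beta\otimes\id_A)$, so the property defining $\beta$ reads $\left(\rho_U^\vee\right)^\mathbf{Meas}(\beta\otimes\id_A)=\left(\rho_U^\mathbf{Comeas}\right)^\vee(\varkappa_{\mathcal{A}^\square(\rho_U)}\otimes\id_A)$, that is, the commutativity of~\eqref{Eq(Co)monUniv(Co)measDuality}. Uniqueness of a comonoid homomorphism making~\eqref{Eq(Co)monUniv(Co)measDuality} commute is the uniqueness clause in the initiality of $\left(\rho_U^\vee\right)^\mathbf{Meas}$; to invoke it one checks that $\left(\rho_U^\mathbf{Comeas}\right)^\vee(\varkappa_{\mathcal{A}^\square(\rho_U)}\otimes\id_A)$ is an object of $\mathbf{Meas}(A,B)^{\mathrm{op}}(\rho_U^\vee)$, which follows by writing $\rho_U^\mathbf{Comeas}=(\id_B\otimes\varphi)\rho_U$ for the morphism $\varphi\colon U\to\mathcal{A}^\square(\rho_U)$ witnessing $\rho_U^\mathbf{Comeas}\in\mathbf{Comeas}(A,B)(\rho_U)$ and applying Proposition~\ref{PropositionVeeUnderComposition} to rewrite the measuring as $\rho_U^\vee\bigl((\varphi^*\varkappa_{\mathcal{A}^\square(\rho_U)})\otimes\id_A\bigr)$.

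I expect the main obstacle to be organizational rather than conceptual: carefully matching the relative subcategories $\mathbf{Comeas}(A,B)(\rho_U)$ and $\mathbf{Meas}(A,B)^{\mathrm{op}}(\rho_U^\vee)$ — as defined through the forgetful functors $G_1$, $G_1'$ and the preorder machinery of Section~\ref{SubsectionLIO} — with the sets $\mathrm{Comeas}(-,\rho_U)$ and $\mathrm{Meas}(-,\rho_U^\vee)$ of Lemma~\ref{Lemma(Co)monMeasComeasBijection}, and keeping every $(-)^{\mathrm{op}}$ and the direction of the $(-)^*\dashv(-)^\circ$ adjunction straight so that the Yoneda step takes place in $\mathsf{Comon}(\mathcal C)$ and produces $\beta$ pointing from $\mathcal{A}^\square(\rho_U)^\circ$ to ${}_\square \mathcal{C}(\rho_U^\vee)$. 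The genuinely computational inputs are already isolated in Propositions~\ref{PropositionQcircMeasuring}, \ref{PropositionDualityNablaVeeSharp}, \ref{PropositionVeeUnderComposition} and Lemma~\ref{Lemma(Co)monMeasComeasBijection}.
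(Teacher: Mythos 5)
Your proposal is correct and follows essentially the same route as the paper: the measuring claim via Proposition~\ref{PropositionQcircMeasuring}, then the chain of natural bijections $\mathsf{Comon}(\mathcal C)(P, {}_\square \mathcal{C}(\rho_U^\vee))\cong \mathrm{Meas}(P,\rho_U^\vee) \cong \mathrm{Comeas}(P^*,\rho_U)\cong \mathsf{Mon}(\mathcal C)(\mathcal{A}^\square(\rho_U), P^*) \cong \mathsf{Comon}(\mathcal C)(P, \mathcal{A}^\square(\rho_U)^\circ)$ combining the two universal properties, Lemma~\ref{Lemma(Co)monMeasComeasBijection} and the $(-)^*\dashv(-)^\circ$ adjunction, followed by Yoneda and the identification of the universal element via Proposition~\ref{PropositionDualityNablaVeeSharp} applied to $\varkappa$ (the paper packages this last step as Lemma~\ref{LemmaQcircKappa}, which is the same computation).
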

\begin{remark} Consider the following (not necessarily commutative) diagram where 
	$G_1$ and $G_1'$ are forgetful functors:
$$\xymatrix{ \mathbf{Comeas}(A,B) \ar[d]_{G_1} \ar[rr]^{(-)^\vee (\varkappa_{(\ldots)} \otimes \id_A)} & \quad & \mathbf{Meas}(A,B)^{\mathrm{op}} \ar[d]^{G'_1} \\
	\mathbf{MorTens}(A,B) \ar[rr]^{(-)^\vee} &  & \mathbf{TensMor}(A,B)^{\mathrm{op}}
}$$
Theorem~\ref{Theorem(Co)monUniv(Co)measDuality} asserts that 
if $\rho_U^\mathbf{Comeas}$ is the initial object in $\mathbf{Comeas}(A,B)_{G_1}(\rho_U)$,
then $\left(\rho_U^\mathbf{Comeas}\right)^\vee(\varkappa_{\mathcal{A}^\square(\rho_U)}\otimes \id_A)$
is an initial object in $\mathbf{Meas}(A,B)^{\mathrm{op}}_{G_1'}(\rho_U^\vee)$.
\end{remark}
\begin{proof}[Proof of Theorem~\ref{Theorem(Co)monUniv(Co)measDuality}] The morphism $\left(\rho_U^\mathbf{Comeas}\right)^\vee(\varkappa_{\mathcal{A}^\square(\rho_U)}\otimes \id_A)$
	is a measuring by Proposition~\ref{PropositionQcircMeasuring}.
	
	Inspired by~\cite[Remark~1.3]{Tambara},
	we can add the bijection from Lemma~\ref{Lemma(Co)monMeasComeasBijection} to the following bijections natural in the comonoid $P$:
	\begin{equation}\label{EqTambaraBijections}\begin{split}\mathsf{Comon}(\mathcal C)(P, {}_\square \mathcal{C}(\rho_U^\vee))\cong \mathrm{Meas}(P,\rho_U^\vee) \\ \cong \mathrm{Comeas}(P^*,\rho_U)\cong
	\mathsf{Mon}(\mathcal C)(\mathcal{A}^\square(\rho_U), P^*)
	\cong \mathsf{Comon}(\mathcal C)(P, \mathcal{A}^\square(\rho_U)^\circ).\end{split}
	\end{equation}
	
	Now if we substitute for $P$ the comonoid $\mathcal{A}^\square(\rho_U)^\circ$,
	the comonoid homomorphism $\mathcal{A}^\square(\rho_U)^\circ \to {}_\square \mathcal{C}(\rho_U^\vee)$,
	corresponding to $\id_{\mathcal{A}^\square(\rho_U)^\circ}$, will be equal to $\beta$ by Lemma~\ref{LemmaQcircKappa}.
	If we substitute for $P$ the comonoid~${}_\square \mathcal{C}(\rho_U^\vee)$,
	by the naturality,
	the homomorphism $\id_{{}_\square \mathcal{C}(\rho_U^\vee)}$ 
	will correspond to $\beta^{-1}$. Hence $\beta$ is a comonoid isomorphism.
\end{proof}

\begin{corollary}\label{Corollary(Co)monUniv(Co)measDuality}
Let~$\mathcal C$ be a pre-rigid braided monoidal category
satisfying Properties~\ref{PropertySmallLimits}, \ref{PropertySubObjectsSmallSet}, \ref{PropertyMonomorphism},    \ref{PropertySwitchProdTensorIsAMonomorphism}, \ref{PropertyFreeMonoid} of Section~\ref{SubsectionSupportCoactingConditions} and their duals such that	\begin{itemize}
	\item $\alpha_M \otimes \id_N$ and $\theta_{M,N}$ are monomorphisms
	for all objects $M,N$.
\end{itemize}
Let $\rho_U \colon A \to B \otimes U$
be a tensor  epimorphism in~$\mathcal C$
for 
$\Omega$-magmas $A$ and $B$
and an object~$U$ such that $\mathbf{Comeas}(A,B)_{G_1}(\rho_U)$ is not empty.
Then $$\left(\rho_U^\mathbf{Comeas}\right)^\vee(\varkappa_{\mathcal{A}^\square(\rho_U)}\otimes \id_A)
\colon \mathcal{A}^\square(\rho_U)^\circ \otimes A \to B$$ is a measuring and
the unique comonoid homomorphism $\beta \colon \mathcal{A}^\square(\rho_U)^\circ \to {}_\square \mathcal{C}(\rho_U^\vee)$ making
the diagram~\eqref{Eq(Co)monUniv(Co)measDuality}  commutative is a comonoid isomorphism.
\end{corollary}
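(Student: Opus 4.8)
The plan is to derive the corollary from Theorem~\ref{Theorem(Co)monUniv(Co)measDuality} by checking that the latter's hypotheses are implied by the assumptions made here. Two of those hypotheses are literally among our assumptions: $\rho_U$ is a tensor epimorphism, and $\alpha_M\otimes\id_N$ and $\theta_{M,N}$ are monomorphisms for all objects $M,N$. It therefore remains to verify three things: (i) that the finite dual functor $(-)^\circ$ exists; (ii) that $\mathcal{A}^\square(\rho_U)$ exists; (iii) that ${}_\square\mathcal{C}(\rho_U^\vee)$ exists.

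Items (i) and (ii) are pure bookkeeping with the properties of Section~\ref{SubsectionSupportCoactingConditions}. For $(-)^\circ$ I would apply Remark~\ref{RemarkCircSufficientConditionsForExistence}: it suffices to have Properties~\ref{PropertySubObjectsSmallSet} and~\ref{PropertyMonomorphism} and the duals of Properties~\ref{PropertySmallLimits}--\ref{PropertyEpiExtrMonoFactorizations} and~\ref{PropertyFreeMonoid}. Among these, Properties~\ref{PropertySubObjectsSmallSet}, \ref{PropertyMonomorphism} and the duals of~\ref{PropertySmallLimits} and~\ref{PropertyFreeMonoid} are assumed; the dual of Property~\ref{PropertyFiniteAndCountableColimits} follows from Property~\ref{PropertySmallLimits}, and the dual of Property~\ref{PropertyEpiExtrMonoFactorizations} follows from the duals of Properties~\ref{PropertySmallLimits} and~\ref{PropertySubObjectsSmallSet} (see the remark at the end of Section~\ref{SubsectionSupportCoactingConditions}). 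For (ii): since $\rho_U$ is a tensor epimorphism it is a locally initial object of $\mathbf{MorTens}(A,B)$, and $\mathbf{Comeas}(A,B)(\rho_U)$ is non-empty by hypothesis, so the lifting of $\rho_U$ along $G_1$ carried out in~\cite[Theorem~4.23]{AGV3} yields the initial object $\rho_U^\mathbf{Comeas}$, hence $\mathcal{A}^\square(\rho_U)$; the hypotheses on $\mathcal C$ imposed there are implied by Properties~\ref{PropertySmallLimits}, \ref{PropertySubObjectsSmallSet}, \ref{PropertyMonomorphism}, \ref{PropertySwitchProdTensorIsAMonomorphism}, \ref{PropertyFreeMonoid} assumed here (together with Properties~\ref{PropertyFiniteAndCountableColimits} and~\ref{PropertyEpiExtrMonoFactorizations}, which follow as above).

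The genuinely substantive point is (iii). I would obtain ${}_\square\mathcal{C}(\rho_U^\vee)$ from the dual statement~\cite[Theorem~5.19]{AGV3} once we know that $\rho_U^\vee$ is a tensor monomorphism and that $\mathbf{Meas}(A,B)^{\mathrm{op}}(\rho_U^\vee)$ is non-empty (the required properties on $\mathcal C$ are, again, implied by our assumptions). The first is Proposition~\ref{PropositionDualToTensorEpiIsMono}. For the second I would use the assumed non-emptiness of $\mathbf{Comeas}(A,B)(\rho_U)$: choose a comeasuring $\rho\colon A\to B\otimes Q$ and a morphism $f\colon U\to Q$ in $\mathcal C$ with $(\id_B\otimes f)\rho_U=\rho$. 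By Proposition~\ref{PropositionQcircMeasuring} the morphism $\psi:=\rho^\vee(\varkappa_Q\otimes\id_A)\colon Q^\circ\otimes A\to B$ is a measuring, and by Proposition~\ref{PropositionVeeUnderComposition} we have $\rho^\vee=\rho_U^\vee(f^*\otimes\id_A)$, so $\psi=\rho_U^\vee\bigl((f^*\varkappa_Q)\otimes\id_A\bigr)$. Thus $f^*\varkappa_Q\colon Q^\circ\to U^*$ is a morphism $\psi\to\rho_U^\vee$ in $\mathbf{TensMor}(A,B)$, equivalently a morphism $\rho_U^\vee\to\psi$ in $\mathbf{TensMor}(A,B)^{\mathrm{op}}$, which places $\psi$ in $\mathbf{Meas}(A,B)^{\mathrm{op}}(\rho_U^\vee)$; hence the latter is non-empty.

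Once (i)--(iii) are in place, the conclusion is exactly that of Theorem~\ref{Theorem(Co)monUniv(Co)measDuality}, so no further argument is needed. I expect the non-emptiness transfer in step (iii) to be the only place requiring real thought: one must exhibit a concrete measuring lying over $\rho_U^\vee$, and this relies on having $(-)^\circ$ available together with Propositions~\ref{PropositionQcircMeasuring} and~\ref{PropositionVeeUnderComposition} and the naturality of $\varkappa$; everything else reduces to matching the property lists of the cited theorems against the hypotheses at hand.
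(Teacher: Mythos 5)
Your proof is correct and takes essentially the same route as the paper's, which simply invokes \cite[Proposition~4.2~(1), Theorems~4.23, 5.19]{AGV3}, Remark~\ref{RemarkCircSufficientConditionsForExistence} and Theorem~\ref{Theorem(Co)monUniv(Co)measDuality}. Your explicit transfer of non-emptiness from $\mathbf{Comeas}(A,B)(\rho_U)$ to $\mathbf{Meas}(A,B)^{\mathrm{op}}(\rho_U^\vee)$ via Propositions~\ref{PropositionQcircMeasuring} and~\ref{PropositionVeeUnderComposition} correctly fills in a step the paper leaves implicit.
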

\begin{proof}
	Apply~\cite[Proposition~4.2 (1), Theorems~4.24, 5.19]{AGV3}, 
	Remark~\ref{RemarkCircSufficientConditionsForExistence} and	Theorem \ref{Theorem(Co)monUniv(Co)measDuality}. 
\end{proof}		

\subsection{Duality theorem for (co)actions}

For given objects $A$ and $B$ in a pre-rigid braided monoidal category $\mathcal C$ denote by $\theta^\mathrm{inv}_{A,B} \colon A^* \otimes B^* \to (A\otimes B)^*$ the morphism corresponding under the bijection~\eqref{EqPrerigidBijection} to the composition $$\xymatrix{A^* \otimes B^* \otimes A\otimes B \ar[rr]^{{\id_A} \otimes {c_{B^*,A}} \otimes {\id_B}}
	& \qquad  & A^* \otimes A\otimes B^* \otimes B \ar[rr]^(0.6){{\mathrm{ev}_A} \otimes {\mathrm{ev}_B}} & &
	\mathbbm{1} \otimes \mathbbm{1} \ar[r]^{\sim}
	& \mathbbm{1}
}$$

(Please notice that in comparison with $\theta_{A,B}$ in  $\theta^\mathrm{inv}_{A,B}$ 
we use the original braiding, not the inverse one.) Again, $\theta^\mathrm{inv}_{A,B}$ is natural in $A$ and $B$.
\begin{remark}\label{RemarkThetaThetaInvAdjoint}
By~\cite[Proposition 4.4]{ArdGoyMen2022},  $(\theta^\mathrm{inv}_{A,B},\iota)$ is the monoidal structure
on the functor $(-)^* \colon \mathcal C^{\mathrm{op}} \to \mathcal C$ that corresponds to the op-monoidal structure $( \theta_{A,B}, \iota)$ on $(-)^* \colon \mathcal C \to \mathcal C^{\mathrm{op}}$ under the adjunction
 $
(-)^\sharp \colon \mathcal C^{\mathrm{op}}(A^*, B) \mathrel{\widetilde\to}
\mathcal C(A, B^*)$.
\end{remark}

\begin{lemma}\label{LemmaVeeCompositionThetaInv} Let $\rho_1 \colon B \to C \otimes Q_1$ and $\rho_2 \colon A \to B \otimes Q_2$ be morphisms in $\mathcal C$ for some objects $A,B,C, Q_1, Q_2$.
	Then the diagram below is commutative:
		$$\xymatrix{ Q_1^* \otimes Q_2^* \otimes A \ar[rrr]^{\id_{Q_1^*} \otimes \rho_2^\vee} 
			\ar[d]_{\id_{Q_1^* \otimes Q_2^*} \otimes \rho_2}
			& & &   Q_1^*\otimes B  \ar[dddd]^{\rho_1^\vee} \\
			Q_1^* \otimes Q_2^* \otimes B \otimes Q_2
			\ar[d]_{\id_{Q_1^* \otimes Q_2^*} \otimes \rho_1 \otimes \id_{Q_2}}
			   \\ 
			Q_1^* \otimes Q_2^* \otimes C \otimes Q_1 \otimes Q_2
						\ar[d]_{c_{Q_1^* \otimes Q_2^*, C} \otimes \id_{Q_1\otimes Q_2}}
					\\ 
			C \otimes Q_1^* \otimes Q_2^* \otimes Q_1 \otimes Q_2
			\ar[d]_{\id_C \otimes
				\theta^{\mathrm{inv}}_{Q_1, Q_2} \otimes \id_{Q_1 \otimes Q_2}}
			\\
			C \otimes (Q_1 \otimes Q_2)^* \otimes Q_1 \otimes Q_2 \ar[rr]_(0.65){\id_C \otimes
				\mathrm{ev}_{Q_1 \otimes Q_2}} &  \quad &  C \otimes \mathbbm{1} \ar[r]^\sim & C 
		}$$
\end{lemma}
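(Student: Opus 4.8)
The plan is to unfold both composites in the diagram into elementary pieces --- copies of $\rho_1$ and $\rho_2$, braidings, the evaluations $\mathrm{ev}_{Q_1}$ and $\mathrm{ev}_{Q_2}$, and unit isomorphisms --- and then check that the two resulting composites agree by a cell-by-cell chase. The conceptual content is the identity
$$\rho_1^\vee(\id_{Q_1^*}\otimes \rho_2^\vee)\;=\;\bigl((\rho_1\otimes \id_{Q_2})\rho_2\bigr)^\vee(\theta^{\mathrm{inv}}_{Q_1,Q_2}\otimes \id_A),$$
and the diagram in the statement is precisely the spelt-out version of this, obtained by pushing $\theta^{\mathrm{inv}}_{Q_1,Q_2}$ to the right of the two coactions using its naturality, the naturality of the braiding in its first argument, and bifunctoriality of $\otimes$; so I would first observe this reduction and then prove the displayed identity.

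First I would rewrite the last two arrows of the lower leg. By the definition of $\theta^{\mathrm{inv}}_{Q_1,Q_2}$ via the bijection~\eqref{EqPrerigidBijection}, the composite $(\id_C\otimes \mathrm{ev}_{Q_1\otimes Q_2})(\id_C\otimes \theta^{\mathrm{inv}}_{Q_1,Q_2}\otimes \id_{Q_1\otimes Q_2})$ equals, modulo the canonical isomorphism $C\otimes \mathbbm{1}\otimes \mathbbm{1}\cong C$, the composite $(\id_C\otimes \mathrm{ev}_{Q_1}\otimes \mathrm{ev}_{Q_2})(\id_C\otimes \id_{Q_1^*}\otimes c_{Q_2^*,Q_1}\otimes \id_{Q_2})$; after this substitution the lower leg involves only $\rho_1$, $\rho_2$, braidings, $\mathrm{ev}_{Q_1}$, $\mathrm{ev}_{Q_2}$ and unitors. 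Next I would unfold the upper leg by the defining diagram of $(-)^\vee$: up to $B\otimes \mathbbm{1}\cong B$, $\rho_2^\vee$ is $(\id_B\otimes \mathrm{ev}_{Q_2})(c_{Q_2^*,B}\otimes \id_{Q_2})(\id_{Q_2^*}\otimes \rho_2)$, and similarly for $\rho_1^\vee$. Since both legs now begin with $\id_{Q_1^*\otimes Q_2^*}\otimes \rho_2$, it suffices to equate the two composites out of $Q_1^*\otimes Q_2^*\otimes B\otimes Q_2$: in the upper leg one applies $\mathrm{ev}_{Q_2}$ first, then $\rho_1$ on $B$, then a single braiding of $Q_1^*$ past $C$, then $\mathrm{ev}_{Q_1}$; in the lower leg one applies $\rho_1$ first, then braids $Q_1^*\otimes Q_2^*$ past $C$ and $Q_2^*$ past $Q_1$, and performs both evaluations at the end.

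These two composites are reconciled by naturality of the braiding (to commute $\rho_1\colon B\to C\otimes Q_1$ past the braidings involving $B$, turning $c_{-,B}$ into braidings past $C$ and past $Q_1$), the hexagon coherence for the braiding (to split $c_{Q_1^*\otimes Q_2^*,C}$ and $c_{Q_2^*,C\otimes Q_1}$ into composites of braidings of the single tensor factors), bifunctoriality of $\otimes$ (to slide $\mathrm{ev}_{Q_2}$, which acts on a factor disjoint from the one touched by $\rho_1$, past $\rho_1$ and the braidings), and naturality of the unitors $C\otimes \mathbbm{1}\cong C$ and $\mathbbm{1}\otimes \mathbbm{1}\cong \mathbbm{1}$. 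Drawing the full diagram, every cell closes for one of these reasons, so the outer polygon commutes. The only real difficulty is bookkeeping: one must arrange the order of the braidings so that, once $\rho_2$ and $\rho_1$ have been pushed to the left and all the dual factors $Q_i^*$ collected to the left of $C$, the residual shuffle of $Q_2^*$ past $Q_1$ is exactly the one appearing in the definition of $\theta^{\mathrm{inv}}_{Q_1,Q_2}$ --- which is precisely why the \emph{ordinary} braiding must be used there rather than its inverse, in accordance with Remark~\ref{RemarkThetaThetaInvAdjoint}.
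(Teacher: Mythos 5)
Your proposal is correct and follows essentially the same route as the paper: unfold $\rho_1^\vee$, $\rho_2^\vee$ and $\theta^{\mathrm{inv}}_{Q_1,Q_2}$ by their defining diagrams and close the resulting cells using naturality of the braiding, functoriality of $\otimes$, and the coherence/unitor axioms. The paper's proof is exactly this cell-by-cell chase on one large diagram, including the observation that the residual shuffle $c_{Q_2^*,Q_1}$ left over after commuting $\rho_1$ past the braidings is the one occurring in the definition of $\theta^{\mathrm{inv}}$.
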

\begin{proof}
	Consider the following diagram:
	$$\hspace{-0.7cm}
			\xymatrix{ Q_1^* \otimes Q_2^* \otimes A \ar[rrr]^{\id_{Q_1^*} \otimes \rho_2^\vee} 
				\ar[d]_{\id_{Q_1^* \otimes Q_2^*} \otimes \rho_2}
				& & &   Q_1^*\otimes B \ar@/^1.2pc/[lddd]^(0.65){\id_{Q_1^*}\otimes \rho_1} \ar[ddddd]^{\rho_1^\vee} \\
				 Q_1^* \otimes Q_2^* \otimes B \otimes Q_2
				\ar[d]_{\id_{Q_1^* \otimes Q_2^*} \otimes \rho_1 \otimes \id_{Q_2}}
				\ar[r]^{\id_{Q_1^*}
					\otimes c_{Q_2^*,B} \otimes \id_{Q_2} \phantom{\Bigl|}} & Q_1^* \otimes B \otimes Q_2^* \otimes Q_2
				\ar[d]_{\id_{Q_1^*}\otimes \rho_1 \otimes \id_{ Q_2^* \otimes Q_2}}
				\ar[r]^(0.55){\id_{Q_1^* \otimes B} \otimes \mathrm{ev}_{Q_2}} & Q_1^* \otimes B \otimes \mathbbm{1}\ar[d]^(0.45){\id_{Q_1^*}\otimes \rho_1 \otimes \id_\mathbbm{1}} \ar[ru]^\sim 
				&   \\ 
				 Q_1^* \otimes Q_2^* \otimes C \otimes Q_1 \otimes Q_2
				\ar[r]_{\id_{Q_1^*}
					\otimes c_{Q_2^*,C\otimes Q_1} \otimes \id_{Q_2} \phantom{\bigl|}}
				\ar[d]_{c_{Q_1^* \otimes Q_2^*, C} \otimes \id_{Q_1\otimes Q_2}}
				& Q_1^* \otimes C \otimes Q_1 \otimes Q_2^* \otimes Q_2 
				\ar[d]^(0.45){c_{Q_1^*, C} \otimes \id_{Q_1\otimes Q_2^* \otimes Q_2}}
				\ar[r]^{\qquad\id_{Q_1^* \otimes C \otimes Q_1} \otimes \mathrm{ev}_{Q_2} \phantom{\biggl|}} &
				 Q_1^* \otimes C \otimes Q_1 \otimes \mathbbm{1} \ar[d]^\sim
				\\ 
				 C \otimes Q_1^* \otimes Q_2^* \otimes Q_1 \otimes Q_2
				\ar[r]_{\id_{C \otimes Q_1^*}
					\otimes c_{Q_2^*, Q_1} \otimes \id_{Q_2} \phantom{\bigl|}}
\ar[dd]^{\id_C \otimes
	\theta^{\mathrm{inv}}_{Q_1, Q_2}\otimes \id_{Q_1 \otimes Q_2}}				
				& C \otimes Q_1^* \otimes Q_1 \otimes Q_2^* \otimes Q_2
				 \ar[d]^{\id_{C \otimes Q_1^* \otimes Q_1} \otimes \mathrm{ev}_{Q_2}}  &
				Q^*_1 \otimes C \otimes Q_1
				\ar[d]^{c_{Q_1^*, C} \otimes \id_{Q_1}} \\
				  &  C \otimes Q_1^* \otimes Q_1 \otimes \mathbbm{1} \ar[r]^\sim & C \otimes Q_1^* \otimes Q_1 \ar[d]^{\id_{C} \otimes \mathrm{ev}_{Q_1}}\\
				  C \otimes (Q_1 \otimes Q_2)^* \otimes Q_1 \otimes Q_2 \ar[rr]^{\id_C \otimes
				 \mathrm{ev}_{Q_1 \otimes Q_2}} &  &  C \otimes \mathbbm{1} \ar[r]^\sim & C 
		}
		$$
		The upper polygon is commutative by the definition of $\rho_2^\vee$. The lower right polygon is commutative by the definition of $\rho_1^\vee$. The upper right triangle is commutative by the properties of a monoidal category.
		The lower left polygon is commutative by the definition of $\theta^{\mathrm{inv}}$. The rectangles in the central part of the diagram are commutative by the functoriality of $\otimes$. The squares on the left are commutative by the properties of the braiding. Hence the outer square is commutative too, which implies the lemma.
\end{proof}	

\begin{lemma}\label{LemmaVeeCompositionSwitch} Let $\rho_1 \colon B \to C \otimes Q_1$ and $\rho_2 \colon A \to B \otimes Q_2$ be morphisms in $\mathcal C$ for some objects $A,B,C, Q_1, Q_2$.
	Then $$\left((\rho_1 \otimes \id_{Q_2})\rho_2\right)^\vee (\theta^\mathrm{inv}_{Q_1,Q_2} \otimes \id_A) = 
	\rho_1^\vee (\id_{Q_1^*} \otimes \rho_2^\vee ).$$
\end{lemma}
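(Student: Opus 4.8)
The plan is to read this off directly from Lemma~\ref{LemmaVeeCompositionThetaInv}. The commutative square in that lemma has as its right-and-top edge precisely the morphism $\rho_1^\vee(\id_{Q_1^*}\otimes\rho_2^\vee)$, so it suffices to identify its left-and-bottom edge with $\left((\rho_1\otimes\id_{Q_2})\rho_2\right)^\vee(\theta^{\mathrm{inv}}_{Q_1,Q_2}\otimes\id_A)$.

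First I would unfold the definition of $(-)^\vee$ applied to $\sigma := (\rho_1\otimes\id_{Q_2})\rho_2 \colon A \to C\otimes(Q_1\otimes Q_2)$: the morphism $\sigma^\vee$ is the composite
\[
(Q_1\otimes Q_2)^*\otimes A \xrightarrow{\id\otimes\rho_2} (Q_1\otimes Q_2)^*\otimes B\otimes Q_2 \xrightarrow{\id\otimes\rho_1\otimes\id_{Q_2}} (Q_1\otimes Q_2)^*\otimes C\otimes Q_1\otimes Q_2,
\]
followed by $c_{(Q_1\otimes Q_2)^*,\,C}\otimes\id_{Q_1\otimes Q_2}$, then $\id_C\otimes\mathrm{ev}_{Q_1\otimes Q_2}$, then the canonical identification $C\otimes\mathbbm{1}\cong C$.

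Next I would precompose with $\theta^{\mathrm{inv}}_{Q_1,Q_2}\otimes\id_A$ and slide the factor $\theta^{\mathrm{inv}}_{Q_1,Q_2}$, which acts only on the $Q_1^*\otimes Q_2^*$ tensorand, to the right through the composite. By bifunctoriality of $\otimes$ it commutes past $\id\otimes\rho_2$ and $\id\otimes\rho_1\otimes\id_{Q_2}$ (which act only on the other tensorands), and then by naturality of the braiding it commutes past $c_{(Q_1\otimes Q_2)^*,\,C}\otimes\id$, replacing $c_{(Q_1\otimes Q_2)^*,\,C}$ by $c_{Q_1^*\otimes Q_2^*,\,C}$ and leaving an instance of $\id_C\otimes\theta^{\mathrm{inv}}_{Q_1,Q_2}\otimes\id_{Q_1\otimes Q_2}$ in front of $\id_C\otimes\mathrm{ev}_{Q_1\otimes Q_2}$. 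What remains is exactly the composite along the left and bottom edges of the diagram in Lemma~\ref{LemmaVeeCompositionThetaInv}, so combining with that lemma yields the asserted equality.

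The argument is entirely formal; the only point that needs care is the bookkeeping of which tensor factor each arrow acts on, so that every appeal to bifunctoriality of $\otimes$ and to naturality of the braiding is applied to the correct strand. There is no genuine obstacle here, since the substantive diagram chase has already been carried out in the proof of Lemma~\ref{LemmaVeeCompositionThetaInv}.
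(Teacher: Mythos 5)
Your proposal is correct and is essentially the paper's own proof, which simply says to apply Lemma~\ref{LemmaVeeCompositionThetaInv} together with the naturality of the braiding; you have just spelled out the bookkeeping of sliding $\theta^{\mathrm{inv}}_{Q_1,Q_2}$ through the composite defining $\left((\rho_1\otimes\id_{Q_2})\rho_2\right)^\vee$. No gaps.
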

\begin{proof}
	Apply Lemma~\ref{LemmaVeeCompositionThetaInv} and the naturality of the braiding.
\end{proof}

Suppose now that~$\mathcal C$ is a pre-rigid \textit{symmetric} monoidal category, i.e. 
$\mathcal C$ is braided and $c_{B,A}=c_{A,B}^{-1}$ for every objects $A,B$. In particular, $\theta_{A,B}=\theta^\mathrm{inv}_{A,B}$.

\begin{lemma}\label{LemmaStarComoduleToAModule}
	Let a morphism $\rho \colon M \to M\otimes P$ define on an object $M$ a structure of a right $P$-comodule
	for a comonoid $P$. Then $\rho^\vee \colon P^* \otimes M \to M$ defines on $M$ a structure of a left $P^*$-module.
\end{lemma}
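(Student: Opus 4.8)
The plan is to verify directly the two axioms of a left $P^*$-module structure on $M$, using the duality identities already established. Write $\psi:=\rho^\vee\colon P^*\otimes M\to M$; recall that the dual monoid $(P^*,\mu,u)$ has $\mu=\Delta^*\theta_{P,P}$ and $u=\varepsilon^*\iota$, and that since $\mathcal C$ is symmetric we have $\theta_{P,P}=\theta^{\mathrm{inv}}_{P,P}$. Suppressing associativity and unit constraints (as elsewhere in the paper), the two statements to check are $\psi(\mu\otimes\id_M)=\psi(\id_{P^*}\otimes\psi)$ and $\psi(u\otimes\id_M)=l_M$, the left unitor; the data on $\rho$ that I may use are coassociativity $(\rho\otimes\id_P)\rho=(\id_M\otimes\Delta)\rho$ and counitality $(\id_M\otimes\varepsilon)\rho=r_M^{-1}$.

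For associativity I would apply Lemma~\ref{LemmaVeeCompositionSwitch} with $\rho_1=\rho_2=\rho$ and $Q_1=Q_2=P$. By symmetry its left-hand side is $\bigl((\rho\otimes\id_P)\rho\bigr)^\vee(\theta_{P,P}\otimes\id_M)$; coassociativity turns this into $\bigl((\id_M\otimes\Delta)\rho\bigr)^\vee(\theta_{P,P}\otimes\id_M)$, and Proposition~\ref{PropositionVeeUnderComposition} (with $f=\Delta$) rewrites it further as $\rho^\vee(\Delta^*\otimes\id_M)(\theta_{P,P}\otimes\id_M)=\rho^\vee\bigl((\Delta^*\theta_{P,P})\otimes\id_M\bigr)=\rho^\vee(\mu\otimes\id_M)$. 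Since the right-hand side of Lemma~\ref{LemmaVeeCompositionSwitch} is exactly $\rho^\vee(\id_{P^*}\otimes\rho^\vee)$, the associativity axiom follows.

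For unitality I would use $u=\varepsilon^*\iota$ together with functoriality of $\otimes$ and Proposition~\ref{PropositionVeeUnderComposition} (with $f=\varepsilon$) to obtain $\rho^\vee(u\otimes\id_M)=\rho^\vee(\varepsilon^*\otimes\id_M)(\iota\otimes\id_M)=\bigl((\id_M\otimes\varepsilon)\rho\bigr)^\vee(\iota\otimes\id_M)$, which by counitality equals $(r_M^{-1})^\vee(\iota\otimes\id_M)$. It then remains to check that this last morphism is the left unitor $l_M$; I would do so by unwinding the defining diagram of $(-)^\vee$, pushing $\iota$ past the braiding by naturality, and invoking the definition of $\iota$ (which makes $\mathrm{ev}_{\mathbbm 1}(\iota\otimes\id_{\mathbbm 1})$ the canonical isomorphism $\mathbbm 1\otimes\mathbbm 1\xrightarrow{\sim}\mathbbm 1$), the remaining identifications being pure monoidal coherence. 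An equivalent finish: expand $\rho^\vee(u\otimes\id_M)$ directly, push $u\otimes\id_M$ past $\id_{P^*}\otimes\rho$ and past $c_{P^*,M}$ by naturality, apply the square~\eqref{LemmaPStarPStarPDifferentWaysUnity} of Lemma~\ref{LemmaPStarPStarPDifferentWays} to replace $\mathrm{ev}_P(u\otimes\id_P)$ by $\varepsilon\circ l_P$, and then use counitality to collapse the result to $l_M$.

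Most of this is routine bookkeeping on top of the machinery developed above; the single point requiring attention is the appeal to the \emph{symmetric}, rather than merely braided, hypothesis in the associativity step, which is what lets the $\theta^{\mathrm{inv}}$ produced by Lemma~\ref{LemmaVeeCompositionSwitch} coincide with the $\theta$ occurring in the definition $\mu=\Delta^*\theta_{P,P}$ of the dual monoid's multiplication.
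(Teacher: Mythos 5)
Your proof is correct, and it reaches the two module axioms by a slightly different decomposition than the paper does. The paper proves the lemma by two explicit diagram chases: for unitality it pastes together the counit property of $\rho$, naturality of the unitor, and the square~\eqref{LemmaPStarPStarPDifferentWaysUnity} of Lemma~\ref{LemmaPStarPStarPDifferentWays}; for associativity it chases a large diagram whose inner cells are the coassociativity of $\rho$, naturality of the braiding, and the square~\eqref{LemmaPStarPStarPDifferentWaysMul}, and then identifies the outer composite with $\rho^\vee(\id_{P^*}\otimes\rho^\vee)$ via Lemma~\ref{LemmaVeeCompositionThetaInv}. You instead reuse the already-established \emph{equational} interface: Proposition~\ref{PropositionVeeUnderComposition} transports postcomposition by $\Delta$ (resp.\ $\varepsilon$) on the comodule leg into precomposition by $\Delta^*$ (resp.\ $\varepsilon^*$), so the definitions $\mu=\Delta^*\theta_{P,P}$ and $u=\varepsilon^*\iota$ of the dual monoid do the rest, with Lemma~\ref{LemmaVeeCompositionSwitch} (the corollary of Lemma~\ref{LemmaVeeCompositionThetaInv}) supplying the right-hand side $\rho^\vee(\id_{P^*}\otimes\rho^\vee)$. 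The underlying ingredients are the same --- in particular both arguments invoke symmetry at exactly the same point, to identify the $\theta^{\mathrm{inv}}_{P,P}$ produced by Lemma~\ref{LemmaVeeCompositionThetaInv} with the $\theta_{P,P}$ in the definition of $\mu$ --- but your packaging avoids redoing the diagram chases, at the modest cost of one residual coherence check, namely that $(r_M^{-1})^\vee(\iota\otimes\id_M)=l_M$, which you correctly identify and whose two proposed finishes (unwinding the definition of $\iota$, or applying~\eqref{LemmaPStarPStarPDifferentWaysUnity}) both work.
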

\begin{proof}
Consider the following diagram:
$$\xymatrix{ P^* \otimes M \ar[rrrrr]^{\id_{P^*} \otimes \rho} & & & & & P^*\otimes M \otimes P \ar[d]^{c_{P^*,M}\otimes \id_P} \\
	& \mathbbm{1}\otimes M \otimes P \ar[rrd]_(0.3)\sim \ar[rrrru]^{u \otimes \id_{M\otimes P}}
	\ar[rr]_{c_{\mathbbm{1},M}\otimes \id_P}
	 & & M \otimes {\mathbbm{1}}\otimes P \ar[rr]^{\id_M \otimes u \otimes \id_P}
	 \ar[d]^{\sim}
	  & & M \otimes P^*\otimes P \ar[d]^{\id_M \otimes \mathrm{ev}_P} \\
	& M \ar[rr]^\rho
	 & & M \otimes P \ar[rr]_{\id_M \otimes \varepsilon}& & M \otimes \mathbbm{1}
	 \ar[d]^\sim \\
\mathbbm{1} \otimes M \ar[uuu]^{u\otimes \id_M}\ar[ruu]^{\id_\mathbbm{1} \otimes \rho}
\ar[ru]_\sim
 \ar[rrrrr]^\sim & & & & & M\ar@{=}[llllu]	
}$$
The inner polygons are commutative by the naturality of the transformation $\mathbbm{1}\otimes C \mathrel{\widetilde\to} C$, the counit property of $\rho$, the functoriality of $\otimes$, Lemma~\ref{LemmaPStarPStarPDifferentWays} and the properties of the braiding.
Therefore the outer polygon is commutative too and $\rho^\vee$ satisfies the unit axiom of a module structure.

Now consider the diagram below:
$$\hspace{-7mm}
\xymatrix{ P^* \otimes P^* \otimes M 
	 \ar[ddd]_{\mu \otimes \id_M}
	\ar[r]^(0.45){\id_{P^* \otimes P^*} \otimes \rho\phantom{\Bigr|}}
	\ar[rd]_(0.45){\id_{P^* \otimes P^*} \otimes \rho\quad }
	& P^* \otimes P^* \otimes M \otimes P
	\ar[r]^{\id_{P^* \otimes P^*} \otimes \rho \otimes \id_P \phantom{\Bigr|}}
	& P^* \otimes P^* \otimes M \otimes P \otimes P
	\ar[r]^{c_{P^* \otimes P^*, M} \otimes \id_{P\otimes P}\phantom{\Bigr|}} &
	M \otimes P^* \otimes P^* \otimes P \otimes P
	\ar[d]^{\id_{M}
		\otimes \theta_{P, P} \otimes \id_{P\otimes P}}
	 \\
	& P^* \otimes P^* \otimes M \otimes P
	\ar[ru]^(0.4){\id_{P^* \otimes P^* \otimes M} \otimes \Delta\quad}
	\ar[r]_{c_{P^* \otimes P^*, M} \otimes \id_P\phantom{\bigl|}}
	\ar[d]_{\mu\otimes \id_{M \otimes P}}
	 	&  M \otimes P^* \otimes P^* \otimes P\ar[d]^{\id_M \otimes \mu \otimes \id_P}
	\ar[ru]^(0.4){\id_{M \otimes P^* \otimes P^*} \otimes \Delta\quad}
	& M \otimes (P \otimes P)^* \otimes P \otimes P \ar[d]^{\id_{M} \otimes \mathrm{ev}_{P\otimes P}}
	  \\
	& P^* \otimes M \otimes P \ar[r]_{c_{P^*, M} \otimes \id_P} & M \otimes P^* \otimes P \ar[r]_{\id_M\otimes \mathrm{ev}_P }  & M \otimes \mathbbm{1} \ar[d]^\sim  \\
	P^* \otimes M \ar[ru]^{\id_{P^*}\otimes \rho} \ar[rrr]^{\rho^\vee} & &   & M
}
$$
The inner polygons are commutative by the naturality of the braiding, Lemma~\ref{LemmaPStarPStarPDifferentWays},
the definition of $\rho^\vee$, the coassociativity of $\rho$ and the functoriality of $\otimes$.
Therefore the outer square is commutative too.
By Lemma~\ref{LemmaVeeCompositionThetaInv},
the composition along the upper and the right sides of the outer square equals~$\rho^\vee(\id_{P^*} \otimes \rho^\vee)$. Therefore, $\rho^\vee$ satisfies the associativity axiom of a module structure.
\end{proof}

When $\mathcal C$ is symmetric, by Theorem~\ref{TheoremStarBraidedLaxMonoidal} and Remark~\ref{RemarkThetaThetaInvAdjoint},  $(-)^*$ is a \textit{symmetric} monoidal functor $\mathcal C^\mathrm{op} \to \mathcal C$
(i.e. a braided functor in the case of symmetric categories),
which is right adjoint to itself considered as an op-monoidal functor $(-)^* \colon \mathcal C \to \mathcal C^\mathrm{op}$.
In particular, $(-)^*$ induces a monoidal functor $\mathsf{Mon}(\mathcal C^\mathrm{op}) \to \mathsf{Mon}(\mathcal C)$
and an op-monoidal functor $\mathsf{Comon}(\mathcal C) \to \mathsf{Comon}(\mathcal C^\mathrm{op})$.
We denote both of them again by $(-)^*$. Suppose now that $(-)^* \colon \mathsf{Mon}(\mathcal C^\mathrm{op}) \to \mathsf{Mon}(\mathcal C)$ admits a left adjoint functor $(-)^\circ \colon \mathsf{Mon}(\mathcal C) \to \mathsf{Mon}(\mathcal C^\mathrm{op})$. Then $(-)^\circ$ is right adjoint to the functor $(-)^* \colon \mathsf{Comon}(\mathcal C) \to \mathsf{Comon}(\mathcal C^\mathrm{op})$ if we view $(-)^\circ$ as a functor $\mathsf{Comon}(\mathcal C^\mathrm{op}) \to \mathsf{Comon}(\mathcal C)$. Moreover, $(-)^\circ$ being a left adjoint to a monoidal functor is an op-monoidal functor
$\mathsf{Mon}(\mathcal C) \to \mathsf{Mon}(\mathcal C^\mathrm{op})$. Analogously, 
$(-)^\circ$ is a monoidal functor $\mathsf{Comon}(\mathcal C^\mathrm{op}) \to \mathsf{Comon}(\mathcal C)$.
Now recall that $\mathsf{Comon}(\mathsf{Mon}(\mathcal C))=\mathsf{Mon}(\mathsf{Comon}(\mathcal C))=\mathsf{Bimon}(\mathcal C)$.
Hence $(-)^\circ$ induces functors $\mathsf{Bimon}(\mathcal C) \to \mathsf{Bimon}(\mathcal C^\mathrm{op})$
and $\mathsf{Bimon}(\mathcal C^\mathrm{op}) \to \mathsf{Bimon}(\mathcal C)$,
which we again denote by $(-)^\circ$. By~\cite[Theorem 2.7]{GoyVer}, $(-)^\circ \colon \mathsf{Bimon}(\mathcal C) \to \mathsf{Bimon}(\mathcal C^\mathrm{op})$ is left adjoint to $(-)^\circ \colon \mathsf{Bimon}(\mathcal C^\mathrm{op}) \to \mathsf{Bimon}(\mathcal C)$.

\begin{lemma}\label{LemmaKappaForBimonoids} For every bimonoid $B$ in $\mathcal C$
	the morphism
	$\varkappa_B \colon B^\circ \to B^*$ is a monoid homomorphism.
\end{lemma}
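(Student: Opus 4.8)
The plan is to check the two conditions defining a monoid homomorphism for $\varkappa_B$: compatibility with the multiplications and with the units of $B^\circ$ and $B^*$. Recall how these monoid structures arise. The multiplication and unit of $B^*$ are $\mu_{B^*}=\Delta_B^*\,\theta_{B,B}$ and $u_{B^*}=\varepsilon_B^*\,\iota$, obtained by applying $(-)^*$ to the comonoid $(B,\Delta_B,\varepsilon_B)$. The bimonoid structure on $B^\circ$ is the one induced from $B$ by the op-monoidal functor $(-)^\circ\colon\mathsf{Mon}(\mathcal C)\to\mathsf{Mon}(\mathcal C^{\mathrm{op}})$; concretely, writing $\zeta_{X,Y}\colon X^\circ\otimes Y^\circ\to(X\otimes Y)^\circ$ and $\zeta_0\colon\mathbbm{1}\to\mathbbm{1}^\circ$ for the op-monoidal structure morphisms of $(-)^\circ$, the multiplication and unit of $B^\circ$ are $\Delta_B^\circ\,\zeta_{B,B}$ and $\varepsilon_B^\circ\,\zeta_0$, in exact analogy with $B^*$. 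Since $B$ is a bimonoid, $\Delta_B\colon B\to B\otimes B$ and $\varepsilon_B\colon B\to\mathbbm{1}$ are morphisms of monoids.

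First I would feed the naturality of $\varkappa$ in the monoid homomorphisms $\Delta_B$ and $\varepsilon_B$ into these formulas. The naturality squares give $\varkappa_B\,\Delta_B^\circ=\Delta_B^*\,\varkappa_{B\otimes B}$ and $\varkappa_B\,\varepsilon_B^\circ=\varepsilon_B^*\,\varkappa_{\mathbbm{1}}$, so the two identities to prove reduce to
\[
\varkappa_{B\otimes B}\,\zeta_{B,B}=\theta_{B,B}\,(\varkappa_B\otimes\varkappa_B)\qquad\text{and}\qquad\varkappa_{\mathbbm{1}}\,\zeta_0=\iota .
\]
In other words, it suffices to show that $\varkappa$ is a monoidal natural transformation between the op-monoidal functors $(-)^\circ,(-)^*\colon\mathsf{Mon}(\mathcal C)\to\mathsf{Mon}(\mathcal C^{\mathrm{op}})$.

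To establish these identities there are two routes. The conceptual one uses that $\varkappa_Q^\flat\colon Q\to Q^{\circ*}$ is, by its very definition, the unit of the adjunction $(-)^\circ\dashv(-)^*$, hence a morphism of monoids, while the op-monoidal structure $\zeta$ of the left adjoint $(-)^\circ$ is the mate of the (lax) monoidal structure of $(-)^*$; thus $\varkappa^\flat$ is compatible with these structures, and it remains only to transport the compatibility through the transpose $(-)^\sharp$, which is precisely what Remark~\ref{RemarkThetaThetaInvAdjoint} lets one do, since it matches $\theta$ with $\theta^{\mathrm{inv}}$ under $(-)^\sharp$. The concrete route, more in the spirit of this section, proves the two displayed identities by a diagram chase: compose both sides with $\mathrm{ev}$ (using pre-rigidity to reduce an equality of morphisms into $B^*$ to one of morphisms into $\mathbbm{1}$) and assemble the resulting diagrams from Lemma~\ref{LemmaFFlatEvBraiding}, Lemma~\ref{LemmaPStarPStarPDifferentWays}, Lemma~\ref{LemmaDiagramsPPStarOperations}, Lemma~\ref{LemmaQcircKappa} and the naturality of the braiding.

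The main obstacle is the bookkeeping with the op-monoidal structure $\zeta$ of $(-)^\circ$, which is available only abstractly through the representing adjunction: one has to make precise the identification of the multiplication of $B^\circ$ with $\Delta_B^\circ\,\zeta_{B,B}$ and to check that the mate construction is compatible with the $(-)^\flat/(-)^\sharp$ correspondence. Once $\varkappa$ is known to be monoidal, the lemma follows at once from the reduction above.
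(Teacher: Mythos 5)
Your proof is correct in outline but takes a genuinely different route from the paper's, and it is worth seeing how the two relate. The paper never unpacks the multiplication of $B^\circ$ or the op-monoidal structure $\zeta$ of $(-)^\circ$ at all: it specializes the factorization from the proof of \cite[Theorem 2.7]{GoyVer}, under which the image of $\id_{B^\circ}\in\mathsf{Bimon}(\mathcal C^{\mathrm{op}})(B^\circ,B^\circ)$ after the first bijection is $\varkappa_B^\flat\in\mathsf{Mon}(\mathcal C)(B,B^{\circ*})$ and after $(-)^\sharp$ is $\varkappa_B$; the cited factorization guarantees that this lands in $\mathsf{Comon}(\mathcal C^{\mathrm{op}})(B^*,B^\circ)=\mathsf{Mon}(\mathcal C)(B^\circ,B^*)$, which is the lemma. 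So the entire burden there is the single fact that $(-)^\sharp$ carries monoid morphisms $B\to B^{\circ*}$ to monoid morphisms $B^\circ\to B^*$ (the content of Lemma~\ref{LemmaPhiMonACStarEquivalence}, i.e.\ \cite[Lemma 2.5]{GoyVer}). Your route instead makes $\zeta$ explicit, uses naturality of $\varkappa$ in the monoid maps $\Delta_B$ and $\varepsilon_B$ to reduce the lemma to the two identities $\varkappa_{B\otimes B}\,\zeta_{B,B}=\theta_{B,B}(\varkappa_B\otimes\varkappa_B)$ and $\varkappa_{\mathbbm{1}}\,\zeta_0=\iota$, and then appeals to doctrinal adjunction. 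That reduction is valid, and it buys a more transparent statement of where each hypothesis enters; what it costs is that the monoidality of $\varkappa$ is exactly the doctrinal-adjunction content that the paper's citation packages, so no labour is saved.

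Two remarks on the remaining work. First, the lemma is stated in the symmetric setting, so $\theta=\theta^{\mathrm{inv}}$ and your bookkeeping worry about which of the two appears in $\mu_{B^*}$ versus in the mate defining $\zeta$ disappears. Second, the two displayed identities are the only genuinely nontrivial content left in your plan, and as written they are asserted rather than proved: "transport the compatibility through $(-)^\sharp$" needs to be carried out, using the naturality of $(-)^\flat$ together with Proposition~\ref{PropositionDualityNablaVeeSharp} and Remark~\ref{RemarkThetaThetaInvAdjoint}, or replaced by the direct diagram chase you sketch. Until one of those is done, your argument is a correct strategy rather than a complete proof.
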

\begin{proof} Applying the proof of~\cite[Theorem 2.7]{GoyVer} to the case of the functors $(-)^*$
	and $(-)^\circ$, we see
	that, given bimonoids $B_1$ and $B_2$,
	the natural bijection 
	$$\mathsf{Bimon}(\mathcal C^\mathrm{op})(B_1^\circ, B_2) \mathrel{\widetilde{\to}}
	\mathsf{Bimon}(\mathcal C)(B_1, B_2^\circ)
	$$ is the composition of (co)restrictions
	of natural bijections
	$$\mathsf{Mon}(\mathcal C^\mathrm{op})(B_1^\circ, B_2) \mathrel{\widetilde{\to}}
	\mathsf{Mon}(\mathcal C)(B_1, B_2^*),
	$$
	$$
(-)^\sharp \colon {\mathcal C}(B_1, B_2^*) \mathrel{\widetilde{\to}} {\mathcal C}^\mathrm{op}(B_1^*, B_2),
$$
$$\mathsf{Comon}(\mathcal C^\mathrm{op})(B_1^*, B_2) \mathrel{\widetilde{\to}} \mathsf{Comon}(\mathcal C)(B_1, B_2^\circ).$$	
In particular, the morphism from ${\mathcal C}^\mathrm{op}(B_1^*, B_2)$ obtained in the second line in fact belongs to 
$\mathsf{Comon}(\mathcal C^\mathrm{op})(B_1^*, B_2)$.
If we let $B_1 = B$ and $B_2 = B^\circ$
and consider $\id_B \in \mathsf{Bimon}(\mathcal C^\mathrm{op})(B^\circ, B^\circ)$,
we get $\varkappa_B \in \mathsf{Comon}(\mathcal C^\mathrm{op})(B^*, B^\circ)$, which proves the lemma.
\end{proof}	

Again, it turns out that under mild conditions
$\mathcal{B}^\square(\rho_U)^\circ \cong {}_\square \mathcal{B}(\rho_U^\vee)$:

\begin{theorem}\label{TheoremBimonUniv(Co)actDuality}
Let~$\mathcal C$ be a pre-rigid symmetric monoidal category such that
\begin{itemize}
\item there exists the functor $(-)^\circ$;
\item $\alpha_M \otimes \id_N$ and $\theta_{M,N}$ are monomorphisms
for all objects $M,N$.
\end{itemize}
Let $\rho_U \colon A \to A \otimes U$
be a tensor epimorphism	defining on an $\Omega$-magma~$A$ a structure of a $U$-comodule for a comonoid $U$ such that there exist	 
$\mathcal{A}^\square(\rho_U)$ and ${}_\square \mathcal{C}(\rho_U^\vee)$.
Then $$\left(\rho_U^\mathbf{Coact}\right)^\vee(\varkappa_{\mathcal{B}^\square(\rho_U)}\otimes \id_A)
\colon \mathcal{B}^\square(\rho_U)^\circ \otimes A \to A$$ is an action and
the unique bimonoid homomorphism $\beta \colon \mathcal{B}^\square(\rho_U)^\circ \to {}_\square \mathcal{B}(\rho_U^\vee)$ making
the diagram
\begin{equation}\label{EqBimonUniv(Co)actDuality}
\xymatrix{
	\mathcal{B}^\square(\rho_U)^\circ \otimes A  \ar@{-->}[d]_{\beta \otimes \id_A}
	\ar[rr]^{\varkappa_{\mathcal{B}^\square(\rho_U)}\otimes \id_A}
	& & \mathcal{B}^\square(\rho_U)^* \otimes A \ar[d]^{\left(\rho_U^\mathbf{Coact}\right)^\vee} \\
	{}_\square \mathcal{B}(\rho_U^\vee) \otimes A \ar[rr]^(0.6){\left(\rho_U^\vee\right)^\mathbf{Act}}& & A \\}
\end{equation}
  commutative is a bimonoid isomorphism.	
	\end{theorem}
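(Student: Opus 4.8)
The plan is to transpose the proof of Theorem~\ref{Theorem(Co)monUniv(Co)measDuality} to the bimonoid setting, replacing comonoids $P$ by bimonoids $B$ and carrying the module/comodule structures along throughout. First I would check that $\left(\rho_U^\mathbf{Coact}\right)^\vee(\varkappa_{\mathcal{B}^\square(\rho_U)}\otimes \id_A)$ is an action of the bimonoid $\mathcal{B}^\square(\rho_U)^\circ$ on $A$. Since $\rho_U^\mathbf{Coact}$ is a comeasuring, Proposition~\ref{PropositionQcircMeasuring} shows this morphism is a measuring. Since moreover $\rho_U^\mathbf{Coact}$ makes $A$ into a $\mathcal{B}^\square(\rho_U)$-comodule and $\mathcal C$ is symmetric, Lemma~\ref{LemmaStarComoduleToAModule} shows $\left(\rho_U^\mathbf{Coact}\right)^\vee$ makes $A$ into a $\mathcal{B}^\square(\rho_U)^*$-module, and restricting scalars along the monoid homomorphism $\varkappa_{\mathcal{B}^\square(\rho_U)}$ (Lemma~\ref{LemmaKappaForBimonoids}) turns this into a $\mathcal{B}^\square(\rho_U)^\circ$-module structure. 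A morphism that is simultaneously a measuring of $\Omega$-magmas and a module structure over the underlying monoid is exactly an action, so this settles the first assertion.

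Next I would assemble, for a bimonoid $B$, a chain of bijections natural in $B$ that generalises~\eqref{EqTambaraBijections}. Writing $\mathrm{Act}(B,\rho_U^\vee)$ for the set of actions $\psi\colon B\otimes A\to A$ admitting a morphism $\psi\to\rho_U^\vee$ in $\mathbf{ModStr}(A)$, and $\mathrm{Coact}(B^*,\rho_U)$ for the set of coactions $\rho\colon A\to A\otimes B^*$ admitting a morphism $\rho_U\to\rho$ in $\mathbf{ComodStr}(A)$, the chain reads
$$\mathsf{Bimon}(\mathcal C)(B,{}_\square\mathcal{B}(\rho_U^\vee))\;\cong\;\mathrm{Act}(B,\rho_U^\vee)\;\cong\;\mathrm{Coact}(B^*,\rho_U)\;\cong\;\mathsf{Bimon}(\mathcal C)(\mathcal{B}^\square(\rho_U),B^*)\;\cong\;\mathsf{Bimon}(\mathcal C)(B,\mathcal{B}^\square(\rho_U)^\circ).$$
The first and third bijections are simply the universal properties of ${}_\square\mathcal{B}(\rho_U^\vee)$ (initial in $\mathbf{Act}(A)^{\mathrm{op}}(\rho_U^\vee)$) and of $\mathcal{B}^\square(\rho_U)$ (initial in $\mathbf{Coact}(A)(\rho_U)$) established in~\cite{AGV3}; the last is the bimonoid form of the finite-dual adjunction, built from $(-)^*$ and $(-)^\circ$ on bimonoids exactly as in the discussion preceding Lemma~\ref{LemmaKappaForBimonoids} (the passage between $\mathcal C$ and $\mathcal C^{\mathrm{op}}$ must be handled with care here). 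The middle bijection is the map $(-)^\nabla$: it is the restriction to actions and coactions of the bijection $(-)^\nabla\colon\mathrm{Comeas}(B^*,\rho_U)\cong\mathrm{Meas}(B,\rho_U^\vee)$ of Lemma~\ref{Lemma(Co)monMeasComeasBijection}, and the point is that $(-)^\nabla$ matches up $B^*$-comodule structures with $B$-module structures. Since $\mathcal C$ is symmetric one has $\rho^\nabla=\rho^\vee(\alpha_B\otimes\id_A)$ for $\rho\colon A\to A\otimes B^*$, so the direction ``coaction $\Rightarrow$ action'' follows from Lemma~\ref{LemmaStarComoduleToAModule} together with $\alpha_B$ being a bimonoid homomorphism; for the converse one reads the comodule axioms for $\rho$ off the module axioms for $\rho^\nabla$ via the cancellation Corollary~\ref{CorollaryRhoABPStarEqualAfterNabla}, which is where the monomorphism hypotheses on $\alpha_M\otimes\id_N$ and $\theta_{M,N}$ enter, in the style of Lemma~\ref{LemmaNablaMeasComeas}(II); the associativity/coassociativity bookkeeping is supplied by Lemma~\ref{LemmaVeeCompositionThetaInv} and naturality of the braiding.

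Finally, composing the chain yields a natural isomorphism $\mathsf{Bimon}(\mathcal C)(B,{}_\square\mathcal{B}(\rho_U^\vee))\cong\mathsf{Bimon}(\mathcal C)(B,\mathcal{B}^\square(\rho_U)^\circ)$, so the Yoneda lemma produces a bimonoid isomorphism $\beta\colon\mathcal{B}^\square(\rho_U)^\circ\to{}_\square\mathcal{B}(\rho_U^\vee)$: substituting $B=\mathcal{B}^\square(\rho_U)^\circ$ sends $\id$ to $\beta$, substituting $B={}_\square\mathcal{B}(\rho_U^\vee)$ sends $\id$ to $\beta^{-1}$, and naturality forces these to be mutual inverses, exactly as at the end of the proof of Theorem~\ref{Theorem(Co)monUniv(Co)measDuality}. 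Chasing $\id_{\mathcal{B}^\square(\rho_U)^\circ}$ through the first two bijections, and using Proposition~\ref{PropositionDualityNablaVeeSharp} and Lemma~\ref{LemmaQcircKappa} to see that it corresponds to $\left(\rho_U^\mathbf{Coact}\right)^\vee(\varkappa_{\mathcal{B}^\square(\rho_U)}\otimes\id_A)$, identifies $\beta$ as the unique bimonoid homomorphism making~\eqref{EqBimonUniv(Co)actDuality} commute. I expect the main obstacle to be the middle bijection of the chain: establishing in \emph{both} directions that $(-)^\nabla$ respects the comodule/module structures and the surrounding bimonoid structures — in particular that $B^*$ is again a bimonoid and that $\varkappa_B$, $\alpha_B$ are bimonoid homomorphisms, and that the converse direction genuinely follows from the stated monomorphism hypotheses — while the remainder is a faithful transcription of the (co)measuring case.
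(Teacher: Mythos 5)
Your first paragraph is essentially the paper's own argument and is correct: Proposition~\ref{PropositionQcircMeasuring} gives the measuring, Lemma~\ref{LemmaStarComoduleToAModule} gives the $\mathcal{B}^\square(\rho_U)^*$-module structure, and restriction of scalars along the monoid homomorphism $\varkappa_{\mathcal{B}^\square(\rho_U)}$ (Lemma~\ref{LemmaKappaForBimonoids}) yields the action. The gap is in your chain of bijections. Two of its four intermediate terms, $\mathrm{Coact}(B^*,\rho_U)$ and $\mathsf{Bimon}(\mathcal C)(\mathcal{B}^\square(\rho_U),B^*)$, are not defined in this setting: for a bimonoid $B$ the object $B^*$ carries a monoid structure (dual to the comonoid structure of $B$), but \emph{not} a comonoid structure, because $(-)^*$ is only op-monoidal --- turning the multiplication of $B$ into a comultiplication on $B^*$ would require inverting $\theta_{B,B}\colon B^*\otimes B^*\to(B\otimes B)^*$, which is only assumed to be a monomorphism. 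Hence ``$B^*$-coactions on $A$'' cannot be stated, and the universal property of $\mathcal{B}^\square(\rho_U)$, which quantifies over bimonoid targets, cannot be invoked at $B^*$. You list ``that $B^*$ is again a bimonoid'' among the things to verify for the middle bijection, but it is false in general; this is exactly why the paper's analogous chain at the Hopf level (proof of Theorem~\ref{TheoremHopfMonUniv(Co)actDuality}) only ever applies $(-)^\circ$ to a bimonoid, never $(-)^*$. Rerouting your chain through $B^\circ$ does not rescue it, since the middle comparison would then be ``$B$-actions versus $B^\circ$-coactions'', which is essentially the statement being proved.

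The paper's proof avoids all of this and is much shorter. By the lifting theorems \cite[Theorems~4.34 and~5.22]{AGV3}, the underlying monoid of $\mathcal{B}^\square(\rho_U)$ is $\mathcal{A}^\square(\rho_U)$ and the underlying comonoid of ${}_\square\mathcal{B}(\rho_U^\vee)$ is ${}_\square\mathcal{C}(\rho_U^\vee)$, with $\rho_U^{\mathbf{Coact}}=\rho_U^{\mathbf{Comeas}}$ and $\left(\rho_U^\vee\right)^{\mathbf{Act}}=\left(\rho_U^\vee\right)^{\mathbf{Meas}}$. Once the displayed morphism is known to be an action, the universal property of ${}_\square\mathcal{B}(\rho_U^\vee)$ produces the unique bimonoid homomorphism $\beta$ making \eqref{EqBimonUniv(Co)actDuality} commute; being in particular a comonoid homomorphism making what is, under these identifications, the diagram \eqref{Eq(Co)monUniv(Co)measDuality} commute, it must coincide with the comonoid isomorphism of Theorem~\ref{Theorem(Co)monUniv(Co)measDuality}, hence is invertible in $\mathcal C$, hence a bimonoid isomorphism. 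You should replace your second and third paragraphs by this reduction to the already-proved (co)measuring duality.
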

\begin{remark} Consider the following (not necessarily commutative) diagram where 
	$G_2$ and $G_2'$ are forgetful functors and $\mathbf{ModStr}(A)$ is the category of module structures on $A$:
	$$\xymatrix{ \mathbf{Coact}(A) \ar[d]_{G_2} \ar[rr]^{(-)^\vee (\varkappa_{(\ldots)} \otimes \id_A)} & \quad & \mathbf{Act}(A)^{\mathrm{op}} \ar[d]^{G'_2} \\
\mathbf{ComodStr}(A) \ar[rr]^{(-)^\vee} & \quad & \mathbf{ModStr}(A)^{\mathrm{op}} 		
	}$$
	Theorem~\ref{TheoremBimonUniv(Co)actDuality} asserts that 
if $\rho_U^\mathbf{Coact}$ is the initial object in $\mathbf{Coact}(A)_{G_2}(\rho_U)$,
then $\left(\rho_U^\mathbf{Coact}\right)^\vee(\varkappa_{\mathcal{B}^\square(\rho_U)}\otimes \id_A)$
is an initial object in $\mathbf{Act}(A)^{\mathrm{op}}_{G_2'}(\rho_U^\vee)$.
\end{remark}
\begin{proof}[Proof of Theorem~\ref{TheoremBimonUniv(Co)actDuality}] By~\cite[Theorems~4.35 and~5.23]{AGV3}
	there exist $\mathcal{B}^\square(\rho_U) = \mathcal{A}^\square(\rho_U)$ and ${}_\square \mathcal{B}(\rho_U^\vee)={}_\square \mathcal{C}(\rho_U^\vee)$.
	The morphism $\left(\rho_U^\mathbf{Coact}\right)^\vee(\varkappa_{\mathcal{B}^\square(\rho_U)}\otimes \id_A)$ is an action by
	Theorem~\ref{Theorem(Co)monUniv(Co)measDuality} and Lemmas~\ref{LemmaStarComoduleToAModule} and~\ref{LemmaKappaForBimonoids}.
	Hence the existence of $\beta$ follows from the definition of ${}_\square \mathcal{B}(\rho_U^\vee)$. By
	Theorem~\ref{Theorem(Co)monUniv(Co)measDuality} the morphism $\beta$ is an isomorphism in $\mathcal C$.
	Combined with the fact that $\beta$ is a bimonoid homomorphism, this implies that $\beta$ is a bimonoid isomorphism.
\end{proof}
\begin{corollary}
	Let~$\mathcal C$ be a pre-rigid symmetric monoidal category satisfying
	Properties~\ref{PropertySmallLimits}, \ref{PropertySubObjectsSmallSet}, \ref{PropertyMonomorphism}, \ref{PropertySwitchProdTensorIsAMonomorphism}, \ref{PropertyFreeMonoid} of Section~\ref{SubsectionSupportCoactingConditions} and their duals 	such that
	\begin{itemize}
		\item $\alpha_M \otimes \id_N$ and $\theta_{M,N}$ are monomorphisms
		for all objects $M,N$.
	\end{itemize}
	Let $\rho_U \colon A \to A \otimes U$
	be a tensor epimorphism	defining on an $\Omega$-magma~$A$ a structure of a $U$-comodule for a comonoid $U$.
	Then $$\left(\rho_U^\mathbf{Coact}\right)^\vee(\varkappa_{\mathcal{B}^\square(\rho_U)}\otimes \id_A)
	\colon \mathcal{B}^\square(\rho_U)^\circ \otimes A \to A$$ is an action and
	the unique bimonoid homomorphism $\beta \colon \mathcal{B}^\square(\rho_U)^\circ \to {}_\square \mathcal{B}(\rho_U^\vee)$ making
	the diagram~\eqref{EqBimonUniv(Co)actDuality}  commutative is a bimonoid isomorphism.	
\end{corollary}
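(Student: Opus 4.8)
The plan is to obtain this corollary as a direct consequence of Theorem~\ref{TheoremBimonUniv(Co)actDuality}, exactly in the way Corollary~\ref{Corollary(Co)monUniv(Co)measDuality} follows from Theorem~\ref{Theorem(Co)monUniv(Co)measDuality}. That theorem already supplies the action $\left(\rho_U^\mathbf{Coact}\right)^\vee(\varkappa_{\mathcal{B}^\square(\rho_U)}\otimes\id_A)$, the homomorphism $\beta$, and the assertion that $\beta$ is a bimonoid isomorphism, so the only thing to do is to check that the concrete properties assumed here imply the two abstract hypotheses of Theorem~\ref{TheoremBimonUniv(Co)actDuality}: the existence of the functor $(-)^\circ$, and the existence of the universal objects $\mathcal{A}^\square(\rho_U)$ (equivalently $\mathcal{B}^\square(\rho_U)$) and ${}_\square\mathcal{C}(\rho_U^\vee)$ (equivalently ${}_\square\mathcal{B}(\rho_U^\vee)$); the monomorphy of $\alpha_M\otimes\id_N$ and $\theta_{M,N}$ is assumed verbatim in both statements.

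First I would establish the existence of $(-)^\circ$ via Remark~\ref{RemarkCircSufficientConditionsForExistence}, according to which it suffices that $\mathcal{C}$ satisfy Properties~\ref{PropertySubObjectsSmallSet}, \ref{PropertyMonomorphism} and the duals of Properties~\ref{PropertySmallLimits}--\ref{PropertyEpiExtrMonoFactorizations} and~\ref{PropertyFreeMonoid}. Among these, Property~\ref{PropertySubObjectsSmallSet}, Property~\ref{PropertyMonomorphism}, the dual of Property~\ref{PropertySmallLimits} and the dual of Property~\ref{PropertyFreeMonoid} appear among the hypotheses; the dual of Property~\ref{PropertyFiniteAndCountableColimits} (existence of finite and countable limits) follows from Property~\ref{PropertySmallLimits}; and the dual of Property~\ref{PropertyEpiExtrMonoFactorizations} follows from the duals of Properties~\ref{PropertySmallLimits} and~\ref{PropertySubObjectsSmallSet} by dualizing the implication recorded in the Remark after the list of properties in Section~\ref{SubsectionSupportCoactingConditions}. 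Hence $(-)^\circ$ exists.

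Next I would establish the existence of the universal objects. The hypothesis that $\rho_U$ defines a $U$-comodule $\Omega$-magma structure on $A$ makes $\rho_U$ an object of $\mathbf{ComodStr}(A)$, so that --- together with the remaining assumed properties and their duals, reduced to the short list above through \cite[Proposition~4.2 (1)]{AGV3} and its dual --- \cite[Theorems~4.34 and~5.22]{AGV3} apply and produce $\mathcal{B}^\square(\rho_U)$ and ${}_\square\mathcal{B}(\rho_U^\vee)$, which by those theorems coincide with $\mathcal{A}^\square(\rho_U)$ and ${}_\square\mathcal{C}(\rho_U^\vee)$; in particular all four exist. All hypotheses of Theorem~\ref{TheoremBimonUniv(Co)actDuality} being verified, that theorem yields the conclusion. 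There is no new mathematical content here: the only thing requiring care --- and the nearest thing to an obstacle --- is the bookkeeping of precisely which of the properties of Section~\ref{SubsectionSupportCoactingConditions} and their duals are consumed by each invoked result of \cite{AGV3}, and checking that the implications among those properties make the short list of hypotheses genuinely sufficient.
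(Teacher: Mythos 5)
Your proposal is correct and follows essentially the same route as the paper: the paper's proof likewise just combines Remark~\ref{RemarkCircSufficientConditionsForExistence} (for the existence of $(-)^\circ$) with the existence results of \cite{AGV3} and then applies Theorem~\ref{TheoremBimonUniv(Co)actDuality}. The only discrepancy is a citation detail: the existence of $\mathcal{A}^\square(\rho_U)$ and ${}_\square\mathcal{C}(\rho_U^\vee)$, which is the actual hypothesis of Theorem~\ref{TheoremBimonUniv(Co)actDuality}, is supplied by \cite[Proposition~4.2~(1), Theorems~4.23 and~5.19]{AGV3} (the lifting results along $G_1$ and $G_1'$), whereas \cite[Theorems~4.34 and~5.22]{AGV3}, which you cite, are the results that endow these already-existing objects with their bimonoid structure and are already consumed inside the proof of Theorem~\ref{TheoremBimonUniv(Co)actDuality} itself.
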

\begin{proof}
	Apply~\cite[Proposition~4.2 (1), Theorems~4.24, 5.19]{AGV3}, 
Remark~\ref{RemarkCircSufficientConditionsForExistence} and	Theorem \ref{TheoremBimonUniv(Co)actDuality}. 
\end{proof}	

Note that since $(-)^* \colon \mathcal C^\mathrm{op} \to \mathcal C$ is a  a symmetric functor, the induced functor
$(-)^* \colon \mathsf{Mon}(\mathcal C^\mathrm{op}) \to \mathsf{Mon}(\mathcal C)$
 commutes with the functor $(-)^{\mathrm{op}}$, i.e. $(C^\mathrm{cop})^*=(C^*)^\mathrm{op}$ for every comonoid $C$ in $\mathcal C$. This implies that for the left adjoint functor $(-)^\circ \colon
 \mathsf{Mon}(\mathcal C) \to \mathsf{Mon}(\mathcal C^\mathrm{op})$
 there exists an isomorphism $(A^\circ)^\mathrm{cop} \cong (A^\mathrm{op})^\circ$ natural in the monoid $A$ in $\mathcal C$. 
  If $H$ is a Hopf monoid in $\mathcal C$, then its antipode $S$
 is a monoid homomorphism $H \to H^\mathrm{op}$. Now~\cite[Proposition 31]{PorstStreet} implies
  that if $\varkappa_H$ is a monomorphism in $\mathcal C$,
  then $H^\circ$ is a Hopf monoid too where the antipode is
   the comonoid homomorphism $(H^\circ)^\mathrm{cop} \to H^\circ$
  corresponding to $S^\circ \colon (H^\mathrm{op})^\circ \to H^\circ$.
  As it was noticed in~\cite{GoyVer}, the fact that any bimonoid homomorphism between Hopf monoids is
  in fact a Hopf monoid homomorphism implies that the adjunction
  $$\mathsf{Bimon}(\mathcal C)(H_2, H_1^\circ) \cong \mathsf{Bimon}(\mathcal C)(H_1, H_2^\circ)$$
  can be restricted to an adjunction
  $$\mathsf{Hopf}(\mathcal C)(H_2, H_1^\circ) \cong \mathsf{Hopf}(\mathcal C)(H_1, H_2^\circ)$$
  provided that $\varkappa_H$ is a monomorphism in $\mathcal C$ for every Hopf monoid $H$.
  
  Denote by $H_l$ and $H_r$, respectively, the left and the right adjoint functors (if they exist) to the embedding functor $\mathsf{Hopf}(\mathcal C) \subseteq \mathsf{Bimon}(\mathcal C)$.

\begin{theorem}\label{TheoremHopfMonUniv(Co)actDuality}
Let~$\mathcal C$ be a pre-rigid symmetric monoidal category such that
\begin{itemize}
\item there exist functors $(-)^\circ$, $H_l$ and $H_r$;
\item the morphisms
 $\alpha_M \otimes \id_N$ and $\theta_{M,N}$ are monomorphisms
for all objects $M,N$;
\item $\varkappa_H$ is a monomorphism in $\mathcal C$ for every Hopf monoid $H$.
\end{itemize}
Let $\rho_U \colon A \to A \otimes U$
be a tensor epimorphism	defining on an $\Omega$-magma~$A$ a structure of a $U$-comodule for a comonoid $U$ such that there exist	 
$\mathcal{A}^\square(\rho_U)$ and ${}_\square \mathcal{C}(\rho_U^\vee)$.
Then $$\left(\rho_U^\mathbf{HCoact}\right)^\vee(\varkappa_{\mathcal{H}^\square(\rho_U)}\otimes \id_A)
\colon \mathcal{H}^\square(\rho_U)^\circ \otimes A \to A$$ is an action and
the unique Hopf monoid homomorphism $\beta^{\mathbf{Hopf}} \colon \mathcal{H}^\square(\rho_U)^\circ \to {}_\square \mathcal{H}(\rho_U^\vee)$ making
the diagram
\begin{equation}\label{EqHopfMonUniv(Co)actDuality}
\xymatrix{
	\mathcal{H}^\square(\rho_U)^\circ \otimes A  \ar@{-->}[d]_{\beta^{\mathbf{Hopf}} \otimes \id_A}
	\ar[rr]^{\varkappa_{\mathcal{H}^\square(\rho_U)}\otimes \id_A}
	& & \mathcal{H}^\square(\rho_U)^* \otimes A \ar[d]^{\left(\rho_U^\mathbf{HCoact}\right)^\vee} \\
	{}_\square \mathcal{H}(\rho_U^\vee) \otimes A \ar[rr]^(0.6){\left(\rho_U^\vee\right)^\mathbf{HAct}}& & A \\}
\end{equation}
commutative is a Hopf monoid isomorphism.		
\end{theorem}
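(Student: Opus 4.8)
The plan is to repeat, one categorical level up, the argument behind Theorems~\ref{Theorem(Co)monUniv(Co)measDuality} and~\ref{TheoremBimonUniv(Co)actDuality}, now on $\mathsf{Hopf}(\mathcal C)$ instead of $\mathsf{Bimon}(\mathcal C)$. Unlike in Theorem~\ref{TheoremBimonUniv(Co)actDuality}, where the universal (co)acting bimonoids literally coincide with the universal (co)measuring (co)monoids, the universal (co)acting Hopf monoids are genuinely new objects, so the shortcut ``$\beta$ is the same morphism as before'' is unavailable and the isomorphism will instead come from a Tambara-style chain of natural bijections. First I would fix notation. Since $\varkappa_H$ is a monomorphism for every Hopf monoid $H$, by \cite[Proposition~31]{PorstStreet} --- as recalled before the statement --- the finite dual of a Hopf monoid is again a Hopf monoid and the self-adjunction of $(-)^\circ$ restricts to $\mathsf{Hopf}(\mathcal C)$; in particular $\mathcal{H}^\square(\rho_U)^\circ$ is a Hopf monoid. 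By \cite[Theorems~4.40 and~5.26]{AGV3} the objects $\mathcal{H}^\square(\rho_U)$ and ${}_\square\mathcal{H}(\rho_U^\vee)$ exist, and, with $\mathcal{B}^\square(\rho_U)=\mathcal{A}^\square(\rho_U)$ and ${}_\square\mathcal{B}(\rho_U^\vee)={}_\square\mathcal{C}(\rho_U^\vee)$ as in Theorem~\ref{TheoremBimonUniv(Co)actDuality}, one has $\mathcal{H}^\square(\rho_U)=H_l(\mathcal{B}^\square(\rho_U))$ and ${}_\square\mathcal{H}(\rho_U^\vee)=H_r({}_\square\mathcal{B}(\rho_U^\vee))$, with $\rho_U^\mathbf{HCoact}=(\id_A\otimes\tau)\rho_U$ for the comonoid morphism $\tau\colon U\to\mathcal{H}^\square(\rho_U)$ obtained by composing the structure morphism $U\to\mathcal{B}^\square(\rho_U)$ with the unit $\mathcal{B}^\square(\rho_U)\to\mathcal{H}^\square(\rho_U)$.

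Next I would establish the action claim. As $\rho_U^\mathbf{HCoact}$ is a comodule structure over the comonoid underlying $\mathcal{H}^\square(\rho_U)$, Lemma~\ref{LemmaStarComoduleToAModule} turns $\bigl(\rho_U^\mathbf{HCoact}\bigr)^\vee$ into a left $\mathcal{H}^\square(\rho_U)^*$-module structure on $A$; pulling this back along the monoid homomorphism $\varkappa_{\mathcal{H}^\square(\rho_U)}$ (Lemma~\ref{LemmaKappaForBimonoids} applied to the bimonoid underlying $\mathcal{H}^\square(\rho_U)$) yields a left $\mathcal{H}^\square(\rho_U)^\circ$-module structure, and Proposition~\ref{PropositionQcircMeasuring} applied to the comeasuring underlying $\rho_U^\mathbf{HCoact}$ shows it is a measuring of $\Omega$-magmas. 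Since $\mathcal{H}^\square(\rho_U)^\circ$ is a Hopf monoid, $\bigl(\rho_U^\mathbf{HCoact}\bigr)^\vee(\varkappa_{\mathcal{H}^\square(\rho_U)}\otimes\id_A)$ is an object of $\mathbf{HAct}(A)$; by Proposition~\ref{PropositionVeeUnderComposition} it equals $\rho_U^\vee\bigl((\tau^*\varkappa_{\mathcal{H}^\square(\rho_U)})\otimes\id_A\bigr)$, so it lies in $\mathbf{HAct}(A)^{\mathrm{op}}(\rho_U^\vee)$, and the universal property of ${}_\square\mathcal{H}(\rho_U^\vee)$ provides the unique Hopf monoid homomorphism $\beta^{\mathbf{Hopf}}$ making~\eqref{EqHopfMonUniv(Co)actDuality} commute.

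To prove $\beta^{\mathbf{Hopf}}$ is an isomorphism I would, for an arbitrary Hopf monoid $P$, compose the natural bijections
\begin{align*}
\mathsf{Hopf}(\mathcal C)(P,{}_\square\mathcal{H}(\rho_U^\vee))
&\cong \mathsf{Bimon}(\mathcal C)(P,{}_\square\mathcal{B}(\rho_U^\vee)) \\
&\cong \mathsf{Bimon}(\mathcal C)(P,\mathcal{B}^\square(\rho_U)^\circ) \\
&\cong \mathsf{Bimon}(\mathcal C)(\mathcal{B}^\square(\rho_U),P^\circ) \\
&\cong \mathsf{Hopf}(\mathcal C)(\mathcal{H}^\square(\rho_U),P^\circ) \\
&\cong \mathsf{Hopf}(\mathcal C)(P,\mathcal{H}^\square(\rho_U)^\circ),
\end{align*}
coming respectively from the universal property of $H_r$ (valid since $P$ is Hopf), from Theorem~\ref{TheoremBimonUniv(Co)actDuality} (i.e. postcomposition with $\beta$), from the self-duality of $(-)^\circ$ on $\mathsf{Bimon}(\mathcal C)$, from the universal property of $H_l$ (valid since $P^\circ$ is again a Hopf monoid, $P$ being Hopf and $\varkappa_P$ a monomorphism), and from the self-duality of $(-)^\circ$ on $\mathsf{Hopf}(\mathcal C)$. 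Since all steps are natural in $P$, the Yoneda lemma shows the composite is induced by an isomorphism ${}_\square\mathcal{H}(\rho_U^\vee)\xrightarrow{\sim}\mathcal{H}^\square(\rho_U)^\circ$ of Hopf monoids; substituting $P=\mathcal{H}^\square(\rho_U)^\circ$ and tracing $\id$ through the chain --- recognizing the role of $\varkappa_{\mathcal{H}^\square(\rho_U)}$ via Lemma~\ref{LemmaQcircKappa}, as in the proof of Theorem~\ref{Theorem(Co)monUniv(Co)measDuality} --- identifies this isomorphism with $\beta^{\mathbf{Hopf}}$, and substituting $P={}_\square\mathcal{H}(\rho_U^\vee)$ shows that $\id$ corresponds to its inverse. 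Hence $\beta^{\mathbf{Hopf}}$ is a Hopf monoid isomorphism.

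The step I expect to be the main obstacle is making this chain return $\beta^{\mathbf{Hopf}}$ on the nose rather than merely some isomorphism: one must check that the identifications $\mathcal{H}^\square(\rho_U)=H_l(\mathcal{B}^\square(\rho_U))$ and ${}_\square\mathcal{H}(\rho_U^\vee)=H_r({}_\square\mathcal{B}(\rho_U^\vee))$ are compatible with the units and counits of $H_l$ and $H_r$ in the required way, and that $\varkappa$ threads correctly through $(-)^\circ$ and through these adjunctions --- the Hopf-level analogue of Lemmas~\ref{LemmaQcircKappa} and~\ref{LemmaKappaForBimonoids}, most of which is automatic because any bimonoid homomorphism between Hopf monoids is a Hopf monoid homomorphism. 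Everything else is a direct transcription of the already proved bimonoid case.
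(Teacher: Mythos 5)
Your proposal is correct and follows essentially the same route as the paper: the paper's proof is exactly the chain of natural bijections $\mathsf{Hopf}(\mathcal C)(H,{}_\square\mathcal H(\rho_U^\vee))=\mathsf{Hopf}(\mathcal C)(H,H_r({}_\square\mathcal B(\rho_U^\vee)))\cong\cdots\cong\mathsf{Hopf}(\mathcal C)(H,\mathcal H^\square(\rho_U)^\circ)$ built from the $H_l/H_r$ adjunctions, Theorem~\ref{TheoremBimonUniv(Co)actDuality} and the restricted $(-)^\circ$ self-adjunction, followed by substituting $H=\mathcal H^\square(\rho_U)^\circ$ and identifying the resulting morphism with $\beta^{\mathbf{Hopf}}$ via the universal properties of ${}_\square\mathcal B(\rho_U^\vee)$ and ${}_\square\mathcal H(\rho_U^\vee)$ together with Proposition~\ref{PropositionVeeUnderComposition}. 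Your additional explicit verification of the action claim and the compatibility worries you flag at the end are handled in the paper by the same lemmas you cite, so there is no substantive difference.
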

\begin{remark} Consider the following (not necessarily commutative) diagram where 
	$G_2G_4$ and $G_2'G_4'$ are forgetful functors:
	$$\xymatrix{ \mathbf{HCoact}(A) \ar[d]_{G_2G_4} \ar[rr]^{(-)^\vee (\varkappa_{(\ldots)} \otimes \id_A)} & \quad & \mathbf{HAct}(A)^{\mathrm{op}} \ar[d]^{G_2'G'_4} \\
		\mathbf{ComodStr}(A) \ar[rr]^{(-)^\vee} & \quad & \mathbf{ModStr}(A)^{\mathrm{op}} 		
	}$$
	Theorem~\ref{TheoremHopfMonUniv(Co)actDuality} asserts that 
if $\rho_U^\mathbf{HCoact}$ is the initial object in $\mathbf{HCoact}(A)_{G_2 G_4}(\rho_U)$,
then $\left(\rho_U^\mathbf{HCoact}\right)^\vee(\varkappa_{\mathcal{H}^\square(\rho_U)}\otimes \id_A)$
is an initial object in $\mathbf{HAct}(A)^{\mathrm{op}}_{G_2' G_4'}(\rho_U^\vee)$.
\end{remark}
\begin{proof}[Proof of Theorem~\ref{TheoremHopfMonUniv(Co)actDuality}]Let $H$ be an arbitrary Hopf monoid in $\mathcal C$.
	By~\cite[Theorems~4.42 and~5.27]{AGV3} 
	 and Theorem~\ref{TheoremBimonUniv(Co)actDuality} we have natural bijections
	\begin{equation*}
		\begin{split}
			\mathsf{Hopf}(\mathcal C)(H, {}_\square \mathcal{H}(\rho_U^\vee)) = 
			\mathsf{Hopf}(\mathcal C)(H, H_r({}_\square \mathcal{B}(\rho_U^\vee)))\cong
			\mathsf{Bimon}(\mathcal C)(H, {}_\square \mathcal{B}(\rho_U^\vee)) \\
			\cong \mathsf{Bimon}(\mathcal C)(H, \mathcal{B}^\square(\rho_U)^\circ)
			\cong \mathsf{Bimon}(\mathcal C)(\mathcal{B}^\square(\rho_U), H^\circ)
			\cong \mathsf{Hopf}(\mathcal C)(H_l(\mathcal{B}^\square(\rho_U)), H^\circ) \\
			= \mathsf{Hopf}(\mathcal C)(\mathcal{H}^\square(\rho_U), H^\circ)
			\cong \mathsf{Hopf}(\mathcal C)(H, \mathcal{H}^\square(\rho_U)^\circ).\end{split}\end{equation*}
	Hence $\mathcal{H}^\square(\rho_U)^\circ \cong {}_\square \mathcal{H}(\rho_U^\vee)$
	under the isomorphism that corresponds to $$\id_{\mathcal{H}^\square(\rho_U)^\circ}
	\in \mathsf{Hopf}(\mathcal C)(\mathcal{H}^\square(\rho_U)^\circ, \mathcal{H}^\square(\rho_U)^\circ)$$
	if we take $H=\mathcal{H}^\square(\rho_U)^\circ$.
	But the corresponding element of $\mathsf{Hopf}(\mathcal C)(\mathcal{H}^\square(\rho_U)^\circ, {}_\square \mathcal{H}(\rho_U^\vee))$ is precisely $\beta^\mathbf{Hopf}$ resulting from the universal properties 
	of ${}_\square \mathcal{B}(\rho_U^\vee)$ and ${}_\square \mathcal{H}(\rho_U^\vee)$:
	$$
	\xymatrix{ & & & \mathcal{H}^\square(\rho_U)^* \otimes A \ar[d] \ar@/^4pc/[rddd]^{\left(\rho_U^\mathbf{HCoact}\right)^\vee}   \\
	\mathcal{H}^\square(\rho_U)^\circ \otimes A \ar[r]
	\ar[rrru]^{\varkappa_{\mathcal{H}^\square(\rho_U)}\otimes \id_A}
	 \ar@{-->}[d]_{\beta^{\mathbf{Hopf}} \otimes \id_A} &	\mathcal{B}^\square(\rho_U)^\circ \otimes A  \ar[d]^{\beta \otimes \id_A}
		\ar[rr]^{\varkappa_{\mathcal{B}^\square(\rho_U)}\otimes \id_A}
		& & \mathcal{B}^\square(\rho_U)^* \otimes A \ar[rdd]^{\left(\rho_U^\mathbf{Coact}\right)^\vee} \\
		{}_\square \mathcal{H}(\rho_U^\vee) \otimes A \ar[r]
		\ar@/_2pc/[rrrrd]_{\left(\rho_U^\vee\right)^\mathbf{HAct}} & {}_\square \mathcal{B}(\rho_U^\vee) \otimes A \ar[rrrd]^(0.6){\left(\rho_U^\vee\right)^\mathbf{Act}}& & \\
		& & & & A \\}
	$$
	 (The right triangle in the diagram above is commutative by Proposition~\ref{PropositionVeeUnderComposition}.)
	
	The uniqueness of $\beta^\mathbf{Hopf}$ follows from the universal property of ${}_\square \mathcal{H}(\rho_U^\vee)$ too.
\end{proof}
\begin{corollary}\label{CorollaryHopfMonUniv(Co)actDuality}
	Let~$\mathcal C$ be a pre-rigid symmetric monoidal category satisfying
	Properties~\ref{PropertySmallLimits}, \ref{PropertySubObjectsSmallSet}, \ref{PropertyMonomorphism},    \ref{PropertySwitchProdTensorIsAMonomorphism}, \ref{PropertyFreeMonoid} of Section~\ref{SubsectionSupportCoactingConditions} and their duals such that
	\begin{itemize}
		\item the morphisms
		$\alpha_M \otimes \id_N$ and $\theta_{M,N}$ are monomorphisms
		for all objects $M,N$;
		\item $\varkappa_H$ is a monomorphism in $\mathcal C$ for every Hopf monoid $H$.
	\end{itemize}
	Let $\rho_U \colon A \to A \otimes U$
	be a tensor epimorphism	defining on an $\Omega$-magma~$A$ a structure of a $U$-comodule for a comonoid $U$.
	Then $$\left(\rho_U^\mathbf{HCoact}\right)^\vee(\varkappa_{\mathcal{H}^\square(\rho_U)}\otimes \id_A)
	\colon \mathcal{H}^\square(\rho_U)^\circ \otimes A \to A$$ is an action and
	the unique Hopf monoid homomorphism $\beta^{\mathbf{Hopf}} \colon \mathcal{H}^\square(\rho_U)^\circ \to {}_\square \mathcal{H}(\rho_U^\vee)$ making
	the diagram~\eqref{EqHopfMonUniv(Co)actDuality}
	commutative is a Hopf monoid isomorphism.		
\end{corollary}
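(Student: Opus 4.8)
The plan is to deduce this corollary from Theorem~\ref{TheoremHopfMonUniv(Co)actDuality} in the same way the corollaries following Theorems~\ref{Theorem(Co)monUniv(Co)measDuality} and~\ref{TheoremBimonUniv(Co)actDuality} are deduced from those theorems: one only has to verify that the concrete property package assumed here entails every hypothesis of Theorem~\ref{TheoremHopfMonUniv(Co)actDuality}. The monomorphy of $\alpha_M\otimes\id_N$, $\theta_{M,N}$ and of $\varkappa_H$ for every Hopf monoid $H$ is assumed verbatim, so the only things left to check are (a) the existence of the functors $(-)^\circ$, $H_l$ and $H_r$, and (b) the existence of $\mathcal{A}^\square(\rho_U)$ and ${}_\square\mathcal{C}(\rho_U^\vee)$.

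For (a), I would obtain the finite dual $(-)^\circ$ from Remark~\ref{RemarkCircSufficientConditionsForExistence}: its hypotheses are Properties~\ref{PropertySubObjectsSmallSet}, \ref{PropertyMonomorphism} together with the duals of Properties~\ref{PropertySmallLimits}--\ref{PropertyEpiExtrMonoFactorizations} and~\ref{PropertyFreeMonoid}, of which Properties~\ref{PropertySubObjectsSmallSet}, \ref{PropertyMonomorphism}, the dual of Property~\ref{PropertySmallLimits} and the dual of Property~\ref{PropertyFreeMonoid} are assumed, while the dual of Property~\ref{PropertyFiniteAndCountableColimits} follows from Property~\ref{PropertySmallLimits} and the dual of Property~\ref{PropertyEpiExtrMonoFactorizations} follows from the duals of Properties~\ref{PropertySmallLimits} and~\ref{PropertySubObjectsSmallSet} (cf. the remark closing Section~\ref{SubsectionSupportCoactingConditions}). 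I would obtain $H_l$ and $H_r$ from~\cite[Theorems~4.40 and~5.26]{AGV3}, whose running assumptions on $\mathcal C$ are again Properties~\ref{PropertySmallLimits}, \ref{PropertySubObjectsSmallSet}, \ref{PropertyMonomorphism}, \ref{PropertySwitchProdTensorIsAMonomorphism}, \ref{PropertyFreeMonoid} and their duals, possibly after first deriving from them, via~\cite[Proposition~4.2]{AGV3} and the remark closing Section~\ref{SubsectionSupportCoactingConditions}, such auxiliary properties as Properties~\ref{PropertyLimitsOfSubobjectsArePreserved}, \ref{PropertyTensorPullback}, \ref{PropertyEqualizers} and their duals.

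For (b), I would argue exactly as in the corollaries to Theorems~\ref{Theorem(Co)monUniv(Co)measDuality} and~\ref{TheoremBimonUniv(Co)actDuality}: by~\cite[Proposition~4.2 (1)]{AGV3} the assumed properties and their duals yield all the hypotheses of~\cite[Theorems~4.23 and~5.19]{AGV3}, and since $B=A$ here the relevant comma categories are automatically non-empty --- the trivial comeasuring $A\xrightarrow{\sim}A\otimes\mathbbm{1}$ obtained from $\rho_U$ by composing with the counit of $U$ lies in $\mathbf{Comeas}(A,A)(\rho_U)$, and dually (using Proposition~\ref{PropositionDualToTensorEpiIsMono} to see that $\rho_U^\vee$ is a tensor monomorphism and Lemma~\ref{LemmaStarComoduleToAModule} to see that $\rho_U^\vee$ is a module structure) the trivial measuring $\mathbbm{1}\otimes A\xrightarrow{\sim}A$ lies in $\mathbf{Meas}(A,A)^{\mathrm{op}}(\rho_U^\vee)$. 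Hence $\mathcal{A}^\square(\rho_U)$ and ${}_\square\mathcal{C}(\rho_U^\vee)$ exist, and invoking Theorem~\ref{TheoremHopfMonUniv(Co)actDuality} finishes the argument. I expect the only genuinely delicate point to be (a), and within it the existence of $H_l$ and $H_r$ as honest functors --- i.e. the reflectivity and coreflectivity of $\mathsf{Hopf}(\mathcal C)$ in $\mathsf{Bimon}(\mathcal C)$, in the spirit of Porst--Street and of~\cite[Theorems~4.40 and~5.26]{AGV3} --- under precisely the property package assumed here; the remaining verifications are routine bookkeeping.
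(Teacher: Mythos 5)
Your proposal is correct and follows essentially the same route as the paper, whose proof is exactly this verification: apply \cite[Proposition~4.2 (1), Theorems~4.23, 5.19]{AGV3} and Remark~\ref{RemarkCircSufficientConditionsForExistence} to supply the existence hypotheses of Theorem~\ref{TheoremHopfMonUniv(Co)actDuality} and then invoke that theorem. The only discrepancy is bibliographic: the paper obtains $H_l$ and $H_r$ from \cite[Theorems~4.6 and~5.10]{AGV3} (the (co)reflectivity of $\mathsf{Hopf}(\mathcal C)$ in $\mathsf{Bimon}(\mathcal C)$) rather than from \cite[Theorems~4.40 and~5.26]{AGV3}, which presuppose those adjoints; your identification of the needed mathematical content is nonetheless accurate, and your explicit non-emptiness argument for the $A=B$ case is correct though left implicit in the paper.
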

\begin{proof}
	Apply~\cite[Proposition~4.2 (1), Theorems~4.6, 4.24, 5.10, 5.19]{AGV3}, 
Remark~\ref{RemarkCircSufficientConditionsForExistence} and	Theorem~\ref{TheoremHopfMonUniv(Co)actDuality}. 
\end{proof}	

\section{Cosupport and duality in closed monoidal categories }\label{SectionCosupportDualityInMonoidalClosedCategories}

\subsection{Cosupports in closed monoidal categories} When the base category $\mathcal C$ is closed monoidal,
we can reduce the Lifting Problem to the Lifting Problem
for a forgetful functor between such comma categories that the notion of the absolute value coincides with the notion of the cosupport. This allows us to work with subobjects instead of tensor monomorphisms, which in some situations (say, for $\mathcal C = \mathbf{Vect}_\mathbbm{k}$) is easier.

Let $\mathcal C$ be a closed monoidal category. (See examples in Section~\ref{SectionApplications} below.) Recall that by $[A,-]$ we denote the right adjoint functor to $(-)\otimes A$
  and by $\mathrm{ev}_{A,B} \colon [A,B]\otimes A \to B$ the counit of this adjunction. Let $(-)^* := [-,\mathbbm{1}]$. Then $\mathrm{ev}_{A} := \mathrm{ev}_{A,\mathbbm{1}}$. 

Given objects $A,B$ in $\mathcal C$, consider the comma category $(\mathcal C \downarrow [A,B])$,
i.e. the category where
\begin{itemize}
	\item objects are morphisms $\zeta \colon P \to [A,B]$ for objects $P$ in $\mathcal C$;
	\item morphisms between $\zeta_1 \colon P_1 \to [A,B]$ and $\zeta_2 \colon P_2 \to [A,B]$
	are morphisms $\tau \colon P_1 \to P_2$ such that $\zeta_2\tau = \zeta_1$:
	$$\xymatrix{ P_1 \ar[d]_\tau \ar[r]^(0.4){\zeta_1} & [A,B] \\
		P_2 \ar[ru]_{\zeta_2}
	}$$
\end{itemize} 

\begin{remark}
Objects in $\LIO\left((\mathcal C \downarrow [A,B])^{\mathrm{op}} \right)$
are exactly monomorphisms $P \rightarrowtail [A,B]$. Moreover, if $\mathcal C$
satisfies the property dual to Property~\ref{PropertyEpiExtrMonoFactorizations} of Section~\ref{SubsectionSupportCoactingConditions},
then in $(\mathcal C \downarrow [A,B])^{\mathrm{op}}$ there exist all absolute values, since if
 $\zeta = i \tau$
is an (ExtrEpi, Mono)-factorization of $\zeta \colon P \to [A,B]$, then $|\zeta|=i$.
\end{remark}

Denote by $K \colon  \mathbf{TensMor}(A,B) \mathrel{\widetilde\to} (\mathcal C \downarrow [A,B])$
the isomorphism of categories, corresponding to the natural bijection
\begin{equation*}\mathcal C(P\otimes A, B) \cong \mathcal C(P, [A,B]).\end{equation*}

Being an isomorphism, the functor $K$ maps $\LIO(\mathbf{TensMor}(A,B)^\mathrm{op})$
onto $\LIO((\mathcal C \downarrow [A,B])^\mathrm{op})$, commutes with taking absolute values and preserves the preorder.
In other words, we get the following proposition and remarks:

\begin{proposition}\label{PropositionMonoTensorMonoClosed} The morphism $\psi \colon P \otimes A \to B$ is a tensor monomorphism in $\mathcal C$  for some objects $P,A,B$
	if and only if the corresponding morphism $K\psi \colon P \to [A,B]$ is a monomorphism.
\end{proposition}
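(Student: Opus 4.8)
The plan is to deduce the statement directly from the fact that $K$ is an isomorphism of categories, together with the description of $\LIO\bigl((\mathcal C\downarrow[A,B])^{\mathrm{op}}\bigr)$ recorded in the remark preceding the proposition.

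First I would observe that, since $K\colon \mathbf{TensMor}(A,B)\mathrel{\widetilde\to}(\mathcal C\downarrow[A,B])$ is an isomorphism of categories, the induced functor $K^{\mathrm{op}}\colon \mathbf{TensMor}(A,B)^{\mathrm{op}}\mathrel{\widetilde\to}(\mathcal C\downarrow[A,B])^{\mathrm{op}}$ is an isomorphism as well. Any isomorphism (in fact any equivalence) of categories preserves and reflects the property of being locally initial, because that property --- for every object $x$ there is at most one morphism $x_0\to x$ --- is expressed purely in terms of hom-sets. Hence $K^{\mathrm{op}}$ restricts to a bijection between $\LIO\bigl(\mathbf{TensMor}(A,B)^{\mathrm{op}}\bigr)$ and $\LIO\bigl((\mathcal C\downarrow[A,B])^{\mathrm{op}}\bigr)$; in particular, for an object $\psi\colon P\otimes A\to B$ of $\mathbf{TensMor}(A,B)$ we have $\psi\in\LIO\bigl(\mathbf{TensMor}(A,B)^{\mathrm{op}}\bigr)$ if and only if $K\psi\in\LIO\bigl((\mathcal C\downarrow[A,B])^{\mathrm{op}}\bigr)$.

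Second, by definition $\psi$ is a tensor monomorphism precisely when $\psi\in\LIO\bigl(\mathbf{TensMor}(A,B)^{\mathrm{op}}\bigr)$, while by the remark preceding the proposition the objects of $\LIO\bigl((\mathcal C\downarrow[A,B])^{\mathrm{op}}\bigr)$ are exactly the monomorphisms $P\rightarrowtail[A,B]$. Combining the two equivalences yields the claim. For completeness I would recall why a morphism $\zeta\colon P\to[A,B]$ is locally initial in $(\mathcal C\downarrow[A,B])^{\mathrm{op}}$ if and only if it is a monomorphism: a morphism from $\zeta$ to $\zeta'\colon P'\to[A,B]$ in the opposite comma category is a morphism $\tau\colon P'\to P$ with $\zeta\tau=\zeta'$, and there is at most one such $\tau$ for every $\zeta'$ exactly when $\zeta$ is left-cancellable. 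There is no genuine obstacle in this argument; the only place where the closed monoidal hypothesis enters is the very existence of the functor $K$, i.e. of the internal hom $[A,B]$ and the natural bijection $\mathcal C(P\otimes A,B)\cong\mathcal C(P,[A,B])$ that implements it.
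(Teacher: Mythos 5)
Your argument is correct and is essentially the paper's own: the proposition is stated there as an immediate consequence of $K$ being an isomorphism of categories (hence carrying $\LIO(\mathbf{TensMor}(A,B)^{\mathrm{op}})$ onto $\LIO((\mathcal C\downarrow[A,B])^{\mathrm{op}})$) combined with the preceding remark identifying the locally initial objects of $(\mathcal C\downarrow[A,B])^{\mathrm{op}}$ with the monomorphisms $P\rightarrowtail[A,B]$. Your added verification that local initiality in the opposite comma category amounts to left-cancellability of $\zeta$ is a correct filling-in of a detail the paper leaves implicit.
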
	

\begin{remarks}\label{Cosupp_duality_rmk}
 \hspace{0.1cm}
\begin{enumerate}
\item\label{RemarkCosupportIsTheImage}
	Suppose that $\mathcal C$ satisfies the property dual to Property~\ref{PropertyEpiExtrMonoFactorizations}.
	Let $\psi \colon P \otimes A \to B$ be a morphism
	and let $K\psi = i \pi$ be the (ExtrEpi, Mono)-factorization.
	Then there exists $|\psi| = K^{-1} i$, i.e. $i$ is a morphism $\cosupp \psi \rightarrowtail [A,B]$
	and $\psi = |\psi|(\pi \otimes \id_A)$:
	\begin{equation*}\xymatrix{
			P \otimes A \ar[r]^\psi  \ar[d]_(0.45){\pi\otimes \id_A} & B  \\
			(\cosupp \psi) \otimes A \ar[ru]_(0.6){|\psi|} &
	}\end{equation*}
	In particular, $|K\psi|=K|\psi|$ and the cosupport of $\psi$ is just the image of $K\psi$ in $[A,B]$;
\item\label{RemarkCosupportSubobjectPartialOrder} Under the same assumptions as in (\ref{RemarkCosupportIsTheImage}) above,
	we have $\psi_1 \succcurlyeq \psi_2$ (with respect to $X=(\mathcal C \downarrow [A,B])^\mathrm{op}$) if and only if $\cosupp \psi_1$ is a subobject of $\cosupp \psi_2$.
	\end{enumerate}
\end{remarks}

Proposition~\ref{PropositionMonoidActionMonoidHomomorphism} below is 
verified directly using the universal property of $\mathrm{ev}$ dually to the proof of
\cite[Theorem~4.31]{AGV3}:
\begin{proposition}\label{PropositionMonoidActionMonoidHomomorphism} 
	For every object $A$ in $\mathcal C$ the object $[A,A]$ admits a unique structure of a monoid
	turning $A$ into a left $[A,A]$-module via $\mathrm{ev}_{A,A}$.
	Namely, the multiplication $\mu_{[A,A]} \colon [A,A]\otimes [A,A] \to [A,A]$
	and the unit $u_{[A,A]} \colon \mathbbm{1}\to [A,A]$
	are the unique morphisms making the diagrams below commutative:
	$$\xymatrix{
		[A,A] \otimes [A,A] \otimes A \ar[rr]^(0.6){\id_{[A,A]} \otimes  \mathrm{ev}_{A,A}} \ar@{-->}[d]_{\mu_{[A,A]}\otimes \id_A} &\quad & [A,A] \otimes A \ar[d]^{\mathrm{ev}_{A,A}}\\
		[A,A] \otimes A \ar[rr]^{\mathrm{ev}_{A,A}} & & A
	}
	\qquad
	\xymatrix{ \mathbbm{1} \otimes A \ar@{-->}[dd]_{u_{[A,A]}\otimes \id_A} \ar[rd]^\sim &     \\
		& A \\
		[A,A] \otimes A \ar[ru]^{\mathrm{ev}_{A,A}} &
	} 
	$$
	Moreover, if $\psi \colon P \otimes A \to A$ is a morphism 
	for some object $A$ and a monoid $(P,\mu,u)$ in $\mathcal C$,
	then $\psi$
	defines on $A$ a structure of a $P$-module if and only if
	the corresponding morphism $K\psi \colon P \to [A,A]$ is a monoid homomorphism.
	In particular, the isomorphism of categories $$K \colon  \mathbf{TensMor}(A,A) \mathrel{\widetilde\to} (\mathcal C \downarrow [A,A])$$ restricts to
	an isomorphism of categories
	$$\mathbf{ModStr}(A) \mathrel{\widetilde\to} (\mathsf{Mon}(\mathcal C) \downarrow [A,A]),$$
	which we again denote by the same letter $K$.
\end{proposition}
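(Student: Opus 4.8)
The plan is to work entirely through the natural bijection underlying $K$, namely $\mathcal C(P\otimes A, A)\cong\mathcal C(P,[A,A])$, whose inverse sends $\zeta\colon P\to[A,A]$ to $\mathrm{ev}_{A,A}(\zeta\otimes\id_A)$; since this is a bijection, two morphisms $P\to[A,A]$ coincide as soon as they agree after applying $\mathrm{ev}_{A,A}((-)\otimes\id_A)$. First I would \emph{define} $\mu_{[A,A]}\colon[A,A]\otimes[A,A]\to[A,A]$ to be the morphism corresponding under this bijection to $\mathrm{ev}_{A,A}(\id_{[A,A]}\otimes\mathrm{ev}_{A,A})$, and $u_{[A,A]}\colon\mathbbm{1}\to[A,A]$ to the one corresponding to the left unitor $l_A\colon\mathbbm{1}\otimes A\xrightarrow{\sim}A$. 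Unwinding the bijection, these are exactly the unique morphisms making the two displayed diagrams commute, and the claim that $\mathrm{ev}_{A,A}$ turns $A$ into an $[A,A]$-module is just a reformulation of those diagrams. To check that $([A,A],\mu_{[A,A]},u_{[A,A]})$ is a monoid one applies the bijection to each axiom: by naturality and the defining equation for $\mu_{[A,A]}$, both composites $\mu_{[A,A]}(\mu_{[A,A]}\otimes\id_{[A,A]})$ and $\mu_{[A,A]}(\id_{[A,A]}\otimes\mu_{[A,A]})$ correspond to $\mathrm{ev}_{A,A}(\id\otimes\mathrm{ev}_{A,A})(\id\otimes\id\otimes\mathrm{ev}_{A,A})$ once the associativity constraint is inserted, hence are equal; the unit axioms are handled identically with $l_A$. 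Uniqueness of such a monoid structure is then clear, since for any other candidate $(\mu',u')$ the module axioms for $\mathrm{ev}_{A,A}$ are precisely the defining equations of $\mu_{[A,A]}$ and $u_{[A,A]}$.

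For the second assertion, fix a monoid $(P,\mu,u)$ and a morphism $\psi\colon P\otimes A\to A$, and put $\zeta:=K\psi$, so that $\psi=\mathrm{ev}_{A,A}(\zeta\otimes\id_A)$. A short diagram chase using functoriality of $\otimes$, naturality of the bijection, and the defining diagram of $\mu_{[A,A]}$ shows that $\zeta\mu$ and $\mu_{[A,A]}(\zeta\otimes\zeta)$ correspond under the bijection to $\psi(\mu\otimes\id_A)$ and $\psi(\id_P\otimes\psi)$ respectively, while $\zeta u$ and $u_{[A,A]}$ correspond to $\psi(u\otimes\id_A)$ and $l_A$. As the bijection is injective, $\zeta$ is a monoid homomorphism if and only if $\psi(\mu\otimes\id_A)=\psi(\id_P\otimes\psi)$ and $\psi(u\otimes\id_A)=l_A$, i.e. if and only if $\psi$ defines a $P$-module structure on $A$.

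Finally, the isomorphism of categories follows formally: $K$ is already an isomorphism $\mathbf{TensMor}(A,A)\mathrel{\widetilde\to}(\mathcal C\downarrow[A,A])$, the second paragraph matches the objects of $\mathbf{ModStr}(A)$ with those of $(\mathsf{Mon}(\mathcal C)\downarrow[A,A])$, and a morphism $\tau\colon P_1\to P_2$ in $\mathbf{ModStr}(A)$ is by definition a monoid homomorphism with $\psi_2(\tau\otimes\id_A)=\psi_1$, which $K$ sends to a monoid homomorphism $\tau$ with $(K\psi_2)\tau=K\psi_1$ --- exactly a morphism of $(\mathsf{Mon}(\mathcal C)\downarrow[A,A])$ --- and conversely; functoriality of $K$ takes care of composition and identities. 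The only mildly delicate step throughout is keeping track of the coherence isomorphisms (associator and left unitor) when translating the monoid and module axioms across the adjunction, but this is bookkeeping rather than a real obstacle, and the whole argument runs dual to the proof of \cite[Theorem~4.30]{AGV3}.
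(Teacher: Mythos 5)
Your proposal is correct and follows essentially the same route as the paper, which states only that the proposition "is verified directly using the universal property of $\mathrm{ev}$ dually to the proof of \cite[Theorem~4.30]{AGV3}"; your write-up is simply that verification carried out in full, transposing each monoid/module axiom across the adjunction bijection $\mathcal C(P\otimes A,A)\cong\mathcal C(P,[A,A])$ and using that the counit $\mathrm{ev}_{A,A}$ detects equality of morphisms into $[A,A]$. Nothing further is needed.
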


\subsection{Universal measuring comonoids and universal acting bimonoids and Hopf monoids in closed monoidal categories}
\label{SubsectionMeasuringActingClosed}

The propositions and remarks above make it possible to provide sufficient conditions for existence
of universal (co)measuring (co)monoids in terms of subobjects in $[A,B]$ and  universal (co)acting 
bimonoids and Hopf monoids in terms of submonoids in $[A,A]$.

Let $\mathcal C$ be a braided closed monoidal category satisfying the property dual to Property~\ref{PropertyEpiExtrMonoFactorizations} of Section~\ref{SubsectionSupportCoactingConditions}.

\begin{remark}\label{RemarkInBraidedClosedMonoidalManyPropertiesHoldAutomatically}
	Note that in a braided closed monoidal category the functor $M\otimes (-)$ is isomorphic to the functor $(-)\otimes M$,
	which is a left adjoint for every object $M$. Therefore both functors preserve all colimits and epimorphisms.
	In particular, the properties dual to Properties~\ref{PropertyMonomorphism}--\ref{PropertyEqualizers} of Section~\ref{SubsectionSupportCoactingConditions} hold automatically.
	Property~\ref{PropertyFreeMonoid} follows from~\cite[\S 7.3, Theorem 2]{MacLaneCatWork}.
\end{remark}
 
Fix $\Omega$-magmas $A$ and $B$ and a subobject $i \colon V \rightarrowtail [A,B]$. 
The comonoid ${}_\square \mathcal{C}(A,B,V)$ corresponding to an initial object $\psi_{A,B,V}$ in $\mathbf{Meas}(A,B)^\mathrm{op}_{K G_1'}(i)$ (where $G_1'$ is the forgetful functor $\mathbf{Meas}(A,B)\to \mathbf{TensMor}(A,B)$)
is called the \textit{$V$-universal measuring comonoid} from $A$ to $B$.
 
 In other words, ${}_\square\mathcal{C}(A,B,V)$ is a $V$-universal measuring comonoid if for every measuring $\psi \colon P \otimes A \to B$,
 such that $\cosupp \psi$ is a subobject of $V$, there exists a unique comonoid homomorphism
 $\varphi \colon P \to {}_\square\mathcal{C}(A,B,V)$ making the diagram below commutative:
 
 \begin{equation*}\xymatrix{
 		{P} \otimes A \ar[r]^(0.6){\psi}  \ar[d]_{\varphi\otimes{\id_A}} & B  \\
 		{{}_\square\mathcal{C}(A,B,V)} \otimes A \ar[ru]_(0.6){\psi_{A,B,V}} &
 }\end{equation*}

\begin{theorem}\label{TheoremDUALComonUnivMeasExistenceClosed}
Let $A$ and $B$ be $\Omega$-magmas and let $i \colon V \rightarrowtail [A,B]$ be a subobject
in a braided closed monoidal category $\mathcal C$ satisfying Properties~\ref{PropertySubObjectsSmallSet}, \ref{PropertyMonomorphism} and the properties dual to
Properties~\ref{PropertySmallLimits}, \ref{PropertyEpiExtrMonoFactorizations},   \ref{PropertyFreeMonoid} of Section~\ref{SubsectionSupportCoactingConditions}.
	 Then the initial object in $\mathbf{Meas}(A,B)^\mathrm{op}_{K G_1'}(i)$ indeed exists if $\mathbf{Meas}(A,B)^\mathrm{op}_{K G_1'}(i)$ is not empty. 
\end{theorem}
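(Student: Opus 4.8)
The plan is to reduce the statement to \cite[Theorem 5.19]{AGV3}, which constructs the $V$-universal measuring comonoid for a tensor monomorphism, by transporting everything along the isomorphism of categories $K \colon \mathbf{TensMor}(A,B) \mathrel{\widetilde\to} (\mathcal C \downarrow [A,B])$. Concretely, set $\psi_V := K^{-1}(i) \colon V \otimes A \to B$. Since $i$ is a monomorphism, Proposition~\ref{PropositionMonoTensorMonoClosed} gives that $\psi_V$ is a tensor monomorphism, so $\psi_V \in \LIO(\mathbf{TensMor}(A,B)^{\mathrm{op}})$; and because $K\psi_V = i$ is already a monomorphism, its (ExtrEpi, Mono)-factorization is trivial, so by Remarks~\ref{Cosupp_duality_rmk} we have $\cosupp \psi_V = V$ and $|\psi_V| = \psi_V$. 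The first point to record is that the full subcategory $\mathbf{Meas}(A,B)^{\mathrm{op}}(i)$ taken with respect to $(KG_1')^{\mathrm{op}}$ agrees with the full subcategory $\mathbf{Meas}(A,B)^{\mathrm{op}}(\psi_V)$ taken with respect to $G_1'^{\mathrm{op}}$ and the absolute value $|\psi_V|$: as noted just before Proposition~\ref{PropositionMonoTensorMonoClosed}, $K$ carries $\LIO$, absolute values and the preorder $\succcurlyeq$ over verbatim, and since $(KG_1')^{\mathrm{op}} = K^{\mathrm{op}}\circ G_1'^{\mathrm{op}}$ with $K^{\mathrm{op}}(\psi_V) = i$, a measuring $\psi$ lies in one of these subcategories if and only if it lies in the other, equivalently if and only if $\cosupp \psi$ is a subobject of $V$.

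Next I would verify that the hypotheses of \cite[Theorem 5.19]{AGV3} hold in our setting. By Remark~\ref{RemarkInBraidedClosedMonoidalManyPropertiesHoldAutomatically}, in a braided closed monoidal category the functors $M\otimes(-)$ and $(-)\otimes M$ are isomorphic left adjoints, hence Property~\ref{PropertyFreeMonoid} and all the properties dual to Properties~\ref{PropertyMonomorphism}--\ref{PropertyEqualizers} of Section~\ref{SubsectionSupportCoactingConditions} hold automatically; together with the explicitly assumed Properties~\ref{PropertySubObjectsSmallSet}, \ref{PropertyMonomorphism} and the properties dual to Properties~\ref{PropertySmallLimits}, \ref{PropertyEpiExtrMonoFactorizations}, \ref{PropertyFreeMonoid}, this covers everything \cite[Theorem 5.19]{AGV3} requires. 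Applying that theorem to the tensor monomorphism $\psi_V$ then yields, whenever $\mathbf{Meas}(A,B)^{\mathrm{op}}(\psi_V)$ is non-empty, the $V$-universal measuring comonoid ${}_\square\mathcal{C}(\psi_V)$ together with the initial object $\psi_V^{\mathbf{Meas}} \colon {}_\square\mathcal{C}(\psi_V)\otimes A \to B$ of $\mathbf{Meas}(A,B)^{\mathrm{op}}(\psi_V)$; by the identification of subcategories above this is precisely the initial object of $\mathbf{Meas}(A,B)^{\mathrm{op}}(i)$, and one sets ${}_\square\mathcal{C}(A,B,V) := {}_\square\mathcal{C}(\psi_V)$ and $\psi_{A,B,V} := \psi_V^{\mathbf{Meas}}$. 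For orientation one may recall the construction underlying \cite[Theorem 5.19]{AGV3}: form the cofree comonoid $\pi_V \colon \mathcal{G}(V)\to V$, which exists by the property dual to Property~\ref{PropertyFreeMonoid}, equip it with $\psi_0 := K^{-1}(i\pi_V)$, and take ${}_\square\mathcal{C}(A,B,V)$ to be the join inside $\mathcal{G}(V)$ of all subcomonoids on which $\psi_0$ restricts to a measuring — wellpoweredness and Property~\ref{PropertyMonomorphism} ensuring that this join exists and has the required universal property.

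The main obstacle I anticipate is not a new computation but bookkeeping: pinning down exactly which of the (dual) Properties of Section~\ref{SubsectionSupportCoactingConditions} are consumed by \cite[Theorem 5.19]{AGV3} and confirming each is available, noting in particular that it is Property~\ref{PropertySubObjectsSmallSet} \emph{itself}, not its dual, that is needed, since the construction proceeds by controlling subcomonoids of a cofree comonoid. A secondary subtlety is the short naturality argument identifying ``$\cosupp \psi$ is a subobject of $V$'' with ``there is a morphism $\psi_V \to \psi$ in $\mathbf{TensMor}(A,B)^{\mathrm{op}}$'', which rests on Proposition~\ref{PropositionMonoTensorMonoClosed}, Remarks~\ref{Cosupp_duality_rmk}, and the fact that, via the adjunction defining $K$, $K\bigl(\psi(\tau\otimes\id_A)\bigr) = (K\psi)\tau$.
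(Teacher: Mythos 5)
Your proposal is correct and follows essentially the same route as the paper's own (very terse) proof: transport the problem along the isomorphism $K$, check the remaining hypotheses of \cite[Theorem~5.19]{AGV3} via Remark~\ref{RemarkInBraidedClosedMonoidalManyPropertiesHoldAutomatically}, and conclude. Your write-up simply makes explicit the bookkeeping (the identification of $\mathbf{Meas}(A,B)^{\mathrm{op}}(i)$ with $\mathbf{Meas}(A,B)^{\mathrm{op}}(\psi_V)$ via Proposition~\ref{PropositionMonoTensorMonoClosed} and Remarks~\ref{Cosupp_duality_rmk}) that the paper leaves implicit.
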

\begin{proof} Use the fact that $K$ is an isomorphism of categories and apply~\cite[Theorem~5.19]{AGV3} 
	and Remark~\ref{RemarkInBraidedClosedMonoidalManyPropertiesHoldAutomatically}.
\end{proof}	
\begin{remark}
If we take $i=\id_{[A,B]}$, then we get the measuring that is universal among all measurings from $A$ to $B$. In particular,
the corresponding comonoid is a generalization of the Sweedler universal measuring coalgebra.	
\end{remark}
\begin{corollary}[{\cite[Theorem 3.10]{AGV2}}]
	Let $A$ and $B$ be $\Omega$-algebras over a field $\mathbbm{k}$ and let $V\subseteq \mathbf{Vect}_\mathbbm{k}(A,B)$
	be a subspace. Then there exists $({}_\square \mathcal{C}(A,B,V),\psi_{A,B,V})$,
	which is called the \textit{$V$-universal measuring coalgebra} from $A$ to $B$.
\end{corollary}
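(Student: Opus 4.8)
The plan is to obtain this as the special case $\mathcal C=\mathbf{Vect}_\mathbbm{k}$ of Theorem~\ref{TheoremDUALComonUnivMeasExistenceClosed}. First I would recall that $\mathbf{Vect}_\mathbbm{k}$, with its usual tensor product and symmetry, is a braided closed monoidal category whose internal hom is the space of linear maps, $[A,B]=\Hom_\mathbbm{k}(A,B)=\mathbf{Vect}_\mathbbm{k}(A,B)$; hence an $\Omega$-algebra over $\mathbbm{k}$ is precisely an $\Omega$-magma in $\mathbf{Vect}_\mathbbm{k}$, and a subspace $V\subseteq\mathbf{Vect}_\mathbbm{k}(A,B)$ is exactly a subobject $i\colon V\rightarrowtail[A,B]$. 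By definition the pair $({}_\square\mathcal{C}(A,B,V),\psi_{A,B,V})$ to be produced is the initial object of $\mathbf{Meas}(A,B)^{\mathrm{op}}(i)$ with respect to $(KG_1')^{\mathrm{op}}$, so the whole task reduces to checking the hypotheses of Theorem~\ref{TheoremDUALComonUnivMeasExistenceClosed} and the non-emptiness of $\mathbf{Meas}(A,B)^{\mathrm{op}}(i)$.

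The hypotheses are routine to verify: $\mathbf{Vect}_\mathbbm{k}$ is wellpowered (Property~\ref{PropertySubObjectsSmallSet}); tensoring by a fixed space is exact, hence preserves monomorphisms (Property~\ref{PropertyMonomorphism}); it is cocomplete (dual of Property~\ref{PropertySmallLimits}); the image factorization of a linear map into a surjection followed by an injection --- both extremal --- makes it $(\mathrm{ExtrEpi},\mathrm{Mono})$-structured (dual of Property~\ref{PropertyEpiExtrMonoFactorizations}); and the forgetful functor $\mathsf{Comon}(\mathbf{Vect}_\mathbbm{k})\to\mathbf{Vect}_\mathbbm{k}$ has a right adjoint, the cofree coalgebra functor, whose existence over a field is classical (see, e.g., \cite{Sweedler}); this is the dual of Property~\ref{PropertyFreeMonoid}. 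Several of these, together with the remaining properties automatically available in a braided closed monoidal category, are already covered by Remark~\ref{RemarkInBraidedClosedMonoidalManyPropertiesHoldAutomatically}.

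The one point that genuinely needs an argument is that $\mathbf{Meas}(A,B)^{\mathrm{op}}(i)$ is non-empty. I would exhibit the \emph{zero measuring}: take $P$ to be the zero coalgebra, so that $P\otimes A=0$ and the unique morphism $\psi\colon 0\to B$ carries an $\Omega$-magma structure in $\mathsf{TensMor}(P)$ inducing the given structures on $A$ and $B$ --- each compatibility square has source $0$ and thus commutes automatically, and no further identities are imposed on $\Omega$-magmas. Its transpose $K\psi\colon 0\to[A,B]$ has image $0$, so $\cosupp\psi=0$, which is a subobject of $V$; hence $\psi\in\mathbf{Meas}(A,B)^{\mathrm{op}}(i)$. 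Theorem~\ref{TheoremDUALComonUnivMeasExistenceClosed} then yields the required initial object $\psi_{A,B,V}$, with underlying coalgebra ${}_\square\mathcal{C}(A,B,V)$.

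I expect this non-emptiness to be the only subtle step: unlike the (co)acting situation with $A=B$, where the trivial action provides a canonical object, there need not exist any $\Omega$-algebra homomorphism $A\to B$, so one cannot use $P=\mathbbm{k}$; passing to the zero coalgebra circumvents this. Everything else is a matter of recalling standard facts about $\mathbf{Vect}_\mathbbm{k}$ and $\mathsf{Comon}(\mathbf{Vect}_\mathbbm{k})$, so no serious obstacle remains.
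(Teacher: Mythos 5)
Your proposal is correct and follows the same route the paper intends: the corollary is the specialization of Theorem~\ref{TheoremDUALComonUnivMeasExistenceClosed} to $\mathcal C=\mathbf{Vect}_\mathbbm{k}$, whose hypotheses you verify by the standard facts you list (wellpoweredness, exactness of $\otimes$, cocompleteness, image factorization, and the classical cofree coalgebra). Your explicit non-emptiness check via the zero coalgebra $P=0$ (whose cosupport is $0\subseteq V$) is a detail the paper leaves implicit, but it is exactly what is needed to pass from the conditional statement of the theorem to the unconditional existence asserted in the corollary, and your argument for it is sound.
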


\begin{theorem}\label{TheoremDUALBimonUnivActingExistenceClosed}
	Let $A$ be an $\Omega$-magma and let $i \colon V \to [A,A]$ be a submonoid (homomorphism of monoids that is a monomorphism in $\mathcal C$) in
	 a braided closed monoidal category $\mathcal C$ satisfying the property dual to Property~\ref{PropertyEpiExtrMonoFactorizations} of Section~\ref{SubsectionSupportCoactingConditions} such that there exists
	 the $V$-universal measuring comonoid ${}_\square \mathcal{C}(A,A,V)$.
	Then  ${}_\square \mathcal{B}(A,V):={}_\square \mathcal{C}(A,A,V)$ admits a structure of a bimonoid such that for any bimonoid $B$ and any action
	$\psi \colon B \otimes A \to A$, such that $\cosupp \psi$ is a submonoid of $V$, the unique comonoid homomorphism $\varphi$
	making the diagram below commutative is in fact a bimonoid homomorphism:
	\begin{equation*}\xymatrix{ B \otimes A \ar[r]^(0.6){\psi} \ar@{-->}[d]_{\varphi \otimes \id_A} & A \\
			{}_\square \mathcal{B}(A,V) \otimes A  \ar[ru]_{\psi_{A,V}}  } \end{equation*}
	(Here $\psi_{A,V} := \psi_{A,A,V}$.)
	In other words, ${}_\square \mathcal{B}(A,V)$ is the \textit{$V$-universal acting bimonoid on $A$}.
\end{theorem}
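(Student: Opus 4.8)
The plan is to transfer Sweedler's classical construction of the bialgebra structure on a universal measuring coalgebra to this $V$-restricted categorical setting, using the universal property of ${}_\square \mathcal{C}(A,A,V)$ in place of the classical one and using crucially that $i\colon V\to [A,A]$ is a homomorphism of monoids. Write $C:={}_\square \mathcal{C}(A,A,V)$ and let $\psi_{A,V}\colon C\otimes A\to A$ be its structure measuring. Since $\psi_{A,V}$ is the initial object of $\mathbf{Meas}(A,A)^{\mathrm{op}}(i)$, its cosupport is a subobject of $V$, so by Remarks~\ref{Cosupp_duality_rmk}(\ref{RemarkCosupportIsTheImage}) the adjoint morphism $K\psi_{A,V}\colon C\to[A,A]$ factors through $i$. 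Throughout I will use the universal property of $C$: every measuring $\psi\colon P\otimes A\to A$ for a comonoid $P$ whose cosupport is a subobject of $V$---equivalently, whose adjoint $K\psi\colon P\to[A,A]$ factors through $i$---admits a \emph{unique} comonoid homomorphism $\varphi\colon P\to C$ with $\psi_{A,V}(\varphi\otimes\id_A)=\psi$.

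First I would build the monoid structure on $C$. The composite $\psi^{(2)}:=\psi_{A,V}(\id_C\otimes\psi_{A,V})\colon (C\otimes C)\otimes A\to A$ is again a measuring, now for the tensor-product comonoid $C\otimes C$ (that a composition of measurings is a measuring is checked directly, as in \cite[Chapter VII]{Sweedler}). A short diagram chase using the characterization of the multiplication $\mu_{[A,A]}$ in Proposition~\ref{PropositionMonoidActionMonoidHomomorphism} yields $K\psi^{(2)}=\mu_{[A,A]}(K\psi_{A,V}\otimes K\psi_{A,V})$; as $i$ is a monoid homomorphism and $K\psi_{A,V}$ factors through $i$, the morphism $K\psi^{(2)}$ also factors through $i$, hence $\cosupp\psi^{(2)}$ is a subobject of $V$. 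The universal property then produces a unique comonoid homomorphism $\mu_C\colon C\otimes C\to C$ with $\psi_{A,V}(\mu_C\otimes\id_A)=\psi^{(2)}$. Likewise, the left unitor $l_A\colon \mathbbm{1}\otimes A\to A$ is a measuring for the trivial comonoid $\mathbbm{1}$ (this is immediate) with adjoint the monoid unit $u_{[A,A]}$, and $u_{[A,A]}$ factors through $i$ since $i$ preserves units; so $\cosupp l_A$ is a subobject of $V$, and I obtain a unique comonoid homomorphism $u_C\colon \mathbbm{1}\to C$ with $\psi_{A,V}(u_C\otimes\id_A)=l_A$.

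Next I would verify that $(C,\mu_C,u_C,\Delta_C,\varepsilon_C)$ is a bimonoid. The associativity and unit laws for $\mu_C$ follow from the uniqueness clause above: the iterated composite measuring $\psi^{(3)}\colon C^{\otimes 3}\otimes A\to A$ has cosupport a subobject of $V$ for the same reason as $\psi^{(2)}$ (namely $K\psi^{(3)}=\mu^{(3)}_{[A,A]}(K\psi_{A,V})^{\otimes 3}$ factors through the monoid homomorphism $i$, using associativity of $\mu_{[A,A]}$), and both $\mu_C(\mu_C\otimes\id_C)$ and $\mu_C(\id_C\otimes\mu_C)$ are comonoid homomorphisms $C^{\otimes 3}\to C$ which, after composition with $\psi_{A,V}((-)\otimes\id_A)$, both equal $\psi^{(3)}$; hence they agree. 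The unit laws are handled identically with $l_A$ in place of $\psi^{(3)}$. Since $\mu_C$ and $u_C$ were obtained as comonoid homomorphisms, the bimonoid compatibility axioms hold automatically; I set ${}_\square \mathcal{B}(A,V):=C$ with this bimonoid structure. (As a byproduct, $K\psi_{A,V}$ is then a monoid homomorphism by functoriality of $K$, so $\psi_{A,V}$ is itself an action---the $V$-universal acting one.)

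Finally, for the stated universal property: given a bimonoid $B$ and an action $\psi\colon B\otimes A\to A$ with $\cosupp\psi$ a submonoid of $V$, $\psi$ is in particular a measuring with cosupport a subobject of $V$, so there is a unique comonoid homomorphism $\varphi\colon B\to C$ with $\psi_{A,V}(\varphi\otimes\id_A)=\psi$; it remains to see that $\varphi$ is a monoid homomorphism. Both $\varphi\mu_B$ and $\mu_C(\varphi\otimes\varphi)$ are comonoid homomorphisms $B\otimes B\to C$ (using that $\mu_B$ is one, as $B$ is a bimonoid), and composing either of them with $\psi_{A,V}((-)\otimes\id_A)$ yields the composite measuring $\psi(\id_B\otimes\psi)$---for $\varphi\mu_B$ because $\psi$ is a module structure, and for $\mu_C(\varphi\otimes\varphi)$ by the defining relation of $\mu_C$ together with $\psi_{A,V}(\varphi\otimes\id_A)=\psi$---whose cosupport is a subobject of $V$ because $K\psi$ factors through the monoid homomorphism $i$; so by uniqueness $\varphi\mu_B=\mu_C(\varphi\otimes\varphi)$, and the identity $\varphi u_B=u_C$ is obtained in the same way. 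Hence $\varphi$ is a bimonoid homomorphism. The step I expect to require the most care---and the reason the hypothesis demands that $i$ be a monoid homomorphism rather than merely a subobject---is the verification that all of the auxiliary measurings ($\psi^{(2)}$, $\psi^{(3)}$, $l_A$, and $\psi(\id_B\otimes\psi)$) have cosupport contained in $V$; the rest is a routine, if coherence-heavy, transcription of the classical argument.
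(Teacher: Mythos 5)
Your proposal is correct, and it is essentially a self-contained unfolding of what the paper handles by citation: the paper's entire proof is ``Apply Proposition~\ref{PropositionMonoidActionMonoidHomomorphism} and \cite[Theorem~5.22]{AGV3}'', where the cited theorem of the companion paper carries out precisely the Sweedler--Manin--Tambara style construction you reproduce, and Proposition~\ref{PropositionMonoidActionMonoidHomomorphism} supplies the translation between ``$\psi$ is an action with $\cosupp\psi$ a submonoid of $V$'' and ``$K\psi$ is a monoid homomorphism factoring through $i$''. What your route makes explicit, and what the citation hides, is exactly the point you flag at the end: the cosupport of each auxiliary measuring $\psi^{(2)}$, $\psi^{(3)}$, $l_A$, $\psi(\id_B\otimes\psi)$ lands in $V$ because its adjoint is $\mu_{[A,A]}(K\psi_{A,V}\otimes K\psi_{A,V})$ (resp.\ $u_{[A,A]}$, etc.) and $i$ is a monoid homomorphism. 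Two small points you are implicitly relying on and should acknowledge: (i) the fact that a composite of measurings is a measuring for the tensor-product comonoid is the dual of \cite[Lemma~4.32]{AGV3} (the paper uses the comeasuring version in Lemmas~\ref{LemmaComeasVUnity} and~\ref{LemmaComeasVComposition}); and (ii) passing from ``$K\psi^{(2)}$ factors through the monomorphism $i$'' to ``$\cosupp\psi^{(2)}$ is a subobject of $V$'' uses uniqueness of (ExtrEpi, Mono)-factorizations, which is available by the assumed dual of Property~\ref{PropertyEpiExtrMonoFactorizations} and Remarks~\ref{Cosupp_duality_rmk}. With those two references supplied, your argument is a complete proof; its advantage over the paper's is self-containedness, while the paper's advantage is that \cite[Theorem~5.22]{AGV3} is stated in the general pre-rigid (non-closed) setting via tensor monomorphisms and so covers more than the closed-monoidal reformulation proved here.
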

\begin{proof} 	Apply Proposition~\ref{PropositionMonoidActionMonoidHomomorphism}
	and~\cite[Theorem~5.23]{AGV3}.
\end{proof}	

\begin{theorem}\label{TheoremDUALBimonUnivActingExistenceExplicit} Suppose that a braided closed monoidal category $\mathcal C$
	satisfies Properties~\ref{PropertySubObjectsSmallSet}, \ref{PropertyMonomorphism} and the properties dual to
	Properties~\ref{PropertySmallLimits}, \ref{PropertyEpiExtrMonoFactorizations},   \ref{PropertyFreeMonoid} of Section~\ref{SubsectionSupportCoactingConditions}.
	Let $A$ be an $\Omega$-magma in $\mathcal C$ and let $i \colon V \to [A,A]$ be a submonoid.
	Then ${}_\square\mathcal{B}(A,V):={}_\square\mathcal{C}(A,A,V)$ admits a unique monoid structure turning
	$\psi_{A,V} := \psi_{A,A,V}$ into an action, which is the initial object in $\mathbf{Act}(A)^{\mathrm{op}}_{K G_1'G_3'}(i)$
	where $G_3'$ is the forgetful functor $\mathbf{Act}(A)\to \mathbf{Meas}(A,A)$.
\end{theorem}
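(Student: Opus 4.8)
The plan is to derive the statement from Theorems~\ref{TheoremDUALComonUnivMeasExistenceClosed} and~\ref{TheoremDUALBimonUnivActingExistenceClosed} together with Proposition~\ref{PropositionMonoidActionMonoidHomomorphism}; the only points not already contained in those results are the \emph{uniqueness} of the monoid structure on ${}_\square\mathcal{B}(A,V)$ and the reformulation of its universal property as initiality in $\mathbf{Act}(A)^{\mathrm{op}}(i)$.

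First I would secure existence. The listed hypotheses are precisely those of Theorem~\ref{TheoremDUALComonUnivMeasExistenceClosed}, the remaining dual properties being automatic in a braided closed monoidal category by Remark~\ref{RemarkInBraidedClosedMonoidalManyPropertiesHoldAutomatically}; and $\mathbf{Meas}(A,A)^{\mathrm{op}}(i)$ is nonempty, because the canonical isomorphism $\mathbbm{1}\otimes A\cong A$, viewed as a measuring $\mathbbm{1}\otimes A\to A$, has $K$-image equal to the unit $u_{[A,A]}$ of the monoid $[A,A]$ of Proposition~\ref{PropositionMonoidActionMonoidHomomorphism}, which factors through the monoid monomorphism $i$ via $u_V$ since $V$ is a submonoid. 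Hence ${}_\square\mathcal{C}(A,A,V)=\psi_{A,A,V}$ exists, and Theorem~\ref{TheoremDUALBimonUnivActingExistenceClosed} equips ${}_\square\mathcal{B}(A,V):={}_\square\mathcal{C}(A,A,V)$ with a bimonoid structure making $\psi_{A,V}$ an action, with the universal property stated there among bimonoid actions whose cosupport is a submonoid of $V$.

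Next I would prove that $K\psi_{A,V}\colon{}_\square\mathcal{B}(A,V)\to[A,A]$ is a monomorphism, i.e.\ that $\psi_{A,V}$ is a tensor monomorphism. Writing $K\psi_{A,V}=i_0\pi_0$ for its (ExtrEpi, Mono)-factorization, Remarks~\ref{Cosupp_duality_rmk} (\ref{RemarkCosupportIsTheImage}) identify $|\psi_{A,V}|=K^{-1}i_0$ with the cosupport; since the cosupport of a measuring again inherits a comonoid structure turning $|\psi_{A,V}|$ into a measuring and $\pi_0$ into a comonoid homomorphism (dually to the support results of~\cite{AGV3}), and $i_0$ factors through $i$ by the diagonal fill-in, $|\psi_{A,V}|$ again lies in $\mathbf{Meas}(A,A)^{\mathrm{op}}(i)$. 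Initiality of $\psi_{A,V}$ then gives a comonoid homomorphism $t$ with $\psi_{A,V}(t\pi_0\otimes\id_A)=\psi_{A,V}$, so $t\pi_0=\id$ since $\psi_{A,V}$ is initial in $\mathbf{Meas}(A,A)^{\mathrm{op}}(i)$; as $\pi_0$ is at once an epimorphism and a split monomorphism, it is an isomorphism, whence $K\psi_{A,V}$ is a monomorphism. By Proposition~\ref{PropositionMonoidActionMonoidHomomorphism} a monoid structure on ${}_\square\mathcal{B}(A,V)$ turns $\psi_{A,V}$ into an action exactly when $K\psi_{A,V}$ is a monoid homomorphism, and a monoid structure on a subobject of the monoid $[A,A]$ making the inclusion a monoid homomorphism is unique; this yields the uniqueness assertion.

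Finally I would identify $\psi_{A,V}$ with the initial object of $\mathbf{Act}(A)^{\mathrm{op}}(i)$. It lies there because $K\psi_{A,V}$ is a monoid homomorphism that factors through $i$, and the factorization is itself a monoid homomorphism since $i$ is a monoid monomorphism; equivalently $\cosupp\psi_{A,V}$ is a submonoid of $V$. For any bimonoid action $\psi'\colon B\otimes A\to A$ belonging to $\mathbf{Act}(A)^{\mathrm{op}}(i)$, i.e.\ with $\cosupp\psi'$ a submonoid of $V$, Theorem~\ref{TheoremDUALBimonUnivActingExistenceClosed} supplies the unique comonoid homomorphism $\varphi\colon B\to{}_\square\mathcal{B}(A,V)$ with $\psi_{A,V}(\varphi\otimes\id_A)=\psi'$ and asserts it is a bimonoid homomorphism, hence a morphism $\psi_{A,V}\to\psi'$ in $\mathbf{Act}(A)^{\mathrm{op}}$; any morphism with this property is in particular such a comonoid homomorphism, hence equal to $\varphi$. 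Thus $\psi_{A,V}$ is initial in $\mathbf{Act}(A)^{\mathrm{op}}(i)$. The step I expect to be the main obstacle is the monomorphy of $K\psi_{A,V}$, together with the bookkeeping that makes ``$\psi'\in\mathbf{Act}(A)^{\mathrm{op}}(i)$'' equivalent to ``$\cosupp\psi'$ is a submonoid of $V$''; both rest on the duals of the AGV3 results that cosupports of measurings, respectively actions, inherit comonoid, respectively bimonoid, structures, and on $\mathsf{Mon}(\mathcal C)$ inheriting enough of the (ExtrEpi, Mono) factorization structure of $\mathcal C$.
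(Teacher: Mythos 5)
Your argument is correct, but it is organized quite differently from the paper's. The paper disposes of this theorem in one line: it applies \cite[Corollary~5.23]{AGV3} — which already packages the lifting of a locally initial module structure to an initial object of $\mathbf{Act}(A)^{\mathrm{op}}$, including the uniqueness of the monoid structure — translated into the ``submonoid of $[A,A]$'' language via Proposition~\ref{PropositionMonoidActionMonoidHomomorphism} (and Proposition~\ref{PropositionMonoTensorMonoClosed}), with Remark~\ref{RemarkInBraidedClosedMonoidalManyPropertiesHoldAutomatically} supplying the remaining dual properties. You instead rebuild that corollary's content from Theorems~\ref{TheoremDUALComonUnivMeasExistenceClosed} and~\ref{TheoremDUALBimonUnivActingExistenceClosed}: your nonemptiness argument via the trivial measuring $\mathbbm{1}\otimes A\to A$ is the exact dual of Lemma~\ref{LemmaComeasVUnity} and is needed since the theorem has no nonemptiness hypothesis; your retract argument showing $\pi_0$ is split by initiality, hence that $K\psi_{A,V}$ is a monomorphism, correctly isolates the crux of the uniqueness claim (two monoid structures on a subobject of $[A,A]$ making the inclusion multiplicative must coincide); and your identification of membership in $\mathbf{Act}(A)^{\mathrm{op}}(i)$ with ``$\cosupp\psi'$ is a submonoid of $V$'' via Remarks~\ref{Cosupp_duality_rmk} is how the two formulations are reconciled. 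What you buy is an explicit, self-contained derivation inside the closed setting; what you pay is that two inputs you invoke only by analogy — that the cosupport of a measuring inherits a comonoid structure making the extremal-epi part $\pi_0$ a comonoid homomorphism, and that (ExtrEpi, Mono)-factorizations of monoid homomorphisms land in $\mathsf{Mon}(\mathcal C)$ — are precisely the dual-\cite{AGV3} machinery that the cited Corollary~5.23 already encapsulates, so your proof is not shorter than the citation it replaces. If you want to keep your route, those two facts should be stated and referenced explicitly rather than gestured at.
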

\begin{proof} 
	Apply \cite[Remark 5.22, Corollary~5.24]{AGV3}, 
	Proposition~\ref{PropositionMonoidActionMonoidHomomorphism}
	and Remark~\ref{RemarkInBraidedClosedMonoidalManyPropertiesHoldAutomatically}.
\end{proof}

\begin{corollary}[{\cite[Theorem 4.2]{AGV2}}]
	Let $A$ be an $\Omega$-algebra over a field $\mathbbm{k}$ and let $V\subseteq \End_\mathbbm{k}(A)$ be a unital subalgebra.
	Then the $V$-universal measuring coalgebra ${}_\square \mathcal{B}(A,V) := {}_\square \mathcal{C}(A,A,V)$ admits a structure of a bialgebra such that for any bialgebra $B$ and any action
	$\psi \colon B \otimes A \to A$ such that $\cosupp \psi \subseteq V$ the unique coalgebra homomorphism $\varphi$
	making the diagram below commutative is in fact a bialgebra homomorphism:
	\begin{equation*}\xymatrix{ B \otimes A \ar[r]^(0.6){\psi} \ar@{-->}[d]_{\varphi \otimes \id_A} & A \\
		{}_\square \mathcal{B}(A,V) \otimes A  \ar[ru]_{\psi_{A,V}}  } \end{equation*}
	(Here $\psi_{A,V} := \psi_{A,A,V}$.)
\end{corollary}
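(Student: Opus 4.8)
The plan is to obtain this statement by specializing Theorems~\ref{TheoremDUALBimonUnivActingExistenceClosed} and~\ref{TheoremDUALBimonUnivActingExistenceExplicit} to the base category $\mathcal C = \mathbf{Vect}_\mathbbm{k}$, and then translating the resulting categorical assertions into the classical language of module algebras. First I would record the relevant dictionary: $\mathbf{Vect}_\mathbbm{k}$ is a symmetric (hence braided) closed monoidal category with $\otimes = \otimes_\mathbbm{k}$, unit $\mathbbm{1} = \mathbbm{k}$, internal hom $[A,B] = \Hom_\mathbbm{k}(A,B)$ and $\mathrm{ev}_{A,B}$ the usual evaluation; as recalled in Section~\ref{SubsectionOmegaMagmas}, $\Omega$-algebras over $\mathbbm{k}$ are precisely $\Omega$-magmas in $\mathcal C$, while coalgebras, bialgebras and Hopf algebras are respectively the comonoids, bimonoids and Hopf monoids of $\mathcal C$. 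By Proposition~\ref{PropositionMonoidActionMonoidHomomorphism} the object $[A,A] = \End_\mathbbm{k}(A)$ carries a unique monoid structure making $A$ a left module via $\mathrm{ev}_{A,A}$; unwinding the defining diagrams shows this is nothing but the algebra of endomorphisms under composition, with unit $\id_A$. Hence a unital subalgebra $V \subseteq \End_\mathbbm{k}(A)$ is exactly a submonoid $i \colon V \to [A,A]$ in the sense of those theorems, and the same proposition identifies an action $\psi \colon B \otimes A \to A$ of a bialgebra $B$ with a $B$-module algebra structure on $A$ in the usual sense.

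Next I would verify that $\mathbf{Vect}_\mathbbm{k}$ meets the hypotheses of Theorem~\ref{TheoremDUALBimonUnivActingExistenceExplicit}: it is wellpowered (Property~\ref{PropertySubObjectsSmallSet}); since every vector space is flat, $M\otimes(-)$ and $(-)\otimes M$ preserve monomorphisms (Property~\ref{PropertyMonomorphism}); $\mathbf{Vect}_\mathbbm{k}$ has all small colimits, so $\mathbf{Vect}_\mathbbm{k}^{\mathrm{op}}$ has all small limits (dual of Property~\ref{PropertySmallLimits}); every linear map factors as a surjection followed by an injection and every epimorphism in $\mathbf{Vect}_\mathbbm{k}$ is extremal, so $\mathbf{Vect}_\mathbbm{k}^{\mathrm{op}}$ is (Epi, ExtrMono)-structured (dual of Property~\ref{PropertyEpiExtrMonoFactorizations}); and the forgetful functor $\mathsf{Comon}(\mathbf{Vect}_\mathbbm{k})\to\mathbf{Vect}_\mathbbm{k}$ admits a right adjoint, namely the classical cofree coalgebra functor over a field (dual of Property~\ref{PropertyFreeMonoid}). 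The properties dual to Properties~\ref{PropertyMonomorphism}--\ref{PropertyEqualizers} hold automatically in any braided closed monoidal category by Remark~\ref{RemarkInBraidedClosedMonoidalManyPropertiesHoldAutomatically}. I would also note that $\mathbf{Meas}(A,A)^{\mathrm{op}}(i)$, and hence $\mathbf{Act}(A)^{\mathrm{op}}(i)$, is non-empty: the canonical measuring $\mathbbm{k}\otimes A \xrightarrow{\ \sim\ } A$ has cosupport $\mathbbm{k}\,\id_A$, a submonoid of $V$ because $V$ contains $\id_A$; so Theorem~\ref{TheoremDUALComonUnivMeasExistenceClosed} guarantees the existence of ${}_\square\mathcal{C}(A,A,V)$.

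With these verifications, Theorem~\ref{TheoremDUALBimonUnivActingExistenceExplicit} furnishes a (unique) monoid, hence bialgebra, structure on ${}_\square\mathcal{B}(A,V) := {}_\square\mathcal{C}(A,A,V)$ for which $\psi_{A,V} := \psi_{A,A,V}$ is an action that is the initial object of $\mathbf{Act}(A)^{\mathrm{op}}(i)$. It then remains to unwind initiality. By Remarks~\ref{Cosupp_duality_rmk} and Proposition~\ref{PropositionMonoidActionMonoidHomomorphism}, for a bialgebra action $\psi \colon B \otimes A \to A$ the cosupport $\cosupp\psi$ is the image in $\End_\mathbbm{k}(A)$ of the structure map $K\psi \colon B \to \End_\mathbbm{k}(A)$, $b \mapsto \psi(b\otimes -)$, which is an algebra homomorphism and hence has a unital subalgebra as image; and $\psi$ belongs to $\mathbf{Act}(A)^{\mathrm{op}}(i)$ exactly when this image is a subobject of $V$, i.e. when $\cosupp\psi \subseteq V$. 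Initiality of $\psi_{A,V}$ therefore says precisely that for any such $\psi$ there is a unique morphism $\psi\to\psi_{A,V}$ in $\mathbf{Act}(A)$, i.e. a unique bialgebra homomorphism $\varphi\colon B\to{}_\square\mathcal{B}(A,V)$ with $\psi_{A,V}(\varphi\otimes\id_A) = \psi$. Finally, since every action is in particular a measuring, the universal property of the $V$-universal measuring coalgebra ${}_\square\mathcal{C}(A,A,V)$ already singles out $\varphi$ as \emph{the} unique coalgebra homomorphism making the stated triangle commute, and Theorem~\ref{TheoremDUALBimonUnivActingExistenceClosed} is exactly the assertion that this $\varphi$ is a bialgebra homomorphism, which is the claim of the corollary.

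The proof is essentially bookkeeping: the only inputs that are not purely formal are the classical existence of the cofree coalgebra over a field (the dual of Property~\ref{PropertyFreeMonoid}) together with the (ExtrEpi, Mono) factorization in $\mathbf{Vect}_\mathbbm{k}$. The step that deserves the most care, and which I regard as the main, if modest, obstacle, is confirming that the abstract monoid structure produced on $[A,A]$ by Proposition~\ref{PropositionMonoidActionMonoidHomomorphism} really coincides with $\End_\mathbbm{k}(A)$ under composition, so that ``unital subalgebra'' matches ``submonoid'', and that the categorical $\cosupp$ of \cite{AGV3} agrees with the classical notion of cosupport of \cite{AGV2} as the image of the structure map; once these identifications are in place the corollary follows at once.
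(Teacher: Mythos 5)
Your proposal is correct and follows exactly the route the paper intends: the corollary is stated immediately after Theorems~\ref{TheoremDUALBimonUnivActingExistenceClosed} and~\ref{TheoremDUALBimonUnivActingExistenceExplicit} precisely as their specialization to $\mathcal C = \mathbf{Vect}_\mathbbm{k}$, and your verification of the hypotheses (wellpoweredness, flatness, cocompleteness, the (ExtrEpi, Mono) factorization, the cofree coalgebra functor as the dual of Property~\ref{PropertyFreeMonoid}, and non-emptiness via the trivial measuring with cosupport $\mathbbm{k}\,\id_A \subseteq V$) together with the identification of $[A,A]$ with $\End_\mathbbm{k}(A)$ under composition and of the categorical cosupport with the image of $K\psi$ is exactly the bookkeeping required. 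Nothing is missing.
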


Again, let $\mathcal C$ be a braided closed monoidal category satisfying the property dual to Property~\ref{PropertyEpiExtrMonoFactorizations} of Section~\ref{SubsectionSupportCoactingConditions}. Fix an $\Omega$-magma $A$ and a submonoid $i \colon V \to [A,A]$. The Hopf monoid ${}_\square\mathcal{H}(A,V)$
corresponding to an initial object $\psi_{A,V}^\mathbf{Hopf}$ in $\mathbf{HAct}(A)^\mathrm{op}_{K G_1'G_3'G_4'}(i)$
(where $G_4'$ is the forgetful functor $\mathbf{HAct}(A)\to \mathbf{Act}(A)$)
is called the \textit{$V$-universal acting Hopf monoid} on $A$.

In other words, ${}_\square\mathcal{H}(A,V)$ is a $V$-universal measuring Hopf monoid if for every action $\psi \colon H \otimes A \to H$ of a Hopf monoid $H$,
such that $\cosupp \psi$ is a subobject of $V$, there exists a unique Hopf monoid homomorphism
$\varphi \colon H \to {}_\square\mathcal{H}(A,V)$ making the diagram below commutative:

\begin{equation*}\xymatrix{
		{H} \otimes A \ar[r]^(0.6){\psi}  \ar[d]_{\varphi\otimes{\id_A}} & A  \\
		{{}_\square\mathcal{H}(A,V)} \otimes A \ar[ru]_(0.6){\psi_{A,V}^\mathbf{Hopf}} &
}\end{equation*}

\begin{theorem}\label{TheoremDUALHopfMonUnivActingExistenceClosed}
		Let $A$ be an $\Omega$-magma and let $i \colon V \to [A,A]$ be a submonoid in
	a braided closed monoidal category $\mathcal C$ satisfying the property dual to Property~\ref{PropertyEpiExtrMonoFactorizations} of Section~\ref{SubsectionSupportCoactingConditions}. 	Suppose that the forgetful functor $\mathbf{Hopf}(\mathcal C) \to \mathbf{Bimon}(\mathcal C)$
	admits a right adjoint functor $H_r \colon \mathbf{Bimon}(\mathcal C) \to \mathbf{Hopf}(\mathcal C)$
	and there exists ${}_\square\mathcal{B}(V)$.
	Then the initial object in $\mathbf{HAct}(A)^\mathrm{op}_{K G_1'G_3'G_4'}(i)$ indeed exists.
\end{theorem}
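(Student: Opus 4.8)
The plan is to transport the statement across the category isomorphism $K$ of Proposition~\ref{PropositionMonoidActionMonoidHomomorphism} and then quote the abstract lifting result \cite[Theorem~5.26]{AGV3}. First I would unwind the datum $i\colon V\to[A,A]$: since $K$ restricts to an isomorphism $\mathbf{ModStr}(A)\mathrel{\widetilde\to}(\mathsf{Mon}(\mathcal C)\downarrow[A,A])$, the monoid homomorphism $i$ corresponds to a morphism $\psi_V:=K^{-1}(i)\colon V\otimes A\to A$ which, again by Proposition~\ref{PropositionMonoidActionMonoidHomomorphism}, defines on $A$ a structure of a $V$-module; and since $i$ is a monomorphism, Proposition~\ref{PropositionMonoTensorMonoClosed} shows that $\psi_V$ is a tensor monomorphism, hence $\psi_V\in\LIO(\mathbf{TensMor}(A,A)^{\mathrm{op}})$. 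This $\psi_V$ is exactly the kind of object for which \cite[Theorem~5.26]{AGV3} is formulated.

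Next, because $K$ is an isomorphism of categories that intertwines the relevant forgetful functors, the full subcategory $\mathbf{HAct}(A)^{\mathrm{op}}(i)$ --- defined with respect to $(KG_1'G_3'G_4')^{\mathrm{op}}$ --- coincides with $\mathbf{HAct}(A)^{\mathrm{op}}(\psi_V)$ --- defined with respect to $(G_1'G_3'G_4')^{\mathrm{op}}$: a Hopf monoid action $\psi$ lies in the former iff there is an arrow $K G_1'G_3'G_4'\psi\to i$ in $(\mathcal C\downarrow[A,A])$, iff (applying $K^{-1}$) there is an arrow $G_1'G_3'G_4'\psi\to\psi_V$ in $\mathbf{TensMor}(A,A)$, i.e. iff $\cosupp\psi$ is a subobject of $V$. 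Hence it suffices to exhibit an initial object in $\mathbf{HAct}(A)^{\mathrm{op}}(\psi_V)$.

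For this last step I would invoke \cite[Theorem~5.26]{AGV3} (the dual of \cite[Theorem~4.40]{AGV3}), which produces an initial object in $\mathbf{HAct}(A)^{\mathrm{op}}(\psi_V)$ provided that the $V$-universal acting bimonoid ${}_\square\mathcal{B}(\psi_V)$ exists and that the embedding $\mathbf{Hopf}(\mathcal C)\hookrightarrow\mathbf{Bimon}(\mathcal C)$ admits a right adjoint $H_r$; the latter is a standing hypothesis, while the former is the hypothesis ``there exists ${}_\square\mathcal{B}(V)$'' once one identifies ${}_\square\mathcal{B}(\psi_V)$ with ${}_\square\mathcal{B}(V)={}_\square\mathcal{C}(A,A,V)$ via Theorem~\ref{TheoremDUALBimonUnivActingExistenceClosed}. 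All remaining hypotheses of \cite[Theorem~5.26]{AGV3} bear on the ambient category, and by Remark~\ref{RemarkInBraidedClosedMonoidalManyPropertiesHoldAutomatically} every one of them except the dual of Property~\ref{PropertyEpiExtrMonoFactorizations} --- which we assume --- holds automatically in a braided closed monoidal category. The resulting initial object is $\psi_{A,V}^{\mathbf{Hopf}}$, with associated Hopf monoid ${}_\square\mathcal{H}(A,V)$.

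The step I expect to require the most care is the bookkeeping of the second paragraph --- checking that passing through $K$ really matches the two full subcategories on the nose (including the various opposites and the chain of forgetful functors $G_1'G_3'G_4'$), and that the object called ${}_\square\mathcal{B}(\psi_V)$ in \cite[Theorem~5.26]{AGV3} is literally the ${}_\square\mathcal{B}(V)$ whose existence is assumed here; everything else is a formal application of results already established.
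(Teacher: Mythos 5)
Your proposal is correct and follows essentially the same route as the paper: the paper's proof is precisely ``apply Proposition~\ref{PropositionMonoidActionMonoidHomomorphism} and \cite[Theorem~5.26]{AGV3}'', i.e.\ use the isomorphism $K$ to translate the submonoid $i\colon V\to[A,A]$ into a $V$-module structure $\psi_V$ and then invoke the abstract lifting theorem for Hopf monoid actions. Your write-up merely makes explicit the bookkeeping (matching of the full subcategories through $K$ and the identification of ${}_\square\mathcal{B}(\psi_V)$ with ${}_\square\mathcal{B}(V)$) that the paper leaves implicit.
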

\begin{proof}
		Apply Proposition~\ref{PropositionMonoidActionMonoidHomomorphism}
	and~\cite[Theorem~5.27]{AGV3}. 
\end{proof}	
\begin{theorem}\label{TheoremDUALHopfMonUnivActingExistenceClosedExplicit}
	Let $A$ be an $\Omega$-magma and let $i \colon V \to [A,A]$ be a submonoid in
	a braided closed monoidal category $\mathcal C$ satisfying  
	Properties~\ref{PropertySubObjectsSmallSet}, \ref{PropertyMonomorphism} and the properties dual to
	Properties~\ref{PropertySmallLimits}--\ref{PropertyEpiExtrMonoFactorizations},   \ref{PropertyFreeMonoid} of Section~\ref{SubsectionSupportCoactingConditions}.
	 Then the initial object in $\mathbf{HAct}(A)^\mathrm{op}_{K G_1'G_3'G_4'}(i)$ indeed exists.
\end{theorem}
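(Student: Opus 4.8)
The plan is to reduce the statement to Theorem~\ref{TheoremDUALHopfMonUnivActingExistenceClosed}. That theorem already carries out the required lifting once two ingredients are at hand: the right adjoint $H_r$ of the forgetful functor $\mathbf{Hopf}(\mathcal C)\to\mathbf{Bimon}(\mathcal C)$, and the $V$-universal acting bimonoid ${}_\square\mathcal B(A,V)$. So, modulo Theorem~\ref{TheoremDUALHopfMonUnivActingExistenceClosed}, the whole task is to check that, under the present hypotheses (Properties~\ref{PropertySubObjectsSmallSet}, \ref{PropertyMonomorphism} and the duals of Properties~\ref{PropertySmallLimits}--\ref{PropertyEpiExtrMonoFactorizations}, \ref{PropertyFreeMonoid}) together with the features of a braided closed monoidal category recorded in Remark~\ref{RemarkInBraidedClosedMonoidalManyPropertiesHoldAutomatically}, both ingredients genuinely exist. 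Throughout, Proposition~\ref{PropositionMonoidActionMonoidHomomorphism} is what lets us phrase everything on the level of $(\mathsf{Mon}(\mathcal C)\downarrow[A,A])^{\mathrm{op}}$: the condition ``$\cosupp\psi$ is a submonoid of $V$'' becomes a condition there, and the lifting is performed along $(KG_1'G_3'G_4')^{\mathrm{op}}$ exactly as in the definition of the $V$-universal acting Hopf monoid.

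First I would dispose of the universal acting bimonoid: the hypotheses of Theorem~\ref{TheoremDUALBimonUnivActingExistenceExplicit} are contained in the present ones (indeed the present list is strictly larger, since we additionally assume the dual of Property~\ref{PropertyFiniteAndCountableColimits}), so ${}_\square\mathcal B(A,V):={}_\square\mathcal C(A,A,V)$ exists; its existence rests in turn on Theorem~\ref{TheoremDUALComonUnivMeasExistenceClosed} for the measuring comonoid and on Proposition~\ref{PropositionMonoidActionMonoidHomomorphism} to install the bimonoid structure. Next I would produce $H_r$. Here the essential observation is that in a braided closed monoidal category $M\otimes(-)\cong(-)\otimes M$ is a left adjoint for every $M$, hence preserves all colimits and epimorphisms, so that the duals of Properties~\ref{PropertyMonomorphism}--\ref{PropertyEqualizers} and Property~\ref{PropertyFreeMonoid} hold automatically (Remark~\ref{RemarkInBraidedClosedMonoidalManyPropertiesHoldAutomatically}); combining these with the assumed duals of Properties~\ref{PropertySmallLimits}--\ref{PropertyEpiExtrMonoFactorizations} and \ref{PropertyFreeMonoid}, the hypotheses of the existence criterion for the Hopf coreflection in \cite[Theorem~5.10]{AGV3} are met, so $H_r$ exists. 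With ${}_\square\mathcal B(A,V)$ and $H_r$ available, Theorem~\ref{TheoremDUALHopfMonUnivActingExistenceClosed} applies and produces the initial object $\psi_{A,V}^{\mathbf{Hopf}}$ in $\mathbf{HAct}(A)^{\mathrm{op}}(i)$; equivalently, one may invoke \cite[Theorem~5.26]{AGV3} directly through the isomorphism $K$.

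The routine part is the bookkeeping of which of the eleven properties and their duals are consumed at each stage, and the back-and-forth between $\mathbf{ModStr}(A)$ and $(\mathsf{Mon}(\mathcal C)\downarrow[A,A])$ via $K$. The one genuinely delicate step, and the point where the slightly longer list of hypotheses (notably the dual of Property~\ref{PropertyFiniteAndCountableColimits}) is actually used, is the existence of $H_r$: one must be sure that the transfinite construction of the Hopf coreflection of $\mathbf{Bimon}(\mathcal C)$ in \cite{AGV3} goes through, and it is precisely Remark~\ref{RemarkInBraidedClosedMonoidalManyPropertiesHoldAutomatically}, which upgrades ``braided closed monoidal'' to ``satisfies the duals of Properties~\ref{PropertyMonomorphism}--\ref{PropertyEqualizers} and Property~\ref{PropertyFreeMonoid}'', that makes the short assumed list sufficient to trigger it.
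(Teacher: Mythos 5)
Your proposal is correct and follows essentially the same route as the paper: the paper's own proof is a direct citation of \cite[Corollary~5.27]{AGV3} together with Proposition~\ref{PropositionMonoidActionMonoidHomomorphism} (to transport the problem along $K$ into $(\mathsf{Mon}(\mathcal C)\downarrow[A,A])^{\mathrm{op}}$) and Remark~\ref{RemarkInBraidedClosedMonoidalManyPropertiesHoldAutomatically} (to supply the automatically satisfied dual properties). What you do is simply unpack that corollary into its ingredients --- the universal acting bimonoid via Theorem~\ref{TheoremDUALBimonUnivActingExistenceExplicit} and the Hopf coreflection $H_r$ via \cite[Theorem~5.10]{AGV3}, fed into Theorem~\ref{TheoremDUALHopfMonUnivActingExistenceClosed} --- which is the same argument at one lower level of packaging.
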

\begin{proof}
	Apply~\cite[Corollary~5.28]{AGV3}, Proposition~\ref{PropositionMonoidActionMonoidHomomorphism}
	and Remark~\ref{RemarkInBraidedClosedMonoidalManyPropertiesHoldAutomatically}.
\end{proof}	

\begin{remark}
	If we take $i=\id_{[A,A]}$, then we get the action that is universal among all Hopf monoid actions on $A$.
\end{remark}

Again, applying Theorem~\ref{TheoremDUALHopfMonUnivActingExistenceClosedExplicit} above, we recover the existence theorem for $V$-universal acting Hopf algebras proved in~\cite{AGV2}.

\subsection{Universal comeasuring monoids and universal coacting bimonoids and Hopf monoids in closed monoidal categories}
\label{SubsectionComeasuringCoactingClosed}
Let $\mathcal C$ be a braided closed monoidal category satisfying
the property dual to Property~\ref{PropertyEpiExtrMonoFactorizations} of Section~\ref{SubsectionSupportCoactingConditions}.

Fix $\Omega$-magmas $A$ and $B$ and a subobject $i \colon V \rightarrowtail [A,B]$. 
The monoid $\mathcal{A}^\square(A,B,V)$ corresponding to an initial object $\rho_{A,B,V}$ in $\mathbf{Comeas}(A,B)_{K (-)^\vee G_1}(i)$ (where $G_1$ is the forgetful functor $\mathbf{Comeas}(A,B)\to \mathbf{MorTens}(A,B)$) is called the \textit{$V$-universal comeasuring monoid} from $A$ to $B$.

In other words, $\mathcal{A}^\square(A,B,V)$ is a $V$-universal measuring comonoid if for every comeasuring $\rho \colon  A \to B \otimes Q$,
such that $\cosupp \left(\rho^\vee\right)$ is a subobject of $V$, there exists a unique monoid homomorphism
$\varphi \colon \mathcal{A}^\square(A,B,V) \to Q$ making the diagram below commutative:

$$\xymatrix{ A \ar[rr]^(0.3){\rho_{A,B,V}} 
	\ar[rrd]_{\rho}
	& & B \otimes \mathcal{A}^\square(A,B,V) \ar[d]^{\id_B \otimes \varphi} \\
	& & B \otimes Q} $$

\begin{theorem}\label{TheoremMonUnivComeasExistenceClosed}
	Let $\mathcal C$ be a braided closed monoidal category satisfying
	Properties~\ref{PropertySmallLimits}, \ref{PropertySubObjectsSmallSet}--\ref{PropertyLimitsOfSubobjectsArePreserved},
		 \ref{PropertySwitchProdTensorIsAMonomorphism}, \ref{PropertyEqualizers}
		 and the properties dual to Properties~\ref{PropertySmallLimits} and \ref{PropertySubObjectsSmallSet}
		  of Sections~\ref{SubsectionSupportCoactingConditions}. 
	 Suppose, in addition, that
	 \begin{itemize} \item the functor $(-)^*$ maps extremal monomorphisms to extremal epimorphisms;
	\item
	$\alpha_P$ is an extremal monomorphism and $\theta_{B^*,P}$ is a monomorphism for all objects~$P$.
	\end{itemize}
	Let $A$ and $B$ be $\Omega$-magmas  and let  $i\colon V \rightarrowtail [A,B]$
	be a subobject such that $i=K(\rho_U^\vee)$ for some morphism $\rho_U \colon A \to B \otimes U$ where $U$ is an object in $\mathcal C$. Then there exists an initial object in $\mathbf{Comeas}(A,B)_{K (-)^\vee G_1}(i)$ if $\mathbf{Comeas}(A,B)_{K (-)^\vee G_1}(i)$ is not empty.
\end{theorem}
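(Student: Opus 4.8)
The plan is to present the statement as an application of Proposition~\ref{PropositionLIOKPreservesPreorder} followed by the lifting of locally initial objects along the forgetful functor $G_1 \colon \mathbf{Comeas}(A,B) \to \mathbf{MorTens}(A,B)$ from \cite[Theorem~4.23]{AGV3}. Write $X := \mathbf{MorTens}(A,B)$ and $\widetilde K := K(-)^\vee \colon X \to (\mathcal C \downarrow [A,B])^{\mathrm{op}}$, where $(-)^\vee$ is the functor $X \to \mathbf{TensMor}(A,B)^{\mathrm{op}}$ (it is a functor by Proposition~\ref{PropositionVeeUnderComposition}) and $K$ is the isomorphism of categories of Proposition~\ref{PropositionMonoTensorMonoClosed}; thus $\widetilde K G_1$ is precisely the functor with respect to which $\mathbf{Comeas}(A,B)(i)$ is formed, and I would apply Proposition~\ref{PropositionLIOKPreservesPreorder} with $G = G_1$, $K = \widetilde K$ and $x_0 = \rho_U$.

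First I would record that the ambient hypotheses of everything used are in force. Since $\mathcal C$ is braided closed monoidal, Remark~\ref{RemarkInBraidedClosedMonoidalManyPropertiesHoldAutomatically} supplies the properties dual to Properties~\ref{PropertyMonomorphism}--\ref{PropertyEqualizers} and Property~\ref{PropertyFreeMonoid} for free, and \cite[Proposition~4.2]{AGV3} derives Properties~\ref{PropertyEpiExtrMonoFactorizations} and~\ref{PropertyEqualizers}; combined with the assumed Properties~\ref{PropertySmallLimits}, \ref{PropertySubObjectsSmallSet}--\ref{PropertyLimitsOfSubobjectsArePreserved}, \ref{PropertySwitchProdTensorIsAMonomorphism}, \ref{PropertyEqualizers}, the duals of Properties~\ref{PropertySmallLimits} and~\ref{PropertySubObjectsSmallSet}, and the hypothesis that $(-)^*$ carries extremal monomorphisms to extremal epimorphisms, this puts us in the setting of Theorems~\ref{TheoremSupportsUnderVee}--\ref{TheoremVeeReflectsCoarserFiner} and of \cite[Theorem~4.23]{AGV3}. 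Moreover $\alpha_B$ is an extremal, hence ordinary, monomorphism, so by Property~\ref{PropertyMonomorphism} the morphism $\alpha_B \otimes \id_{P^*}$ is a monomorphism for every $P$, while $\theta_{B^*,P}$ is one by hypothesis; together with the assumed extremality of $\alpha_P$ this yields the individual hypotheses of Theorems~\ref{TheoremSupportsUnderVee}, \ref{TheoremVeePreservesCoarserFiner} and~\ref{TheoremVeeReflectsCoarserFiner}. By Theorem~\ref{TheoremAbsValueSupportExistence} every object of $X$ has an absolute value.

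Next I would check that $\widetilde K$ preserves absolute values and reflects the preorder. For a morphism $\rho \colon A \to B \otimes Q$, Theorem~\ref{TheoremSupportsUnderVee} identifies $|\rho|^\vee$ with the absolute value of $\rho^\vee$ in $\mathbf{TensMor}(A,B)^{\mathrm{op}}$, and since $K$ is an isomorphism of categories (hence commutes with taking absolute values) we get $\widetilde K|\rho| = K(|\rho|^\vee) = K(|\rho^\vee|) = |K(\rho^\vee)| = |\widetilde K\rho|$. For the preorder, $\widetilde K$ maps $\LIO(X)$, i.e.\ the tensor epimorphisms, into $\LIO((\mathcal C \downarrow [A,B])^{\mathrm{op}})$, i.e.\ the monomorphisms into $[A,B]$, by Proposition~\ref{PropositionDualToTensorEpiIsMono} followed by Proposition~\ref{PropositionMonoTensorMonoClosed}; and for tensor epimorphisms $\rho_1, \rho_2$, Theorems~\ref{TheoremVeePreservesCoarserFiner} and~\ref{TheoremVeeReflectsCoarserFiner} together give $\rho_1 \succcurlyeq \rho_2$ if and only if $\rho_1^\vee \succcurlyeq \rho_2^\vee$, which $K$ transports to $\widetilde K\rho_1 \succcurlyeq \widetilde K\rho_2$. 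The one genuinely delicate ingredient is the implication $\rho_1^\vee \succcurlyeq \rho_2^\vee \Rightarrow \rho_1 \succcurlyeq \rho_2$, but this is exactly Theorem~\ref{TheoremVeeReflectsCoarserFiner} (whose proof uses that $\alpha_{\supp \rho_2}$ is an extremal monomorphism); everything else here is formal transport along an equivalence, so I expect the only real ``obstacle'' to be bookkeeping.

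Finally, I would observe that since $i = K(\rho_U^\vee) = \widetilde K G_1(\rho_U)$ is a monomorphism by hypothesis, it lies in $\LIO((\mathcal C \downarrow [A,B])^{\mathrm{op}})$, and Proposition~\ref{PropositionLIOKPreservesPreorder} identifies $\mathbf{Comeas}(A,B)(i)$ (formed with respect to $\widetilde K G_1$) with $\mathbf{Comeas}(A,B)(|\rho_U|)$ (formed with respect to $G_1$), where $|\rho_U| \colon A \to B \otimes (\supp\rho_U)$ is the tensor epimorphism given by Theorem~\ref{TheoremAbsValueSupportExistence}. As this common category is assumed non-empty, \cite[Theorem~4.23]{AGV3} lifts the locally initial object $|\rho_U|$ along $G_1$ to an initial object $\rho_{A,B,V}$ in it, which is therefore the desired initial object in $\mathbf{Comeas}(A,B)(i)$; its underlying monoid is the $V$-universal comeasuring monoid $\mathcal{A}^\square(A,B,V)$.
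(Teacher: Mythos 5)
Your proposal is correct and follows essentially the same route as the paper's proof: verify the ambient properties via Remark~\ref{RemarkInBraidedClosedMonoidalManyPropertiesHoldAutomatically} and \cite[Proposition~4.2]{AGV3}, use Theorems~\ref{TheoremSupportsUnderVee}--\ref{TheoremVeeReflectsCoarserFiner} (together with the fact that $K$ is an isomorphism of categories) to place $K(-)^\vee$ under the hypotheses of Proposition~\ref{PropositionLIOKPreservesPreorder}, and then lift along $G_1$ via \cite[Theorem~4.23]{AGV3}. The only quibble is the notational slip $\widetilde K G_1(\rho_U)$ for $\widetilde K(\rho_U)$ (since $\rho_U$ need not itself be a comeasuring), which does not affect the argument.
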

\begin{proof} Recall that by~\cite[Proposition 4.2 (1)]{AGV3} 
	and Remark~\ref{RemarkInBraidedClosedMonoidalManyPropertiesHoldAutomatically} the category $\mathcal C$ satisfies Properties~\ref{PropertyEpiExtrMonoFactorizations}, \ref{PropertyFreeMonoid} and the properties dual to Properties~\ref{PropertyEpiExtrMonoFactorizations} and~\ref{PropertyMonomorphism}--\ref{PropertyEqualizers}. By Theorems~\ref{TheoremSupportsUnderVee}--\ref{TheoremVeeReflectsCoarserFiner} and Remarks~\ref{Cosupp_duality_rmk} (\ref{RemarkCosupportSubobjectPartialOrder}) we are under the assumptions of Proposition~\ref{PropositionLIOKPreservesPreorder}.
Now we apply~\cite[Theorem~4.24]{AGV3}. 
\end{proof}	

\begin{remark} If $V$ is an arbitrary subobject of $[A,B]$, say, if $V=[A,B]$, then ${}_\square \mathcal{C}(A,B,V)$
	may not exist, see~\cite[Section~4.5]{AGV2} for the corresponding examples.
\end{remark}
\begin{corollary}[{\cite[Theorem 3.16]{AGV2}}]
Let $A$ and $B$ be $\Omega$-algebras over a field $\mathbbm{k}$ and let $V\subseteq \mathbf{Vect}_\mathbbm{k}(A,B)$
be a pointwise finite dimensional subspace closed in the finite topology (see the definitions in~\cite[Section 2.2]{AGV2}). Then there exists the \textit{$V$-universal comeasuring algebra} ${}_\square \mathcal{C}(A,B,V)$ from $A$ to $B$.
\end{corollary}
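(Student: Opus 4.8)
The statement is the instance $\mathcal C = \mathbf{Vect}_\mathbbm{k}$ of Theorem~\ref{TheoremMonUnivComeasExistenceClosed}: recall that $\mathbf{Vect}_\mathbbm{k}$ is a symmetric, hence braided, closed monoidal category with $[A,B]=\Hom_\mathbbm{k}(A,B)$ and with $(-)^*=[-,\mathbbm{k}]$ the usual linear dual. So the plan is, first, to verify that $\mathbf{Vect}_\mathbbm{k}$ satisfies all the hypotheses of Theorem~\ref{TheoremMonUnivComeasExistenceClosed}; second, to show that a pointwise finite dimensional subspace $V\subseteq\Hom_\mathbbm{k}(A,B)$ closed in the finite topology is precisely a subobject $i\colon V\rightarrowtail[A,B]$ of the form $K(\rho_U^\vee)$ required there; and third, to identify the resulting initial object with the $V$-universal comeasuring algebra as defined just before that theorem.

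For the first point everything is standard: $\mathbf{Vect}_\mathbbm{k}$ has all small limits and all small colimits, it is wellpowered and co-wellpowered, and since $\mathbbm{k}$ is a field the functor $M\otimes(-)$ is exact and commutes with filtered colimits, so Properties~\ref{PropertySmallLimits}, \ref{PropertySubObjectsSmallSet}--\ref{PropertyLimitsOfSubobjectsArePreserved}, \ref{PropertySwitchProdTensorIsAMonomorphism}, \ref{PropertyEqualizers} and the properties dual to Properties~\ref{PropertySmallLimits} and~\ref{PropertySubObjectsSmallSet} all hold (see also Remark~\ref{RemarkInBraidedClosedMonoidalManyPropertiesHoldAutomatically}). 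For the three remaining conditions: a subspace inclusion dualizes to the surjective restriction of functionals, so $(-)^*$ sends extremal monomorphisms (= injections) to extremal epimorphisms (= surjections); the canonical map $\alpha_P\colon P\to P^{**}$ is injective, hence an extremal monomorphism; and the canonical map $X^*\otimes Y^*\to(X\otimes Y)^*$ is injective over a field, in particular $\theta_{B^*,P}\colon B^{**}\otimes P^*\to(B^*\otimes P)^*$ is a monomorphism.

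The core of the proof is the second point, and it is where both hypotheses on $V$ enter. Given a pointwise finite dimensional $V$, I would define a morphism $\rho\colon A\to B\otimes V^*$ by sending $a$ to the finite-rank operator $v\mapsto v(a)$ — it has finite rank precisely because $\{v(a):v\in V\}$ is finite dimensional — under the identification of $B\otimes V^*$ with the space of finite-rank maps $V\to B$. The induced morphism $K(\rho^\vee)\colon V^{**}\to\Hom_\mathbbm{k}(A,B)$ restricts along $\alpha_V$ to the inclusion $V\hookrightarrow\Hom_\mathbbm{k}(A,B)$; and for a general $\xi\in V^{**}$ and any finite list $a_1,\dots,a_n$, the identity $(W^\perp)^\perp=W$ for finite dimensional subspaces $W$ of $V^*$ produces some $v\in V$ agreeing with $K(\rho^\vee)(\xi)$ on $a_1,\dots,a_n$, so closedness of $V$ in the finite topology forces $K(\rho^\vee)(\xi)\in V$. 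Hence the image of $K(\rho^\vee)$ is exactly $V$; replacing the codomain by the support of $\rho$ (a subspace of $V^*$) so as to obtain a tensor epimorphism $\rho_U\colon A\to B\otimes U$ and applying Theorem~\ref{TheoremSupportsUnderVee} gives $\cosupp(\rho_U^\vee)=V$ and $i=K(\rho_U^\vee)$. (Equivalently, such a $V$ is the projective limit of the finite dimensional spaces obtained by restricting functions to finite dimensional subspaces of $A$, hence linearly compact, hence a dual space.)

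Granting these two points, Theorem~\ref{TheoremMonUnivComeasExistenceClosed} yields the initial object $\rho_{A,B,V}$ in $\mathbf{Comeas}(A,B)(i)$, and the monoid it corresponds to under $K(-)^\vee G_1$ is by definition the $V$-universal comeasuring algebra ${}_\square\mathcal{C}(A,B,V)$ from $A$ to $B$; this recovers \cite[Theorem~3.16]{AGV2}. I expect the second point to be the only non-routine step: one must check that ``pointwise finite dimensional and closed in the finite topology'' is exactly the condition guaranteeing that $V$ arises as $K(\rho_U^\vee)$, and match it with the cosupport formalism of Section~\ref{SubsectionCorrespondenceSuppCosupp} through Proposition~\ref{PropositionMonoTensorMonoClosed} and Theorems~\ref{TheoremSupportsUnderVee}--\ref{TheoremVeeReflectsCoarserFiner}; by contrast the categorical verifications for $\mathbf{Vect}_\mathbbm{k}$ are immediate.
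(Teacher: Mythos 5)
Your proof is correct and takes essentially the same route as the paper: verify the hypotheses of Theorem~\ref{TheoremMonUnivComeasExistenceClosed} for $\mathcal C=\mathbf{Vect}_\mathbbm{k}$ and identify the inclusion $V\hookrightarrow[A,B]$ with some $K(\rho_U^\vee)$. The only difference is that the paper simply cites \cite[Theorem 2.11]{AGV2} (together with Theorem~\ref{TheoremSupportsUnderVee}) for the fact that the subspaces of the form $K(\rho_U^\vee)$ are exactly the pointwise finite dimensional ones closed in the finite topology, whereas you reprove the needed direction directly via the evaluation morphism $A\to B\otimes V^*_\mathbbm{k}$ and passage to its support --- a correct reconstruction of that cited result.
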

\begin{proof}
	By~\cite[Theorem 2.11]{AGV2} and Theorem~\ref{TheoremSupportsUnderVee} there exists a morphism $\rho_U \colon A \to B \otimes U$ such that $i=K(\rho_U^\vee)$ if and only if $V$ is pointwise finite dimensional and closed in the finite topology.
\end{proof}	
\begin{remark}
In~\cite{AGV2} the notion of the cosupport was introduced for linear maps $\rho \colon A \to B \otimes Q$ too. Namely, by the definition, $\cosupp \rho := \cosupp(\rho^\vee)$. 
\end{remark}

In order to proceed to coactions we need the following lemmas:
\begin{lemma}\label{LemmaComeasVUnity}
	Let $A$ be an $\Omega$-magma in a braided closed monoidal category~$\mathcal C$
	and let $i\colon V \to [A,A]$ be a submonoid. Then the morphism $A	\mathrel{\widetilde\to} A  \otimes \mathbbm{1}$ is an object in $\mathbf{Comeas}(A,A)_{K (-)^\vee G_1}(i)$.
\end{lemma}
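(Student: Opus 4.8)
The plan is to exhibit the object explicitly and verify the two requirements directly. Write $\rho_0\colon A\xrightarrow{\sim}A\otimes\mathbbm{1}$ for the canonical isomorphism (the inverse of the right unitor $r_A$), with $\mathbbm{1}$ regarded as the trivial monoid, i.e. the monoidal unit of $\mathsf{Mon}(\mathcal C)$, whose unit is $\id_{\mathbbm{1}}$ and whose multiplication is the canonical isomorphism $\mathbbm{1}\otimes\mathbbm{1}\xrightarrow{\sim}\mathbbm{1}$. To show that $\rho_0$ is an object of $\mathbf{Comeas}(A,A)(i)$ I must check: (i) that $\rho_0$ is a comeasuring, i.e. that the given $\Omega$-magma structure on $A$ turns $\rho_0$ into an $\Omega$-magma in $\mathsf{MorTens}(\mathbbm{1})$; and (ii), unwinding the definition of $\mathbf{Comeas}(A,A)(i)$ relative to the functor $K(-)^\vee G_1\colon\mathbf{Comeas}(A,A)\to(\mathcal C\downarrow[A,A])^{\mathrm{op}}$, that the morphism $K(\rho_0^\vee)\colon\mathbbm{1}^*\to[A,A]$ factors through $i$.

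For (i) I would first prove, by induction on $m$, that the $m$-fold monoidal product $\rho_0^{{}\mathbin{\widetilde\otimes} m}$ computed in $\mathsf{MorTens}(\mathbbm{1})$ is simply the canonical isomorphism $A^{\otimes m}\xrightarrow{\sim}A^{\otimes m}\otimes\mathbbm{1}$: the case $m=0$ is exactly the monoidal unit $\mathbbm{1}\xrightarrow{\id}\mathbbm{1}\xrightarrow{\sim}\mathbbm{1}\otimes\mathbbm{1}$ of $\mathsf{MorTens}(\mathbbm{1})$, and the inductive step follows from the explicit formula for the monoidal product in $\mathsf{MorTens}(Q)$ with $Q=\mathbbm{1}$, using that the multiplication of $\mathbbm{1}$ is a unitor and that the braiding $c_{\mathbbm{1},-}$ is canonical -- a routine coherence argument. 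Granting this, for each $\omega\in\Omega$ the square asserting that $\rho_0$ is a morphism of $\Omega$-magmas is precisely the naturality square of the canonical transformation $C\mapsto(C\xrightarrow{\sim}C\otimes\mathbbm{1})$ evaluated at $\omega_A\colon A^{\otimes s(\omega)}\to A^{\otimes t(\omega)}$, hence commutes. (Equivalently, $B\mapsto(B\xrightarrow{\sim}B\otimes\mathbbm{1})$ is a strong monoidal equivalence $\mathcal C\xrightarrow{\sim}\mathsf{MorTens}(\mathbbm{1})$ sending the $\Omega$-magma $A$ to $\rho_0$ with its transported structure maps.)

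For (ii) I would unwind the defining diagram of $(-)^\vee$ at $\rho=\rho_0$, $Q=\mathbbm{1}$. Since $\mathcal C$ is closed we have $[\mathbbm{1},-]=\id_{\mathcal C}$, so $\mathbbm{1}^*=\mathbbm{1}$ and $\mathrm{ev}_{\mathbbm{1}}=r_{\mathbbm{1}}$; feeding this together with $\rho_0=r_A^{-1}$ into the diagram defining $\rho_0^\vee$ and collapsing the unitors and the braiding $c_{\mathbbm{1},A}$ by coherence gives $\rho_0^\vee=l_A\colon\mathbbm{1}\otimes A\to A$, the left unitor. Transposing under the adjunction $\mathcal C(P\otimes A,A)\cong\mathcal C(P,[A,A])$ and invoking the second defining diagram of the monoid structure on $[A,A]$ in Proposition~\ref{PropositionMonoidActionMonoidHomomorphism}, we obtain $K(\rho_0^\vee)=K(l_A)=u_{[A,A]}$, the unit of $[A,A]$. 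Finally, since $i\colon V\to[A,A]$ is a monoid homomorphism it satisfies $u_{[A,A]}=i\,u_V$, where $u_V$ is the unit of $V$, so $K(\rho_0^\vee)=i\,u_V$ factors through $i$. This is exactly a morphism $K(\rho_0^\vee)\to i$ in $(\mathcal C\downarrow[A,A])$, equivalently a morphism $i\to K(\rho_0^\vee)$ in $(\mathcal C\downarrow[A,A])^{\mathrm{op}}$, i.e. the assertion that $\rho_0\in\mathbf{Comeas}(A,A)(i)$ (and, by Remarks~\ref{Cosupp_duality_rmk} (\ref{RemarkCosupportIsTheImage}), $\cosupp(\rho_0^\vee)$ is the image of $u_{[A,A]}$, a subobject of $V$). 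The only genuinely delicate step is the identification $\rho_0^\vee=l_A$ through the chain of canonical isomorphisms; once this is established, everything follows from the fact that a monoid homomorphism preserves units.
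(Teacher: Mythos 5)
Your proof is correct and follows essentially the same route as the paper: both arguments reduce to identifying $\rho_0^\vee$ with the unit action $\mathbbm{1}^*\otimes A\to A$ and then using that $i$ preserves units (equivalently, that $A$ is a $V$-module via $K^{-1}i$) to factor it through $V$. The only difference is that for the comeasuring part the paper simply cites \cite[Lemma 4.32]{AGV3}, whereas you reprove that fact directly via the coherence/naturality argument for $\mathsf{MorTens}(\mathbbm{1})$.
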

\begin{proof}
	By~\cite[Lemma 4.33]{AGV3}, 
	 $\rho_0 \colon A	\mathrel{\widetilde\to} A  \otimes \mathbbm{1}$ is a comeasuring.
	 Denote by $u \colon \mathbbm{1} \to V$ the unit of $V$ and
	 consider the diagram	 
	 $$\xymatrix{\mathbbm{1}^* \otimes A \ar[r]^(0.4){\mathbbm{1}^* \otimes \rho_0} \ar[dd]_{r_{\mathbbm{1}^*} \otimes \id_{A}} &  \mathbbm{1}^* \otimes A \otimes \mathbbm{1} \ar[rr]^{c_{\mathbbm{1}^* \otimes A} \otimes \id_{\mathbbm{1}}} & &  A \otimes \mathbbm{1}^* \otimes \mathbbm{1}
	 	\ar[dd]^{\id_A \otimes \mathrm{ev_{\mathbbm{1}}}} \\ & & \\
	 	\mathbbm{1}^* \otimes \mathbbm{1} \otimes A \ar[ruu]^(0.6){\id_{\mathbbm{1}^*} \otimes c_{\mathbbm{1}, A}\,} \ar[d]^{\mathrm{ev}_{\mathbbm{1}} \otimes \id_{A}}
	 	\ar[rrruu]_{\ c_{\mathbbm{1}^* \otimes \mathbbm{1}, A}}
	 	 & & & A\otimes \mathbbm{1} \ar[d]^\sim \\
	 	\mathbbm{1} \otimes A \ar[rd]_{u\otimes A} \ar[rrr]_\sim \ar[rrru]_{c_{\mathbbm{1}, A}} & & & A \\
	 	&  V \otimes A \ar[rru]_(0.6){K^{-1}i} & 
	 }$$
 
 The lower triangle is commutative since $A$ is a $V$-module by Proposition~\ref{PropositionMonoidActionMonoidHomomorphism}.
 The other inner polygons are commutative by the properties of the braiding. Hence the outer polygon is commutative too.
 Now we notice the composition on the upper and the right edges equals $\rho_0^\vee \colon \mathbbm{1}^* \otimes A	\to A$.
 Therefore, the diagram below is commutative too:
 $$\xymatrix{ \mathbbm{1}^*\otimes A \ar[rd]_{(u\,\mathrm{ev}_{\mathbbm{1}}\, r_{\mathbbm{1}^*})\otimes \id_{A}} \ar[rr]^{\rho_0^\vee}  & & A \\
 	&  V \otimes A \ar[ru]_(0.6){K^{-1}i} &
 }$$ In particular, $\rho_0 \colon A	\mathrel{\widetilde\to} A  \otimes \mathbbm{1}$ is an object in $\mathbf{Comeas}(A,A)_{K (-)^\vee G_1}(i)$. 
\end{proof}	
 
\begin{lemma}\label{LemmaComeasVComposition}
	Let $A$ be an $\Omega$-magma in a braided closed monoidal category~$\mathcal C$
	satisfying Properties~\ref{PropertySmallLimits},
	 \ref{PropertySubObjectsSmallSet}--\ref{PropertyLimitsOfSubobjectsArePreserved} and \ref{PropertyEqualizers}
	and the properties dual to Properties~\ref{PropertySmallLimits} and~\ref{PropertySubObjectsSmallSet}
	 of Section~\ref{SubsectionSupportCoactingConditions}
	and let $i\colon V \to [A,A]$ be a submonoid such that $i=K(\rho_U^\vee)$ for some morphism $\rho_U \colon A \to A \otimes U$ where $U$ is an object in $\mathcal C$. Suppose, in addition, that
	\begin{itemize}
	 \item the functor $(-)^*$ maps extremal monomorphisms to extremal epimorphisms;
	\item $\alpha_A \otimes \id_{P^*}$ and $\theta_{A^*,P}$ are monomorphisms for all objects $P$.
	\end{itemize}
	 If $\rho_1 \colon A \to A \otimes Q_1$ and $\rho_2 \colon A \to A \otimes Q_2$
	are objects in $\mathbf{Comeas}(A,A)_{K (-)^\vee G_1}(i)$ such that
	$\left(\theta^\mathrm{inv}_{Q_1,Q_2}\right)^\flat \colon Q_1\otimes Q_2 \to (Q_1^* \otimes Q_2^*)^*$ is an extremal monomorphism in $\mathcal C$, then $(\rho_1 \otimes {\id_{Q_2}} )\rho_2$ is an object in
	$\mathbf{Comeas}(A,A)_{K (-)^\vee G_1}(i)$ too.
\end{lemma}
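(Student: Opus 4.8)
The plan is to regard $\rho:=(\rho_1\otimes\id_{Q_2})\rho_2$ as a morphism $A\to A\otimes(Q_1\otimes Q_2)$ into the tensor product monoid $Q_1\otimes Q_2$, to note that it is a comeasuring (a routine diagram chase using that $\rho_1,\rho_2$ are comeasurings and that $\mu_{Q_1\otimes Q_2}$ is built from $\mu_{Q_1}$ and $\mu_{Q_2}$ via the braiding, or a reference to the composition of comeasurings in~\cite{AGV3}), and then to check that $K(\rho^\vee)\colon(Q_1\otimes Q_2)^*\to[A,A]$ factors through $i$ --- which, by definition of $\mathbf{Comeas}(A,A)(i)$ with respect to the functor $K(-)^\vee G_1$, is exactly the assertion that $\rho$ is an object of $\mathbf{Comeas}(A,A)(i)$. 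Throughout I would use that $i=K(\rho_U^\vee)$ forces $V=U^*$ and $K^{-1}i=\rho_U^\vee$, and that $K^{-1}i\colon V\otimes A\to A$ is a left $V$-module structure on $A$ by Proposition~\ref{PropositionMonoidActionMonoidHomomorphism}, since $i$ is a submonoid.

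First I would assemble the key identity. As $\rho_1,\rho_2$ lie in $\mathbf{Comeas}(A,A)(i)$, there are morphisms $\tau_j\colon Q_j^*\to V$ with $\rho_j^\vee=(K^{-1}i)(\tau_j\otimes\id_A)$. Combining Lemma~\ref{LemmaVeeCompositionSwitch} with bifunctoriality of $\otimes$ (to slide the $\tau_j$ past $K^{-1}i$) and then with the associativity axiom of the $V$-action, one obtains
$$\rho^\vee(\theta^{\mathrm{inv}}_{Q_1,Q_2}\otimes\id_A)=\rho_1^\vee(\id_{Q_1^*}\otimes\rho_2^\vee)=(K^{-1}i)\bigl((\mu_V(\tau_1\otimes\tau_2))\otimes\id_A\bigr)=\rho_U^\vee(\sigma\otimes\id_A),$$
where $\sigma:=\mu_V(\tau_1\otimes\tau_2)\colon Q_1^*\otimes Q_2^*\to V=U^*$.

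The main point is then to upgrade this statement --- a factorization through $i$ only after precomposition with $\theta^{\mathrm{inv}}_{Q_1,Q_2}$ --- to a factorization of $K(\rho^\vee)$ itself, and this is where the hypothesis on $(\theta^{\mathrm{inv}}_{Q_1,Q_2})^\flat$ is used. Set $W:=Q_1^*\otimes Q_2^*$ and $\tilde\rho:=(\id_A\otimes(\theta^{\mathrm{inv}}_{Q_1,Q_2})^\flat)\rho\colon A\to A\otimes W^*$. Applying Proposition~\ref{PropositionDualityNablaVeeSharp} twice --- once to $\rho$ (using $((\theta^{\mathrm{inv}}_{Q_1,Q_2})^\flat)^\sharp=\theta^{\mathrm{inv}}_{Q_1,Q_2}$) and once to $\rho_U$ (with the morphism $\sigma\colon W\to U^*$) --- identifies $\tilde\rho^\nabla$ with $\rho^\vee(\theta^{\mathrm{inv}}_{Q_1,Q_2}\otimes\id_A)=\rho_U^\vee(\sigma\otimes\id_A)$ and with $\bigl((\id_A\otimes\sigma^\flat)\rho_U\bigr)^\nabla$; since $\alpha_A\otimes\id_{W^*}$ and $\theta_{A^*,W}$ are monomorphisms, Corollary~\ref{CorollaryRhoABPStarEqualAfterNabla} then yields $\tilde\rho=(\id_A\otimes\sigma^\flat)\rho_U$. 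Feeding the two descriptions of $\tilde\rho$ into Proposition~\ref{PropositionVeeUnderComposition} gives, on the one hand, $K(\tilde\rho^\vee)=K(\rho_U^\vee)\circ(\sigma^\flat)^*=i\circ(\sigma^\flat)^*$, so $K(\tilde\rho^\vee)$ factors through $i$, and on the other hand $K(\tilde\rho^\vee)=K(\rho^\vee)\circ((\theta^{\mathrm{inv}}_{Q_1,Q_2})^\flat)^*$. By hypothesis $(\theta^{\mathrm{inv}}_{Q_1,Q_2})^\flat$ is an extremal monomorphism, hence $((\theta^{\mathrm{inv}}_{Q_1,Q_2})^\flat)^*$ is an extremal epimorphism because $(-)^*$ sends extremal monomorphisms to extremal epimorphisms; the diagonal fill-in in the commutative square with sides $((\theta^{\mathrm{inv}}_{Q_1,Q_2})^\flat)^*$, $(\sigma^\flat)^*$, $K(\rho^\vee)$ and $i$ (an extremal epimorphism against the monomorphism $i$) then produces the desired factorization of $K(\rho^\vee)$ through $i$. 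I expect the fiddly parts to be the bookkeeping of which braiding-variant of $\theta$ and which of $(-)^\flat$, $(-)^\sharp$ occurs where, and the verification (or citation) that $(\rho_1\otimes\id_{Q_2})\rho_2$ is genuinely a comeasuring into the tensor product monoid.
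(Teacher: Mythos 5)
Your proposal is correct, and up to the identity $(\id_A\otimes(\theta^{\mathrm{inv}}_{Q_1,Q_2})^\flat)(\rho_1\otimes\id_{Q_2})\rho_2=(\id_A\otimes\sigma^\flat)\rho_U$ it coincides with the paper's argument: the paper also establishes $\rho^\vee(\theta^{\mathrm{inv}}_{Q_1,Q_2}\otimes\id_A)=\rho_1^\vee(\id_{Q_1^*}\otimes\rho_2^\vee)=\rho_U^\vee(f\otimes\id_A)$ via Lemma~\ref{LemmaVeeCompositionSwitch}, and then uses Proposition~\ref{PropositionDualityNablaVeeSharp} and Corollary~\ref{CorollaryRhoABPStarEqualAfterNabla} exactly as you do. The two local differences are worth noting. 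First, where the paper obtains the morphism $f\colon Q_1^*\otimes Q_2^*\to V$ by citing the dual of \cite[Lemma~4.33]{AGV3}, you construct it explicitly as $\sigma=\mu_V(\tau_1\otimes\tau_2)$ using the associativity of the $V$-action $K^{-1}i$; this is a correct and self-contained replacement. Second, your endgame differs: the paper stays in $\mathbf{MorTens}(A,A)$, invokes \cite[Proposition~4.17]{AGV3} to conclude that precomposition with the extremal monomorphism $(\theta^{\mathrm{inv}}_{Q_1,Q_2})^\flat$ does not change the support, hence $(\rho_1\otimes\id_{Q_2})\rho_2\preccurlyeq\rho_U$, and then transports this through Theorem~\ref{TheoremVeePreservesCoarserFiner}; you instead apply $(-)^\vee$ and $K$ directly and resolve the square $K(\rho^\vee)\circ((\theta^{\mathrm{inv}}_{Q_1,Q_2})^\flat)^*=i\circ(\sigma^\flat)^*$ by the diagonal fill-in of the extremal epimorphism $((\theta^{\mathrm{inv}}_{Q_1,Q_2})^\flat)^*$ against the monomorphism $i$. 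This is legitimate: the fill-in requires extremal epimorphisms to be orthogonal to monomorphisms, which holds here since Property~\ref{PropertySmallLimits} supplies pullbacks (equivalently, the duals of Properties~\ref{PropertySmallLimits} and~\ref{PropertySubObjectsSmallSet} make $\mathcal C$ (ExtrMono, Epi)-costructured), so you should state that justification explicitly. Your route buys independence from Theorems~\ref{TheoremSupportsUnderVee}--\ref{TheoremVeePreservesCoarserFiner} and \cite[Proposition~4.17]{AGV3} at the cost of doing the orthogonality argument by hand; both rest on the same key hypothesis that $(-)^*$ sends extremal monomorphisms to extremal epimorphisms.
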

\begin{proof}  By~\cite[Lemma 4.33]{AGV3}, 
	 the morphism $(\rho_1 \otimes {\id_{Q_2}} )\rho_2$
	is a comeasuring.
	
By Lemma~\ref{LemmaVeeCompositionSwitch},
\begin{equation}\label{EqComeasV}\left((\rho_1 \otimes \id_{Q_2})\rho_2\right)^\vee (\theta^\mathrm{inv}_{Q_1,Q_2} \otimes \id_A) = 
\rho_1^\vee (\id_{Q_1^*} \otimes \rho_2^\vee ).\end{equation}
Applying the dual of~\cite[Lemma~4.34]{AGV3} 
 we obtain that the right hand side of \eqref{EqComeasV}
is an object in $\mathbf{Meas}(A,A)^\mathrm{op}_{K G_1'}(i)$. In particular, 
$\rho_1^\vee (\id_{Q_1^*} \otimes \rho_2^\vee ) = \rho_U^\vee(f\otimes \id_A)$
for some morphism  $f \colon Q_1^* \otimes Q_2^* \to V$ where $V=U^*$.

In virtue of Proposition~\ref{PropositionDualityNablaVeeSharp},
we have 
\begin{equation*}\begin{split}\bigl((\id_A \otimes \left( \theta^\mathrm{inv}_{Q_1,Q_2}\right)^\flat)(\rho_1 \otimes \id_{Q_2})\rho_2\bigr)^\nabla = \left((\rho_1 \otimes \id_{Q_2})\rho_2\right)^\vee (\theta^\mathrm{inv}_{Q_1,Q_2} \otimes \id_A)=\\=
\rho_1^\vee (\id_{Q_1^*} \otimes \rho_2^\vee ) = \rho_U^\vee(f\otimes \id_A)=
\left((\id_A \otimes f^\flat)\rho_U\right)^\nabla.
\end{split}
\end{equation*}

By Corollary~\ref{CorollaryRhoABPStarEqualAfterNabla}, 
$$(\id_A \otimes \left( \theta^\mathrm{inv}_{Q_1,Q_2}\right)^\flat)(\rho_1 \otimes \id_{Q_2})\rho_2 = (\id_A \otimes f^\flat)\rho_U.$$

By~\cite[Proposition~4.18]{AGV3}, 
$\supp \left( (\id_A \otimes f^\flat)\rho_U \right) = \supp (\rho_1 \otimes \id_{Q_2})\rho_2$
and $(\rho_1 \otimes \id_{Q_2})\rho_2 \preccurlyeq \rho_U$.
By Theorem~\ref{TheoremVeePreservesCoarserFiner}, 
$\bigl((\rho_1 \otimes \id_{Q_2})\rho_2\bigr)^\vee \preccurlyeq \rho_U^\vee$
and $(\rho_1 \otimes \id_{Q_2})\rho_2$ is an object in $\mathbf{Comeas}(A,A)_{K (-)^\vee G_1}(i)$.
\end{proof}	

\begin{theorem}\label{TheoremBimonUnivCoactingExistenceClosed}
	Let $A$ be an $\Omega$-magma in a braided closed monoidal category~$\mathcal C$
	satisfying 	
	Properties~\ref{PropertySmallLimits},
	\ref{PropertySubObjectsSmallSet}--\ref{PropertyLimitsOfSubobjectsArePreserved} and \ref{PropertyEqualizers}
	and the properties dual to Properties~\ref{PropertySmallLimits} and~\ref{PropertySubObjectsSmallSet}
	of Section~\ref{SubsectionSupportCoactingConditions}
		and let $i\colon V \to [A,A]$ be a submonoid such that $i=K(\rho_U^\vee)$ for some morphism $\rho_U \colon A \to A \otimes U$ where $U$ is an object in $\mathcal C$. Suppose, in addition, that 
	\begin{itemize}
		\item the functor $(-)^*$ maps extremal monomorphisms to extremal epimorphisms;
		\item $\alpha_A \otimes \id_{P^*}$ and $\theta_{A^*,P}$ are monomorphisms for all objects $P$;
		\item  there exists the $V$-universal comeasuring monoid $\mathcal{B}^\square(A,V) := \mathcal{A}^\square(A,A,V)$;
		\item $\left(\theta^\mathrm{inv}_{\mathcal{B}^\square(A,V),\mathcal{B}^\square(A,V)}\right)^\flat \colon \mathcal{B}^\square(A,V)\otimes \mathcal{B}^\square(A,V) \to (\mathcal{B}^\square(A,V)^* \otimes \mathcal{B}^\square(A,V)^*)^*$ is an extremal monomorphism in $\mathcal C$.
	\end{itemize}
	Then $\mathcal{B}^\square(A,V)$ admits a structure of a bimonoid such that for any bimonoid $B$ and any coaction $\rho \colon  A \to A \otimes B$, such that $\cosupp(\rho^\vee)$ is a submonoid of $V$, the unique monoid homomorphism $\varphi$
	making the diagram below commutative is in fact a bimonoid homomorphism:
	\begin{equation*}\xymatrix{ A \ar[r]^(0.3){\rho_{A,V}} \ar[rd]_\rho & A \otimes \mathcal{B}^\square(A,V) \ar@{-->}[d]^{\id_A\otimes
				\varphi} \\
			& A\otimes B} \end{equation*}
	(Here $\rho_{A,V} := \rho_{A,A,V}$.)
	In other words, $\mathcal{B}^\square(A,V)$ is the \textit{$V$-universal coacting bimonoid on $A$}.
\end{theorem}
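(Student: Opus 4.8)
The plan is to endow $\mathcal B := \mathcal{B}^\square(A,V) = \mathcal{A}^\square(A,A,V)$ with a comultiplication and a counit manufactured from the universal property of $\rho_{A,V} := \rho_{A,A,V}$, and then to read off the bimonoid axioms and the asserted universal property from the uniqueness built into that property. The whole argument will be powered by the following remark, which I would establish first: if $\nu$ is any monoid homomorphism out of $\mathcal B$ and $\sigma := (\id_A \otimes \nu)\rho_{A,V}$, then $\sigma$ is a comeasuring and, by Proposition~\ref{PropositionVeeUnderComposition}, $K(\sigma^\vee) = K(\rho_{A,V}^\vee)\circ\nu^*$; since $\rho_{A,V}$ already lies in $\mathbf{Comeas}(A,A)(i)$, the morphism $K(\rho_{A,V}^\vee)$ factors through $i\colon V\rightarrowtail[A,A]$, hence so does $K(\sigma^\vee)$, i.e.\ $\sigma\in\mathbf{Comeas}(A,A)(i)$. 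In particular, any two monoid homomorphisms $\nu_1,\nu_2$ with $(\id_A\otimes\nu_1)\rho_{A,V} = (\id_A\otimes\nu_2)\rho_{A,V}$ must be equal, by initiality of $\rho_{A,V}$ in $\mathbf{Comeas}(A,A)(i)$.

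Next I would produce the structure morphisms. For the counit, Lemma~\ref{LemmaComeasVUnity} places the trivial comeasuring $\rho_0\colon A\mathrel{\widetilde\to}A\otimes\mathbbm{1}$ in $\mathbf{Comeas}(A,A)(i)$, so the universal property furnishes a unique monoid homomorphism $\varepsilon\colon\mathcal B\to\mathbbm{1}$ with $(\id_A\otimes\varepsilon)\rho_{A,V} = \rho_0$. For the comultiplication I would invoke Lemma~\ref{LemmaComeasVComposition} with $\rho_1 = \rho_2 = \rho_{A,V}$ and $Q_1 = Q_2 = \mathcal B$ --- this is exactly the point where the hypothesis that $\left(\theta^{\mathrm{inv}}_{\mathcal B,\mathcal B}\right)^\flat$ is an extremal monomorphism is consumed --- which shows $(\rho_{A,V}\otimes\id_{\mathcal B})\rho_{A,V}$ is again an object of $\mathbf{Comeas}(A,A)(i)$; the universal property then yields a unique monoid homomorphism $\Delta\colon\mathcal B\to\mathcal B\otimes\mathcal B$ with $(\id_A\otimes\Delta)\rho_{A,V} = (\rho_{A,V}\otimes\id_{\mathcal B})\rho_{A,V}$, which makes $\rho_{A,V}$ a coaction for $(\mathcal B,\Delta,\varepsilon)$.

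Then I would check the coalgebra axioms for $(\Delta,\varepsilon)$. A routine diagram chase using only the defining relation of $\Delta$ and the functoriality of $\otimes$ shows that both $\left(\id_A\otimes(\Delta\otimes\id_{\mathcal B})\Delta\right)\rho_{A,V}$ and $\left(\id_A\otimes(\id_{\mathcal B}\otimes\Delta)\Delta\right)\rho_{A,V}$ coincide with the threefold iterated comeasuring $(\rho_{A,V}\otimes\id_{\mathcal B}\otimes\id_{\mathcal B})(\rho_{A,V}\otimes\id_{\mathcal B})\rho_{A,V}$; being of the form $(\id_A\otimes\nu)\rho_{A,V}$, this lies in $\mathbf{Comeas}(A,A)(i)$ by the remark above, so coassociativity $(\Delta\otimes\id_{\mathcal B})\Delta = (\id_{\mathcal B}\otimes\Delta)\Delta$ follows from uniqueness. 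In the same way, both $\left(\id_A\otimes(\varepsilon\otimes\id_{\mathcal B})\Delta\right)\rho_{A,V}$ and $\left(\id_A\otimes(\id_{\mathcal B}\otimes\varepsilon)\Delta\right)\rho_{A,V}$ reduce to $\rho_{A,V}$ by the relation defining $\varepsilon$, which gives the counit axioms. Since $\Delta$ and $\varepsilon$ are monoid homomorphisms by construction, $(\mathcal B,\mu,u,\Delta,\varepsilon)$ is a bimonoid.

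Finally I would verify the universal property. Given a bimonoid $B$ and a coaction $\rho\colon A\to A\otimes B$ with $\cosupp(\rho^\vee)$ a submonoid of $V$, the comeasuring $\rho$ lies in $\mathbf{Comeas}(A,A)(i)$, so there is a unique monoid homomorphism $\varphi\colon\mathcal B\to B$ with $(\id_A\otimes\varphi)\rho_{A,V} = \rho$; it remains to see that $\varphi$ is a comonoid homomorphism. Composing with $\rho_{A,V}$, both $\Delta_B\varphi$ and $(\varphi\otimes\varphi)\Delta$ induce the comeasuring $(\rho\otimes\id_B)\rho = (\id_A\otimes\Delta_B)\rho$ --- the first directly, the second through the defining relation of $\Delta$ and functoriality --- and $\varepsilon_B\varphi$ induces $(\id_A\otimes\varepsilon_B)\rho\cong\rho_0$, matching $\varepsilon$; by the remark these induced comeasurings all lie in $\mathbf{Comeas}(A,A)(i)$, so uniqueness forces $\Delta_B\varphi = (\varphi\otimes\varphi)\Delta$ and $\varepsilon_B\varphi = \varepsilon$. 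Hence $\varphi$ is a bimonoid homomorphism, and it is the unique such since it is already unique as a monoid homomorphism. The step I expect to be the real obstacle is the construction of $\Delta$, i.e.\ the fact that $\mathbf{Comeas}(A,A)(i)$ is closed under the composition $(\rho_1\otimes\id)\rho_2$: this is precisely Lemma~\ref{LemmaComeasVComposition}, whose hypothesis on $\left(\theta^{\mathrm{inv}}_{\mathcal B,\mathcal B}\right)^\flat$ cannot be dispensed with, and whose proof rests on the delicate correspondence between supports of morphisms $A\to A\otimes Q$ and cosupports of the associated morphisms $Q^*\otimes A\to A$ developed in Theorems~\ref{TheoremSupportsUnderVee}--\ref{TheoremVeePreservesCoarserFiner} for pre-rigid categories; once this closure is in hand, the remaining verifications are formal diagram chases driven by the uniqueness clause.
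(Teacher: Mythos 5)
Your proof is correct and follows essentially the same route as the paper: the paper's own proof consists precisely of observing that, by Proposition~\ref{PropositionVeeUnderComposition} and Lemmas~\ref{LemmaComeasVUnity} and~\ref{LemmaComeasVComposition}, the category $\mathbf{Comeas}(A,A)(i)$ contains the trivial comeasuring and is closed under the composition $(\rho_1\otimes\id)\rho_2$, and then invoking the general bimonoid-construction theorem \cite[Theorem~4.30]{AGV3}. What you have done is unpack explicitly the Manin--Tambara-style argument that the cited theorem encapsulates (manufacturing $\Delta$ and $\varepsilon$ from initiality and verifying the coalgebra axioms and the universal property via the uniqueness clause), which matches the paper's intent exactly.
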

\begin{proof} By Proposition~\ref{PropositionVeeUnderComposition}, Lemmas~\ref{LemmaComeasVUnity} and~\ref{LemmaComeasVComposition} the category $\mathcal D = \mathbf{Comeas}(A,A)_{K (-)^\vee G_1}(i)$
	satisfies the assumptions of~\cite[Theorem~4.31]{AGV3}. 
\end{proof}	

\begin{corollary}\label{CorollaryBimonUnivCoactingExistenceClosed}  Let $A$ be an $\Omega$-magma in a braided closed monoidal category~$\mathcal C$
	satisfying 
	Properties~\ref{PropertySmallLimits},
	\ref{PropertySubObjectsSmallSet}--\ref{PropertyLimitsOfSubobjectsArePreserved},
	\ref{PropertySwitchProdTensorIsAMonomorphism}, \ref{PropertyEqualizers}
	and the properties dual to Properties~\ref{PropertySmallLimits} and~\ref{PropertySubObjectsSmallSet}
	of Section~\ref{SubsectionSupportCoactingConditions}
		and let $i\colon V \to [A,A]$ be a submonoid such that $i=K(\rho_U^\vee)$ for some morphism $\rho_U \colon A \to A \otimes U$ where $U$ is an object in $\mathcal C$. Suppose, in addition, that 
	\begin{itemize}
		\item the functor $(-)^*$ maps extremal monomorphisms to extremal epimorphisms;
		\item $\alpha_P$ is an extremal monomorphism and $\theta_{A^*,P}$ is a monomorphism for all objects $P$;
		\item $\left(\theta^\mathrm{inv}_{B,B}\right)^\flat \colon B\otimes B \to (B^* \otimes B^*)^*$  is an extremal monomorphism in $\mathcal C$
		for all bimonoids~$B$.
	\end{itemize}
	Then $\mathcal{B}^\square(A,V) := \mathcal{A}^\square(A,A,V)$ admits a unique comonoid structure turning
	$\rho_{A,V} := \rho_{A,A,V}$ into a coaction, which is the initial object in $\mathbf{Coact}(A)_{K (-)^\vee G_1 G_3}(i)$ where $G_3$ is the forgetful functor $\mathbf{Coact}(A)\to \mathbf{Comeas}(A,A)$.
\end{corollary}
\begin{proof} Apply Theorems~\ref{TheoremMonUnivComeasExistenceClosed} and~\ref{TheoremBimonUnivCoactingExistenceClosed}.
\end{proof}	

Again fix an $\Omega$-magma $A$ in $\mathcal C$.
Let  $i\colon V \to [A,A]$ be a submonoid.
We call the Hopf monoid 
corresponding to the initial object
in $\mathbf{HCoact}(A)_{K (-)^\vee G_1 G_3 G_4}(i)$ (if it exists) the \textit{$V$-universal coacting Hopf monoid} on $A$.

\begin{theorem}\label{TheoremHopfMonUnivCoactingExistenceClosed}
		Let $A$ be an $\Omega$-magma in a braided closed monoidal category~$\mathcal C$ 
	and let $i\colon V \to [A,A]$ be a submonoid
	such that there exists the $V$-universal comeasuring bimonoid $\mathcal{B}^\square(A,V)$.
	Suppose, in addition, that the forgetful functor $\mathbf{Hopf}(\mathcal C) \to \mathbf{Bimon}(\mathcal C)$
	admits a left adjoint functor $H_l \colon  \mathbf{Bimon}(\mathcal C) \to \mathbf{Hopf}(\mathcal C)$. Then the initial object in $\mathbf{HCoact}(A)_{K (-)^\vee G_1 G_3 G_4}(i)$ indeed exists.
\end{theorem}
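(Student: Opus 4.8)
The plan is to lift the initial object of $\mathbf{Coact}(A)(i)$ along the forgetful functor $\mathbf{HCoact}(A)\to\mathbf{Coact}(A)$, reproducing in the present closed monoidal setting the lifting mechanism of \cite[Theorem~4.40]{AGV3}. By hypothesis the $V$-universal coacting bimonoid $\mathcal{B}^\square(A,V)$ exists, i.e.\ the coaction $\rho_{A,V}\colon A\to A\otimes\mathcal{B}^\square(A,V)$ is the initial object of $\mathbf{Coact}(A)(i)$ (cf.\ Theorem~\ref{TheoremBimonUnivCoactingExistenceClosed}). First I would set $H_0:=H_l\bigl(\mathcal{B}^\square(A,V)\bigr)$ and let $\eta\colon\mathcal{B}^\square(A,V)\to H_0$ be the unit of the adjunction $H_l\dashv\bigl(\mathbf{Hopf}(\mathcal C)\hookrightarrow\mathbf{Bimon}(\mathcal C)\bigr)$. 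Since $\eta$ is a bimonoid homomorphism, the composite $\rho_{H_0}:=(\id_A\otimes\eta)\rho_{A,V}\colon A\to A\otimes H_0$ is a coaction of the Hopf monoid $H_0$ (pushing a coaction forward along a bimonoid homomorphism), hence an object of $\mathbf{HCoact}(A)$; I expect $\rho_{H_0}$ to be the desired initial object of $\mathbf{HCoact}(A)(i)$.

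Next I would check that $\rho_{H_0}$ belongs to $\mathbf{HCoact}(A)(i)$. Since $\eta$ furnishes a morphism $\rho_{A,V}\to\rho_{H_0}$ in $\mathbf{Coact}(A)$, Proposition~\ref{PropositionVeeUnderComposition} gives $\rho_{H_0}^\vee=\rho_{A,V}^\vee(\eta^*\otimes\id_A)$, so $\cosupp(\rho_{H_0}^\vee)$ is a subobject of $\cosupp(\rho_{A,V}^\vee)$ by Remarks~\ref{Cosupp_duality_rmk}, and the latter is a subobject of $V$ because $\rho_{A,V}$ lies in $\mathbf{Coact}(A)(i)$; as $\mathbf{HCoact}(A)(i)$ is by definition the preimage of $\mathbf{Coact}(A)(i)$ under the forgetful functor, $\rho_{H_0}$ is an object of $\mathbf{HCoact}(A)(i)$.

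Then I would verify the universal property. Let $\rho\colon A\to A\otimes H$ be any object of $\mathbf{HCoact}(A)(i)$; its image under the forgetful functor $\mathbf{HCoact}(A)\to\mathbf{Coact}(A)$ lies in $\mathbf{Coact}(A)(i)$, so the universal property of $\rho_{A,V}$ yields a unique bimonoid homomorphism $\psi\colon\mathcal{B}^\square(A,V)\to H$ with $(\id_A\otimes\psi)\rho_{A,V}=\rho$. As $H$ is a Hopf monoid, transposing $\psi$ across the adjunction $H_l\dashv\bigl(\mathbf{Hopf}(\mathcal C)\hookrightarrow\mathbf{Bimon}(\mathcal C)\bigr)$ produces a unique Hopf monoid homomorphism $\varphi\colon H_0\to H$ with $\varphi\eta=\psi$; then $(\id_A\otimes\varphi)\rho_{H_0}=(\id_A\otimes\psi)\rho_{A,V}=\rho$, so $\varphi$ is a morphism $\rho_{H_0}\to\rho$ in $\mathbf{HCoact}(A)$. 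For uniqueness, any Hopf monoid homomorphism $\varphi'\colon H_0\to H$ with $(\id_A\otimes\varphi')\rho_{H_0}=\rho$ gives a bimonoid homomorphism $\varphi'\eta$ satisfying $(\id_A\otimes\varphi'\eta)\rho_{A,V}=\rho$, whence $\varphi'\eta=\psi$ by uniqueness of $\psi$ and therefore $\varphi'=\varphi$ by uniqueness of adjunction transposes. This exhibits $\rho_{H_0}$ as the initial object of $\mathbf{HCoact}(A)(i)$.

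I expect the only real obstacle to be the bookkeeping: identifying $\mathbf{HCoact}(A)(i)$ with the preimage of $\mathbf{Coact}(A)(i)$ under the forgetful functor, and checking that this functor interacts with the cosupport construction exactly as used above. This will follow from the compatibility of the chain of forgetful functors with $(-)^\vee$ and $K$, together with the properties of absolute values recalled in Section~\ref{SubsectionLIOAbsValue} and Remarks~\ref{Cosupp_duality_rmk}. Once this is in place, the whole argument is precisely the specialization of \cite[Theorem~4.40]{AGV3} to the category $\mathbf{Coact}(A)(i)$, so that in the final write-up the proof reduces to invoking \cite[Theorem~4.40]{AGV3} together with Theorem~\ref{TheoremBimonUnivCoactingExistenceClosed}.
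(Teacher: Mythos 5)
Your proposal is correct and follows essentially the same route as the paper: the paper's proof is the one-line citation ``Apply \cite[Theorem 4.39]{AGV3}'', and your argument is precisely an explicit unwinding of that lifting — push the initial coaction of $\mathbf{Coact}(A)(i)$ forward along the unit of the adjunction $H_l\dashv(\mathbf{Hopf}(\mathcal C)\hookrightarrow\mathbf{Bimon}(\mathcal C))$ and transpose to get the universal property. The only cosmetic remark is that membership of $\rho_{H_0}$ in $\mathbf{HCoact}(A)(i)$ follows directly from $K(\rho_{H_0}^\vee)=K(\rho_{A,V}^\vee)\circ\eta^*$ factoring through $i$, without needing to invoke cosupports at all.
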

\begin{proof} Apply~\cite[Theorem 4.41]{AGV3}.
\end{proof}	

Applying Theorem~\ref{TheoremHopfMonUnivCoactingExistenceClosed} above, we recover here the existence theorem for $V$-universal coacting Hopf algebras proved in~\cite{AGV2}.

\begin{corollary}\label{CorollaryHopfMonUnivCoactingExistenceClosed}  Let $A$ be an $\Omega$-magma in a braided closed monoidal category~$\mathcal C$
	satisfying Properties~\ref{PropertySmallLimits},
	\ref{PropertySubObjectsSmallSet}--\ref{PropertyLimitsOfSubobjectsArePreserved}, \ref{PropertySwitchProdTensorIsAMonomorphism}, \ref{PropertyEqualizers}
	and the properties dual to Properties~\ref{PropertySmallLimits} and~\ref{PropertySubObjectsSmallSet}
	of Section~\ref{SubsectionSupportCoactingConditions}
	and let $i\colon V \to [A,A]$ be a submonoid such that $i=K(\rho_U^\vee)$ for some morphism $\rho_U \colon A \to A \otimes U$ where $U$ is an object in $\mathcal C$. Suppose, in addition, that 
	\begin{itemize}
		\item the functor $(-)^*$ maps extremal monomorphisms to extremal epimorphisms;
		\item $\alpha_P$ is an extremal monomorphism and $\theta_{A^*,P}$ is a monomorphism for all objects $P$;
		\item $\left(\theta^\mathrm{inv}_{B,B}\right)^\flat \colon B\otimes B \to (B^* \otimes B^*)^*$  is an extremal monomorphism in $\mathcal C$
		for all bimonoids~$B$.
	\end{itemize}
	Then the initial object in $\mathbf{HCoact}(A)_{K (-)^\vee G_1 G_3 G_4}(i)$ indeed exists.
\end{corollary}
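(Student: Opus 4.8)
The plan is to read off the statement from Theorem~\ref{TheoremHopfMonUnivCoactingExistenceClosed}, whose only nontrivial hypotheses — over and above $\mathcal C$ being braided closed monoidal — are the existence of the $V$-universal comeasuring bimonoid $\mathcal{B}^\square(A,V)$ and the existence of a left adjoint $H_l\colon\mathbf{Hopf}(\mathcal C)\to\mathbf{Bimon}(\mathcal C)$ to the forgetful functor. So the whole proof reduces to producing these two inputs from the hypotheses listed in the corollary.

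For the first, I would note that the list of properties imposed on $\mathcal C$ here, together with the three extra conditions (on $(-)^*$ sending extremal monomorphisms to extremal epimorphisms, on $\alpha_P$ being an extremal monomorphism and $\theta_{A^*,P}$ a monomorphism, and on $\left(\theta^\mathrm{inv}_{B,B}\right)^\flat$ being an extremal monomorphism for every bimonoid $B$), are exactly the hypotheses of Corollary~\ref{CorollaryBimonUnivCoactingExistenceClosed} — which in turn is built on Theorems~\ref{TheoremMonUnivComeasExistenceClosed} and~\ref{TheoremBimonUnivCoactingExistenceClosed}. Hence Corollary~\ref{CorollaryBimonUnivCoactingExistenceClosed} applies verbatim and supplies $\mathcal{B}^\square(A,V):=\mathcal{A}^\square(A,A,V)$ together with its bimonoid structure, realized as the initial object of $\mathbf{Coact}(A)(i)$; in particular the $V$-universal comeasuring bimonoid needed by Theorem~\ref{TheoremHopfMonUnivCoactingExistenceClosed} exists.

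For the second input, I would use that $\mathcal C$ has all small colimits (the property dual to Property~\ref{PropertySmallLimits}), is wellpowered and co-wellpowered (Property~\ref{PropertySubObjectsSmallSet} and its dual), and, by Remark~\ref{RemarkInBraidedClosedMonoidalManyPropertiesHoldAutomatically}, has a free-monoid functor and tensoring functors preserving all colimits. Under these conditions the embedding $\mathbf{Hopf}(\mathcal C)\subseteq\mathbf{Bimon}(\mathcal C)$ admits a left adjoint $H_l$ by the relevant result of~\cite{AGV3} — the one invoked for $H_l$ in the proof of Corollary~\ref{CorollaryHopfMonUniv(Co)actDuality}, namely~\cite[Theorem~4.6]{AGV3}. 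Feeding the two inputs into Theorem~\ref{TheoremHopfMonUnivCoactingExistenceClosed} then yields the desired initial object of $\mathbf{HCoact}(A)(i)$.

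There is no serious obstacle: the argument is a short bookkeeping reduction to results already established. The only delicate point is to make sure that the abstract hypotheses under which~\cite{AGV3} produces $H_l$ are genuinely met by a braided closed monoidal category that is cocomplete and (co)wellpowered; Remark~\ref{RemarkInBraidedClosedMonoidalManyPropertiesHoldAutomatically} takes care of the auxiliary closure properties (preservation of colimits under tensoring, existence of free monoids), so this amounts to a routine check rather than a new difficulty.
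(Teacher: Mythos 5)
Your proposal is correct and follows exactly the same route as the paper: the paper's proof is the one-line citation ``Apply~\cite[Theorem~4.6]{AGV3}, Theorem~\ref{TheoremHopfMonUnivCoactingExistenceClosed}, Remark~\ref{RemarkInBraidedClosedMonoidalManyPropertiesHoldAutomatically} and Corollary~\ref{CorollaryBimonUnivCoactingExistenceClosed}'', which is precisely your decomposition into (i) obtaining $\mathcal{B}^\square(A,V)$ from Corollary~\ref{CorollaryBimonUnivCoactingExistenceClosed} (whose hypotheses do match verbatim) and (ii) obtaining $H_l$ from \cite[Theorem~4.6]{AGV3} via the automatic properties of Remark~\ref{RemarkInBraidedClosedMonoidalManyPropertiesHoldAutomatically}, before feeding both into Theorem~\ref{TheoremHopfMonUnivCoactingExistenceClosed}.
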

\begin{proof}
	Apply~\cite[Theorem~4.6]{AGV3}, 
	Theorem~\ref{TheoremHopfMonUnivCoactingExistenceClosed}, Remark~\ref{RemarkInBraidedClosedMonoidalManyPropertiesHoldAutomatically} and Corollary~\ref{CorollaryBimonUnivCoactingExistenceClosed}.
\end{proof}	

\subsection{Duality in closed monoidal categories}\label{SubsectionDualityClosed}

\begin{theorem}\label{Theorem(Co)monUniv(Co)measDualityClosed}
	Let $\mathcal C$ be a braided closed monoidal category satisfying
	Properties~\ref{PropertySmallLimits},
	\ref{PropertySubObjectsSmallSet}--\ref{PropertyLimitsOfSubobjectsArePreserved} and~\ref{PropertyEqualizers}
	and the properties dual to Properties~\ref{PropertySmallLimits} and~\ref{PropertySubObjectsSmallSet}
	of Section~\ref{SubsectionSupportCoactingConditions}.
Suppose, in addition, that
\begin{itemize} \item the functor $(-)^*$ maps extremal monomorphisms to extremal epimorphisms;
	\item there exists the functor $(-)^\circ$;
	\item 
$\alpha_M$ is an extremal monomorphism and $\theta_{M,N}$ is a monomorphism
for all objects $M,N$.
\end{itemize}
Let $A$ and $B$ be $\Omega$-magmas  and let  $i\colon V \rightarrowtail [A,B]$
be a subobject such that $i=K(\rho_U^\vee)$ for some morphism $\rho_U \colon A \to B \otimes U$ where $U$ is an object in $\mathcal C$ and there exist	 
	$\mathcal{A}^\square(A,B,V)$ and ${}_\square \mathcal{C}(A,B,V)$.
	Then $$\rho_{A,B,V}^\vee(\varkappa_{\mathcal{A}^\square(A,B,V)}\otimes \id_A)
	\colon \mathcal{A}^\square(A,B,V)^\circ \otimes A \to B$$ is a measuring and
	the unique comonoid homomorphism $\beta \colon \mathcal{A}^\square(A,B,V)^\circ \to {}_\square \mathcal{C}(A,B,V)$ making
	the diagram
	\begin{equation}\label{Eq(Co)monUniv(Co)measDualityClosed}
	 \xymatrix{
		\mathcal{A}^\square(A,B,V)^\circ \otimes A  \ar@{-->}[d]_{\beta \otimes \id_A}
		\ar[rr]^{\varkappa_{\mathcal{A}^\square(A,B,V)}\otimes \id_A}
		& & \mathcal{A}^\square(A,B,V)^* \otimes A \ar[d]^{\rho_{A,B,V}^\vee} \\
		{}_\square \mathcal{C}(A,B,V) \otimes A \ar[rr]^(0.6){\psi_{A,B,V}}& \quad & B \\} 
	\end{equation}  commutative is a comonoid isomorphism.
\end{theorem}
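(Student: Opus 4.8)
The plan is to deduce this statement from Theorem~\ref{Theorem(Co)monUniv(Co)measDuality} by transporting everything through the isomorphism of categories $K$, exactly as Theorem~\ref{TheoremMonUnivComeasExistenceClosed} was deduced from the existence results of~\cite{AGV3}. First I would check that the standing hypotheses supply everything needed: by Remark~\ref{RemarkInBraidedClosedMonoidalManyPropertiesHoldAutomatically} the category $\mathcal C$ automatically satisfies Property~\ref{PropertyMonomorphism}, Property~\ref{PropertyFreeMonoid} and the duals of Properties~\ref{PropertyMonomorphism}--\ref{PropertyEqualizers}, so together with the explicitly assumed properties we are under the assumptions of Theorems~\ref{TheoremSupportsUnderVee}--\ref{TheoremVeeReflectsCoarserFiner}; moreover, since $\alpha_M$ is an extremal (hence ordinary) monomorphism and tensoring with $\id_N$ preserves monomorphisms by Property~\ref{PropertyMonomorphism}, while $\theta_{M,N}$ is a monomorphism by assumption and the functor $(-)^\circ$ exists by assumption, the hypotheses of Theorem~\ref{Theorem(Co)monUniv(Co)measDuality} are met for any tensor epimorphism.

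Next I would reduce to a tensor epimorphism. Put $\rho_U' := |\rho_U| \colon A \to B \otimes \supp \rho_U$, a tensor epimorphism. Since $i = K(\rho_U^\vee)$ is a monomorphism, Proposition~\ref{PropositionMonoTensorMonoClosed} shows that $\rho_U^\vee$ is a tensor monomorphism, hence equals its own absolute value; by Theorem~\ref{TheoremSupportsUnderVee} we get $(\rho_U')^\vee = |\rho_U|^\vee = |\rho_U^\vee| = \rho_U^\vee$, so $K((\rho_U')^\vee) = i$. Thus $\rho_U'$ is a tensor epimorphism with $K((\rho_U')^\vee) = i$, to which Theorem~\ref{Theorem(Co)monUniv(Co)measDuality} applies; note that $\mathcal{A}^\square(A,B,V)$, ${}_\square \mathcal{C}(A,B,V)$, $\rho_{A,B,V}$ and $\psi_{A,B,V}$ depend only on the subobject $i$, so this replacement is harmless.

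The core step is then the identification of the comma subcategories. Theorems~\ref{TheoremSupportsUnderVee}--\ref{TheoremVeeReflectsCoarserFiner} together with Remarks~\ref{Cosupp_duality_rmk}~(\ref{RemarkCosupportSubobjectPartialOrder}) show that the composite of $(-)^\vee$ with $K$ preserves absolute values and reflects the preorder, so Proposition~\ref{PropositionLIOKPreservesPreorder} (applied to $G_1$ and this composite, with $x_0 = \rho_U$, whose image under the composite is $i \in \LIO((\mathcal C \downarrow [A,B])^{\mathrm{op}})$) identifies $\mathbf{Comeas}(A,B)(\rho_U')$ taken with respect to $G_1$ with $\mathbf{Comeas}(A,B)(i)$ taken with respect to $K(-)^\vee G_1$; since $K$ itself is an isomorphism of categories, it likewise identifies $\mathbf{Meas}(A,B)^{\mathrm{op}}((\rho_U')^\vee) = \mathbf{Meas}(A,B)^{\mathrm{op}}(\rho_U^\vee)$ with $\mathbf{Meas}(A,B)^{\mathrm{op}}(i)$. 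Hence $\mathcal{A}^\square(A,B,V) = \mathcal{A}^\square(\rho_U')$ with $\rho_{A,B,V} = (\rho_U')^\mathbf{Comeas}$, and ${}_\square \mathcal{C}(A,B,V) = {}_\square \mathcal{C}((\rho_U')^\vee)$ with $\psi_{A,B,V} = ((\rho_U')^\vee)^\mathbf{Meas}$, so that diagram~\eqref{Eq(Co)monUniv(Co)measDualityClosed} becomes diagram~\eqref{Eq(Co)monUniv(Co)measDuality}; Theorem~\ref{Theorem(Co)monUniv(Co)measDuality} then yields that $\rho_{A,B,V}^\vee(\varkappa_{\mathcal{A}^\square(A,B,V)} \otimes \id_A)$ is a measuring and that $\beta$ is a comonoid isomorphism. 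The step I expect to require the most care is precisely this matching of the two a priori different descriptions of the universal objects --- that is, verifying that the comma subcategories genuinely coincide and that the preservation-of-absolute-values and preorder-reflection hypotheses of Proposition~\ref{PropositionLIOKPreservesPreorder} are exactly what Theorems~\ref{TheoremSupportsUnderVee}--\ref{TheoremVeeReflectsCoarserFiner} provide.
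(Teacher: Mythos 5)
Your proposal is correct and follows essentially the same route as the paper: the paper's own (very terse) proof likewise invokes Theorems~\ref{TheoremSupportsUnderVee}--\ref{TheoremVeeReflectsCoarserFiner} together with Remarks~\ref{Cosupp_duality_rmk}~(\ref{RemarkCosupportSubobjectPartialOrder}) and~\ref{RemarkInBraidedClosedMonoidalManyPropertiesHoldAutomatically} to place itself under the hypotheses of Proposition~\ref{PropositionLIOKPreservesPreorder} and then applies Theorem~\ref{Theorem(Co)monUniv(Co)measDuality}, which is exactly what your reduction to the tensor epimorphism $|\rho_U|$ and your identification of the comma subcategories spell out. One small slip: Remark~\ref{RemarkInBraidedClosedMonoidalManyPropertiesHoldAutomatically} yields only the \emph{duals} of Properties~\ref{PropertyMonomorphism}--\ref{PropertyEqualizers} (plus Property~\ref{PropertyFreeMonoid}), not Property~\ref{PropertyMonomorphism} itself, but the latter is already among the theorem's explicit hypotheses, so nothing is affected.
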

\begin{proof}
By Theorems~\ref{TheoremSupportsUnderVee}--\ref{TheoremVeeReflectsCoarserFiner} and Remarks~\ref{Cosupp_duality_rmk} (\ref{RemarkCosupportSubobjectPartialOrder}) and~\ref{RemarkInBraidedClosedMonoidalManyPropertiesHoldAutomatically} we are under the assumptions of Proposition~\ref{PropositionLIOKPreservesPreorder}.
Now we apply Theorem~\ref{Theorem(Co)monUniv(Co)measDuality}.
\end{proof}	

\begin{corollary}\label{Corollary(Co)monUniv(Co)measDualityClosed}
	Let~$\mathcal C$ be a  braided closed monoidal category
	satisfying Properties~\ref{PropertySmallLimits},
	\ref{PropertySubObjectsSmallSet}--\ref{PropertyLimitsOfSubobjectsArePreserved}, \ref{PropertySwitchProdTensorIsAMonomorphism}, \ref{PropertyEqualizers}
	and the properties dual to Properties~\ref{PropertySmallLimits}, \ref{PropertySubObjectsSmallSet} and~\ref{PropertyFreeMonoid}
	of Section~\ref{SubsectionSupportCoactingConditions}
	such that 	\begin{itemize}
		\item the functor $(-)^*$ maps extremal monomorphisms to extremal epimorphisms;
		\item $\alpha_M$ is an extremal monomorphism and $\theta_{M,N}$ is a monomorphism
for all objects $M,N$;
	\end{itemize}
Let $A$ and $B$ be $\Omega$-magmas  and let  $i\colon V \rightarrowtail [A,B]$
be a subobject such that $i=K(\rho_U^\vee)$ for some morphism $\rho_U \colon A \to B \otimes U$ where $U$ is an object in $\mathcal C$ and $\mathbf{Comeas}(A,B)_{K (-)^\vee G_1}(i)$ is not empty.
Then $$\rho_{A,B,V}^\vee(\varkappa_{\mathcal{A}^\square(A,B,V)}\otimes \id_A)
\colon \mathcal{A}^\square(A,B,V)^\circ \otimes A \to B$$ is a measuring and
the unique comonoid homomorphism $\beta \colon \mathcal{A}^\square(A,B,V)^\circ \to {}_\square \mathcal{C}(A,B,V)$ making
the diagram~\eqref{Eq(Co)monUniv(Co)measDualityClosed} commutative is a comonoid isomorphism.
\end{corollary}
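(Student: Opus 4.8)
The plan is to deduce Corollary~\ref{Corollary(Co)monUniv(Co)measDualityClosed} from Theorem~\ref{Theorem(Co)monUniv(Co)measDualityClosed} by verifying that, in this more restrictive setting, all hypotheses of that theorem are met. The only three that are not assumed outright are the existence of the functor $(-)^\circ$ and of the two universal objects $\mathcal{A}^\square(A,B,V)$ and ${}_\square\mathcal{C}(A,B,V)$; everything else is either assumed in the corollary or automatic.

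First I would dispose of the bookkeeping. Since $\mathcal C$ is braided closed monoidal, Remark~\ref{RemarkInBraidedClosedMonoidalManyPropertiesHoldAutomatically} supplies the properties dual to Properties~\ref{PropertyMonomorphism}--\ref{PropertyEqualizers} and Property~\ref{PropertyFreeMonoid} itself; the property dual to Property~\ref{PropertyEpiExtrMonoFactorizations} follows (dually to the Remark after the list in Section~\ref{SubsectionSupportCoactingConditions}) from the assumed duals of Properties~\ref{PropertySmallLimits} and~\ref{PropertySubObjectsSmallSet}, and the property dual to Property~\ref{PropertyFiniteAndCountableColimits} follows from Property~\ref{PropertySmallLimits}. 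Combining these with the hypotheses explicitly listed in the corollary (Properties~\ref{PropertySubObjectsSmallSet}, \ref{PropertyMonomorphism}, the dual of Property~\ref{PropertyFreeMonoid}), Remark~\ref{RemarkCircSufficientConditionsForExistence} then yields the functor $(-)^\circ$. For $\mathcal{A}^\square(A,B,V)$ I would invoke Theorem~\ref{TheoremMonUnivComeasExistenceClosed}: its list of property hypotheses is exactly the one the corollary imposes, its condition on $\alpha$ and $\theta$ is weaker than ours, $i = K(\rho_U^\vee)$ by assumption, and $\mathbf{Comeas}(A,B)(i)$ is nonempty by assumption, so the initial object of $\mathbf{Comeas}(A,B)(i)$—which by definition is $\rho_{A,B,V}$ with underlying monoid $\mathcal{A}^\square(A,B,V)$—exists.

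The one genuinely substantive step is the existence of ${}_\square\mathcal{C}(A,B,V)$ via Theorem~\ref{TheoremDUALComonUnivMeasExistenceClosed}, whose property hypotheses are again all available, but which additionally requires $\mathbf{Meas}(A,B)^\mathrm{op}(i)$ to be nonempty; this has to be derived from nonemptiness of $\mathbf{Comeas}(A,B)(i)$. Given $\rho\colon A\to B\otimes Q$ in $\mathbf{Comeas}(A,B)(i)$, so that $\cosupp(\rho^\vee)$ is a subobject of $V$, Proposition~\ref{PropositionQcircMeasuring} shows that $\rho^\vee(\varkappa_Q\otimes\id_A)\colon Q^\circ\otimes A\to B$ is a measuring. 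Since this morphism equals $\rho^\vee\circ(\varkappa_Q\otimes\id_A)$, applying $K$ gives $K\bigl(\rho^\vee(\varkappa_Q\otimes\id_A)\bigr)=K(\rho^\vee)\circ\varkappa_Q$; by Remarks~\ref{Cosupp_duality_rmk}~(\ref{RemarkCosupportIsTheImage}) the cosupport of a measuring is the image of its $K$-image in $[A,B]$, so $\cosupp\bigl(\rho^\vee(\varkappa_Q\otimes\id_A)\bigr)$ factors through, hence is a subobject of, $\cosupp(\rho^\vee)$, and therefore of $V$. Thus $\rho^\vee(\varkappa_Q\otimes\id_A)\in\mathbf{Meas}(A,B)^\mathrm{op}(i)$, which is what was needed, so ${}_\square\mathcal{C}(A,B,V)$ exists.

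With $(-)^\circ$, $\mathcal{A}^\square(A,B,V)$ and ${}_\square\mathcal{C}(A,B,V)$ all in hand, Theorem~\ref{Theorem(Co)monUniv(Co)measDualityClosed} applies directly and delivers that $\rho_{A,B,V}^\vee(\varkappa_{\mathcal{A}^\square(A,B,V)}\otimes\id_A)$ is a measuring and that the unique comonoid homomorphism $\beta$ making~\eqref{Eq(Co)monUniv(Co)measDualityClosed} commute is a comonoid isomorphism. The main obstacle is not conceptual: it is the careful matching of the several property lists and their duals across Theorems~\ref{TheoremMonUnivComeasExistenceClosed}, \ref{TheoremDUALComonUnivMeasExistenceClosed}, \ref{Theorem(Co)monUniv(Co)measDualityClosed} and Remarks~\ref{RemarkCircSufficientConditionsForExistence} and~\ref{RemarkInBraidedClosedMonoidalManyPropertiesHoldAutomatically}, together with the short argument above turning nonemptiness of $\mathbf{Comeas}(A,B)(i)$ into nonemptiness of $\mathbf{Meas}(A,B)^\mathrm{op}(i)$, which is the only point where something is proved rather than cited.
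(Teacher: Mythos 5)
Your proposal is correct and follows essentially the same route as the paper, whose proof is precisely the citation of Theorems~\ref{TheoremDUALComonUnivMeasExistenceClosed}, \ref{TheoremMonUnivComeasExistenceClosed}, \ref{Theorem(Co)monUniv(Co)measDualityClosed} and Remark~\ref{RemarkCircSufficientConditionsForExistence}. Your explicit derivation of the nonemptiness of $\mathbf{Meas}(A,B)^{\mathrm{op}}(i)$ from that of $\mathbf{Comeas}(A,B)(i)$ via Proposition~\ref{PropositionQcircMeasuring} and the identity $K\bigl(\rho^\vee(\varkappa_Q\otimes\id_A)\bigr)=K(\rho^\vee)\circ\varkappa_Q$ is a correct filling-in of a step the paper leaves implicit.
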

\begin{proof}
	Apply Theorems~\ref{TheoremDUALComonUnivMeasExistenceClosed}, \ref{TheoremMonUnivComeasExistenceClosed}, \ref{Theorem(Co)monUniv(Co)measDualityClosed}
	and Remark~\ref{RemarkCircSufficientConditionsForExistence}. 
\end{proof}		

\begin{corollary}[{\cite[Theorem 3.20]{AGV2}}] Let $A$ and $B$ be $\Omega$-algebras  over a field $\mathbbm{k}$ and let $V \subseteq \mathbf{Vect}_\mathbbm{k}(A,B)$ be a pointwise finite dimensional subspace closed in the finite topology.  
	Then $\beta$ from the diagram~\eqref{Eq(Co)monUniv(Co)measDualityClosed} is a coalgebra isomorphism.
\end{corollary}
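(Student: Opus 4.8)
The plan is to read the statement off from Corollary~\ref{Corollary(Co)monUniv(Co)measDualityClosed} by specializing the base category to $\mathcal C=\mathbf{Vect}_\mathbbm{k}$, so the whole task reduces to verifying that this category, together with the data $A$, $B$, $V$, satisfies the hypotheses of that corollary. Recall that $\mathbf{Vect}_\mathbbm{k}$ is a symmetric (hence braided) closed monoidal category with internal hom $[A,B]=\mathbf{Vect}_\mathbbm{k}(A,B)$ and $(-)^*=\Hom_\mathbbm{k}(-,\mathbbm{k})$, and that in it every monomorphism is extremal and every epimorphism is extremal, so all ``extremal'' qualifiers in the hypotheses become vacuous.

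First I would check the (co)completeness-type properties. The category $\mathbf{Vect}_\mathbbm{k}$ is complete, cocomplete, wellpowered and co-wellpowered and is (Epi, ExtrMono)-structured, so Properties~\ref{PropertySmallLimits}, \ref{PropertySubObjectsSmallSet}, \ref{PropertyEpiExtrMonoFactorizations} and their duals hold; the functor $M\otimes(-)\cong(-)\otimes M$ is exact and admits adjoints on both sides, so Properties~\ref{PropertyMonomorphism}--\ref{PropertyLimitsOfSubobjectsArePreserved}, \ref{PropertySwitchProdTensorIsAMonomorphism}, \ref{PropertyEqualizers} and their duals hold as well (cf.\ Remark~\ref{RemarkInBraidedClosedMonoidalManyPropertiesHoldAutomatically}); and Property~\ref{PropertyFreeMonoid} and its dual hold since the free monoid functor is the tensor algebra and over a field the cofree coalgebra functor exists. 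Next I would check the two additional bullets: dualizing an injection of $\mathbbm{k}$-vector spaces (split off a complement and restrict functionals) gives a surjection, so $(-)^*$ sends extremal monomorphisms to extremal epimorphisms; the canonical map $\alpha_M\colon M\to M^{**}$ is injective for every $M$, hence an extremal monomorphism; and the canonical map $\theta_{M,N}\colon M^*\otimes N^*\to(M\otimes N)^*$ is injective for all $M,N$, hence a monomorphism. The finite dual functor $(-)^\circ$, i.e.\ the classical Sweedler dual, then exists, e.g.\ by Remark~\ref{RemarkCircSufficientConditionsForExistence}.

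It remains to match the subobject $i\colon V\rightarrowtail[A,B]$ with the hypotheses. By \cite[Theorem~2.11]{AGV2}---the same input already used in the proof of the corollary recalling \cite[Theorem~3.16]{AGV2}---the assumption that $V$ be pointwise finite dimensional and closed in the finite topology is exactly equivalent to the existence of a morphism $\rho_U\colon A\to B\otimes U$ with $i=K(\rho_U^\vee)$, so $i$ has the required form; moreover, that same corollary guarantees that $\mathcal{A}^\square(A,B,V)$ exists (in particular $\mathbf{Comeas}(A,B)(i)\neq\emptyset$), and the corollary recalling \cite[Theorem~3.10]{AGV2} that ${}_\square\mathcal{C}(A,B,V)$ exists, so $\beta$ in \eqref{Eq(Co)monUniv(Co)measDualityClosed} is well-defined. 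All hypotheses of Corollary~\ref{Corollary(Co)monUniv(Co)measDualityClosed} are then in force, and it yields that $\beta$ is an isomorphism of comonoids in $\mathbf{Vect}_\mathbbm{k}$, i.e.\ a coalgebra isomorphism, as claimed. I do not anticipate a genuine obstacle here: the only points requiring a little care are the identification of monomorphisms with extremal monomorphisms (and epimorphisms with extremal epimorphisms) in $\mathbf{Vect}_\mathbbm{k}$, which trivializes the extremality conditions, and the translation via \cite[Theorem~2.11]{AGV2} of the classical hypotheses on $V$ into the categorical condition $i=K(\rho_U^\vee)$.
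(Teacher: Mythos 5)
Your proposal is correct and follows exactly the route the paper intends: the paper states this corollary without proof as the specialization of Corollary~\ref{Corollary(Co)monUniv(Co)measDualityClosed} to $\mathcal C=\mathbf{Vect}_\mathbbm{k}$, with the hypotheses on $V$ translated into $i=K(\rho_U^\vee)$ via \cite[Theorem~2.11]{AGV2}, just as in the paper's proof of the corollary recovering \cite[Theorem~3.16]{AGV2}. Your verification of the categorical properties of $\mathbf{Vect}_\mathbbm{k}$ (all monomorphisms and epimorphisms extremal, $(-)^*$ exact on injections, $\alpha_M$ and $\theta_{M,N}$ injective, existence of the Sweedler dual) is the routine content the paper leaves implicit.
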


\begin{theorem}\label{TheoremBimonUniv(Co)actDualityClosed}
	Let $A$ be an $\Omega$-magma in a
	symmetric closed monoidal category~$\mathcal C$
	satisfying
	Properties~\ref{PropertySmallLimits},
	\ref{PropertySubObjectsSmallSet}--\ref{PropertyLimitsOfSubobjectsArePreserved} and~\ref{PropertyEqualizers}
	and the properties dual to Properties~\ref{PropertySmallLimits} and~\ref{PropertySubObjectsSmallSet}
	of Section~\ref{SubsectionSupportCoactingConditions}
	and let $i\colon V \to [A,A]$ be a submonoid such that $i=K(\rho_U^\vee)$ for some morphism $\rho_U \colon A \to A \otimes U$ where $U$ is an object in $\mathcal C$. Suppose, in addition, that 
	\begin{itemize} \item the functor $(-)^*$ maps extremal monomorphisms to extremal epimorphisms;
		\item there exists the functor $(-)^\circ$;
	\item 
$\alpha_M$ is an extremal monomorphism and $\theta_{M,N}$ is a monomorphism
for all objects $M,N$;
\item there exist	 
$\mathcal{B}^\square(A,V)=\mathcal{A}^\square(A,A,V)$ and ${}_\square \mathcal{B}(A,V)={}_\square \mathcal{C}(A,A,V)$;
\item $\left(\theta^\mathrm{inv}_{\mathcal{B}^\square(A,V),\mathcal{B}^\square(A,V)}\right)^\flat \colon \mathcal{B}^\square(A,V)\otimes \mathcal{B}^\square(A,V) \to (\mathcal{B}^\square(A,V)^* \otimes \mathcal{B}^\square(A,V)^*)^*$ is an extremal monomorphism in $\mathcal C$.
	\end{itemize}
	Then $$\rho_{A,V}^\vee(\varkappa_{\mathcal{B}^\square(A,V)}\otimes \id_A)
	\colon \mathcal{B}^\square(A,V)^\circ \otimes A \to A$$ is an action and
	the unique bimonoid homomorphism $\beta \colon \mathcal{B}^\square(A,V)^\circ \to {}_\square \mathcal{B}(A,V)$ making
	the diagram
	\begin{equation}\label{EqBimonUniv(Co)actDualityClosed}
	\xymatrix{
		\mathcal{B}^\square(A,V)^\circ \otimes A  \ar@{-->}[d]_{\beta \otimes \id_A}
		\ar[rr]^{\varkappa_{\mathcal{B}^\square(A,V)}\otimes \id_A}
		& & \mathcal{B}^\square(A,V)^* \otimes A \ar[d]^{\rho_{A,V}^\vee} \\
		{}_\square \mathcal{B}(A,V) \otimes A \ar[rr]^(0.6){\psi_{A,V}}& & A \\}
	\end{equation}
	commutative is a bimonoid isomorphism.	
\end{theorem}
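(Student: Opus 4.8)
The plan is to deduce this statement from Theorem~\ref{TheoremBimonUniv(Co)actDuality}, exactly as Theorem~\ref{Theorem(Co)monUniv(Co)measDualityClosed} was deduced from Theorem~\ref{Theorem(Co)monUniv(Co)measDuality}. A symmetric closed monoidal category is pre-rigid symmetric with $(-)^* = [-,\mathbbm{1}]$, and by Remark~\ref{RemarkInBraidedClosedMonoidalManyPropertiesHoldAutomatically} the properties dual to Properties~\ref{PropertyMonomorphism}--\ref{PropertyEqualizers} hold automatically. So I first verify the hypotheses of Theorem~\ref{TheoremBimonUniv(Co)actDuality}: the functor $(-)^\circ$ is assumed to exist, $\theta_{M,N}$ is assumed monic, and $\alpha_M\otimes\id_N$ is monic because $\alpha_M$ is an extremal (hence ordinary) monomorphism while $(-)\otimes N\cong N\otimes(-)$ is a left adjoint in a braided closed monoidal category and therefore preserves monomorphisms.

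Next I translate the $V$-universal objects of the closed-monoidal set-up into the tensor-(epi/mono)morphism language of Theorem~\ref{TheoremBimonUniv(Co)actDuality}. Using the category isomorphism $K\colon\mathbf{TensMor}(A,A)\mathrel{\widetilde\to}(\mathcal C\downarrow[A,A])$ and the composite $K^{\mathrm{op}}\circ(-)^\vee\colon\mathbf{MorTens}(A,A)\to(\mathcal C\downarrow[A,A])^{\mathrm{op}}$, Theorems~\ref{TheoremSupportsUnderVee}--\ref{TheoremVeeReflectsCoarserFiner} show that $(-)^\vee$ carries $\supp\rho$ to $\cosupp(\rho^\vee)=(\supp\rho)^*$ and reflects the preorder on locally initial objects, while Remarks~\ref{Cosupp_duality_rmk} identify absolute values in $\mathbf{TensMor}(A,A)^{\mathrm{op}}$ with cosupports (images in $[A,A]$) and the preorder with the subobject order; hence $K^{\mathrm{op}}\circ(-)^\vee$ preserves absolute values and reflects the preorder. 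After replacing $\rho_U$ by its absolute value we may assume it is a tensor epimorphism with $K(\rho_U^\vee)=i$, and Proposition~\ref{PropositionLIOKPreservesPreorder}, applied to this functor together with the relevant forgetful functors, gives $\mathbf{Comeas}(A,A)(i)=\mathbf{Comeas}(A,A)(\rho_U)$, $\mathbf{Coact}(A)(i)=\mathbf{Coact}(A)(\rho_U)$ and the dual equalities for $\mathbf{Meas}$ and $\mathbf{Act}$, so that $\mathcal{B}^\square(A,V)=\mathcal{B}^\square(\rho_U)$, ${}_\square\mathcal{B}(A,V)={}_\square\mathcal{C}(\rho_U^\vee)$ and the structure morphisms $\rho_{A,V}$, $\psi_{A,V}$ are identified with $\rho_U^\mathbf{Coact}$, $(\rho_U^\vee)^\mathbf{Act}$. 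The bimonoid structures on these objects are those furnished by Theorems~\ref{TheoremBimonUnivCoactingExistenceClosed} and~\ref{TheoremDUALBimonUnivActingExistenceClosed} (the extremal-monomorphism hypothesis on $(\theta^{\mathrm{inv}}_{\mathcal{B}^\square(A,V),\mathcal{B}^\square(A,V)})^\flat$ being precisely what is needed on the coacting side), and they coincide with the ones appearing in Theorem~\ref{TheoremBimonUniv(Co)actDuality}.

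With these identifications in place I simply invoke Theorem~\ref{TheoremBimonUniv(Co)actDuality}: it asserts that $(\rho_U^\mathbf{Coact})^\vee(\varkappa_{\mathcal{B}^\square(\rho_U)}\otimes\id_A)=\rho_{A,V}^\vee(\varkappa_{\mathcal{B}^\square(A,V)}\otimes\id_A)$ is an action and that the unique bimonoid homomorphism $\beta\colon\mathcal{B}^\square(A,V)^\circ\to{}_\square\mathcal{B}(A,V)$ making diagram~\eqref{EqBimonUniv(Co)actDuality}, which under the identifications above is precisely diagram~\eqref{EqBimonUniv(Co)actDualityClosed}, commute is a bimonoid isomorphism. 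That completes the argument.

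The step requiring care is the translation carried out in the second paragraph: one must make sure the two notions of $V$-universality agree, i.e.\ that the full subcategory of $\mathbf{Coact}(A)$ cut out by the subobject $i\rightarrowtail[A,A]$ equals the one cut out by the tensor epimorphism $|\rho_U|$, and, since Theorem~\ref{TheoremBimonUniv(Co)actDuality} is stated for a $\rho_U$ that is simultaneously a tensor epimorphism and a $U$-comodule structure, that one may legitimately run the argument with the coaction $\rho_{A,V}$ itself, for which $\mathbf{Coact}(A)(i)=\mathbf{Coact}(A)(\rho_{A,V})$ since $\rho_{A,V}$ is the initial object of $\mathbf{Coact}(A)(i)$ and $\cosupp(\rho_{A,V}^\vee)$ is a subobject of $V$. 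This is exactly where the hypothesis $i=K(\rho_U^\vee)$ and the support/cosupport dictionary of Theorems~\ref{TheoremSupportsUnderVee}--\ref{TheoremVeeReflectsCoarserFiner} do the real work; once it is settled, everything else is formal.
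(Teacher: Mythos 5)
Your overall strategy --- deduce the closed-monoidal statement from Theorem~\ref{TheoremBimonUniv(Co)actDuality} by the same translation that derives Theorem~\ref{Theorem(Co)monUniv(Co)measDualityClosed} from Theorem~\ref{Theorem(Co)monUniv(Co)measDuality} --- runs into a genuine obstruction that the paper's own proof is organized to avoid. Theorem~\ref{TheoremBimonUniv(Co)actDuality} requires $\rho_U\colon A\to A\otimes U$ to be a tensor epimorphism that defines on $A$ a $U$-comodule structure for a \emph{comonoid} $U$; this hypothesis is what feeds into \cite[Theorems~4.34 and~5.22]{AGV3} to produce the bimonoid structure on $\mathcal{A}^\square(\rho_U)$. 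In the closed-monoidal statement no such datum is available: $U$ is a bare object, $\rho_U$ a bare morphism, and the only structural input is that $V=\cosupp(\rho_U^\vee)$ is a submonoid of $[A,A]$. Replacing $\rho_U$ by $|\rho_U|$ repairs the tensor-epimorphism requirement but not the comonoid/coaction requirement, and your fallback --- running the theorem with $\rho_{A,V}$ itself --- does not work because $\rho_{A,V}$, being merely the initial object of $\mathbf{Comeas}(A,A)(i)$, need not be locally initial in $\mathbf{MorTens}(A,A)$, i.e.\ is not a tensor epimorphism in general (already over $\mathbf{Vect}_\mathbbm{k}$ the universal comeasuring algebra is generated by, but not spanned by, the coefficients of $\rho_{A,V}$, so $\supp\rho_{A,V}$ is a proper subobject). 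The paper therefore does not invoke Theorem~\ref{TheoremBimonUniv(Co)actDuality} at all: it re-runs that theorem's proof at the closed level, obtaining the universal bimonoids from Theorems~\ref{TheoremDUALBimonUnivActingExistenceClosed} and~\ref{TheoremBimonUnivCoactingExistenceClosed} (whose proofs replace the coaction hypothesis by the submonoid hypothesis on $V$ via Lemmas~\ref{LemmaComeasVUnity} and~\ref{LemmaComeasVComposition}), showing that $\rho_{A,V}^\vee(\varkappa_{\mathcal{B}^\square(A,V)}\otimes\id_A)$ is an action by Theorem~\ref{Theorem(Co)monUniv(Co)measDualityClosed} together with Lemmas~\ref{LemmaStarComoduleToAModule} and~\ref{LemmaKappaForBimonoids}, extracting $\beta$ from the universal property of ${}_\square\mathcal{B}(A,V)$, and concluding from Theorem~\ref{Theorem(Co)monUniv(Co)measDualityClosed} that $\beta$ is an isomorphism in $\mathcal C$, hence a bimonoid isomorphism. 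Your translation machinery (the functor $K$, Theorems~\ref{TheoremSupportsUnderVee}--\ref{TheoremVeeReflectsCoarserFiner}, Proposition~\ref{PropositionLIOKPreservesPreorder}) is exactly the right tool, but it belongs one level down, in the proof of Theorem~\ref{Theorem(Co)monUniv(Co)measDualityClosed}, where the target theorem demands no comonoid structure on $U$.

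A smaller slip: you justify that $\alpha_M\otimes\id_N$ is a monomorphism by saying that $(-)\otimes N$ is a left adjoint and therefore preserves monomorphisms; left adjoints preserve epimorphisms, not monomorphisms (this is precisely why Remark~\ref{RemarkInBraidedClosedMonoidalManyPropertiesHoldAutomatically} only yields the \emph{duals} of Properties~\ref{PropertyMonomorphism}--\ref{PropertyEqualizers} for free). The correct justification is simply Property~\ref{PropertyMonomorphism}, which is among the assumed hypotheses of the theorem.
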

\begin{proof}
By Theorems~\ref{TheoremDUALBimonUnivActingExistenceClosed} and~\ref{TheoremBimonUnivCoactingExistenceClosed}
the bimonoids $\mathcal{B}^\square(A,V) = \mathcal{A}^\square(A,A,V)$ and ${}_\square \mathcal{B}(A,V)={}_\square \mathcal{C}(A,A,V)$ are indeed $V$-universal.
The morphism $\rho_{A,V}^\vee(\varkappa_{\mathcal{B}^\square(A,V)}\otimes \id_A)$ is an action by Theorem~\ref{Theorem(Co)monUniv(Co)measDualityClosed} and Lemmas~\ref{LemmaStarComoduleToAModule} and~\ref{LemmaKappaForBimonoids}.
Hence the existence of $\beta$ follows from the definition of ${}_\square \mathcal{B}(A,V)$. By
Theorem~\ref{Theorem(Co)monUniv(Co)measDualityClosed} the morphism $\beta$ is an isomorphism in~$\mathcal C$.
Combined with the fact that $\beta$ is a bimonoid homomorphism, this implies that $\beta$ is a bimonoid isomorphism.
\end{proof}	

\begin{corollary}
	Let~$\mathcal C$ be a  symmetric closed monoidal category
	satisfying Properties~\ref{PropertySmallLimits},
	\ref{PropertySubObjectsSmallSet}--\ref{PropertyLimitsOfSubobjectsArePreserved}, \ref{PropertySwitchProdTensorIsAMonomorphism}, \ref{PropertyEqualizers}
	and the properties dual to Properties~\ref{PropertySmallLimits}, \ref{PropertySubObjectsSmallSet} and~\ref{PropertyFreeMonoid}
	of Section~\ref{SubsectionSupportCoactingConditions}
	such that
	\begin{itemize}
		\item the functor $(-)^*$ maps extremal monomorphisms to extremal epimorphisms;
		\item 
		$\alpha_M$ is an extremal monomorphism and $\theta_{M,N}$ is a monomorphism
		for all objects $M,N$;
		\item $\left(\theta^\mathrm{inv}_{B,B}\right)^\flat \colon B\otimes B \to (B^* \otimes B^*)^*$  is an extremal monomorphism in $\mathcal C$
		for all bimonoids~$B$
	\end{itemize}
	and let $i\colon V \to [A,A]$ be a submonoid such that $i=K(\rho_U^\vee)$ for some morphism $\rho_U \colon A \to A \otimes U$ where $U$ is an object in $\mathcal C$.
	Then $$\rho_{A,V}^\vee(\varkappa_{\mathcal{B}^\square(A,V)}\otimes \id_A)
	\colon \mathcal{B}^\square(A,V)^\circ \otimes A \to A$$ is an action and
	the unique bimonoid homomorphism $\beta \colon \mathcal{B}^\square(A,V)^\circ \to {}_\square \mathcal{B}(A,V)$ making
	the diagram~\eqref{EqBimonUniv(Co)actDualityClosed} commutative is a bimonoid isomorphism.	
\end{corollary}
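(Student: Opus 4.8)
The plan is to derive the statement directly from Theorem~\ref{TheoremBimonUniv(Co)actDualityClosed}, so the only work is to check that the explicit categorical hypotheses imposed here entail its remaining abstract assumptions: the existence of the finite-dual functor $(-)^\circ$, the existence of the $V$-universal objects $\mathcal{B}^\square(A,V)$ and ${}_\square\mathcal{B}(A,V)$, and the fact that $(\theta^{\mathrm{inv}}_{\mathcal{B}^\square(A,V),\mathcal{B}^\square(A,V)})^\flat$ is an extremal monomorphism. The last of these is immediate, being the instance $B=\mathcal{B}^\square(A,V)$ of an assumption made here for every bimonoid $B$.

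For the functor $(-)^\circ$ I would invoke Remark~\ref{RemarkCircSufficientConditionsForExistence}. Its hypotheses are met in the present setting: Properties~\ref{PropertySubObjectsSmallSet} and~\ref{PropertyMonomorphism} are assumed, the duals of Properties~\ref{PropertySmallLimits}, \ref{PropertySubObjectsSmallSet}, \ref{PropertyFreeMonoid} are assumed, the dual of Property~\ref{PropertyFiniteAndCountableColimits} follows from Property~\ref{PropertySmallLimits}, and the dual of Property~\ref{PropertyEpiExtrMonoFactorizations} follows from the duals of Properties~\ref{PropertySmallLimits} and~\ref{PropertySubObjectsSmallSet}. (Here I also keep in mind Remark~\ref{RemarkInBraidedClosedMonoidalManyPropertiesHoldAutomatically}, which in any braided closed monoidal category supplies Property~\ref{PropertyFreeMonoid} and the duals of Properties~\ref{PropertyMonomorphism}--\ref{PropertyEqualizers} for free.)

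For the universal objects I would observe that the hypotheses assumed here are exactly those of Theorem~\ref{TheoremDUALBimonUnivActingExistenceExplicit}, which yields the bimonoid ${}_\square\mathcal{B}(A,V)={}_\square\mathcal{C}(A,A,V)$, and of Corollary~\ref{CorollaryBimonUnivCoactingExistenceClosed} — the assumption that $\alpha_M$ is an extremal monomorphism and $\theta_{M,N}$ a monomorphism for all $M,N$ being stronger than the version stated there, and the remaining bullet points matching verbatim — which yields the bimonoid $\mathcal{B}^\square(A,V)=\mathcal{A}^\square(A,A,V)$ together with the coaction $\rho_{A,V}$. With these two bimonoids and the functor $(-)^\circ$ in hand, and the conditions on $(-)^*$, $\alpha_M$ and $\theta_{M,N}$ carried over unchanged, all hypotheses of Theorem~\ref{TheoremBimonUniv(Co)actDualityClosed} are satisfied.

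Applying that theorem then gives both conclusions at once: the morphism $\rho_{A,V}^\vee(\varkappa_{\mathcal{B}^\square(A,V)}\otimes\id_A)$ is an action, and the unique bimonoid homomorphism $\beta$ making~\eqref{EqBimonUniv(Co)actDualityClosed} commute is a bimonoid isomorphism. I do not expect any genuine obstacle; the statement merely repackages results already established, and the only point demanding attention is the bookkeeping of which enumerated properties (and their duals) each cited result requires — in particular the way closedness quietly furnishes several of the ``dual'' properties — which is precisely what the two cited remarks handle.
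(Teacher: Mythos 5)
Your proposal is correct and follows essentially the same route as the paper: reduce to Theorem~\ref{TheoremBimonUniv(Co)actDualityClosed} after supplying $(-)^\circ$ via Remark~\ref{RemarkCircSufficientConditionsForExistence} and the universal objects via the closed-category existence results. The only (harmless) difference is that the paper cites the (co)monoid-level existence theorems (Theorems~\ref{TheoremDUALComonUnivMeasExistenceClosed} and~\ref{TheoremMonUnivComeasExistenceClosed}), letting Theorem~\ref{TheoremBimonUniv(Co)actDualityClosed} itself produce the bimonoid structures, whereas you invoke the bimonoid-level results directly.
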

\begin{proof}
	Apply Theorems~\ref{TheoremDUALComonUnivMeasExistenceClosed}, \ref{TheoremMonUnivComeasExistenceClosed}, \ref{TheoremBimonUniv(Co)actDualityClosed}
	and Remark~\ref{RemarkCircSufficientConditionsForExistence}. 
\end{proof}

\begin{corollary}[{\cite[Theorem 4.14]{AGV2}}]
	Let $A$ be an $\Omega$-algebra  over a field $\mathbbm{k}$ and let $V\subseteq \End_\mathbbm{k}(A)$ be a unital pointwise finite dimensional subalgebra  closed in the finite topology.
	Then $\beta$ from the diagram~\eqref{EqBimonUniv(Co)actDualityClosed}
	 is a bialgebra isomorphism.
\end{corollary}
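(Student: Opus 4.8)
The plan is to obtain this statement as the special case $\mathcal C = \mathbf{Vect}_\mathbbm{k}$ of the Corollary immediately preceding it (the symmetric closed monoidal version of Theorem~\ref{TheoremBimonUniv(Co)actDualityClosed}), where $[A,B] = \Hom_\mathbbm{k}(A,B)$, $(-)^* = \Hom_\mathbbm{k}(-,\mathbbm{k})$, the braiding is the flip, and $\End_\mathbbm{k}(A) = [A,A]$. Thus the entire task splits into two parts: checking that $\mathbf{Vect}_\mathbbm{k}$ satisfies all hypotheses of that Corollary, and rephrasing the condition imposed on $V$ in the form required there.

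For the categorical hypotheses I would recall that $\mathbf{Vect}_\mathbbm{k}$ satisfies Properties~\ref{PropertySmallLimits}, \ref{PropertySubObjectsSmallSet}--\ref{PropertyLimitsOfSubobjectsArePreserved}, \ref{PropertySwitchProdTensorIsAMonomorphism}, \ref{PropertyEqualizers} and the duals of Properties~\ref{PropertySmallLimits}, \ref{PropertySubObjectsSmallSet} and~\ref{PropertyFreeMonoid} — this is standard and recorded in the Remark following Theorem~\ref{TheoremSupportsUnderVee} together with Remark~\ref{RemarkInBraidedClosedMonoidalManyPropertiesHoldAutomatically} — and that $(-)^*$ sends an injective linear map to a surjective one, hence maps extremal monomorphisms to extremal epimorphisms. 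It then remains to verify the three displayed bullet conditions. Over a field every monomorphism is extremal, the canonical map $\alpha_M \colon M \to M^{**}$ is always injective, and $\theta_{M,N} \colon M^*\otimes N^* \to (M\otimes N)^*$ is the usual canonical injection. Finally, one computes that $\left(\theta^\mathrm{inv}_{B,B}\right)^\flat \colon B\otimes B \to (B^*\otimes B^*)^*$ is the composite of $\alpha_B\otimes\alpha_B$ with the canonical injection $B^{**}\otimes B^{**}\hookrightarrow(B^*\otimes B^*)^*$; since a tensor product over a field of injective maps is injective, this composite is injective, hence an extremal monomorphism in $\mathbf{Vect}_\mathbbm{k}$ for every bialgebra $B$, as required.

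It then remains to express the hypothesis on $i$. Since $V \subseteq \End_\mathbbm{k}(A) = [A,A]$ is assumed to be a unital subalgebra, the inclusion $i \colon V \to [A,A]$ is a submonoid in the sense of Proposition~\ref{PropositionMonoidActionMonoidHomomorphism}; and exactly as in the proof of the earlier corollary recovering \cite[Theorem~3.16]{AGV2}, the combination of \cite[Theorem~2.11]{AGV2} with Theorem~\ref{TheoremSupportsUnderVee} shows that there exists a morphism $\rho_U \colon A \to A\otimes U$ with $i = K(\rho_U^\vee)$ if and only if $V$ is pointwise finite dimensional and closed in the finite topology — which is our standing assumption. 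All the hypotheses of the preceding Corollary are thus met, and applying it yields that $\rho_{A,V}^\vee(\varkappa_{\mathcal{B}^\square(A,V)}\otimes\id_A)$ is an action and that the map $\beta$ in~\eqref{EqBimonUniv(Co)actDualityClosed} is a bialgebra isomorphism.

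The only step that involves a genuine verification rather than a direct citation is the claim that $\left(\theta^\mathrm{inv}_{B,B}\right)^\flat$ is an extremal monomorphism for all bialgebras $B$; over $\mathbbm{k}$ it reduces, as indicated, to the injectivity of the canonical dualization maps, but in a general closed monoidal base this is precisely the hypothesis that cannot be dispensed with, which is why it is carried along explicitly from Theorem~\ref{TheoremBimonUniv(Co)actDualityClosed} onwards. I do not expect any of the other points to present difficulties.
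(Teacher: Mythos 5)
Your proposal is correct and follows exactly the route the paper intends: the corollary is the specialization of the preceding corollary (the symmetric closed monoidal form of Theorem~\ref{TheoremBimonUniv(Co)actDualityClosed}) to $\mathcal C = \mathbf{Vect}_\mathbbm{k}$, with the hypothesis $i = K(\rho_U^\vee)$ translated into pointwise finite dimensionality and closedness in the finite topology via \cite[Theorem~2.11]{AGV2} and Theorem~\ref{TheoremSupportsUnderVee}, just as in the earlier corollary recovering \cite[Theorem~3.16]{AGV2}. Your explicit verification that $\left(\theta^\mathrm{inv}_{B,B}\right)^\flat$ factors as $\alpha_B\otimes\alpha_B$ followed by the canonical injection $B^{**}\otimes B^{**}\hookrightarrow (B^*\otimes B^*)^*$, hence is injective over a field, is exactly the point that needs checking and is handled correctly.
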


\begin{theorem}\label{TheoremHopfMonUniv(Co)actDualityClosed}
	Let~$\mathcal C$ be a symmetric closed monoidal category 
	satisfying
	Properties~\ref{PropertySmallLimits},
	\ref{PropertySubObjectsSmallSet}--\ref{PropertyLimitsOfSubobjectsArePreserved} and~\ref{PropertyEqualizers}
	and the properties dual to Properties~\ref{PropertySmallLimits} and~\ref{PropertySubObjectsSmallSet}
	of Section~\ref{SubsectionSupportCoactingConditions}
	and let $i\colon V \to [A,A]$ be a submonoid such that $i=K(\rho_U^\vee)$ for some morphism $\rho_U \colon A \to A \otimes U$ where $U$ is an object in $\mathcal C$. Suppose, in addition, that
	\begin{itemize} \item the functor $(-)^*$ maps extremal monomorphisms to extremal epimorphisms;
	\item there exist functors $(-)^\circ$, $H_l$ and $H_r$;
	\item 
		$\alpha_M$ is an extremal monomorphism and $\theta_{M,N}$ is a monomorphism
		for all objects $M,N$;
		\item there exist	 
		$\mathcal{B}^\square(A,V)=\mathcal{A}^\square(A,A,V)$ and ${}_\square \mathcal{B}(A,V)={}_\square \mathcal{C}(A,A,V)$;
		\item $\left(\theta^\mathrm{inv}_{\mathcal{B}^\square(A,V),\mathcal{B}^\square(A,V)}\right)^\flat \colon \mathcal{B}^\square(A,V)\otimes \mathcal{B}^\square(A,V) \to (\mathcal{B}^\square(A,V)^* \otimes \mathcal{B}^\square(A,V)^*)^*$ is an extremal monomorphism in $\mathcal C$;
		\item $\varkappa_H$ is a monomorphism in $\mathcal C$ for every Hopf monoid $H$.
	\end{itemize}
Denote by 
$\rho_{A,V}^\mathbf{Hopf} \colon A \to A \otimes \mathcal{H}^\square(A,V) $
and $\psi_{A,V}^\mathbf{Hopf} \colon {}_\square \mathcal{H}(A,V) \otimes A \to A$
the initial objects in $\mathbf{HCoact}(A)_{K (-)^\vee G_1 G_3 G_4}(i)$ and $\mathbf{HAct}(A)^\mathrm{op}_{K G_1'G_3'G_4'}(i)$, respectively.
	Then $$\left(\rho_{A,V}^\mathbf{Hopf}\right)^\vee(\varkappa_{\mathcal{H}^\square(A,V)}\otimes \id_A)
	\colon \mathcal{H}^\square(A,V)^\circ \otimes A \to A$$ is an action and
	the unique Hopf monoid homomorphism $\beta^{\mathbf{Hopf}} \colon \mathcal{H}^\square(A,V)^\circ \to {}_\square \mathcal{H}(A,V)$ making
	the diagram
	\begin{equation}
	\label{EqHopfMonUniv(Co)actDualityClosed}
	\xymatrix{
		\mathcal{H}^\square(A,V)^\circ \otimes A  \ar@{-->}[d]_{\beta^{\mathbf{Hopf}} \otimes \id_A}
		\ar[rr]^{\varkappa_{\mathcal{H}^\square(A,V)}\otimes \id_A}
		& & \mathcal{H}^\square(A,V)^* \otimes A \ar[d]^{\left(\rho_{A,V}^\mathbf{Hopf}\right)^\vee} \\
		{}_\square \mathcal{H}(A,V) \otimes A \ar[rr]^(0.6){\psi_{A,V}^\mathbf{Hopf}}& & A \\}
	\end{equation}
	commutative is a Hopf monoid isomorphism.		
\end{theorem}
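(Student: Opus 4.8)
The plan is to prove this as the closed-monoidal incarnation of Theorem~\ref{TheoremHopfMonUniv(Co)actDuality}, feeding the existence results of Section~\ref{SubsectionComeasuringCoactingClosed} into the bijection argument used there. First I would assemble the inputs. By Theorem~\ref{TheoremHopfMonUnivCoactingExistenceClosed} (applicable since $\mathcal{B}^\square(A,V)$ and $H_l$ exist) the $V$-universal coacting Hopf monoid $\mathcal{H}^\square(A,V)$ exists and, by construction, equals $H_l(\mathcal{B}^\square(A,V))$; by Theorem~\ref{TheoremDUALHopfMonUnivActingExistenceClosed} (applicable since ${}_\square \mathcal{B}(A,V)$ and $H_r$ exist) the $V$-universal acting Hopf monoid ${}_\square \mathcal{H}(A,V)$ exists and equals $H_r({}_\square \mathcal{B}(A,V))$; and the remaining hypotheses are precisely those required by Theorem~\ref{TheoremBimonUniv(Co)actDualityClosed}, which therefore supplies a bimonoid isomorphism $\mathcal{B}^\square(A,V)^\circ \cong {}_\square \mathcal{B}(A,V)$ compatible with the universal (co)actions through diagram~\eqref{EqBimonUniv(Co)actDualityClosed}.

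Next I would dispatch the first assertion. The universal Hopf coaction $\rho_{A,V}^\mathbf{Hopf} \colon A \to A \otimes \mathcal{H}^\square(A,V)$ is in particular a comeasuring over the monoid $\mathcal{H}^\square(A,V)$, so $(\rho_{A,V}^\mathbf{Hopf})^\vee(\varkappa_{\mathcal{H}^\square(A,V)}\otimes \id_A) \colon \mathcal{H}^\square(A,V)^\circ \otimes A \to A$ is a measuring by Proposition~\ref{PropositionQcircMeasuring}; since $\rho_{A,V}^\mathbf{Hopf}$ also defines a comodule structure on $A$ and $\varkappa_{\mathcal{H}^\square(A,V)}$ is a monoid homomorphism by Lemma~\ref{LemmaKappaForBimonoids} (a Hopf monoid being in particular a bimonoid), Lemma~\ref{LemmaStarComoduleToAModule} promotes this measuring to an action. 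Hence the universal property of ${}_\square \mathcal{H}(A,V)$ yields a unique Hopf monoid homomorphism $\beta^\mathbf{Hopf} \colon \mathcal{H}^\square(A,V)^\circ \to {}_\square \mathcal{H}(A,V)$ making~\eqref{EqHopfMonUniv(Co)actDualityClosed} commute.

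It then remains to see that $\beta^\mathbf{Hopf}$ is an isomorphism, and here I would reproduce the bijection chain from the proof of Theorem~\ref{TheoremHopfMonUniv(Co)actDuality}: for an arbitrary Hopf monoid $H$ in $\mathcal C$,
\begin{equation*}
\begin{split}
\mathsf{Hopf}(\mathcal C)(H, {}_\square \mathcal{H}(A,V)) = \mathsf{Hopf}(\mathcal C)(H, H_r({}_\square \mathcal{B}(A,V))) \cong \mathsf{Bimon}(\mathcal C)(H, {}_\square \mathcal{B}(A,V)) \\ \cong \mathsf{Bimon}(\mathcal C)(H, \mathcal{B}^\square(A,V)^\circ) \cong \mathsf{Bimon}(\mathcal C)(\mathcal{B}^\square(A,V), H^\circ) \\ \cong \mathsf{Hopf}(\mathcal C)(H_l(\mathcal{B}^\square(A,V)), H^\circ) = \mathsf{Hopf}(\mathcal C)(\mathcal{H}^\square(A,V), H^\circ) \cong \mathsf{Hopf}(\mathcal C)(H, \mathcal{H}^\square(A,V)^\circ),
\end{split}
\end{equation*}
where the two extreme isomorphisms are the adjunctions defining $H_r$ and $H_l$, one inner isomorphism is Theorem~\ref{TheoremBimonUniv(Co)actDualityClosed}, and the others are the self-adjunction of $(-)^\circ$ on bimonoids and its restriction to Hopf monoids (the latter available because $\varkappa_H$ is a monomorphism for every Hopf monoid). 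Substituting $H = \mathcal{H}^\square(A,V)^\circ$ and tracking the identity morphism produces an isomorphism $\mathcal{H}^\square(A,V)^\circ \mathrel{\widetilde{\to}} {}_\square \mathcal{H}(A,V)$, while substituting $H = {}_\square \mathcal{H}(A,V)$ produces its inverse; that this isomorphism coincides with $\beta^\mathbf{Hopf}$ is checked exactly as in the pre-rigid case, by superimposing the squares coming from the universal properties of ${}_\square \mathcal{B}(A,V)$ and ${}_\square \mathcal{H}(A,V)$ and invoking Proposition~\ref{PropositionVeeUnderComposition} for the outer triangle. Uniqueness of $\beta^\mathbf{Hopf}$ is again the universal property of ${}_\square \mathcal{H}(A,V)$.

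The step I expect to be the main obstacle is the bookkeeping: confirming that $\mathcal{H}^\square(A,V)$ and ${}_\square \mathcal{H}(A,V)$ are literally $H_l(\mathcal{B}^\square(A,V))$ and $H_r({}_\square \mathcal{B}(A,V))$ and that every link of the chain above is natural in $H$, so that the composite bijection is the one realized by $\beta^\mathbf{Hopf}$; everything else is soft category theory. An alternative route, parallel to the proofs of Theorems~\ref{Theorem(Co)monUniv(Co)measDualityClosed} and~\ref{TheoremBimonUniv(Co)actDualityClosed}, is to go through Proposition~\ref{PropositionLIOKPreservesPreorder}: by Theorems~\ref{TheoremSupportsUnderVee}--\ref{TheoremVeeReflectsCoarserFiner} together with Remarks~\ref{Cosupp_duality_rmk}(\ref{RemarkCosupportSubobjectPartialOrder}) and~\ref{RemarkInBraidedClosedMonoidalManyPropertiesHoldAutomatically}, along $K$ composed with $(-)^\vee$ the categories $\mathbf{HCoact}(A)(i)$ and $\mathbf{HAct}(A)^{\mathrm{op}}(i)$ coincide with $\mathbf{HCoact}(A)(\rho_{A,V})$ and $\mathbf{HAct}(A)^{\mathrm{op}}(\rho_{A,V}^\vee)$ for the tensor epimorphism $\rho_{A,V}$ afforded by the universal coacting bimonoid (using that an initial object of a subcategory $X(x_0)$ generates the same $X(\cdot)$), after which Theorem~\ref{TheoremHopfMonUniv(Co)actDuality} applies directly with $U := \mathcal{B}^\square(A,V)$ and $\rho_U := \rho_{A,V}$.
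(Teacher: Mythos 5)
Your proposal is correct and follows essentially the same route as the paper: the paper's proof is literally ``repeat verbatim the proof of Theorem~\ref{TheoremHopfMonUniv(Co)actDuality}, using Theorems~\ref{TheoremDUALHopfMonUnivActingExistenceClosed}, \ref{TheoremHopfMonUnivCoactingExistenceClosed} and~\ref{TheoremBimonUniv(Co)actDualityClosed} in place of the corresponding pre-rigid ingredients,'' which is exactly the chain of natural bijections and the identification of the resulting isomorphism with $\beta^{\mathbf{Hopf}}$ that you spell out. The alternative route via Proposition~\ref{PropositionLIOKPreservesPreorder} that you sketch at the end is a reasonable variant but is not the one the paper takes.
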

\begin{proof}
We repeat verbatim the proof of Theorem~\ref{TheoremHopfMonUniv(Co)actDuality}
using Theorems~\ref{TheoremDUALHopfMonUnivActingExistenceClosed}, \ref{TheoremHopfMonUnivCoactingExistenceClosed} and~\ref{TheoremBimonUniv(Co)actDualityClosed} instead of~\cite[Theorems~4.42 and~5.27]{AGV3}
  and Theorem~\ref{TheoremBimonUniv(Co)actDuality}.
\end{proof}	

\begin{corollary}\label{CorollaryHopfMonUniv(Co)actDualityClosed}
	Let~$\mathcal C$ be a symmetric closed monoidal category
	satisfying Properties~\ref{PropertySmallLimits},
	\ref{PropertySubObjectsSmallSet}--\ref{PropertyLimitsOfSubobjectsArePreserved}, \ref{PropertySwitchProdTensorIsAMonomorphism}, \ref{PropertyEqualizers}
	and the properties dual to Properties~\ref{PropertySmallLimits}, \ref{PropertySubObjectsSmallSet} and~\ref{PropertyFreeMonoid}
	of Section~\ref{SubsectionSupportCoactingConditions}
	such that
	\begin{itemize}
		\item the functor $(-)^*$ maps extremal monomorphisms to extremal epimorphisms;
		\item $\alpha_M$ is an extremal monomorphism and $\theta_{M,N}$ is a monomorphism
		for all objects $M,N$;
		\item $\left(\theta^\mathrm{inv}_{B,B}\right)^\flat \colon B\otimes B \to (B^* \otimes B^*)^*$  is an extremal monomorphism in $\mathcal C$
		for all bimonoids~$B$;
		\item $\varkappa_H$ is a monomorphism in $\mathcal C$ for every Hopf monoid $H$
	\end{itemize}
	and let $i\colon V \to [A,A]$ be a submonoid such that $i=K(\rho_U^\vee)$ for some morphism $\rho_U \colon A \to A \otimes U$ where $U$ is an object in $\mathcal C$.
	Then $$\left(\rho_{A,V}^\mathbf{Hopf}\right)^\vee(\varkappa_{\mathcal{H}^\square(A,V)}\otimes \id_A)
	\colon \mathcal{H}^\square(A,V)^\circ \otimes A \to A$$ is an action and
	the unique Hopf monoid homomorphism $\beta^{\mathbf{Hopf}} \colon \mathcal{H}^\square(A,V)^\circ \to {}_\square \mathcal{H}(A,V)$ making
	the diagram~\eqref{EqHopfMonUniv(Co)actDualityClosed}
	commutative is a Hopf monoid isomorphism.		
\end{corollary}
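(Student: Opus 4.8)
The plan is to deduce the statement from Theorem~\ref{TheoremHopfMonUniv(Co)actDualityClosed} by checking that the concrete hypotheses assumed here imply the hypotheses of that theorem. The genericity conditions on $\mathcal C$ (Properties~\ref{PropertySmallLimits}, \ref{PropertySubObjectsSmallSet}--\ref{PropertyLimitsOfSubobjectsArePreserved} and~\ref{PropertyEqualizers} together with the duals of Properties~\ref{PropertySmallLimits} and~\ref{PropertySubObjectsSmallSet}), the requirement $i=K(\rho_U^\vee)$, the assumption that $(-)^*$ sends extremal monomorphisms to extremal epimorphisms, the monomorphicity of all $\alpha_M$ and $\theta_{M,N}$, and the monomorphicity of every $\varkappa_H$ are shared between the two statements, so they transfer directly. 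What remains is to produce the three auxiliary functors $(-)^\circ$, $H_l$, $H_r$ and the four universal objects $\mathcal{B}^\square(A,V)$, ${}_\square\mathcal{B}(A,V)$, $\mathcal{H}^\square(A,V)$, ${}_\square\mathcal{H}(A,V)$, together with the extremal monomorphicity of $\left(\theta^\mathrm{inv}_{\mathcal{B}^\square(A,V),\mathcal{B}^\square(A,V)}\right)^\flat$.

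First I would feed in the automatic consequences: by Remark~\ref{RemarkInBraidedClosedMonoidalManyPropertiesHoldAutomatically}, in a braided closed monoidal category Property~\ref{PropertyFreeMonoid} and the duals of Properties~\ref{PropertyMonomorphism}--\ref{PropertyEqualizers} hold automatically, and by \cite[Proposition~4.2~(1)]{AGV3} Property~\ref{PropertyEpiExtrMonoFactorizations} and its dual follow from Properties~\ref{PropertySmallLimits}, \ref{PropertySubObjectsSmallSet} and their duals. With these in place, Remark~\ref{RemarkCircSufficientConditionsForExistence} supplies the finite dual functor $(-)^\circ$, while \cite[Theorem~4.6]{AGV3} and its dual \cite[Theorem~5.10]{AGV3} supply the left and right adjoints $H_l$ and $H_r$ to the inclusion $\mathbf{Hopf}(\mathcal C)\subseteq\mathbf{Bimon}(\mathcal C)$, the dual half of Property~\ref{PropertyFreeMonoid} needed for $H_r$ being part of our hypotheses.

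Next I would assemble the universal objects. Lemma~\ref{LemmaComeasVUnity} shows that the canonical comeasuring $A\mathrel{\widetilde\to}A\otimes\mathbbm{1}$ lies in $\mathbf{Comeas}(A,A)(i)$, so Theorem~\ref{TheoremMonUnivComeasExistenceClosed} gives $\mathcal{A}^\square(A,A,V)$, and Corollary~\ref{CorollaryBimonUnivCoactingExistenceClosed} upgrades it to the bimonoid $\mathcal{B}^\square(A,V)$; the required extremal monomorphicity of $\left(\theta^\mathrm{inv}_{\mathcal{B}^\square(A,V),\mathcal{B}^\square(A,V)}\right)^\flat$ is the special case $B=\mathcal{B}^\square(A,V)$ of our blanket assumption on bimonoids. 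Dually, the canonical measuring $\mathbbm{1}\otimes A\cong A\to A$, whose cosupport factors through $V$ because $V\to[A,A]$ is unital, witnesses that $\mathbf{Meas}(A,A)^{\mathrm{op}}(i)$ is non-empty, so Theorem~\ref{TheoremDUALComonUnivMeasExistenceClosed} produces ${}_\square\mathcal{C}(A,A,V)$, which Theorem~\ref{TheoremDUALBimonUnivActingExistenceExplicit} endows with a bimonoid structure ${}_\square\mathcal{B}(A,V)$. Finally, Corollary~\ref{CorollaryHopfMonUnivCoactingExistenceClosed} and Theorem~\ref{TheoremDUALHopfMonUnivActingExistenceClosedExplicit} give $\mathcal{H}^\square(A,V)$ and ${}_\square\mathcal{H}(A,V)$. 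With every hypothesis of Theorem~\ref{TheoremHopfMonUniv(Co)actDualityClosed} verified, that theorem yields exactly the asserted action and Hopf monoid isomorphism.

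The work here is organizational rather than conceptual: the only real care is in tracking that each invoked existence result is available under precisely the property list assumed (in particular that the dual half of the list, combined with the automatic consequences in a closed monoidal category, suffices both for the Sweedler dual $(-)^\circ$ and for the adjoint $H_r$), and that the non-emptiness clauses of Theorems~\ref{TheoremMonUnivComeasExistenceClosed} and~\ref{TheoremDUALComonUnivMeasExistenceClosed} are met via Lemma~\ref{LemmaComeasVUnity} and its dual. I anticipate no obstruction beyond this bookkeeping.
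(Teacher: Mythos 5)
Your proposal is correct and follows essentially the same route as the paper, whose proof is precisely the citation list you expand: \cite[Theorems 4.6 and 5.10]{AGV3} for $H_l,H_r$, Theorems~\ref{TheoremDUALComonUnivMeasExistenceClosed} and~\ref{TheoremMonUnivComeasExistenceClosed} for the universal objects, Remarks~\ref{RemarkCircSufficientConditionsForExistence} and~\ref{RemarkInBraidedClosedMonoidalManyPropertiesHoldAutomatically} for $(-)^\circ$ and the automatic properties, and finally Theorem~\ref{TheoremHopfMonUniv(Co)actDualityClosed}. Your extra bookkeeping (Lemma~\ref{LemmaComeasVUnity} for non-emptiness, the bimonoid upgrades via Corollary~\ref{CorollaryBimonUnivCoactingExistenceClosed} and Theorem~\ref{TheoremDUALBimonUnivActingExistenceExplicit}) is consistent with how the theorem's hypotheses are actually met.
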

\begin{proof}
	Apply~\cite[Theorems 4.6 and 5.10]{AGV3}, 
	 Theorems~\ref{TheoremDUALComonUnivMeasExistenceClosed}, \ref{TheoremMonUnivComeasExistenceClosed}, \ref{TheoremHopfMonUniv(Co)actDualityClosed}
	and Remarks~\ref{RemarkCircSufficientConditionsForExistence} and~\ref{RemarkInBraidedClosedMonoidalManyPropertiesHoldAutomatically}. 
\end{proof}	

\begin{corollary}[{\cite[Theorem 4.15]{AGV2}}]
	Let $A$ be an $\Omega$-algebra  over a field $\mathbbm{k}$ and let $V\subseteq \End_\mathbbm{k}(A)$ be a unital pointwise finite dimensional subalgebra  closed in the finite topology.
		Then $\beta^{\mathbf{Hopf}}$ from the diagram~\eqref{EqHopfMonUniv(Co)actDualityClosed}
	is a Hopf algebra isomorphism.
\end{corollary}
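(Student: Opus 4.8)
The plan is to derive this statement as the special case $\mathcal C=\mathbf{Vect}_\mathbbm{k}$ of Corollary~\ref{CorollaryHopfMonUniv(Co)actDualityClosed}, so the substance of the proof is simply to check that $\mathbf{Vect}_\mathbbm{k}$ (for $\mathbbm{k}$ a field) satisfies all the hypotheses listed there. First I would recall that $\mathbf{Vect}_\mathbbm{k}$ is a symmetric closed monoidal category, with internal hom $[A,B]=\mathrm{Hom}_\mathbbm{k}(A,B)$, so that $A^\ast$ is the ordinary linear dual and $[A,A]=\End_\mathbbm{k}(A)$ carries exactly the composition monoid structure singled out in Proposition~\ref{PropositionMonoidActionMonoidHomomorphism}. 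As already noted in the remark following Proposition~\ref{PropositionEpimorphismStarIsAMonomorphism}, $\mathbf{Vect}_\mathbbm{k}$ satisfies Properties~\ref{PropertySmallLimits}, \ref{PropertySubObjectsSmallSet}--\ref{PropertyLimitsOfSubobjectsArePreserved}, \ref{PropertySwitchProdTensorIsAMonomorphism}, \ref{PropertyEqualizers} and the duals of Properties~\ref{PropertySmallLimits}, \ref{PropertySubObjectsSmallSet}, \ref{PropertyFreeMonoid}; these are the standard facts that $\mathbf{Vect}_\mathbbm{k}$ is complete, cocomplete, well-powered and co-well-powered, that $M\otimes(-)$ is exact and preserves all (co)limits, and that free algebras and cofree coalgebras exist.

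Next I would verify the duality-specific conditions, each elementary over a field (recall that in $\mathbf{Vect}_\mathbbm{k}$ every monomorphism is an extremal monomorphism and every epimorphism is an extremal epimorphism, so these classes coincide with the injective, resp. surjective, linear maps): (i) $(-)^\ast=\mathrm{Hom}_\mathbbm{k}(-,\mathbbm{k})$ sends injections to surjections, since a functional on a subspace extends to the ambient space; (ii) $\alpha_M\colon M\to M^{\ast\ast}$ is injective for every $M$; (iii) $\theta_{M,N}\colon M^\ast\otimes N^\ast\to(M\otimes N)^\ast$ is injective for all $M,N$; (iv) for every bialgebra $B$ the map $\left(\theta^{\mathrm{inv}}_{B,B}\right)^\flat\colon B\otimes B\to (B^\ast\otimes B^\ast)^\ast$, which by Remark~\ref{RemarkThetaThetaInvAdjoint} factors (up to the braiding) through $\alpha_B\otimes\alpha_B$ followed by the canonical injection $B^{\ast\ast}\otimes B^{\ast\ast}\to(B^\ast\otimes B^\ast)^\ast$, is therefore injective, hence an extremal monomorphism; and (v) $\varkappa_H\colon H^\circ\to H^\ast$ is injective for every Hopf algebra $H$, since the finite dual is by construction a subspace of the full dual. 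Finally, $(-)^\circ$ exists (it is the classical Sweedler dual, covered by Remark~\ref{RemarkCircSufficientConditionsForExistence}), and the adjoints $H_l,H_r$ to $\mathbf{Hopf}(\mathbf{Vect}_\mathbbm{k})\hookrightarrow\mathbf{Bimon}(\mathbf{Vect}_\mathbbm{k})$ exist by \cite[Theorems~4.6 and~5.10]{AGV3}.

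It remains to match the datum. A unital subalgebra $V\subseteq\End_\mathbbm{k}(A)=[A,A]$ is precisely a submonoid $i\colon V\rightarrowtail[A,A]$, and by \cite[Theorem~2.11]{AGV2} together with Theorem~\ref{TheoremSupportsUnderVee} there is a morphism $\rho_U\colon A\to A\otimes U$ with $i=K(\rho_U^\vee)$ if and only if $V$ is pointwise finite dimensional and closed in the finite topology — exactly our hypothesis on $V$. Thus all assumptions of Corollary~\ref{CorollaryHopfMonUniv(Co)actDualityClosed} hold, and it yields that $\beta^{\mathbf{Hopf}}$ of~\eqref{EqHopfMonUniv(Co)actDualityClosed} is a Hopf monoid isomorphism in $\mathbf{Vect}_\mathbbm{k}$, i.e. a Hopf algebra isomorphism; since the earlier corollaries already identify $\mathcal{H}^\square(A,V)$ and ${}_\square\mathcal{H}(A,V)$ with the $V$-universal coacting and acting Hopf algebras of~\cite{AGV2}, this recovers \cite[Theorem~4.15]{AGV2}. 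The only point I expect to require care is not a hard argument but a bookkeeping one: checking that the cosupport constraint $\cosupp\psi$ a subobject of $V$ coincides with the condition $\cosupp\psi\subseteq V$ used in~\cite{AGV2}, and that the explicit $\beta^{\mathbf{Hopf}}$ produced by Theorem~\ref{TheoremHopfMonUniv(Co)actDualityClosed} agrees with the isomorphism constructed there — which amounts to unwinding the definitions rather than proving anything new.
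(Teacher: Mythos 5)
Your proposal is correct and matches the intended argument: the paper states this corollary without a separate proof precisely because it is the specialization of Corollary~\ref{CorollaryHopfMonUniv(Co)actDualityClosed} to $\mathcal C=\mathbf{Vect}_\mathbbm{k}$, with the hypothesis on $V$ translated into the condition $i=K(\rho_U^\vee)$ via \cite[Theorem~2.11]{AGV2} and Theorem~\ref{TheoremSupportsUnderVee}, exactly as you do. All your verifications of the categorical hypotheses for $\mathbf{Vect}_\mathbbm{k}$ are standard and correct.
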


\section{Applications}\label{SectionApplications}

In this section we provide examples of monoidal categories satisfying Properties~\ref{PropertySmallLimits}--\ref{PropertyFreeMonoid}, \ref{PropertyExtrMonomorphism} 
of Section~\ref{SubsectionSupportCoactingConditions} 
and their duals as well as some other properties required in Sections~\ref{SectionDuality} and~\ref{SectionCosupportDualityInMonoidalClosedCategories}.

Throughout the section we use the standard Sweedler notation:

$\Delta c = c_{(1)}\otimes c_{(2)}$, $c\in C$,
for the comultiplication $\Delta \colon C \to C \otimes C$ in a coalgebra $C$ over a field $\mathbbm k$;

$\delta(m) = m_{(-1)}\otimes m_{(0)}$, $m\in M$, for the linear map 
$\delta \colon M \to C \otimes M$ defining on a $\mathbbm k$-vector space $M$ a structure of a left $C$-comodule;

$\rho(m) = m_{(0)}\otimes m_{(1)}$, $m\in M$, for the linear map 
$\rho \colon M \to M \otimes C$ defining on a $\mathbbm k$-vector space $M$ a structure of a right $C$-comodule.

Recall that by $V^*$ we denote the dual to an object $V$ in a pre-rigid braided monoidal category $\mathcal C$
and by  $(-)^\circ \colon \mathsf{Mon}(\mathcal C) \to \mathsf{Comon}(\mathcal C)$ the functor adjoint to the induced functor $(-)^* \colon \mathsf{Comon}(\mathcal C) \to \mathsf{Mon}(\mathcal C)$.
Below we will have to use the traditional duals (i.e. where $\mathcal C$ is replaced with $\mathbf{Vect}_\mathbbm k$
for a field $\mathbbm{k}$) too.
In order to distinguish between them, we will use for the latter a special notation. Namely,
for a vector space $V$ over $\mathbbm{k}$ we denote
its dual vector space $\Hom_\mathbbm{k}(V,\mathbbm{k})$ by $V^*_\mathbbm{k}$ and for a $\mathbbm{k}$-algebra $A$ 
we denote its traditional finite (or Sweedler) dual by $A^\circ_\mathbbm{k}$.

\subsection{Modules over Hopf algebras}

Let $B$ be a bialgebra over a field $\mathbbm k$. The category ${}_B \mathsf{Mod}$ of left $B$-modules
is monoidal where the monoidal product coincides with the tensor product $\otimes$ over $\mathbbm k$.
It turns out that all the properties from Section~\ref{SubsectionSupportCoactingConditions} as well as their duals
hold in ${}_B \mathsf{Mod}$:

\begin{lemma}\label{LemmaBialgebraMod1-10Star}
The category ${}_B \mathsf{Mod}$
satisfies Properties~\ref{PropertySmallLimits}--\ref{PropertyFreeMonoid}, \ref{PropertyExtrMonomorphism}
of Section~\ref{SubsectionSupportCoactingConditions} as well as their duals.
In addition, all monomorphisms and epimorphisms in ${}_B \mathsf{Mod}$ are extremal. 
\end{lemma}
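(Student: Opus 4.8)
The idea is to reduce the whole list to two structural facts and a single genuinely non-formal point. The first fact is that ${}_B\mathsf{Mod}$, being the category of left modules over the underlying $\mathbbm{k}$-algebra of $B$, is an abelian category which is moreover complete, cocomplete, wellpowered and cowellpowered. The second is that the forgetful functor $U\colon{}_B\mathsf{Mod}\to\mathbf{Vect}_\mathbbm{k}$ is faithful, creates all small limits and colimits, and is strong monoidal: the monoidal product on ${}_B\mathsf{Mod}$ is $\otimes_\mathbbm{k}$ equipped with the diagonal $B$-action, so $U(M\otimes N)=UM\otimes_\mathbbm{k}UN$ with identity coherence morphisms, whence $U\circ(M\otimes(-))=(UM\otimes_\mathbbm{k}(-))\circ U$ for every $B$-module $M$.

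First I would dispatch the ``soft'' conditions. Completeness and cocompleteness give Property~\ref{PropertySmallLimits}, Property~\ref{PropertyFiniteAndCountableColimits} and their duals; the subobjects, resp.\ quotient objects, of a module form a set, giving Property~\ref{PropertySubObjectsSmallSet} and its dual. In an abelian category every monomorphism is a kernel and every epimorphism a cokernel, hence regular, hence extremal; this is the last assertion of the lemma, and combined with the image factorization it shows that ${}_B\mathsf{Mod}$ is both (Epi, ExtrMono)-structured and (ExtrEpi, Mono)-structured, i.e.\ Property~\ref{PropertyEpiExtrMonoFactorizations} and its dual. It also follows, using that $U$ is faithful and preserves monos and epis, that a morphism $f$ of ${}_B\mathsf{Mod}$ is an (extremal) monomorphism, resp.\ (extremal) epimorphism, precisely when $Uf$ is injective, resp.\ surjective; I will use this freely below.

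Next I would transfer the tensor conditions across $U$. For a $B$-module $M$, the functor $UM\otimes_\mathbbm{k}(-)$ on $\mathbf{Vect}_\mathbbm{k}$ is exact (every $\mathbbm{k}$-space is flat) and cocontinuous (it is left adjoint to $\Hom_\mathbbm{k}(UM,-)$), and since $U$ creates limits and colimits and is conservative, exactness and cocontinuity of $M\otimes(-)$ lift back to ${}_B\mathsf{Mod}$. From this, Properties~\ref{PropertyMonomorphism}, \ref{PropertyExtrMonomorphism}, \ref{PropertyLimitsOfSubobjectsArePreserved}, \ref{PropertyTensorPullback}, \ref{PropertyEqualizers} and their duals all follow by reduction to $\mathbf{Vect}_\mathbbm{k}$: flatness gives that tensoring with $M$ preserves monos (Property~\ref{PropertyMonomorphism}), and since a tensor product of injective linear maps is injective also Property~\ref{PropertyExtrMonomorphism}; exactness gives preservation of intersections, preimages and equalizers (Properties~\ref{PropertyLimitsOfSubobjectsArePreserved}, \ref{PropertyTensorPullback}, \ref{PropertyEqualizers}); and the duals follow symmetrically from right exactness and cocontinuity. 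The one slightly computational point, Property~\ref{PropertySwitchProdTensorIsAMonomorphism}, I would check by hand in $\mathbf{Vect}_\mathbbm{k}$: writing an element of $UM\otimes_\mathbbm{k}\prod_\alpha A_\alpha$ as $\sum_j e_j\otimes(a^\alpha_j)_\alpha$ with the $e_j$ linearly independent in $UM$ shows the comparison map to $\prod_\alpha(UM\otimes_\mathbbm{k}A_\alpha)$ is injective, its dual being an isomorphism. Finally, Property~\ref{PropertyFreeMonoid} holds because ${}_B\mathsf{Mod}$ is cocomplete and $(-)\otimes M$ is cocontinuous, so free monoids exist by~\cite[\S 7.3, Theorem~2]{MacLaneCatWork}; concretely the free monoid on $M$ is the tensor algebra $\bigoplus_{n\ge 0}M^{\otimes n}$ with the diagonal action.

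The main obstacle will be the dual of Property~\ref{PropertyFreeMonoid}: constructing a right adjoint to $\mathsf{Comon}({}_B\mathsf{Mod})\to{}_B\mathsf{Mod}$, i.e.\ cofree comonoids. Here I would argue as follows: ${}_B\mathsf{Mod}$ is locally finitely presentable and $\otimes$ preserves filtered colimits in each variable, hence $\mathsf{Comon}({}_B\mathsf{Mod})$ is again locally presentable; since $\otimes$ is cocontinuous in each variable, the forgetful functor $\mathsf{Comon}({}_B\mathsf{Mod})\to{}_B\mathsf{Mod}$ creates, and in particular preserves, all colimits, and therefore has a right adjoint by the adjoint functor theorem for locally presentable categories. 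An alternative to keep in reserve is to lift Sweedler's cofree-coalgebra construction over $\mathbbm{k}$, using that it is a $\mathbbm{k}$-linear endofunctor compatible with the $B$-action; but the locally-presentable argument is cleaner and self-contained. Everything else in the lemma is a formal consequence of the abelian structure of ${}_B\mathsf{Mod}$ together with exactness of $\otimes_\mathbbm{k}$ over a field.
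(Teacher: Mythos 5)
Your proof is correct and follows essentially the same route as the paper: everything is reduced to $\mathbf{Vect}_\mathbbm{k}$ via the strict monoidal forgetful functor, which creates small limits and colimits as well as limits of subobjects and colimits of quotient objects, and the free monoid is the tensor algebra with the inherited $B$-action. The only divergence is the cofree comonoid (the dual of Property~\ref{PropertyFreeMonoid}): the paper simply cites \cite[Proposition 4.1]{AbdulIovanov}, whereas you derive it from local presentability of $\mathsf{Comon}({}_B\mathsf{Mod})$ and the adjoint functor theorem --- both justifications are standard and valid.
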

\begin{proof}
The forgetful functor ${}_B \mathsf{Mod} \to \mathbf{Vect}_{\mathbbm k}$ is a strict monoidal
functor that creates small limits and colimits as well as limits of subobjects and colimits of quotient objects. Moreover, the free (tensor) algebra $T(M)$ of a $B$-module $M$ inherits the structure of a $B$-module, which makes $T(M)$ a $B$-module algebra. Finally, by~\cite[Proposition 4.1]{AbdulIovanov} the forgetful functor $\mathsf{Comon}({}_B \mathsf{Mod}) \to {}_B \mathsf{Mod} $
admits a right adjoint.
\end{proof}	

Let now $H$ be a Hopf algebra over a field $\mathbbm k$. Then the category ${}_H \mathsf{Mod}$
is closed where for every $H$-modules $M$ and $N$ we have $[M,N]=\Hom_\mathbbm{k}(M,N)$,
\begin{equation}\label{EqHActionOn[M,N]}
(hf)(m):=h_{(1)} f(Sh_{(2)} m)\text{ for  }h\in H,\ f\in \Hom_\mathbbm{k}(M,N),\ m\in M.
\end{equation}
In particular, ${}_H \mathsf{Mod}$ is pre-rigid where for every $H$-module $M$
we have $M^* = M^*_\mathbbm{k}$ and $(hf)(m):= f((Sh) m)$ for all  $h\in H$, $f\in M^*$, $m\in M$.

\subsection{Comodules over Hopf algebras}\label{SubsectionApplicationsComodules}
Now consider the category $\mathsf{Comod}^B$ of right $B$-comodules for a bialgebra $B$ over a field $\mathbbm k$ where the monoidal product again coincides with the tensor product $\otimes$ over $\mathbbm k$.
It turns out that all properties from Section~\ref{SubsectionSupportCoactingConditions} as well as their duals hold in $\mathsf{Comod}^B$ too:
\begin{lemma}\label{LemmaBialgebraComod1-10Star}
	Let $B$ be a bialgebra over a field $\mathbbm k$. Then the category $\mathsf{Comod}^B$
	satisfies Properties~\ref{PropertySmallLimits}--\ref{PropertyFreeMonoid}, \ref{PropertyExtrMonomorphism}
	of Section~\ref{SubsectionSupportCoactingConditions} and their duals.
In addition, all monomorphisms and epimorphisms in $\mathsf{Comod}^B$ are extremal. 
\end{lemma}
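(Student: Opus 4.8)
The plan is to push everything down to the faithful, strict monoidal forgetful functor $U\colon\mathsf{Comod}^B\to\mathbf{Vect}_\mathbbm{k}$, which is exact, creates all colimits, is left adjoint to the cofree comodule functor $(-)\otimes B$, and reflects monomorphisms, epimorphisms and isomorphisms. Since $\mathsf{Comod}^B$ is a locally finite Grothendieck category it is complete, cocomplete, wellpowered and cowellpowered, which yields Properties~\ref{PropertySmallLimits}, \ref{PropertyFiniteAndCountableColimits}, \ref{PropertySubObjectsSmallSet} and their duals; being abelian it is both (Epi, ExtrMono)- and (ExtrEpi, Mono)-structured via the usual image factorisations and all its monomorphisms and epimorphisms are extremal, which gives Property~\ref{PropertyEpiExtrMonoFactorizations}, its dual, and the last assertion of the lemma. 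A morphism in $\mathsf{Comod}^B$ is a monomorphism (resp.\ epimorphism) exactly when it is injective (resp.\ surjective) on underlying spaces, so Properties~\ref{PropertyMonomorphism}, \ref{PropertyExtrMonomorphism} and their duals follow immediately from the flatness of $\mathbbm{k}$-vector spaces.

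For the remaining properties I would first note that for every $B$-comodule $M$ the functor $M\otimes(-)$ on $\mathsf{Comod}^B$ is exact and cocontinuous, because its composite with $U$ is $UM\otimes_\mathbbm{k}U(-)$, which is exact and cocontinuous, while $U$ itself preserves finite limits, preserves all colimits and reflects isomorphisms. Feeding this into the corresponding facts for $\mathbf{Vect}_\mathbbm{k}$ (preservation of equalizers, of preimages of monomorphisms, and of arbitrary intersections of subspaces) settles Properties~\ref{PropertyLimitsOfSubobjectsArePreserved}, \ref{PropertyTensorPullback}, \ref{PropertyEqualizers} and their duals.

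The one point that needs real care is Property~\ref{PropertySwitchProdTensorIsAMonomorphism}, since products in $\mathsf{Comod}^B$ are not computed on underlying spaces: there is only a canonical monomorphism $j\colon{\prod}_\alpha^{\mathsf{Comod}}A_\alpha\rightarrowtail{\prod}_\alpha^{\mathbf{Vect}}UA_\alpha$. I would factor the relevant map as the composite of monomorphisms
\[
M\otimes{\prod}_\alpha^{\mathsf{Comod}}A_\alpha\xrightarrow{\ \id_M\otimes j\ }M\otimes{\prod}_\alpha^{\mathbf{Vect}}UA_\alpha\longrightarrow{\prod}_\alpha^{\mathbf{Vect}}(UM\otimes UA_\alpha),
\]
where the first arrow is monic by flatness and the second is the (monic) vector space comparison map; by naturality this composite also factors as $M\otimes{\prod}_\alpha^{\mathsf{Comod}}A_\alpha\to{\prod}_\alpha^{\mathsf{Comod}}(M\otimes A_\alpha)\rightarrowtail{\prod}_\alpha^{\mathbf{Vect}}(UM\otimes UA_\alpha)$, which forces the first arrow of the latter to be a monomorphism. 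The dual statement holds trivially, since $M\otimes(-)$ preserves coproducts.

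Finally, for Property~\ref{PropertyFreeMonoid} one checks, as in the module case, that the tensor algebra $T(M)=\bigoplus_{n\ge0}M^{\otimes n}$ of a $B$-comodule $M$ carries a multiplicative $B$-coaction (here the bialgebra structure of $B$ is used), hence is the free monoid on $M$ in $\mathsf{Comod}^B$. The main obstacle is the dual of Property~\ref{PropertyFreeMonoid}, the existence of a cofree comonoid: since $\mathsf{Comod}^B$ is a locally finitely presentable monoidal category with cocontinuous tensor product, $\mathsf{Comon}(\mathsf{Comod}^B)$ is again locally presentable and the forgetful functor $\mathsf{Comon}(\mathsf{Comod}^B)\to\mathsf{Comod}^B$ creates colimits, so an adjoint functor theorem supplies the required right adjoint; alternatively one adapts \cite{AbdulIovanov} or appeals to the relevant existence result in \cite{AGV3}. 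Collecting these observations proves the lemma.
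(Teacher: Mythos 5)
Your proposal is correct and follows essentially the same route as the paper: reduce everything to $\mathbf{Vect}_\mathbbm{k}$ via the forgetful functor (which creates colimits, finite limits and limits of subobjects), treat the product in $\mathsf{Comod}^B$ --- the rational part of the vector-space product --- as the one genuinely non-underlying construction when checking Property~\ref{PropertySwitchProdTensorIsAMonomorphism}, equip the tensor algebra with the induced coaction for Property~\ref{PropertyFreeMonoid}, and invoke \cite{AbdulIovanov} (or local presentability) for the cofree comonoid. Your write-up is in fact more explicit than the paper's, which merely records the description of the product and leaves the verification of Property~\ref{PropertySwitchProdTensorIsAMonomorphism} implicit.
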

\begin{proof}
	The forgetful functor $\mathsf{Comod}^B \to \mathbf{Vect}_{\mathbbm k}$ creates finite limits and small colimits as well as limits of subobjects and colimits of quotient objects.
	If $M_\alpha$, $\alpha \in \Lambda$,
	are right $B$-comodules, then their product in $\mathsf{Comod}^B$
	is the subspace of their Cartesian product $\prod\limits_{\alpha \in \Lambda} M_\alpha$
	consisting of all tuples $(m_\alpha)_{\alpha \in \Lambda}$, $m_\alpha \in M_\alpha$,
	such that for each tuple there exists a single finite dimensional subcoalgebra $C\subseteq B$
	where $\rho(m_\alpha) \in M_\alpha \otimes C$ for all $\alpha \in \Lambda$. Again, the free algebra $T(M)$ of a $B$-comodule $M$ inherits the structure of a $B$-comodule, which makes $T(M)$ a $B$-comodule algebra.
	Finally, by~\cite[Proposition 4.1]{AbdulIovanov} the forgetful functor $\mathsf{Comon}(\mathsf{Comod}^B) \to \mathsf{Comod}^B$
	admits a right adjoint.
\end{proof}	

Let now $H$ be a Hopf algebra over a field $\mathbbm k$. Then the category $\mathsf{Comod}^H$
is closed 
 where for every $H$-comodules $M$ and $N$ the $H$-comodule  $[M,N]$ is the subspace of $\Hom_\mathbbm{k}(M,N)$
consisting of such $\mathbbm{k}$-linear maps $f \colon M \to N$ that 
$f(m_{(0)})_{(0)}\otimes f(m_{(0)})_{(1)}Sm_{(1)} \in M \otimes C$ where $C \subseteq H$ is a fixed finite dimensional subcoalgebra that may depend on $f$ but does not depend on $m\in M$.
Then $\rho \colon [M,N] \to [M,N] \otimes H$ is defined by $\rho(f):= f_{(0)}\otimes f_{(1)}$,
$f_{(0)} (m) \otimes f_{(1)} := f(m_{(0)})_{(0)}\otimes f(m_{(0)})_{(1)}Sm_{(1)}$ for $m\in M$ and $f\in [M,N]$.
In particular, $\mathsf{Comod}^H$ is pre-rigid where for every $H$-comodule $M$
the dual comodule $M^*$ is the subspace of $M^*_{\mathbbm k}$
consisting of such $\mathbbm{k}$-linear functions $f \colon M \to \mathbbm{k}$ that 
$f(m_{(0)}) Sm_{(1)} \in C$ where $C \subseteq H$ is a fixed finite dimensional subcoalgebra that may depend on $f$ but does not depend on $m\in M$. Then
 $f_{(0)}(m) f_{(1)}:= f(m_{(0)}) Sm_{(1)}$ for $f\in M^*$, $m\in M$.
 
\subsection{Left Yetter~-- Drinfel'd modules}

Let $H$ be a Hopf algebra over a field $\mathbbm k$ with an invertible antipode $S$. Denote by ${}_H^H\mathcal{YD}$
the category of \textit{left Yetter~--- Drinfel'd modules} (or \textit{${}_H^H\mathcal{YD}$-modules} for short), i.e. left $H$-modules and $H$-comodules $M$ such that
the $H$-action and the $H$-coaction $\delta \colon M \to H \otimes M$ 
satisfy the following compatibility condition:
$$\delta(hm)=h_{(1)}m_{(-1)}Sh_{(3)} \otimes h_{(2)}m_{(0)} \text{ for every } m\in M \text{ and } h\in H.$$
For more details on the general theory of Yetter~-- Drinfel'd modules and their applications we refer to e.g.~\cite[Section 11.6]{Radford}, \cite[Section 4.4]{CMZ}. 

An \textit{${}_H^H\mathcal{YD}$-module algebra} is an algebra $A$ over a field $\mathbbm k$
that is an ${}_H^H\mathcal{YD}$-module, an $H$-module algebra and an $H$-comodule algebra at the same time.
The algebra $A$
is called a \textit{unital ${}_H^H\mathcal{YD}$-module algebra} if $A$ is unital, $\delta(1_A) = 1_H \otimes 1_A$ and $h 1_A = \varepsilon(h) 1_A$. 

The category ${}_H^H\mathcal{YD}$ is braided monoidal where the monoidal product of  ${}_H^H\mathcal{YD}$-modules
$M$ and $N$ is their usual tensor product $M\otimes N$ over $\mathbbm k$ with the induced structures of a left $H$-module and a left $H$-comodule. The braiding $c_{M,N} \colon M\otimes N \to N \otimes M$ is defined
by the formula $$c_{M,N}(m\otimes n) := m_{(-1)}n \otimes m_{(0)}\text{ for }m\in M,\ n\in N.$$
Its inverse $c_{M,N}^{-1} \colon  N \otimes M \to M\otimes N$
is defined by $$c_{M,N}^{-1}(n\otimes m) := m_{(0)} \otimes \left(S^{-1}m_{(-1)}\right)n\text{ for }m\in M,\ n\in N.$$
Monoids in ${}_H^H\mathcal{YD}$ are just unital associative ${}_H^H\mathcal{YD}$-module algebras.
Comonoids in ${}_H^H\mathcal{YD}$ are called \textit{${}_H^H\mathcal{YD}$-module coalgebras}.

\begin{lemma}\label{LemmaYD1-10Star}
	Let $H$ be a Hopf algebra over a field $\mathbbm k$ with an invertible antipode $S$. Then the category ${}_H^H\mathcal{YD}$
	satisfies Properties~\ref{PropertySmallLimits}--\ref{PropertyFreeMonoid}, \ref{PropertyExtrMonomorphism}
	of Section~\ref{SubsectionSupportCoactingConditions} and their duals.
In addition, all monomorphisms and epimorphisms in ${}_H^H\mathcal{YD}$ are extremal. 
\end{lemma}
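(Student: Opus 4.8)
The plan is to follow closely the proofs of Lemmas~\ref{LemmaBialgebraMod1-10Star} and~\ref{LemmaBialgebraComod1-10Star}, using that the monoidal product of ${}_H^H\mathcal{YD}$ is the tensor product $\otimes_{\mathbbm k}$ and that the forgetful functor $U\colon {}_H^H\mathcal{YD} \to \mathbf{Vect}_{\mathbbm k}$ is faithful and exact. First I would note that $U$ creates all colimits (the underlying colimit in $\mathbf{Vect}_{\mathbbm k}$ carries unique compatible $H$-module and $H$-comodule structures, the comodule one because $(-)\otimes_{\mathbbm k} H$ is cocontinuous, and the Yetter--Drinfel'd axiom is inherited on the colimiting cocone) and creates finite limits (kernels and finite products, again because $(-)\otimes_{\mathbbm k}H$ is exact over a field). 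This yields at once the dual of Property~\ref{PropertySmallLimits}, Property~\ref{PropertyFiniteAndCountableColimits} and the dual of Property~\ref{PropertyFiniteAndCountableColimits}. It also shows that a morphism in ${}_H^H\mathcal{YD}$ is a monomorphism (resp.\ epimorphism) precisely when it is injective (resp.\ surjective) on underlying spaces; from this one checks, exactly as in ${}_H\mathsf{Mod}$, that every monomorphism is an extremal monomorphism and every epimorphism an extremal epimorphism. Wellpoweredness and co-wellpoweredness (Property~\ref{PropertySubObjectsSmallSet} and its dual) hold because sub- and quotient ${}_H^H\mathcal{YD}$-modules of a fixed module are indexed by subsets, and then Property~\ref{PropertyEpiExtrMonoFactorizations} and its dual follow from the reductions recorded in Section~\ref{SubsectionSupportCoactingConditions}.

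The one genuinely category-specific point is Property~\ref{PropertySmallLimits} (all small limits), which reduces to the existence of arbitrary products. Here I would transport the construction from the proof of Lemma~\ref{LemmaBialgebraComod1-10Star}: the product $\prod_\alpha M_\alpha$ in ${}_H^H\mathcal{YD}$ is the subspace of the Cartesian product consisting of tuples $(m_\alpha)$ for which there is a single finite-dimensional subcoalgebra $C\subseteq H$ with $\delta(m_\alpha)\in C\otimes M_\alpha$ for all $\alpha$. The thing to check --- and the only place where the Yetter--Drinfel'd compatibility condition, as opposed to merely ``module plus comodule'', is used --- is that this subspace is stable under the componentwise $H$-action: if $\delta(m_\alpha)\in C\otimes M_\alpha$ for all $\alpha$ then $\delta(h m_\alpha)\in \bigl(h_{(1)}C\,Sh_{(3)}\bigr)\otimes M_\alpha$, and the space $h_{(1)}C\,Sh_{(3)}$ is finite-dimensional and independent of $\alpha$, hence contained in a finite-dimensional subcoalgebra by the fundamental theorem of coalgebras. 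Combined with equalizers this gives all small limits, and Property~\ref{PropertyEqualizers} and its dual then follow (from the reductions in Section~\ref{SubsectionSupportCoactingConditions}, or directly from exactness of $(-)\otimes_{\mathbbm k}V$).

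The remaining properties are formal consequences of exactness of $(-)\otimes_{\mathbbm k}V$ over a field together with the above description of (co)limits. Since monomorphisms and epimorphisms are detected on underlying spaces, $(-)\otimes_{\mathbbm k}V$ preserves them, giving Property~\ref{PropertyMonomorphism}, Property~\ref{PropertyExtrMonomorphism} and their duals (using that every mono/epi is extremal); $(-)\otimes_{\mathbbm k}V$ preserves intersections and preimages of subspaces and their colimit duals, giving Property~\ref{PropertyLimitsOfSubobjectsArePreserved}, Property~\ref{PropertyTensorPullback} and their duals; and the canonical map $V\otimes_{\mathbbm k}\prod_\alpha A_\alpha\to\prod_\alpha(V\otimes_{\mathbbm k}A_\alpha)$ is a monomorphism because $(-)\otimes_{\mathbbm k}V$ is left exact and the analogous map for the underlying Cartesian products is injective (write $V$ as a direct sum of lines), giving Property~\ref{PropertySwitchProdTensorIsAMonomorphism}; its dual is immediate since coproducts are direct sums and $V\otimes_{\mathbbm k}(-)$ commutes with them. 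Finally, for Property~\ref{PropertyFreeMonoid} the tensor algebra $T(M)=\bigoplus_{n\ge 0}M^{\otimes n}$ of a Yetter--Drinfel'd module $M$ is again a Yetter--Drinfel'd module and is the free monoid on $M$ in ${}_H^H\mathcal{YD}$, while for its dual a right adjoint to $\mathsf{Comon}({}_H^H\mathcal{YD})\to {}_H^H\mathcal{YD}$ is furnished by~\cite[Proposition 4.1]{AbdulIovanov}, exactly as in Lemmas~\ref{LemmaBialgebraMod1-10Star} and~\ref{LemmaBialgebraComod1-10Star}.

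I expect the main obstacle to be the verification in the second paragraph: checking that the restricted Cartesian product is an $H$-submodule of the full Cartesian product requires genuinely using the explicit Yetter--Drinfel'd axiom $\delta(hm)=h_{(1)}m_{(-1)}Sh_{(3)}\otimes h_{(2)}m_{(0)}$ (invertibility of $S$ enters only globally, to make ${}_H^H\mathcal{YD}$ genuinely braided); everything else is either created by the forgetful functor to $\mathbf{Vect}_{\mathbbm k}$ or follows from exactness of $\otimes_{\mathbbm k}$ and the cited structural results.
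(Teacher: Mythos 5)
Most of your argument runs parallel to the paper's: the paper also reduces everything except the last dual property to the comodule case (it factors the forgetful functor through ${}^H\mathsf{Comod}$ and invokes Lemma~\ref{LemmaBialgebraComod1-10Star}, so the restricted-Cartesian-product description of products and the stability of that subspace under the componentwise $H$-action --- the point you rightly single out as the crux --- are exactly what is being used), and it likewise obtains Property~\ref{PropertyFreeMonoid} from the tensor algebra $T(M)$ with its inherited Yetter--Drinfel'd structure.

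The genuine gap is your treatment of the dual of Property~\ref{PropertyFreeMonoid}, i.e.\ the existence of a right adjoint to the forgetful functor $\mathsf{Comon}\bigl({}_H^H\mathcal{YD}\bigr)\to {}_H^H\mathcal{YD}$. You propose to quote \cite[Proposition~4.1]{AbdulIovanov} ``exactly as in'' Lemmas~\ref{LemmaBialgebraMod1-10Star} and~\ref{LemmaBialgebraComod1-10Star}, but that result concerns categories of (co)modules over a bialgebra and does not apply verbatim to ${}_H^H\mathcal{YD}$, which for general $H$ is not of that form (only for finite-dimensional $H$ is it a module category over the Drinfel'd double). This is precisely the one place where the paper does genuinely new work: it verifies the hypotheses of the Special Adjoint Functor Theorem by hand. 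Concretely, it (i) obtains cowellpoweredness of $\mathsf{Comon}\bigl({}_H^H\mathcal{YD}\bigr)$ from \cite[Proposition~2.3]{AbdulIovanov}; (ii) produces a small generating set by combining the finiteness theorem for $H$-comodule coalgebras (\cite[Theorem~4.5]{AbdulIovanov}) with the $H$-action, so that every element of an ${}_H^H\mathcal{YD}$-module coalgebra lies in an ${}_H^H\mathcal{YD}$-module subcoalgebra finitely generated as an $H$-module; and (iii) checks that the forgetful functor $\mathsf{Comon}\bigl({}_H^H\mathcal{YD}\bigr)\to {}_H^H\mathcal{YD}$ preserves small colimits, since it preserves coproducts and the coequalizer of two comonoid morphisms $f,g\colon A\to B$ is $B/I$ with $I=\lbrace f(a)-g(a)\mid a\in A\rbrace$, a coideal that is also an ${}_H^H\mathcal{YD}$-submodule. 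Without this (or some substitute for it) your proof of the last dual property does not go through; the rest of your proposal is sound and essentially the paper's argument.
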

\begin{proof}
	The forgetful functor ${}_H^H\mathcal{YD} \to {}^H\mathsf{Comod}$ creates small limits and colimits as well as limits of subobjects and colimits of quotient objects. Both categories are abelian. In addition, monomorphisms and epimorphisms are just injective and surjective homomorphisms, respectively. Hence, by Lemma~\ref{LemmaBialgebraComod1-10Star},
	the category ${}_H^H\mathcal{YD}$
	satisfies Properties~\ref{PropertySmallLimits}--\ref{PropertyEqualizers}, \ref{PropertyExtrMonomorphism} and their duals.
	Moreover, the free algebra $T(M)$ of an ${}_H^H\mathcal{YD}$-module $M$ inherits the structure of an ${}_H^H\mathcal{YD}$-module, which makes $T(M)$ an ${}_H^H\mathcal{YD}$-module algebra, whence we get Property~\ref{PropertyFreeMonoid}.	Now we have to prove its dual.
	
	By~\cite[Proposition 2.3]{AbdulIovanov},
	the category $\mathsf{Comon}\left({}_H^H\mathcal{YD}\right)$ is cowellpowered. By~\cite[Theorem 4.5]{AbdulIovanov} every
	element of an $H$-comodule coalgebra is contained in a finite dimensional $H$-comodule subcoalgebra, whence
	every element of an ${}_H^H\mathcal{YD}$-module coalgebra is contained in a ${}_H^H\mathcal{YD}$-module
	subcoalgebra finitely generated as an $H$-module. In particular, the small set of all ${}_H^H\mathcal{YD}$-module
	coalgebras finitely generated as $H$-modules generates the category $\mathsf{Comon}\left({}_H^H\mathcal{YD}\right)$.
	
	The forgetful functor $\mathsf{Comon}\left({}_H^H\mathcal{YD}\right) \to {}_H^H\mathcal{YD}$ preserves small coproducts.
	Let $f,g \colon A \to B$ be ${}_H^H\mathcal{YD}$-module coalgebra homomorphisms.
	Then $$\Delta(f(a)-g(a))=\bigl(f(a_{(1)})-g(a_{(1)})\bigr) \otimes f(a_{(2)}) + 
	g(a_{(1)}) \otimes \bigl(f(a_{(2)})-g(a_{(2)})\bigr) 
	\text{ for every } a\in A$$
	implies that the set $I=\lbrace f(a)-g(a) \mid a \in A \rbrace$ is a coideal in $B$ that is, in addition, an ${}_H^H\mathcal{YD}$-subcomodule. Hence the surjective homomorphism $\pi \colon B \to B/I$ is the coequilizer
	of $f$ and $g$ both in $\mathsf{Comon}\left({}_H^H\mathcal{YD}\right)$ and ${}_H^H\mathcal{YD}$.
	Therefore, the forgetful functor $\mathsf{Comon}\left({}_H^H\mathcal{YD}\right) \to {}_H^H\mathcal{YD}$ preserves all small colimits.
	Now we apply the Special Adjoint Functor Theorem.
\end{proof}	

The category ${}_H^H\mathcal{YD}$ is closed where for every ${}_H^H\mathcal{YD}$-modules $M$ and $N$ the ${}_H^H\mathcal{YD}$-module  $[M,N]$ is the subspace of $\Hom_\mathbbm{k}(M,N)$
consisting of such $\mathbbm{k}$-linear maps $f \colon M \to N$ that 
$f(m_{(0)})_{(-1)}S^{-1}m_{(-1)} \otimes f(m_{(0)})_{(0)} \in C \otimes M$ where $C \subseteq H$ is a fixed finite dimensional subcoalgebra that may depend on $f$ but does not depend on $m\in M$.
Then $\delta \colon [M,N] \to H \otimes [M,N] $ is defined by $\delta(f):= f_{(-1)}\otimes f_{(0)}$,
$$f_{(-1)}  \otimes f_{(0)} (m) := f(m_{(0)})_{(-1)}S^{-1}m_{(-1)} \otimes f(m_{(0)})_{(0)}$$
for $m\in M$ and $f\in [M,N]$
and the $H$-action $H \otimes [M,N] \to [M,N]$ is defined by~\eqref{EqHActionOn[M,N]}.
In particular, ${}_H^H\mathcal{YD}$ is pre-rigid where for every ${}_H^H\mathcal{YD}$-module $M$
the ${}_H^H\mathcal{YD}$-module $M^*$ is the subspace of $M^*_{\mathbbm k}$
consisting of such $\mathbbm{k}$-linear functions $f \colon M \to \mathbbm{k}$ that 
$f(m_{(0)}) S^{-1}m_{(-1)} \in C$ where $C \subseteq H$ is a fixed finite dimensional subcoalgebra that may depend on $f$ but does not depend on $m\in M$.
Then $\delta \colon M^* \to H \otimes M^* $ is defined by $\delta(f):= f_{(-1)}\otimes f_{(0)}$, $f_{(0)}(m) f_{(-1)}:= f(m_{(0)}) S^{-1}m_{(-1)}$
and $(hf)(m)=f((Sh)m)$  for $f\in M^*$, $h \in H$ and $m\in M$. The functor $(-)^*$ on ${}_H^H\mathcal{YD}$-module homomorphisms
coincides with the usual $(-)^*$ on $\mathbbm{k}$-linear maps.

\begin{remark}\label{RemarkYDfBelongsToStar} For a ${}_H^H\mathcal{YD}$-module $M$
		define the bilinear map $\diamond \colon M^*_{\mathbbm k} \times M \to H$
	by $f \diamond m := f(m_{(0)}) m_{(-1)}$ for all $f\in M^*_{\mathbbm k}$ and $m\in M$.
	Now for $f \in M^*_{\mathbbm k}$ the condition $f \in M^*$ is equivalent to $\dim (f\diamond M) < +\infty$.
\end{remark}

\begin{lemma}\label{LemmaYDSf(-1)Multiply}
	Let $M$ be an ${}_H^H\mathcal{YD}$-module.  Define the linear maps $\xi \colon M \to M$
	and $\zeta \colon M \to M$ by
	$$\xi(m)=\left(S^{-2}m_{(-1)}\right) m_{(0)} \text{ and }\zeta(m)=\left(Sm_{(-1)}\right) m_{(0)}
	\text{ for }m \in M.$$ Then $$\xi\zeta=\zeta\xi=\id_M.$$
\end{lemma}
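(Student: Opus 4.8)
The plan is to verify the two identities $\zeta\xi=\id_M$ and $\xi\zeta=\id_M$ by a direct computation in Sweedler notation, the two verifications being mirror images of each other. The only ingredients needed are: the fact that $S^{-2}=S^{-1}\circ S^{-1}$ is a bialgebra endomorphism of $H$ (in particular an algebra map commuting with $\Delta$), while $S$ and $S^{-1}$ are anti-algebra and anti-coalgebra maps with $S(S^{-1}x)=x$, $S(S^{-2}x)=S^{-1}x$, $S^{-2}(Sx)=S^{-1}x$, $S^{-2}(S^2x)=x$; the standard antipode axiom $\sum x_{(1)}S(x_{(2)})=\varepsilon(x)1$ together with its consequence $\sum S^{-1}(x_{(1)})S^{-2}(x_{(2)})=\varepsilon(x)1$ (obtained by applying $S$ twice and using injectivity of $S$, via the intermediate identity $\sum S^{-1}(x_{(2)})x_{(1)}=\varepsilon(x)1$); the Yetter--Drinfel'd compatibility $\delta(hm)=h_{(1)}m_{(-1)}S(h_{(3)})\otimes h_{(2)}m_{(0)}$; and coassociativity and counitality of the coaction $\delta$.

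First I would compute $\delta(\zeta(m))$. Writing $\delta(m)=a\otimes b$ with $a\in H$, $b\in M$, one has $\zeta(m)=(Sa)b$, so the Yetter--Drinfel'd condition together with $\Delta^{(2)}(Sa)=Sa_{(3)}\otimes Sa_{(2)}\otimes Sa_{(1)}$ gives $\delta(\zeta(m))=S(m_{(-2)})\,m_{(-1)}\,S^2(m_{(-4)})\otimes S(m_{(-3)})\,m_{(0)}$, where $m_{(-4)}\otimes m_{(-3)}\otimes m_{(-2)}\otimes m_{(-1)}\otimes m_{(0)}$ denotes the fourfold iterated expansion of $m$ (the first three legs coming from $\Delta^{(2)}$ applied to the level-$1$ $H$-part $a$, and the last two from one further coaction on $b$). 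Feeding this into $\xi$ and applying $S^{-2}$ (an algebra map, hence no reversal) turns $\xi\zeta(m)$ into a product of elements of $H$ acting on $m_{(0)}$, namely $S^{-1}(m_{(-2)})\,S^{-2}(m_{(-1)})\,m_{(-4)}\,S(m_{(-3)})\cdot m_{(0)}$. Re-expressing $\Delta^{(2)}(a)$ through the $(\Delta\otimes\id)\Delta$ form of coassociativity so that $m_{(-4)},m_{(-3)}$ become the two factors of a single $\Delta$ with $m_{(-2)}$ free, the adjacent block $m_{(-4)}S(m_{(-3)})$ telescopes to $\varepsilon(\cdot)1$, leaving $S^{-1}(m_{(-2)})\,S^{-2}(m_{(-1)})\cdot m_{(0)}$ in a new twofold indexing; then $S^{-1}(m_{(-2)})S^{-2}(m_{(-1)})$ telescopes to $\varepsilon(\cdot)1$ and counitality of $\delta$ yields $\xi\zeta(m)=m$. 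The computation of $\zeta\xi(m)$ is the symmetric one: one expands $\delta(\xi(m))$ the same way (now $S^{-2}$ commutes with $\Delta$, so $\Delta^{(2)}(S^{-2}a)=S^{-2}a_{(1)}\otimes S^{-2}a_{(2)}\otimes S^{-2}a_{(3)}$ appears), applies $\zeta$ using that $S$ reverses products, telescopes first $S^{-1}(m_{(-2)})S^{-2}(m_{(-1)})$ and then $m_{(-2)}S(m_{(-1)})$, and again finishes by counitality.

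The main point to be careful about --- and essentially the only obstacle --- is the bookkeeping of the iterated coaction: one must pass freely between the $(\Delta\otimes\id)\delta$ and $(\id\otimes\delta)\delta$ forms of coassociativity so that the pair of Sweedler legs one wishes to telescope is genuinely adjacent (i.e. is the pair of factors of a single $\Delta$, with the remaining legs free to be carried along), and one must consistently track which of $S,S^{-1},S^{-2}$ reverses products and which does not. Once the indices are set up carefully each step is a single application of one of the identities listed above, so no delicate estimate or construction is involved.
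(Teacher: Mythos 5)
Your argument is correct: the expansion of $\delta(\zeta(m))$ via the Yetter--Drinfel'd compatibility, the two telescoping steps, and the auxiliary identities $x_{(1)}S(x_{(2)})=\varepsilon(x)1_H$ and $S^{-1}(x_{(1)})S^{-2}(x_{(2)})=\varepsilon(x)1_H$ (the latter derived exactly as you indicate) all check out, and the mirror computation for $\zeta\xi$ goes through the same way. The paper organizes the same ingredients differently: it first proves the twisted linearity relations $\xi(hm)=(S^{-2}h)\,\xi(m)$ and $\zeta(hm)=(S^{2}h)\,\zeta(m)$ --- each of which is one application of the compatibility condition followed by a single antipode telescope among the three legs of $\Delta^{(2)}(h)$, namely $S^{-1}(h_{(3)})h_{(2)}$ in the first case and $S(h_{(1)})h_{(2)}$ in the second --- and then simply composes: for instance $\zeta(\xi(m))=\zeta\bigl((S^{-2}m_{(-1)})m_{(0)}\bigr)=(S^{2}S^{-2}m_{(-1)})\,\zeta(m_{(0)})=m_{(-2)}\bigl(Sm_{(-1)}\bigr)m_{(0)}=m$. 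This factorization lets $S^{2}$ and $S^{-2}$ cancel before any coaction of the composite is ever computed, so only the two-fold iterated coaction and one further telescope are needed, whereas your route requires the four-fold expansion and two telescopes per composite --- precisely the bookkeeping you rightly identify as the delicate point. Both proofs are valid; the paper's intermediate semilinearity step buys a shorter and less index-heavy computation, while yours is a fully direct verification.
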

\begin{proof} The definition of an ${}_H^H\mathcal{YD}$-module implies that
	$$\xi(hm) = (S^{-2}h) \xi(m)\text{ and }\zeta(hm) = (S^2 h) \zeta(m)$$ for every $h\in H$ and $m\in M$.
	Hence $\xi\zeta=\zeta\xi=\id_M$.
\end{proof}	

Note that for any ${}_H^H\mathcal{YD}$-modules $M$ and $N$ we have
$$\theta_{M,N}(f\otimes g)(m\otimes n) = f(m_{(0)})g(m_{(-1)}n),$$
$$\theta^\mathrm{inv}_{M,N}(f\otimes g)(m\otimes n) = f\bigl((S^{-1}n_{(-1)})m\bigr)g(n_{(0)}),$$
$$\varphi^\flat(n)(m)=\varphi(m_{(0)})\bigl((S^{-1}m_{(-1)})n\bigr),$$
$$\varphi^\sharp(n)(m)=\varphi(n_{(-1)}m)(n_{(0)})$$
for all $m\in N$, $n\in M$, $f\in M^*$, $g\in N^*$, $\varphi \in {}_H^H\mathcal{YD}(M, N^*)$.

\begin{lemma}\label{LemmaYDThetaInjective}
	For every ${}_H^H\mathcal{YD}$-modules $M$ and $N$ the map $\theta_{M,N} \colon M^*\otimes N^* \to (M\otimes N)^*$ is injective.
\end{lemma}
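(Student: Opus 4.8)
The plan is to observe that $\theta_{M,N}$ is, up to precomposition with an invertible $\mathbbm k$-linear endomorphism of $M\otimes N$, the ordinary canonical map from the tensor product of dual spaces into the dual of a tensor product; since that canonical map is injective and monomorphisms in ${}_H^H\mathcal{YD}$ are precisely the injective homomorphisms (as noted in the proof of Lemma~\ref{LemmaYD1-10Star}), injectivity of $\theta_{M,N}$ will follow. Concretely, using the formula for $\theta_{M,N}$ recorded just above, I would introduce the $\mathbbm k$-linear map $\Psi\colon M\otimes N\to M\otimes N$, $\Psi(m\otimes n)=m_{(0)}\otimes m_{(-1)}n$, built from the $H$-coaction on $M$ and the $H$-action on $N$, so that $\theta_{M,N}(f\otimes g)=\jmath(f\otimes g)\circ\Psi$, where $\jmath\colon M^*_{\mathbbm k}\otimes N^*_{\mathbbm k}\to (M\otimes N)^*_{\mathbbm k}$, $\jmath(f\otimes g)(m\otimes n)=f(m)g(n)$, is the usual canonical map.

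The crucial step is to prove that $\Psi$ is bijective, with inverse $\Psi^{-1}(m\otimes n)=m_{(0)}\otimes\bigl(S^{-1}m_{(-1)}\bigr)n$; this is where invertibility of $S$ enters, since it is needed for $S^{-1}$ to exist. Computing the two composites $\Psi^{-1}\Psi$ and $\Psi\Psi^{-1}$ is a routine Sweedler manipulation: expanding the iterated coaction on $M$ by coassociativity puts the two composites in the forms $m_{(0)}\otimes\bigl(S^{-1}(m_{(-1)(2)})\,m_{(-1)(1)}\bigr)n$ and $m_{(0)}\otimes\bigl(m_{(-1)(2)}\,S^{-1}(m_{(-1)(1)})\bigr)n$ respectively, the antipode identities for $S^{-1}$ collapse the parenthesised factor to $\varepsilon(m_{(-1)})\,1_H$, and the comodule counit axiom $\varepsilon(m_{(-1)})m_{(0)}=m$ returns $m\otimes n$ in both cases. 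Note that neither the Yetter--Drinfel'd compatibility condition nor associativity of the $H$-action on $N$ is used here; only that $M$ is an $H$-comodule, $N$ an $H$-module, and $S$ bijective.

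Given that $\Psi$ is an automorphism, precomposition with $\Psi$ is a bijection of $(M\otimes N)^*_{\mathbbm k}$; the canonical map $\jmath$ is injective by the standard argument (if $\sum_i f_i(m)g_i(n)=0$ for all $m,n$, with the $g_i$ chosen linearly independent, then $\sum_i f_i(m)g_i=0$ in $N^*_{\mathbbm k}$ for each $m$, hence every $f_i=0$); and the inclusion $M^*\otimes N^*\hookrightarrow M^*_{\mathbbm k}\otimes N^*_{\mathbbm k}$ is injective because the tensor product over a field of injective linear maps is injective. Composing these three injective maps shows that $\theta_{M,N}\colon M^*\otimes N^*\to(M\otimes N)^*_{\mathbbm k}$ is injective, and therefore so is its corestriction to the subspace $(M\otimes N)^*\subseteq (M\otimes N)^*_{\mathbbm k}$. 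The only point requiring care is keeping the Sweedler indices straight in the verification that $\Psi$ is invertible; everything else is formal.
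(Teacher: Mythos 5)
Your proof is correct and follows exactly the paper's (one-line) argument: $\theta_{M,N}$ is the canonical injection $M^*\otimes N^*\hookrightarrow M^*_{\mathbbm k}\otimes N^*_{\mathbbm k}\hookrightarrow (M\otimes N)^*_{\mathbbm k}$ precomposed with the map $m\otimes n\mapsto m_{(0)}\otimes m_{(-1)}n$, which is bijective with the inverse you state (using $S^{-1}$), and your Sweedler computation checks out. One cosmetic quibble: collapsing $m_{(-1)(2)}\bigl(S^{-1}(m_{(-1)(1)})\,n\bigr)$ to $\bigl(m_{(-1)(2)}\,S^{-1}(m_{(-1)(1)})\bigr)n$ does invoke the associativity axiom of the $H$-action on $N$, so your aside that this axiom is unused is not quite right, though it does not affect the argument.
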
	
\begin{proof}
The map $\theta_{M,N} \colon M^*\otimes N^* \to (M\otimes N)^*$ is injective since the map $m\otimes n\mapsto m_{(0)} \otimes m_{(-1)}n$ is a bijection.
\end{proof}
\begin{lemma}\label{LemmaYDAlphaInjective}
	Let $M$ be an ${}_H^H\mathcal{YD}$-module. Then the map $\alpha_M \colon M \to M^{**}$ is injective
	if and only if $\bigcap\limits_{f \in M^*} \Ker f = 0$.
\end{lemma}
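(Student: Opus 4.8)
The plan is to unwind both sides of the claimed equivalence using the explicit description of $M^*$ and of $\alpha_M$ in the category ${}_H^H\mathcal{YD}$. First I would recall that, as a $\mathbbm k$-linear map, $\alpha_M \colon M \to M^{**}$ coincides with the usual evaluation map $M \to (M^*_{\mathbbm k})^*_{\mathbbm k}$ restricted and corestricted appropriately: indeed, by construction $\alpha_M = \id_{M^*}^\flat$ and the formula for $(-)^\flat$ displayed just before this lemma gives $\alpha_M(m)(f) = \varphi^\flat(m)(f)$ with $\varphi = \id_{M^*}$, which one computes to be (up to the action of the invertible operator $\zeta$ or $\xi$ of Lemma~\ref{LemmaYDSf(-1)Multiply}) the evaluation $f \mapsto f(\cdots m \cdots)$. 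Since $\xi$ and $\zeta$ are bijections of $M$, twisting $m$ by such an operator does not change whether $\alpha_M(m)$ is zero. Hence $\alpha_M(m) = 0$ if and only if $f(m') = 0$ for all $f \in M^*$, where $m'$ is the image of $m$ under a fixed bijection of $M$.

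**Key steps.** In order, I would: (1) write down $\alpha_M(m)$ explicitly as an element of $M^{**} \subseteq (M^*)^*_{\mathbbm k}$ using the $(-)^\flat$ formula, absorbing the braiding contribution into $\xi$ or $\zeta$ via Lemma~\ref{LemmaYDSf(-1)Multiply}; (2) observe that $\alpha_M$ is injective if and only if its kernel is $0$, and that $m \in \Ker \alpha_M$ iff $f$ annihilates the ($\xi$- or $\zeta$-twisted) image of $m$ for every $f \in M^*$; (3) use that $\xi, \zeta$ are bijections to conclude $\Ker \alpha_M = 0 \iff \bigcap_{f \in M^*}\Ker f = 0$. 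A small point to address is that $M^*$ here is the ${}_H^H\mathcal{YD}$-dual, a proper subspace of $M^*_{\mathbbm k}$; but since we only ever evaluate elements of $M^*$ against $m$, the intersection $\bigcap_{f \in M^*}\Ker f$ is taken over exactly this subspace, so no discrepancy arises — this is precisely why the statement is phrased with $f$ ranging over $M^*$ rather than $M^*_{\mathbbm k}$.

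**Main obstacle.** The only genuinely delicate point is the bookkeeping in step (1): one must verify that the composite defining $\alpha_M$ — which involves the braiding $c$ of ${}_H^H\mathcal{YD}$ and $\mathrm{ev}$ — really does reduce, after the substitution $\varphi = \id_{M^*}$, to evaluation composed with one of the explicit invertible operators of Lemma~\ref{LemmaYDSf(-1)Multiply}, rather than to some other endomorphism of $M$. Concretely, from $\varphi^\flat(n)(m) = \varphi(m_{(0)})\bigl((S^{-1}m_{(-1)})n\bigr)$ with $\varphi = \id_{M^*}$ one gets $\alpha_M(n)(m) = m\bigl((S^{-1}m_{(-1)})n\bigr)$ — wait, this mixes up the roles of the two arguments, so the correct reading is $\alpha_M(n) \in M^{**}$ sends $f \in M^*$ to $f\bigl((S^{-1}f_{?})\cdots\bigr)$; sorting out which comodule coordinates belong to $f$ versus to $n$, and checking that the resulting map $n \mapsto n'$ is exactly $\xi$ or $\zeta$, is the crux. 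Once that identification is made, the equivalence is immediate. I expect the authors' proof to do exactly this and to invoke Lemma~\ref{LemmaYDSf(-1)Multiply} at the key moment; I would follow the same route.
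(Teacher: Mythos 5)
Your overall route is the paper's: compute $\alpha_M(m)(f)$ from the explicit formula for $(-)^\flat$, absorb the braiding contribution into an invertible operator via Lemma~\ref{LemmaYDSf(-1)Multiply}, and conclude that $\alpha_M(m)=0$ iff $m$ is annihilated by all of $M^*$. The one point you flag as unresolved does, however, resolve differently from what you guess: in the formula $\varphi^\flat(n)(m)=\varphi(m_{(0)})\bigl((S^{-1}m_{(-1)})n\bigr)$ the comodule coordinates sit on the argument of $\varphi^\flat(n)$, which for $\varphi=\id_{M^*}$ is $f\in M^*$, not on the element of $M$. Thus $\alpha_M(m)(f)=f_{(0)}\bigl((S^{-1}f_{(-1)})m\bigr)=\bigl((S^{-2}f_{(-1)})f_{(0)}\bigr)(m)$, i.e.\ the twist is the operator $\xi$ of Lemma~\ref{LemmaYDSf(-1)Multiply} applied to the Yetter--Drinfel'd module $M^*$ (a bijection of $M^*$ acting on $f$), and there is no $f$-independent element $m'$ with $\alpha_M(m)(f)=f(m')$ as your step (1) envisages. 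This does not damage the argument: since $f\mapsto (S^{-2}f_{(-1)})f_{(0)}$ is bijective on $M^*$, the functional $\alpha_M(m)$ vanishes iff $g(m)=0$ for all $g\in M^*$, which is exactly the paper's one-line conclusion; so your plan is correct once the twist is located on the dual side rather than on $M$.
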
	
\begin{proof}
	For every $m\in M$ and $f\in M^*$ we have
	$$\alpha_M(m)(f) := (\id_{M^*})^\flat (m)(f) = f_{(0)}\bigl((S^{-1}f_{(-1)})m\bigr)
	= \bigl((S^{-2}f_{(-1)})f_{(0)}\bigr)(m).$$
	Now Lemma~\ref{LemmaYDSf(-1)Multiply} implies that $\alpha_M(m) = 0$ for some $m\in M$
	if and only if $f(m)=0$ for all $f\in M^*$.
\end{proof}

\begin{lemma}\label{LemmaYDThetaInvFlatInjective}
	Let $M$ and $N$ be nonzero ${}_H^H\mathcal{YD}$-modules.
	Then the map $$\left(\theta^\mathrm{inv}_{M,N}\right)^\flat
	\colon M\otimes N \to (M^*\otimes N^*)^*$$ is injective if and only if 
	$\bigcap\limits_{f \in M^*} \Ker f = 0$  and $\bigcap\limits_{g \in N^*} \Ker g = 0$.
\end{lemma}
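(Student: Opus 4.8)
The idea is to deduce the statement from Lemma~\ref{LemmaYDAlphaInjective}, which records precisely when $\alpha_M$ (resp.\ $\alpha_N$) is injective. The bridge is the factorization
$$\left(\theta^{\mathrm{inv}}_{M,N}\right)^{\flat}=\theta_{M^{*},N^{*}}\circ(\alpha_{M}\otimes\alpha_{N})\colon M\otimes N\longrightarrow (M^{*}\otimes N^{*})^{*},$$
where on the right one first applies $\alpha_{M}\otimes\alpha_{N}\colon M\otimes N\to M^{**}\otimes N^{**}$ and then the op-monoidal structure morphism $\theta_{M^{*},N^{*}}\colon M^{**}\otimes N^{**}\to(M^{*}\otimes N^{*})^{*}$ attached to the ${}_H^H\mathcal{YD}$-modules $M^{*}$ and $N^{*}$.

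Establishing this factorization is the first and, I expect, the only genuinely laborious step. I would verify it by evaluating both sides on a generator $m\otimes n$ of $M\otimes N$ against a generator $p\otimes q$ of $M^{*}\otimes N^{*}$, using the explicit formulas for $\theta^{\mathrm{inv}}$, $\theta$, $(-)^{\flat}$ and $\alpha$ in ${}_H^H\mathcal{YD}$ displayed just before the statement (the formula for $\alpha_{M}$ being the one extracted in the proof of Lemma~\ref{LemmaYDAlphaInjective}, namely $\alpha_{M}(m)(f)=f_{(0)}\bigl((S^{-1}f_{(-1)})m\bigr)$). Comparing the two expansions is a direct Sweedler-notation computation: it uses coassociativity of the $H$-coactions on $M^{*}$ and $N^{*}$, the facts that $\Delta$ is an algebra map and $S^{-1}$ an anti-coalgebra map, the Yetter--Drinfel'd compatibility condition, and the invertibility $\xi\zeta=\zeta\xi=\id_{M}$ of Lemma~\ref{LemmaYDSf(-1)Multiply}. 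Alternatively, one can obtain the factorization without computation from the mate correspondence between the op-monoidal structure $\theta$ on $(-)^{*}\colon\mathcal C\to\mathcal C^{\mathrm{op}}$ and the monoidal structure $\theta^{\mathrm{inv}}$ on $(-)^{*}\colon\mathcal C^{\mathrm{op}}\to\mathcal C$ recorded in Remark~\ref{RemarkThetaThetaInvAdjoint}, together with the identity $g^{\flat}=g^{*}\circ\alpha_{A}$ for $g\colon B\to A^{*}$ and naturality of $(-)^{*}$; but the explicit check with the displayed formulas is the more transparent route.

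Granting the factorization, the rest is formal. By Lemma~\ref{LemmaYDThetaInjective}, applied to the ${}_H^H\mathcal{YD}$-modules $M^{*}$ and $N^{*}$, the morphism $\theta_{M^{*},N^{*}}$ is injective (and so would be $\theta^{\mathrm{inv}}_{M^{*},N^{*}}$, by the same argument, should the computation produce that variant instead). Hence $\left(\theta^{\mathrm{inv}}_{M,N}\right)^{\flat}$ is injective if and only if $\alpha_{M}\otimes\alpha_{N}$ is injective. Now in ${}_H^H\mathcal{YD}$ the monoidal product is the tensor product over $\mathbbm{k}$ and monomorphisms are exactly the injective linear maps; since moreover $M\neq 0$ and $N\neq 0$, the linear map $\alpha_{M}\otimes\alpha_{N}$ is injective if and only if both $\alpha_{M}$ and $\alpha_{N}$ are injective. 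Indeed, a tensor product over a field of two injective linear maps is injective, whereas if, say, $\alpha_{M}$ had a nonzero kernel, then choosing $0\neq m\in\Ker\alpha_{M}$ and $0\neq n\in N$ the nonzero element $m\otimes n$ would lie in $\Ker(\alpha_{M}\otimes\alpha_{N})$. Finally, by Lemma~\ref{LemmaYDAlphaInjective}, $\alpha_{M}$ is injective if and only if $\bigcap_{f\in M^{*}}\Ker f=0$, and symmetrically $\alpha_{N}$ is injective if and only if $\bigcap_{g\in N^{*}}\Ker g=0$. Chaining the three equivalences yields the assertion.
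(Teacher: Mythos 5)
Your proof is correct, but it takes a genuinely different route from the paper's. The paper argues by direct computation: it expands $\left(\theta^\mathrm{inv}_{M,N}\right)^\flat(m\otimes n)(f\otimes g)$ in Sweedler notation, uses the bijectivity of $m\otimes n\mapsto (S^{-1}n_{(-1)})m\otimes n_{(0)}$ together with Lemma~\ref{LemmaYDSf(-1)Multiply} to strip away the Yetter--Drinfel'd twists, and is then left with the untwisted pairing $\sum_i f(m_i)g(n_i)$, on which it runs a linear-independence argument to get both directions of the equivalence. Your pivotal factorization $\left(\theta^\mathrm{inv}_{M,N}\right)^\flat=\theta_{M^*,N^*}\circ(\alpha_M\otimes\alpha_N)$ does hold: it follows formally from the identity $g^\flat=g^*\circ\alpha$ (naturality of $(-)^\flat$ in the source variable), the hexagon decomposition of $c^{-1}_{M^*\otimes N^*,\,M\otimes N}$, and the cancellation of the crossing $c_{N^*,M}$ occurring in the defining composite of $\theta^\mathrm{inv}_{M,N}$ against its inverse; a direct Sweedler check confirms that both sides of your factorization reduce to $f_{(0)}\bigl((S^{-1}f_{(-1)})m\bigr)\,g_{(0)}\bigl(S^{-1}(g_{(-1)})S^{-1}(f_{(-2)})n\bigr)$. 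Granting this, your reduction to Lemmas~\ref{LemmaYDThetaInjective} and~\ref{LemmaYDAlphaInjective}, plus the observation that over a field a tensor product of linear maps with nonzero targets and nonzero sources is injective iff both factors are, is complete; the hypotheses $M\neq 0$, $N\neq 0$ are used exactly where they should be. What your route buys is modularity and a conceptual explanation of why the same kernel conditions govern $\alpha_M$, $\alpha_N$ and $\left(\theta^\mathrm{inv}_{M,N}\right)^\flat$; what it costs is the coherence computation establishing the factorization, which is comparable in length to the Sweedler expansion the paper performs instead, and the paper's closing linear-algebra step is essentially your injectivity criterion for $\alpha_M\otimes\alpha_N$ in different packaging.
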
	
\begin{proof}
Given $m\in M$, $n\in N$, $f\in M^*$, $g\in N^*$,
we have 
\begin{equation*}\begin{split}\left(\theta^\mathrm{inv}_{M,N}\right)^\flat (m\otimes n)(f\otimes g)=
\theta^\mathrm{inv}_{M,N}\bigl((f\otimes g)_{(0)}\bigr)\Bigl(\left(S^{-1}(f\otimes g)_{(-1)}\right)(m\otimes n)\Bigr)
\\=\theta^\mathrm{inv}_{M,N}\Bigl(\left(S^{-2}(f\otimes g)_{(-1)}\right)(f\otimes g)_{(0)}\Bigr)(m\otimes n)
\\= \Bigl(\left(S^{-2}(f\otimes g)_{(-1)}\right)(f\otimes g)_{(0)}\Bigr)\bigl((S^{-1}n_{(-1)})m \otimes n_{(0)}\bigr).\end{split}\end{equation*}
Here we have used the fact that for every $H$-module homomorphism $\varphi \colon A\otimes B \to \mathbbm{k}$
we have $$\varphi(ha \otimes b) = \varphi(h_{(1)}a \otimes h_{(2)} (Sh_{(3)}) b)=
\varepsilon(h_{(1)}) \varphi\bigl(a \otimes (Sh_{(2)}) b) = \varphi(a \otimes (Sh)b)$$ for all $a\in A$, $b\in B$.	

Now we notice that the map $m \otimes n \mapsto (S^{-1}n_{(-1)})m \otimes n_{(0)}$ is bijective
since $m \otimes n \mapsto n_{(-1)}m \otimes n_{(0)}$ is its inverse.
Combining this with Lemma~\ref{LemmaYDSf(-1)Multiply}, we obtain that the map $\left(\theta^\mathrm{inv}_{M,N}\right)^\flat
\colon M\otimes N \to (M^*\otimes N^*)^*$ is not injective only if
there exists a nonzero element $\sum\limits_{i=1}^k m_i \otimes n_i \in M \otimes N$ such that
$$\sum\limits_{i=1}^k f(m_i) g(n_i)=0\text{ for all }f\in M^*,\ g\in N^*.$$
Without loss of generality we may assume that the $m_i$'s are linearly independent.
Suppose $\bigcap\limits_{f \in M^*} \Ker f = 0$  and $\bigcap\limits_{g \in N^*} \Ker g = 0$.
 Then 
$$\sum\limits_{i=1}^k f(m_i) g(n_i) =  f\left( \sum\limits_{i=1}^k g(n_i) m_i\right)=0$$
implies $\sum\limits_{i=1}^k g(n_i) m_i = 0$ and $g(n_i)=0$ for all $1\leqslant i \leqslant k$ and $g\in N^*$.
Hence all $n_i = 0$, $\sum\limits_{i=1}^k m_i \otimes n_i = 0$
and  $\left(\theta^\mathrm{inv}_{M,N}\right)^\flat$ is injective.

Conversely, if $m \in M$, $n\in N$, where $m\ne 0$, $n\ne 0$, and either $m \in \bigcap\limits_{f \in M^*} \Ker f$
or $n\in \bigcap\limits_{g \in N^*} \Ker g$,
then $\left(\theta^\mathrm{inv}_{M,N}\right)^\flat( n_{(-1)}m \otimes n_{(0)})(f\otimes g)= 0$
for all $f\in M^*,\ g\in N^*$.
\end{proof}
\begin{remark}
	An example of ${}_H^H\mathcal{YD}$-modules $M$ and $N$ with non-injective $\alpha_M$ and $\left(\theta^\mathrm{inv}_{M,N}\right)^\flat$ will be given in Remark~\ref{left_YD_rem} (\ref{RemarkYDThetaInvFlatIsNotMono}) below.
\end{remark}

By Lemma~\ref{LemmaYD1-10Star} and Remark~\ref{RemarkCircSufficientConditionsForExistence}, the functor $(-)^* \colon \mathsf{Comon}\left({}_H^H\mathcal{YD}\right) \to \mathsf{Mon}\left({}_H^H\mathcal{YD}\right)$ admits an adjoint functor $(-)^\circ \colon \mathsf{Mon}\left({}_H^H\mathcal{YD}\right) \to \mathsf{Comon}\left({}_H^H\mathcal{YD}\right)$. However, in order to prove that $\varkappa_A$ is a monomorphism for every ${}_H^H\mathcal{YD}$-module algebra $A$, we have to provide an explicit construction.
We first need the following lemma:

\begin{lemma}\label{LemmaPhiMonACStarEquivalence} Let $(A,\mu,u)$ be a monoid and let $(C,\Delta,\varepsilon)$ be a comonoid in a braided pre-rigid monoidal category $\mathcal C$. Then
	for $\varphi \in \mathcal C(A, C^*)$ we have $\varphi \in \mathsf{Mon}(\mathcal C)(A, C^*)$
	if and only if the diagrams below are commutative:
	\begin{equation}\label{EqMonACStarEquivalence} \xymatrix{ C \ar[d]_{\Delta} \ar[rrrr]^{\varphi^\sharp} & & & & A^*\ar[d]^{\mu^*} \\
		C\otimes C \ar[rr]^{\varphi^\sharp\otimes \varphi^\sharp} & & A^*\otimes A^* \ar[rr]^{\theta^\mathrm{inv}_{A,A}} & & (A\otimes A)^*}
	\qquad \xymatrix{ C \ar[d]_\varepsilon \ar[r]^{\varphi^\sharp} & A^* \ar[d]^{u^*} \\
	                  \mathbbm{1} \ar[r]^\iota &  \mathbbm{1}^*}
	\end{equation}
\end{lemma}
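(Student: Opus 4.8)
The plan is to read the statement as a transposition result. Writing $\mu_{C^*}=\Delta^*\theta_{C,C}$ and $u_{C^*}=\varepsilon^*\iota$ for the multiplication and unit of the dual monoid $C^*$, we have $\varphi\in\mathsf{Mon}(\mathcal C)(A,C^*)$ exactly when $\varphi\mu=\Delta^*\theta_{C,C}(\varphi\otimes\varphi)$ and $\varphi u=\varepsilon^*\iota$. I would apply the bijection $(-)^\sharp\colon\mathcal C(A,C^*)\to\mathcal C(C,A^*)$ from \eqref{EqDualityAdjunctionStarObjects} to both sides of each of these two equalities; since $(-)^\sharp$ is a bijection, the resulting equalities are equivalent to the original ones, and the point is that after simplification they become precisely the commutativity of the two diagrams in \eqref{EqMonACStarEquivalence}.

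First I would record the bookkeeping rules for $(-)^\sharp$. Being a composite of the bijections of \eqref{EqPrerigidBijection} with one instance of the braiding, $(-)^\sharp$ is natural in both arguments of the bijection $\mathcal C(A,B^*)\cong\mathcal C(B,A^*)$; explicitly, $(fg)^\sharp=g^*f^\sharp$ for $f\colon A\to B^*$ and $g\colon A'\to A$, and $(h^*f)^\sharp=f^\sharp h$ for $h\colon B'\to B$, both following from the naturality of \eqref{EqPrerigidBijection} in each variable together with the naturality of the braiding. In addition, Remark~\ref{RemarkThetaThetaInvAdjoint}, which identifies $(\theta^\mathrm{inv},\iota)$ as the monoidal structure on $(-)^*\colon\mathcal C^\mathrm{op}\to\mathcal C$ that is the mate of the op-monoidal structure $(\theta,\iota)$ along the self-adjunction, unwinds to the identities $\bigl(\theta_{B_1,B_2}(f_1\otimes f_2)\bigr)^\sharp=\theta^\mathrm{inv}_{A_1,A_2}(f_1^\sharp\otimes f_2^\sharp)$ for $f_i\colon A_i\to B_i^*$, and $\iota^\sharp=\iota$ (the latter being anyway immediate from the definitions of $\iota$ and $(-)^\sharp$, since $c_{\mathbbm{1},\mathbbm{1}}=\id$).

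With these in hand the computation is short. On the multiplicative side, $(\varphi\mu)^\sharp=\mu^*\varphi^\sharp$, while $\bigl(\Delta^*\theta_{C,C}(\varphi\otimes\varphi)\bigr)^\sharp=\bigl(\theta_{C,C}(\varphi\otimes\varphi)\bigr)^\sharp\circ\Delta=\theta^\mathrm{inv}_{A,A}(\varphi^\sharp\otimes\varphi^\sharp)\circ\Delta$, using naturality of $(-)^\sharp$ in the second argument and then the mate identity with $A_1=A_2=A$, $B_1=B_2=C$, $f_1=f_2=\varphi$. Hence $\varphi\mu=\Delta^*\theta_{C,C}(\varphi\otimes\varphi)$ is equivalent to $\mu^*\varphi^\sharp=\theta^\mathrm{inv}_{A,A}(\varphi^\sharp\otimes\varphi^\sharp)\Delta$, which is the first diagram in \eqref{EqMonACStarEquivalence}. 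On the unit side, $(\varphi u)^\sharp=u^*\varphi^\sharp$ and $(\varepsilon^*\iota)^\sharp=\iota^\sharp\varepsilon=\iota\varepsilon$, so $\varphi u=\varepsilon^*\iota$ is equivalent to $u^*\varphi^\sharp=\iota\varepsilon$, which is the second diagram. Combining the two equivalences gives the lemma.

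The only genuinely non-formal ingredient is the passage from Remark~\ref{RemarkThetaThetaInvAdjoint} to the concrete mate identity for $(-)^\sharp$ quoted above, and this is the step I would write out most carefully. If one prefers to avoid the abstract mate calculus, the same identity can instead be established by a direct diagram chase from the definitions of $\theta$, $\theta^\mathrm{inv}$, $(-)^\sharp$, $(-)^\flat$ and the universal property of $\mathrm{ev}$, in the spirit of Lemmas~\ref{LemmaFFlatEvBraiding} and~\ref{LemmaDiagramsPPStarOperations}; everything else in the argument is formal manipulation of natural bijections.
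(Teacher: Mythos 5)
Your proof is correct and is essentially the paper's argument made explicit: the paper's one-line proof invokes Remark~\ref{RemarkThetaThetaInvAdjoint} together with \cite[Lemma 2.5]{GoyVer}, which is precisely the transposition-along-$(-)^\sharp$ computation (naturality of $(-)^\sharp$ in both arguments plus the mate identity relating $\theta$ and $\theta^{\mathrm{inv}}$) that you carry out by hand. Your bookkeeping identities and their application to the multiplication and unit equations check out.
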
	
\begin{proof} Apply Remark~\ref{RemarkThetaThetaInvAdjoint} and~\cite[Lemma 2.5]{GoyVer}.
\end{proof}	

Below we present a candidate for $A^\circ$ in  ${}_H^H\mathcal{YD}$.

\begin{lemma}\label{LemmaYDAcirc} Let $H$ be a Hopf algebra over a field $\mathbbm k$ with an invertible antipode $S$.
	Given a unital associative ${}_H^H\mathcal{YD}$-module algebra $A$, the subspace $A^* \cap A^\circ_\mathbbm{k}$
	is a ${}_H^H\mathcal{YD}$-submodule of~$A^*$. Moreover, $A^* \cap A^\circ_\mathbbm{k}$
	is an \textrm{${}_H^H\mathcal{YD}$-module coalgebra}, i.e. a comonoid in ${}_H^H\mathcal{YD}$,
	where the comultiplication $\Delta_{A^* \cap A^\circ_\mathbbm{k}}f:=f_{[1]}\otimes f_{[2]}$ and the counit are defined by
	$$f_{[1]}(a) f_{[2]}(b) := f\bigl((b_{(-1)}a)b_{(0)}\bigr), \quad \varepsilon_{A^* \cap A^\circ_\mathbbm{k}} (f) := f(1_A),$$
	for all  $f\in A^* \cap A^\circ_\mathbbm{k}$, $a$, $b \in A$.
\end{lemma}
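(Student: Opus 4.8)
Write $B := A^*\cap A^\circ_{\mathbbm k}$, the intersection inside $A^*_{\mathbbm k}$ of the dual $A^*$ of $A$ in ${}_H^H\mathcal{YD}$ with the ordinary finite dual $A^\circ_{\mathbbm k}$ of the $\mathbbm k$-algebra $A$. I will use two standard descriptions: $f\in A^\circ_{\mathbbm k}$ iff $\ker f$ contains a cofinite two-sided ideal, equivalently iff $f\circ\mu_A\in A^*_{\mathbbm k}\otimes A^*_{\mathbbm k}$ inside $(A\otimes A)^*_{\mathbbm k}$, in which case I write $f\circ\mu_A=f_{\{1\}}\otimes f_{\{2\}}$; and $f\in A^*$ iff the map $\Phi_f\colon a\mapsto f(a_{(0)})a_{(-1)}$ has finite-dimensional image, which a short check shows to be $H$-colinear, so $\ker\Phi_f$ is a cofinite $H$-subcomodule of $A$ contained in $\ker f$. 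The plan is: (1) $B$ is an $H$-submodule of $A^*$; (2) $B$ is an $H$-subcomodule of $A^*$ and $\Delta_{A^*\cap A^\circ_{\mathbbm k}}$, $\varepsilon_{A^*\cap A^\circ_{\mathbbm k}}$ take values in $B\otimes B$ and $\mathbbm k$; (3) the comonoid axioms hold and $\Delta_B$, $\varepsilon_B$ are ${}_H^H\mathcal{YD}$-morphisms. Step (1) is immediate: for $h\in H$ and $f\in A^\circ_{\mathbbm k}$, since $A$ is an $H$-module algebra and $S$ is anti-comultiplicative, $(hf)(ab)=f\bigl((Sh)(ab)\bigr)=f\bigl((Sh_{(2)}\,a)(Sh_{(1)}\,b)\bigr)=(h_{(2)}f_{\{1\}})(a)\,(h_{(1)}f_{\{2\}})(b)$, so $(hf)\circ\mu_A=(h_{(2)}f_{\{1\}})\otimes(h_{(1)}f_{\{2\}})\in A^*_{\mathbbm k}\otimes A^*_{\mathbbm k}$ and $hf\in A^\circ_{\mathbbm k}$; since $A^*$ is an $H$-submodule of $A^*_{\mathbbm k}$ by construction, so is $B$.

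\emph{Step 2 (the crux).} Fix $f\in B$, a cofinite two-sided ideal $I\subseteq\ker f$, and a finite-dimensional subcoalgebra $C\subseteq H$ witnessing $f\in A^*$. Writing $\delta(f)=f_{(-1)}\otimes f_{(0)}\in C\otimes A^*$, I must show each component $f_{(0)}$ again lies in $A^\circ_{\mathbbm k}$. Here the delicate point is to combine the cofinite-ideal datum governing $A^\circ_{\mathbbm k}$ with the finite-subcoalgebra datum governing $A^*$: neither $A^*$ nor $A^\circ_{\mathbbm k}$ is on its own stable under the operation the other controls. Concretely I would track, for each $g$ in the finite-dimensional $H$-subcomodule $W\subseteq A^*$ generated by $f$, a cofinite two-sided ideal inside $\ker g$ manufactured from $I$ using that $\delta_A$ is an algebra homomorphism (so preimages under $\delta_A$ of two-sided ideals $H\otimes(-)$ of $H\otimes A$ are two-sided ideals), together with the local finiteness of the comodule $A$ and the finiteness of $C$; this gives $W\subseteq A^\circ_{\mathbbm k}$, hence $\delta(f)\in C\otimes(A^*\cap A^\circ_{\mathbbm k})$, so $B$ is an $H$-subcomodule and thus a ${}_H^H\mathcal{YD}$-submodule of $A^*$. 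A parallel argument, applied to $f\circ\mu_A\in A^*_{\mathbbm k}\otimes A^*_{\mathbbm k}$ and to the defining formula $f_{[1]}(a)f_{[2]}(b)=f\bigl((b_{(-1)}a)b_{(0)}\bigr)$, shows the factors $f_{[1]},f_{[2]}$ can be chosen in $B$, so $\Delta_{A^*\cap A^\circ_{\mathbbm k}}$ maps $B$ into $B\otimes B$; and $\varepsilon_{A^*\cap A^\circ_{\mathbbm k}}(f)=f(1_A)\in\mathbbm k$ is obviously well defined. I expect this bookkeeping — reconciling the two finiteness conditions — to be the only genuinely difficult part of the proof.

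\emph{Step 3.} From the formula $\theta^\mathrm{inv}_{A,A}(g\otimes g')(a\otimes b)=g\bigl((S^{-1}b_{(-1)})a\bigr)g'(b_{(0)})$, coassociativity of $\delta_A$, the comodule counit axiom $\varepsilon_H(b_{(-1)})b_{(0)}=b$ and the identity $b_{(-1)(2)}\,S^{-1}(b_{(-1)(1)})=\varepsilon_H(b_{(-1)})1_H$ (the antipode axiom of $H^{\mathrm{cop}}$), a short computation yields $\theta^\mathrm{inv}_{A,A}\circ\Delta_B=\mu_A^{*}|_B$, where $\mu_A^{*}$ is the image of the multiplication $\mu_A\colon A\otimes A\to A$ under $(-)^*$; since $\theta^\mathrm{inv}$ is injective (the argument for Lemma~\ref{LemmaYDThetaInjective} applies to it verbatim), this identity determines $\Delta_B$. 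Coassociativity of $\Delta_B$ then follows formally from associativity of $\mu_A$: composing both $(\Delta_B\otimes\id)\Delta_B$ and $(\id\otimes\Delta_B)\Delta_B$ with the common injective map $A^*\otimes A^*\otimes A^*\to(A\otimes A\otimes A)^*$ built from $\theta^\mathrm{inv}$ — its two descriptions agreeing by the coherence of the monoidal structure $\theta^\mathrm{inv}$ on $(-)^*$ from Theorem~\ref{TheoremStarBraidedLaxMonoidal} and Remark~\ref{RemarkThetaThetaInvAdjoint} — and using naturality of $\theta^\mathrm{inv}$ turns them into $(\mu_A(\mu_A\otimes\id))^{*}|_B$ and $(\mu_A(\id\otimes\mu_A))^{*}|_B$, which coincide. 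The counit axioms reduce, via the same formula, exactly to $b_{(-1)}1_A=\varepsilon_H(b_{(-1)})1_A$ and $\delta_A(1_A)=1_H\otimes 1_A$, i.e.\ the defining properties of a \emph{unital} ${}_H^H\mathcal{YD}$-module algebra. Finally, since $\mu_A$ and $u_A$ are morphisms in ${}_H^H\mathcal{YD}$ and $\theta^\mathrm{inv}$, $\iota$ are morphisms in $\mathcal C={}_H^H\mathcal{YD}$, the identity $\theta^\mathrm{inv}_{A,A}\circ\Delta_B=\mu_A^{*}|_B$ and the analogous one for $\varepsilon_B$, together with injectivity of $\theta^\mathrm{inv}$, force $\Delta_B$ and $\varepsilon_B$ to be $H$-linear and $H$-colinear; hence $(B,\Delta_B,\varepsilon_B)$ is an ${}_H^H\mathcal{YD}$-module coalgebra, as claimed.
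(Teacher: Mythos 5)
Your Step 1 and the well-definedness of $\varepsilon_{A^*\cap A^\circ_{\mathbbm k}}$ are fine, and your Step 3 is a legitimate route: deriving coassociativity and the $H$-(co)linearity of $\Delta_B$ from the identity $\theta^{\mathrm{inv}}_{A,A}\circ\Delta_B=\mu_A^*|_B$ together with the injectivity of $\theta^{\mathrm{inv}}_{A,A}$ is arguably cleaner than the explicit Sweedler verification the paper performs. The problem is Step 2, which you yourself flag as the crux and then only sketch. As written it is not a proof, and the mechanism you indicate is doubtful: the natural candidate for a cofinite ideal killing the comodule components of $f$, namely $\delta_A^{-1}(H\otimes I)$, is indeed a two-sided ideal but is \emph{not} cofinite when $H$ is infinite-dimensional (the quotient embeds into $H\otimes(A/I)$), while the locally finite pieces $\{a\in A:\delta_A(a)\in D\otimes A\}$ for finite-dimensional subcoalgebras $D\subseteq H$ are not ideals; so it is not clear that the two finiteness conditions can be reconciled along the lines you describe.

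The paper closes exactly this gap with a device absent from your proposal: translates. Writing $f(ab)=\sum_{i=1}^s f_i(a)g_i(b)$ with both systems $\{f_i\}$ and $\{g_i\}$ linearly independent and choosing dual elements $a_i,b_i\in A$ by Gaussian elimination, one gets $f_i=f((-)b_i)$ and $g_i=f(a_i(-))$; a direct computation with the pairing $g\diamond m:=g(m_{(0)})m_{(-1)}$ gives $f(b(-))\diamond a=(Sb_{(-1)})\bigl(f\diamond(b_{(0)}a)\bigr)$ and $f((-)b)\diamond a=\bigl(f\diamond(ab_{(0)})\bigr)S^{-1}b_{(-1)}$, so left and right translates of an element of $A^*$ stay in $A^*$, whence all $f_i,g_i\in A^*\cap A^\circ_{\mathbbm k}$. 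Everything you are missing then follows by short Sweedler computations: $(hf)(ab)=\sum_i(h_{(2)}f_i)(a)(h_{(1)}g_i)(b)$ and $f_{(0)}(ab)f_{(-1)}=\sum_i f_{i(0)}(a)g_{i(0)}(b)\,g_{i(-1)}f_{i(-1)}$ show that $hf$ and the components of $\delta(f)$ again lie in $A^\circ_{\mathbbm k}$ (they already lie in $A^*$ because $A^*$ is an object of ${}_H^H\mathcal{YD}$), and $f_{[1]}\otimes f_{[2]}=\sum_i g_{i(-1)}f_i\otimes g_{i(0)}$ exhibits $\Delta_{A^*\cap A^\circ_{\mathbbm k}}(f)$ as an element of $(A^*\cap A^\circ_{\mathbbm k})\otimes(A^*\cap A^\circ_{\mathbbm k})$. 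Without this trick, or a worked-out substitute for it, your argument establishes neither that $A^*\cap A^\circ_{\mathbbm k}$ is a subobject in ${}_H^H\mathcal{YD}$ nor that the comultiplication lands where it must, and these are the substance of the lemma.
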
	
\begin{proof} 	Note that for every $f \in A^*$ and $b \in A$ we have $\dim \bigl(f(b(-))\diamond A\bigr) < +\infty$
	and $\dim \bigl(f((-)b)\diamond A\bigr) < +\infty$ (recall that $\diamond$ was defined in Remark~\ref{RemarkYDfBelongsToStar} above),
	since $$f\bigl(b(-)\bigr)\diamond a = f(ba_{(0)}) a_{(-1)}= f(b_{(0)} a_{(0)}) (S b_{(-2)}) b_{(-1)}a_{(-1)}=
	(S b_{(-1)})(f \diamond (b_{(0)} a))$$
	and $$f\bigl((-)b\bigr)\diamond a = f(a_{(0)}b) a_{(-1)}= f(a_{(0)}b_{(0)}) a_{(-1)}b_{(-1)}S^{-1}b_{(-2)}=
	(f \diamond (ab_{(0)}))S^{-1}b_{(-1)}.$$
	Suppose $f \in A^* \cap A^\circ_\mathbbm{k}$. Then there exist $s \in\mathbb Z_+$ and $f_i, g_i \in A^\circ_\mathbbm{k}$, $1\leqslant i \leqslant s$, such that $f(ab)=\sum\limits_{i=1}^s f_i(a) g_i(b)$ for all $a,b \in A$. Without loss of generality we may assume that each of the systems $f_1, \ldots, f_s$ and $g_1, \ldots, g_s$ of linear functions is linearly independent. Performing Gaussian elimination, one can find $a_i,b_i \in A$, $1\leqslant i \leqslant s$, such that $f_i(a_j)=g_i(b_j)=\delta_{ij}$ for all  $1\leqslant i,j \leqslant s$.
	In particular, $f_i=f((-)b_i)$ and $g_i=f(a_i(-))$ for all $1\leqslant i \leqslant s$.
	Hence $f \in A^* \cap A^\circ_\mathbbm{k}$ implies that all $f_i, g_i \in A^* \cap A^\circ_\mathbbm{k}$.
	
	Moreover, for every $h\in H$ and $a,b\in A$ we have
	\begin{equation*}\begin{split}(hf)(ab)=f((Sh)(ab))=f\bigl(((Sh_{(2)})a)((Sh_{(1)})b)\bigr) \\ =\sum\limits_{i=1}^s f_i((Sh_{(2)})a) g_i((Sh_{(1)})b) 
			= \sum\limits_{i=1}^s (h_{(2)}f_i)(a) (h_{(1)}g_i)(b),\end{split}\end{equation*}
	\begin{equation*}\begin{split}f_{(0)}(ab)f_{(-1)}=f((ab)_{(0)})S^{-1}\bigl((ab)_{(-1)}\bigr)=f(a_{(0)}b_{(0)})S^{-1}(b_{(-1)}) S^{-1}(a_{(-1)})\\=
			\sum\limits_{i=1}^s f_i(a_{(0)})g_i(b_{(0)})S^{-1}(b_{(-1)}) S^{-1}(a_{(-1)}) =
			\sum\limits_{i=1}^s f_{i(0)}(a)g_{i(0)}(b)g_{i(-1)} f_{i(-1)}.\end{split}\end{equation*}
	Therefore, by~\cite[Proposition 1.5.6, 3)]{DNR}, the subspace $A^* \cap A^\circ_\mathbbm{k}$ is an ${}_H^H\mathcal{YD}$-submodule of $A^*$.

	Now \begin{equation*}\begin{split}f_{[1]}(a) f_{[2]}(b) := f\bigl((b_{(-1)}a)b_{(0)}\bigr)=\sum_{i=1}^s f_i(b_{(-1)}a) g_i(b_{(0)})
	\\ = \sum_{i=1}^s  \bigl((S^{-1}b_{(-1)})f_i\bigr)(a) g_i(b_{(0)})  =
	 \sum_{i=1}^s  \bigl(g_{i(-1)}f_i\bigr)(a) g_{i(0)}(b) \text{ for all }a,b\in A.\end{split}\end{equation*}
	Therefore, $f_{[1]} \otimes f_{[2]} = \sum\limits_{i=1}^s g_{i(-1)}f_i \otimes g_{i(0)} \in (A^* \cap A^\circ_\mathbbm{k}) \otimes (A^* \cap A^\circ_\mathbbm{k})$
	for all $f \in A^* \cap A^\circ_\mathbbm{k}$ and $\Delta_{A^* \cap A^\circ_\mathbbm{k}}$ is a comultiplication on $A^* \cap A^\circ_\mathbbm{k}$.
		
	An explicit verification shows that $(A^* \cap A^\circ_\mathbbm{k}, \Delta_{A^* \cap A^\circ_\mathbbm{k}}, \varepsilon_{A^* \cap A^\circ_\mathbbm{k}})$ is indeed a coalgebra.
	
	Note that	
	\begin{equation*}\begin{split} (hf)_{[1]}(a) (hf)_{[2]}(b)= (hf)\bigl((b_{(-1)}a)b_{(0)}\bigr)  
	= f\bigl((Sh)((b_{(-1)}a)b_{(0)})\bigr) \\=f\bigl(((Sh_{(2)})b_{(-1)}a)((Sh_{(1)})b_{(0)})\bigr)
		 =f\bigl((((Sh_{(4)})b_{(-1)} (S^2 h_{(2)}) Sh_{(1)})a) (Sh_{(3)})b_{(0)}\bigr)
	\\=f\bigl((((Sh_{(2)})_{(1)}b_{(-1)} (S(Sh_{(2)})_{(3)}) Sh_{(1)})a) (Sh_{(2)})_{(2)}b_{(0)}\bigr) \\=
	f\bigl(((((Sh_{(2)})b)_{(-1)} Sh_{(1)})a) ((Sh_{(2)})b)_{(0)}\bigr)
	\\= f_{[1]}\bigl((Sh_{(1)})a\bigr) f_{[2]}\bigl((Sh_{(2)})b\bigr) =	(h_{(1)}f_{[1]})(a) (h_{(2)}f_{[2]})(b)
\end{split}\end{equation*} 
and $$\varepsilon_{A^* \cap A^\circ_\mathbbm{k}}(hf)=(hf)(1_A)=f((Sh) 1_A)=\varepsilon(h)f(1_A)=
\varepsilon(h) \varepsilon_{A^* \cap A^\circ_\mathbbm{k}}(f)$$
	for all $h\in H$, $f\in A^* \cap A^\circ_\mathbbm{k}$, $a,b\in A$.
	Hence $A^* \cap A^\circ_\mathbbm{k}$ is an $H$-module coalgebra.
	
	At the same time,
	\begin{equation*}\begin{split}	f_{(0)[1]}(a) f_{(0)[2]}(b) f_{(-1)}
			= f_{(0)}\bigl((b_{(-1)}a)b_{(0)}\bigr) f_{(-1)}  = 
			f\bigl(((b_{(-1)}a)b_{(0)})_{(0)}\bigr) S^{-1}((b_{(-1)}a)b_{(0)})_{(-1)} \\ =
			f\bigl((b_{(-2)}a)_{(0)}b_{(0)}\bigr) S^{-1} ((b_{(-2)}a)_{(-1)}b_{(-1)})  =
			f\bigl((b_{(-3)}a_{(0)})b_{(0)}\bigr) S^{-1} (b_{(-4)}a_{(-1)} (Sb_{(-2)}) b_{(-1)}) 
			\\ =
			f\bigl((b_{(-1)}a_{(0)})b_{(0)}\bigr) S^{-1} (b_{(-2)}a_{(-1)})
			 = f\bigl((b_{(-1)} a_{(0)}) b_{(0)}\bigr)(S^{-1}a_{(-1)})(S^{-1}b_{(-2)})
            \\ = f_{[1]}(a_{(0)}) f_{[2]}(b_{(0)}) (S^{-1}a_{(-1)})(S^{-1}b_{(-1)})
			 = f_{[1](0)}(a) f_{[2](0)}(b) f_{[1](-1)}f_{[2](-1)}
\end{split}\end{equation*} 
and $$\varepsilon_{A^* \cap A^\circ_\mathbbm{k}}(f_{(0)})f_{(-1)}=f_{(0)}(1_A)f_{(-1)}=f(1_A)1_H=\varepsilon_{A^* \cap A^\circ_\mathbbm{k}}(f)1_H$$
for all $f\in A^* \cap A^\circ_\mathbbm{k}$, $a,b\in A$.
Therefore, $A^* \cap A^\circ_\mathbbm{k}$ is an $H$-comodule coalgebra.
\end{proof}	

\begin{lemma}\label{LemmaYDFiniteDual} Let $H$ be a Hopf algebra over a field $\mathbbm k$ with an invertible antipode $S$. Then the finite dual $A^\circ$ in ${}_H^H\mathcal{YD}$ coincides with $A^* \cap A^\circ_\mathbbm{k}$ endowed with the coalgebra structure from Lemma~\ref{LemmaYDAcirc}.
	Moreover,
	\begin{itemize}
		\item $\varkappa_A$ is a monomorphism for every unital associative ${}_H^H\mathcal{YD}$-module algebra $A$.
	\end{itemize} 
\end{lemma}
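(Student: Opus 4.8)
The plan is to establish the two assertions in turn, the $\varkappa_A$ statement being a quick consequence of an explicit description obtained while proving the first. For the first assertion I will show that the comonoid $C := A^* \cap A^\circ_\mathbbm{k}$ of Lemma~\ref{LemmaYDAcirc} represents, naturally in the ${}_H^H\mathcal{YD}$-module coalgebra $D$, the functor $D \mapsto \mathsf{Mon}\bigl({}_H^H\mathcal{YD}\bigr)(A, D^*)$; by uniqueness of right adjoints this identifies $C$ with $A^\circ$. The candidate bijection is the pre-rigid transpose: since ${}_H^H\mathcal{YD}$ is braided pre-rigid, $(-)^\sharp \colon \mathcal C(A, D^*) \mathrel{\widetilde\to} \mathcal C(D, A^*)$ is a natural bijection, which is automatically a bijection on ${}_H^H\mathcal{YD}$-morphisms since both sides are hom-sets of $\mathcal C = {}_H^H\mathcal{YD}$. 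By Lemma~\ref{LemmaPhiMonACStarEquivalence}, for $\varphi \in {}_H^H\mathcal{YD}(A, D^*)$ one has $\varphi \in \mathsf{Mon}\bigl({}_H^H\mathcal{YD}\bigr)(A, D^*)$ if and only if $\theta^\mathrm{inv}_{A,A}(\varphi^\sharp \otimes \varphi^\sharp)\Delta_D = \mu_A^*\, \varphi^\sharp$ and $u_A^*\, \varphi^\sharp = \iota\, \varepsilon_D$. Using the formula $\theta^\mathrm{inv}_{A,A}(f\otimes g)(a\otimes b) = f\bigl((S^{-1}b_{(-1)})a\bigr)g(b_{(0)})$, the counit condition reads $\varphi^\sharp(d)(1_A) = \varepsilon_D(d)$, i.e.\ $\varepsilon_C \varphi^\sharp = \varepsilon_D$, and involves no finiteness.

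The content is in the multiplicativity condition. First I would observe that $\theta^\mathrm{inv}_{A,A}$ carries $A^* \otimes A^*$ (the ${}_H^H\mathcal{YD}$-duals) into $A^*_\mathbbm{k} \otimes A^*_\mathbbm{k} \subseteq (A \otimes A)^*_\mathbbm{k}$: rewriting $\theta^\mathrm{inv}_{A,A}(f\otimes g)(a\otimes b) = \bigl((S^{-2}(g \diamond b))\, f\bigr)(a)$, where $g\diamond b = g(b_{(0)})b_{(-1)}$, and using $\dim(g \diamond A) < \infty$ for $g \in A^*$ (Remark~\ref{RemarkYDfBelongsToStar}), the map $b \mapsto \theta^\mathrm{inv}_{A,A}(f\otimes g)(-\otimes b)$ has image in the finite-dimensional space $\bigl(S^{-2}(g\diamond A)\bigr)f$, hence has finite rank, so $\theta^\mathrm{inv}_{A,A}(f\otimes g) \in A^*_\mathbbm{k} \otimes A^*_\mathbbm{k}$. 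Consequently, if the multiplicativity condition holds then $\varphi^\sharp(d) \circ \mu_A = \theta^\mathrm{inv}_{A,A}\bigl((\varphi^\sharp\otimes\varphi^\sharp)\Delta_D(d)\bigr) \in A^*_\mathbbm{k}\otimes A^*_\mathbbm{k}$ (since $\Delta_D(d)$ is a finite sum), whence $\varphi^\sharp(d) \in A^\circ_\mathbbm{k}$; together with $\varphi^\sharp(D) \subseteq A^*$ this gives $\varphi^\sharp(D) \subseteq C$.

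It then remains to check that for $f \in C$ the identity $f_{[1]}\bigl((S^{-1}b_{(-1)})a\bigr) f_{[2]}(b_{(0)}) = f(ab)$ holds, i.e.\ $\theta^\mathrm{inv}_{A,A}(\Delta_C f) = \mu_A^* f$. This is a Sweedler computation: substituting $x = (S^{-1}b_{(-1)})a$ and $y = b_{(0)}$ into $f_{[1]}(x)f_{[2]}(y) = f\bigl((y_{(-1)}x)y_{(0)}\bigr)$ and applying coassociativity of the $H$-coaction reduces the inner $H$-element to $\sum b_{(-1)(2)} S^{-1}(b_{(-1)(1)}) = \varepsilon(b_{(-1)})1_H$ (the $S^{-1}$-antipode identity, obtained by applying $S^{-1}$ to $\sum k_{(1)}S(k_{(2)}) = \varepsilon(k)1_H$), after which the comodule counit axiom yields $f(ab)$. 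Since $\theta^\mathrm{inv}_{A,A}$ is injective on $A^* \otimes A^*$ (the linear map $a\otimes b \mapsto (S^{-1}b_{(-1)})a \otimes b_{(0)}$ is bijective, exactly as in the proof of Lemma~\ref{LemmaYDThetaInjective}), combining this identity with the previous paragraph shows that the multiplicativity condition on $\varphi$ is equivalent to the conjunction of $\varphi^\sharp(D) \subseteq C$ and $(\varphi^\sharp\otimes\varphi^\sharp)\Delta_D = \Delta_C\, \varphi^\sharp$. Hence $(-)^\sharp$ restricts to a bijection $\mathsf{Mon}\bigl({}_H^H\mathcal{YD}\bigr)(A, D^*) \cong \mathsf{Comon}\bigl({}_H^H\mathcal{YD}\bigr)(D, C)$, natural in $D$ by naturality of $(-)^\sharp$, so $A^\circ = C$ as comonoids in ${}_H^H\mathcal{YD}$. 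I expect this step — and in particular verifying the twisted identity and the finiteness claim — to be the main obstacle: conceptually routine, but Yetter--Drinfel'd coaction and antipode bookkeeping that is error-prone rather than deep.

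For the second assertion I would unwind the definition of $\varkappa_A$. Taking $D = A^\circ = C$ in the bijection above, the element $\id_C \in \mathsf{Comon}\bigl({}_H^H\mathcal{YD}\bigr)(C, C)$ corresponds to the monoid homomorphism $\varphi := (\iota_C)^\flat \colon A \to C^*$, where $\iota_C \colon C = A^* \cap A^\circ_\mathbbm{k} \hookrightarrow A^*$ is the inclusion; indeed $\varphi^\sharp = \bigl((\iota_C)^\flat\bigr)^\sharp = \iota_C$, and $\varphi^\sharp$ is exactly what the definition calls $\varkappa_A$. Thus $\varkappa_A$ is the canonical inclusion $A^* \cap A^\circ_\mathbbm{k} \hookrightarrow A^*$ (for an abstractly chosen $A^\circ$ it is this inclusion precomposed with the canonical isomorphism $A^\circ \mathrel{\widetilde\to} A^* \cap A^\circ_\mathbbm{k}$, by naturality of $\varkappa$). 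Since the monomorphisms of ${}_H^H\mathcal{YD}$ are precisely the injective homomorphisms (Lemma~\ref{LemmaYD1-10Star}), $\varkappa_A$ is a monomorphism.
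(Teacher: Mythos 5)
Your proposal is correct and follows essentially the same route as the paper: both use Lemma~\ref{LemmaPhiMonACStarEquivalence} to reduce multiplicativity of $\varphi$ to a condition on $\mu_A^*\varphi^\sharp$, show that $\theta^{\mathrm{inv}}_{A,A}$ lands in the standard copy of $A^*_{\mathbbm k}\otimes A^*_{\mathbbm k}$ inside $(A\otimes A)^*_{\mathbbm k}$ to force $\varphi^\sharp(D)\subseteq A^*\cap A^\circ_{\mathbbm k}$, verify the compatibility $\theta^{\mathrm{inv}}_{A,A}\Delta_C=\mu_A^*$ on $C$, and invoke injectivity of $\theta^{\mathrm{inv}}_{A,A}$ to get the natural bijection, with $\varkappa_A$ then identified as the inclusion $A^*\cap A^\circ_{\mathbbm k}\hookrightarrow A^*$. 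The only (harmless) variation is that you establish the containment of $\theta^{\mathrm{inv}}_{A,A}(A^*\otimes A^*)$ by a finite-rank argument via $\dim(g\diamond A)<\infty$, whereas the paper computes $\theta^{\mathrm{inv}}_{A,A}(f\otimes g)=(S^{-1}g_{(-1)})f\otimes g_{(0)}$ explicitly.
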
	
\begin{proof}
By Lemma~\ref{LemmaPhiMonACStarEquivalence}, for
every ${}_H^H\mathcal{YD}$-module algebra homomorphism $\varphi \colon A\to C^*$
where $A$ is a unital associative ${}_H^H\mathcal{YD}$-module algebra 
and $C$ is an ${}_H^H\mathcal{YD}$-module coalgebra, 
$\mu^*\varphi^\sharp(C)\in \theta^\mathrm{inv}_{A,A}(A^*\otimes A^*)$.

Recall that
\begin{equation*}\begin{split}\theta^\mathrm{inv}_{A,A}(f\otimes g)(a\otimes b)=f\bigl((S^{-1}b_{(-1)})a\bigr)g(b_{(0)})
 = \bigl((S^{-2}b_{(-1)})f\bigr)(a)g(b_{(0)})
\\ = \bigl((S^{-1}g_{(-1)})f\bigr)(a)g_{(0)}(b)= \bigl((S^{-1}g_{(-1)})f \otimes g_{(0)}\bigr)(a\otimes b)
\end{split}
\end{equation*}
 for all $f,g \in A^*$, $a,b \in A$.
 In other words, $\theta^\mathrm{inv}_{A,A}(A^*\otimes A^*)$ coincides with
 the image of $A^*\otimes A^*$ in $(A\otimes A)^*$
 under the restriction of the standard embedding $A^*_{\mathbbm k}\otimes A^*_{\mathbbm k} \hookrightarrow (A\otimes A)_{\mathbbm k}^*$
  in $\mathbf{Vect}_\mathbbm{k}$
 since the map $f\otimes g \mapsto (S^{-1}g_{(-1)})f \otimes g_{(0)}$
 is a bijection. Hence for every $c\in C$ the linear function $\varphi^\sharp (c)$
 belongs to $A^* \cap A^\circ_\mathbbm{k}$. An explicit verification shows that the diagram below
 is commutative:
 $$\xymatrix{
 	A^* \cap A^\circ_\mathbbm{k} \ar[d]_{\Delta_{A^* \cap A^\circ_\mathbbm{k}}}
 	\ar@{^{(}->}[rrr] & & & A^*\ar[d]^{\mu^*} \\
 	(A^* \cap A^\circ_\mathbbm{k})\otimes (A^* \cap A^\circ_\mathbbm{k}) \ar@{^{(}->}[r] &  A^*\otimes A^* \ar[rr]^{\theta^\mathrm{inv}_{A,A}} & & (A\otimes A)^* \\
 }$$

 The injectivity of $\theta^\mathrm{inv}_{A,A}$ together with Lemma~\ref{LemmaPhiMonACStarEquivalence} imply that an ${}_H^H\mathcal{YD}$-module homomorphism $\varphi \colon A\to C^*$ 
is an ${}_H^H\mathcal{YD}$-module algebra homomorphism if and only if the corestriction of $\varphi^\sharp$ to $A^* \cap A^\circ_\mathbbm{k}$ is an ${}_H^H\mathcal{YD}$-module coalgebra homomorphism. In particular, the functor $(-)^* \colon \mathsf{Comon}\bigl({}_H^H\mathcal{YD}\bigr)\to 
 \mathsf{Mon}\bigl({}_H^H\mathcal{YD}\bigr)$ admits an adjoint functor
 $(-)^\circ \colon \mathsf{Mon}\bigl({}_H^H\mathcal{YD}\bigr)\to 
 \mathsf{Comon}\bigl({}_H^H\mathcal{YD}\bigr)$ where $A^\circ = A^* \cap A^\circ_\mathbbm{k}$.
  Note that $\varkappa_A$
 coincides with the restriction of the standard embedding $A^\circ_\mathbbm{k} \hookrightarrow A^*_\mathbbm{k}$
 and is a monomorphism.
\end{proof}

Finally, we get
\begin{theorem}\label{TheoremYDSatisfiesProperties}
	Let $H$ be a Hopf algebra over a field $\mathbbm k$ with an invertible antipode $S$. 
	Then ${}_H^H\mathcal{YD}$ is a braided closed monoidal category satisfying  Properties~\ref{PropertySmallLimits}--\ref{PropertyFreeMonoid}, \ref{PropertyExtrMonomorphism}
	of Section~\ref{SubsectionSupportCoactingConditions} and their duals.
		Moreover, all monomorphisms and epimorphisms in ${}_H^H\mathcal{YD}$ are extremal and
	\begin{itemize}
		\item $\theta_{M,N}$ is a monomorphism
		for any ${}_H^H\mathcal{YD}$-modules $M,N$;
		\item $\alpha_M$ is a monomorphism for an ${}_H^H\mathcal{YD}$-module $M$
if and only if $\bigcap\limits_{f \in M^*} \Ker f = 0$;
		\item $\left(\theta^\mathrm{inv}_{M,N}\right)^\flat$ is a monomorphism
		for nonzero ${}_H^H\mathcal{YD}$-modules $M$ and $N$ if and only if $\bigcap\limits_{f \in M^*} \Ker f = 0$  and $\bigcap\limits_{g \in N^*} \Ker g = 0$;
		\item $\varkappa_A$ is a monomorphism for every unital associative ${}_H^H\mathcal{YD}$-module algebra $A$.
	\end{itemize}
\end{theorem}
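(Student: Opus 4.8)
The plan is to assemble the statement from the lemmas already established in this subsection, since each clause has essentially been verified separately. First I would recall the structural facts: the category ${}_H^H\mathcal{YD}$ is braided monoidal with braiding $c_{M,N}(m\otimes n) = m_{(-1)}n \otimes m_{(0)}$, and it is closed with the internal hom $[M,N]$ described above; in particular it is pre-rigid with $M^*$ the subspace of $M^*_\mathbbm{k}$ consisting of those $f$ with $\dim(f\diamond M) < +\infty$ (Remark~\ref{RemarkYDfBelongsToStar}). The bulk of Properties~\ref{PropertySmallLimits}--\ref{PropertyFreeMonoid} and~\ref{PropertyExtrMonomorphism} together with their duals, as well as the fact that all monomorphisms and epimorphisms in ${}_H^H\mathcal{YD}$ are extremal, is exactly the content of Lemma~\ref{LemmaYD1-10Star}.

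For the four remaining bullet points I would simply cite the corresponding lemmas: the injectivity of $\theta_{M,N}$ is Lemma~\ref{LemmaYDThetaInjective}; the criterion for $\alpha_M$ to be a monomorphism in terms of $\bigcap_{f\in M^*}\Ker f$ is Lemma~\ref{LemmaYDAlphaInjective}; the criterion for $\left(\theta^\mathrm{inv}_{M,N}\right)^\flat$ to be a monomorphism is Lemma~\ref{LemmaYDThetaInvFlatInjective}; and the fact that $\varkappa_A$ is a monomorphism for every unital associative ${}_H^H\mathcal{YD}$-module algebra $A$ is part of Lemma~\ref{LemmaYDFiniteDual}.

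The genuine work has therefore already been carried out upstream; the two places where it was not routine are Lemma~\ref{LemmaYDThetaInvFlatInjective} (the linear-algebra argument reducing non-injectivity of $\left(\theta^\mathrm{inv}_{M,N}\right)^\flat$ to the existence of a nonzero tensor annihilated by all pure products $f\otimes g$, which exploits the bijectivity of $m\otimes n\mapsto (S^{-1}n_{(-1)})m\otimes n_{(0)}$ and Lemma~\ref{LemmaYDSf(-1)Multiply}) and Lemma~\ref{LemmaYDFiniteDual}, which rests on the explicit model $A^\circ = A^*\cap A^\circ_\mathbbm{k}$ from Lemma~\ref{LemmaYDAcirc} and identifies $\varkappa_A$ with the restriction of the standard embedding $A^\circ_\mathbbm{k}\hookrightarrow A^*_\mathbbm{k}$. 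With those in hand, the theorem follows by collecting the cited results, so the remaining argument amounts to a one-line reference assembling Lemmas~\ref{LemmaYD1-10Star}, \ref{LemmaYDThetaInjective}, \ref{LemmaYDAlphaInjective}, \ref{LemmaYDThetaInvFlatInjective} and~\ref{LemmaYDFiniteDual}.
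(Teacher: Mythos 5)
Your proposal is correct and matches the paper's own proof, which is exactly the one-line citation ``Apply Lemmas~\ref{LemmaYD1-10Star}, \ref{LemmaYDThetaInjective}--\ref{LemmaYDThetaInvFlatInjective} and \ref{LemmaYDFiniteDual}.'' The assignment of each bullet point to its supporting lemma is the intended reading, so nothing further is needed.
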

\begin{proof}
	Apply Lemmas~\ref{LemmaYD1-10Star}, \ref{LemmaYDThetaInjective}--\ref{LemmaYDThetaInvFlatInjective} and \ref{LemmaYDFiniteDual}.
\end{proof}	

\begin{remarks}\label{left_YD_rem}
 \hspace{0.1cm}
 \begin{enumerate}
 	\item If the Hopf algebra $H$ is finite dimensional, then $M^*=M^*_{\mathbbm k}$ and $\bigcap\limits_{f \in M^*} \Ker f = 0$ for every ${}_H^H\mathcal{YD}$-module $M$.
\item\label{RemarkYDPolyMonoStarIsNotEpi} Note that the functor $(-)^*$ does not necessarily map monomorphisms to epimorphisms. 	Let $\mathbbm k$ be a field and let $H=\mathbbm k[x_i \mid i\in \mathbb N]$ where $\Delta_H x_i := x_i \otimes 1_H  + 1_H \otimes x_i$, $\varepsilon_H(x_i):=0$. Then $H$ is a left $H$-comodule where $\delta_H := \Delta_H$. 
	Consider $H$-subcomodules $\mathbbm k 1_H$ and $V:= \left\langle\left. x_i \right| i\in \mathbb N\right\rangle_{\mathbbm k}\oplus \mathbbm k 1_H$ of $H$.
	 Define on $V$ and $\mathbbm k 1_H$ the structure of trivial left $H$-modules, i.e. $hm := \varepsilon(h)m$ for all $m\in V$. The commutativity of $H$ implies that both $V$ and $\mathbbm k 1_H$ are ${}_H^H\mathcal{YD}$-modules.
	Denote by $\varphi$ the  ${}_H^H\mathcal{YD}$-module embedding $\varphi \colon \mathbbm k1_H \hookrightarrow V$.	
Let  $f\in V^*_{\mathbbm k}$, $\lambda := f(1_H)\ne 0$, $\lambda_i := f(x_i)$.
Then $$f \diamond V = \left\langle\left. f \diamond x_i \right| i\in \mathbb N\right\rangle_{\mathbbm k} + f \diamond \mathbbm k 1_H =
 \left\langle\left. x_i + \frac{\lambda_i}{\lambda}1_H \right| i\in \mathbb N\right\rangle_{\mathbbm k}+\mathbbm k 1_H.$$ In particular, $\dim(f \diamond V) = + \infty$.
Hence $f(1_H)=0$ for all $f\in V^*$ where the dual is taken in ${}_H^H\mathcal{YD}$.
Therefore, $\varphi^*=0$. At the same time, ${\mathbbm k}^* \ne 0$. Hence $\varphi^* \colon V^* \to {\mathbbm k}^*$ is not an epimorphism for the (extremal) monomorphism $\varphi$ in  ${}_H^H\mathcal{YD}$.
\item\label{RemarkYDThetaInvFlatIsNotMono}
	Let $V$ be the same as in~(\ref{RemarkYDPolyMonoStarIsNotEpi}) above.
	Then the proof of Lemmas~\ref{LemmaYDAlphaInjective} and~\ref{LemmaYDThetaInvFlatInjective} implies that
$\alpha_V(1_H)=0$ and $\left(\theta^\mathrm{inv}_{V,V}\right)^\flat(1_H \otimes 1_H)=0$. In particular,
$\alpha_V$ and $\left(\theta^\mathrm{inv}_{V,V}\right)^\flat$ are not monomorphisms.
\item  By Theorem~\ref{TheoremYDSatisfiesProperties} we may apply to the category $\mathcal C = {}_H^H\mathcal{YD}$
all the results of~\cite[Sections 4, 5]{AGV3} 
and Section~\ref{SubsectionMeasuringActingClosed} of the present paper.
Unfortunately, by the reasons mentioned in Remarks~\ref{RemarkYDPolyMonoStarIsNotEpi} and~\ref{RemarkYDThetaInvFlatIsNotMono} above, for an arbitrary $H$, we cannot apply to ${}_H^H\mathcal{YD}$ the results of Sections~\ref{SectionDuality}, \ref{SubsectionComeasuringCoactingClosed} and~\ref{SubsectionDualityClosed}.
Another reason, why we cannot apply the duality theorems for bimonoid and Hopf monoids (co)actions, is that those theorems have been proved only for symmetric categories.
\end{enumerate}
\end{remarks}

\subsection{Right Yetter~-- Drinfel'd modules}
\label{SubsectionYDRightSatisfiesPropertiesToo}

Let $F \colon \mathcal C \to \mathcal D$ be a braided strong monoidal functor between braided monoidal categories.
Suppose $F$ is an isomorphism of ordinary categories, $\mathcal C$ is pre-rigid and
there exists $(-)^\circ \colon \mathsf{Mon}(\mathcal C)\to \mathsf{Comon}(\mathcal C)$.
Then $\mathcal D$ is pre-rigid too and there exists  $(-)^\circ \colon \mathsf{Mon}(\mathcal D)\to \mathsf{Comon}(\mathcal D)$
where $G(-)^*\cong (G(-))^*$ and $G(-)^\circ \cong (G(-))^\circ$.
Moreover, for every  objects $A,B$ in $\mathcal C$, $M$ in $\mathsf{Mon}(\mathcal C)$
and a morphism $\varphi \colon A \to B^*$ the morphisms
  $\theta_{GA,GB}$, $\theta^\mathrm{inv}_{GA,GB}$,
$(G\varphi)^\flat$, $(G\varphi)^\sharp$, $\varkappa_{GM}$ in $\mathcal D$ can be identified with, respectively,
$G\theta_{A,B}$, $G\theta^\mathrm{inv}_{A,B}$,
$G\left(\varphi^\flat\right)$, $G\left(\varphi^\sharp\right)$ and $G\varkappa_M$ via the corresponding isomorphisms.
 Hence $\mathcal C$ and $\mathcal D$ share the same properties. In particular, this is true when $\mathcal D$ is just $\mathcal C$ with the opposite monoidal product and the functor $F$ is identical on objects and morphisms. This observation has the following important application.
 
  Let $H$ be a Hopf algebra over a field $\mathbbm k$ with an invertible antipode $S$. Consider the category $\mathcal{YD}_H^H$ of \textit{right Yetter~--- Drinfel'd modules} (or \textit{$\mathcal{YD}_H^H$-modules} for short), i.e. right $H$-modules and $H$-comodules $M$ such that
the $H$-action and the $H$-coaction $\rho \colon M \to M \otimes H$ 
satisfy the following compatibility condition:
$$\rho(mh)= m_{(0)} h_{(2)} \otimes  (Sh_{(1)})m_{(1)} h_{(3)} \text{ for every } m\in M \text{ and } h\in H.$$
The braiding on $\mathcal{YD}_H^H$ is defined by $c_{M,N}(m\otimes n) := n_{(0)}\otimes mn_{(1)}$
for all $m\in M$, $n\in N$ and $\mathcal{YD}_H^H$-modules $M$ and $N$.
(Note that in contrast with~\cite[Exercise 11.6.21]{Radford} we consider the standard $H$-(co)action
on $M \otimes N$ induced from those in ${}_H\mathsf{Mod}$ and $\mathsf{Comod}^H$, respectively. As a consequence, our formula for the braiding is different too.)
Then $\mathcal{YD}_H^H$ can be identified with ${}_{H^\mathrm{op,cop}}^{H^\mathrm{op,cop}}\mathcal{YD}$
where $H^\mathrm{op,cop}$ is the Hopf algebra $H$ with the opposite product and coproduct and the monoidal product is opposite too.
As a consequence, the analog of Theorem~\ref{TheoremYDSatisfiesProperties} holds for $\mathcal{YD}_H^H$ too.
The only difficulty that one can encounter on this way could be verifying the necessary and sufficient conditions
in the analogs of Lemmas~\ref{LemmaYDAlphaInjective} and~\ref{LemmaYDThetaInvFlatInjective} since the description of $A^*$ in $\mathcal{YD}_H^H$
in terms of ${}_{H^\mathrm{op,cop}}^{H^\mathrm{op,cop}}\mathcal{YD}$ becomes complicated due to the braiding involved
and $\mathrm{ev_{A}}$ no longer being tautological. For this reason, below we give
a natural description of the internal hom and the functor $(-)^*$ in $\mathcal{YD}_H^H$ as well as explicit formulas
for the related maps.

Given $\mathcal{YD}_H^H$-modules $M$ and $N$, the $\mathcal{YD}_H^H$-module $[M,N]$ as a right $H$-comodule coincides with the one defined in Section~\ref{SubsectionApplicationsComodules}.
The right $H$-action on $[M,N]$ is defined by $(fh)(m):=f(m S^{-1}h_{(2)})h_{(1)}$ for $h\in H$, $m\in M$.
In this description, $[M,N]$ is a subspace of $\Hom_\mathbbm k (M,N)$ and the evaluation map is tautological,
i.e. $f \otimes m \mapsto f(m)$ for $f\in [M,N]$ and $m\in M$. As usual, $M^* := [M, \mathbbm k]$.

For any ${}_H^H\mathcal{YD}$-modules $M$ and $N$ we have
$$\theta^\mathrm{inv}_{M,N}(f\otimes g)(m\otimes n) = f(m_{(0)})g\bigl(n S^{-1}m_{(1)}\bigr),$$
$$\varphi^\flat(n)(m)=\varphi\bigl(m S^{-1}n_{(1)}\bigr)(n_{(0)})$$
for all $m\in N$, $n\in M$, $f\in M^*$, $g\in N^*$, $\varphi \in {}_H^H\mathcal{YD}(M, N^*)$.

Lemmas~\ref{LemmaYDRightAlphaInjective} and~\ref{LemmaYDRightThetaInvFlatInjective} below complete the proof of the analog of Theorem~\ref{TheoremYDSatisfiesProperties}  for $\mathcal{YD}_H^H$:

\begin{lemma}\label{LemmaYDRightAlphaInjective}
	Let $M$ be an $\mathcal{YD}_H^H$-module. Then the map $\alpha_M \colon M \to M^{**}$ is injective
	if and only if $\bigcap\limits_{f \in M^*} \Ker f = 0$.
\end{lemma}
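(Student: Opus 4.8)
The plan is to repeat the argument of Lemma~\ref{LemmaYDAlphaInjective} in the right-handed setting, using the explicit descriptions of the internal hom $[M,\mathbbm{k}]$ and of the operation $(-)^\flat$ in $\mathcal{YD}_H^H$ recorded above. First I would unwind $\alpha_M=(\id_{M^*})^\flat$: feeding $\varphi=\id_{M^*}$ into the formula $\varphi^\flat(n)(m)=\varphi\bigl(mS^{-1}n_{(1)}\bigr)(n_{(0)})$ and using that the right $H$-action on $M^*=[M,\mathbbm{k}]$ is $(fh)(x)=f\bigl(xS^{-1}h\bigr)$ (the general formula $(fh)(x)=f(xS^{-1}h_{(2)})h_{(1)}$ specialised to the trivial $H$-module $\mathbbm{k}$), one obtains
$$\alpha_M(m)(f)=f\bigl(m_{(0)}\cdot S^{-2}(m_{(1)})\bigr)\qquad\text{for all }m\in M,\ f\in M^*.$$
Consequently $\alpha_M(m)=0$ if and only if $m_{(0)}\cdot S^{-2}(m_{(1)})$ lies in $\bigcap_{f\in M^*}\Ker f$.

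Next I would introduce the linear endomorphism $\Xi\colon M\to M$, $\Xi(m):=m_{(0)}\cdot S^{-2}(m_{(1)})$, which is the right-handed counterpart of the map $\xi$ from Lemma~\ref{LemmaYDSf(-1)Multiply}, and show that $\Xi$ is bijective. Granting this, $\Ker\alpha_M=\Xi^{-1}\bigl(\bigcap_{f\in M^*}\Ker f\bigr)$, so, since $\Xi$ is a linear isomorphism, $\Ker\alpha_M=0$ if and only if $\bigcap_{f\in M^*}\Ker f=0$, which is exactly the assertion.

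To prove that $\Xi$ is bijective I would exhibit an explicit inverse, namely $\Xi'\colon M\to M$, $\Xi'(m):=m_{(0)}\cdot S(m_{(1)})$. Using that $S^{2}$ and $S^{-2}$ are algebra endomorphisms of $H$ together with the right Yetter~-- Drinfel'd compatibility $\rho(mh)=m_{(0)}h_{(2)}\otimes(Sh_{(1)})m_{(1)}h_{(3)}$ and the antipode axioms (in the forms $a_{(2)}S^{-1}(a_{(1)})=\varepsilon(a)1_H$, $a_{(1)}S(a_{(2)})=\varepsilon(a)1_H$ and $S(a_{(1)})a_{(2)}=\varepsilon(a)1_H$), one checks the twisting identities $\Xi(mh)=\Xi(m)\cdot S^{-2}(h)$ and $\Xi'(mh)=\Xi'(m)\cdot S^{2}(h)$; substituting the coaction of $m$ back in and applying coassociativity together with the counit axiom then yields $\Xi\Xi'=\Xi'\Xi=\id_M$. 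The only genuine work is this last verification, an index-heavy Sweedler computation where one has to keep track of the powers of the antipode and insert coassociativity at the right places; everything else is bookkeeping with the formulas already established for $\mathcal{YD}_H^H$, and this twisting/inversion step is the main obstacle.
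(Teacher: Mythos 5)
Your proposal is correct and follows exactly the paper's route: you derive the same identity $\alpha_M(m)(f)=f\bigl(m_{(0)}S^{-2}(m_{(1)})\bigr)$ from the formula for $(-)^\flat$ and the right $H$-action on $[M,\mathbbm{k}]$, and then reduce the claim to the bijectivity of $m\mapsto m_{(0)}S^{-2}(m_{(1)})$, which is precisely the right-handed analogue of Lemma~\ref{LemmaYDSf(-1)Multiply} that the paper invokes (your map $\Xi'$ and the twisting identities $\Xi(mh)=\Xi(m)S^{-2}(h)$, $\Xi'(mh)=\Xi'(m)S^{2}(h)$ are the correct transcription of $\xi$, $\zeta$ there). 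The only difference is that you spell out the verification the paper leaves implicit.
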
	
\begin{proof}
	For every $m\in M$ and $f\in M^*$ we have
	$$\alpha_M(m)(f) := (\id_{M^*})^\flat (m)(f) = \left(f S^{-1}m_{(1)}\right)\bigl(m_{(0)}\bigr)
	= f\bigl(m_{(0)}S^{-2}m_{(1)}\bigr).$$
	Now we apply the analog of Lemma~\ref{LemmaYDSf(-1)Multiply}.
\end{proof}

\begin{lemma}\label{LemmaYDRightThetaInvFlatInjective}
	Let $M$ and $N$ be nonzero $\mathcal{YD}_H^H$-modules.
	Then the map $\left(\theta^\mathrm{inv}_{M,N}\right)^\flat
	\colon M\otimes N \to (M^*\otimes N^*)^*$ is injective if and only if $\bigcap\limits_{f \in M^*} \Ker f = 0$
	and $\bigcap\limits_{g \in N^*} \Ker g = 0$.
\end{lemma}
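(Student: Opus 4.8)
The plan is to follow the proof of Lemma~\ref{LemmaYDThetaInvFlatInjective} verbatim, but with the right-handed formulas for $\theta^\mathrm{inv}_{M,N}$ and $(-)^\flat$ in $\mathcal{YD}_H^H$ displayed just above. First I would evaluate, for $m\in M$, $n\in N$, $f\in M^*$ and $g\in N^*$, the scalar $\left(\theta^\mathrm{inv}_{M,N}\right)^\flat(m\otimes n)(f\otimes g)$ by unwinding the definition $\varphi^\flat(y)(x)=\varphi(x\,S^{-1}y_{(1)})(y_{(0)})$ with $y=m\otimes n$ and $x=f\otimes g$, and then inserting the formula for $\theta^\mathrm{inv}_{M,N}$. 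Using the $H$-coaction on $M\otimes N$, the $H$-action on $M^*\otimes N^*$, the right-handed analog of the identity $\psi(h a\otimes b)=\psi(a\otimes (Sh)b)$ for $H$-module homomorphisms $\psi\colon A\otimes B\to\mathbbm{k}$, and the right-handed analog of Lemma~\ref{LemmaYDSf(-1)Multiply}, this should collapse to an expression of the shape $f\bigl(\mathsf{T}(m\otimes n)_1\bigr)\,g\bigl(\mathsf{T}(m\otimes n)_2\bigr)$, where $\mathsf{T}\colon M\otimes N\to M\otimes N$ is an explicitly invertible linear map built from the coaction (its inverse obtained by replacing $S^{-1}$ by the appropriate power of $S$), exactly as in the left-handed case.

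Given this, I would transport the problem across $\mathsf{T}$: the map $\left(\theta^\mathrm{inv}_{M,N}\right)^\flat$ fails to be injective if and only if there is a nonzero $\sum_{i=1}^k m_i\otimes n_i\in M\otimes N$ with $\sum_{i=1}^k f(m_i)g(n_i)=0$ for all $f\in M^*$ and $g\in N^*$; without loss of generality the $m_i$ may be taken linearly independent. If $\bigcap_{f\in M^*}\Ker f=0$ and $\bigcap_{g\in N^*}\Ker g=0$, then $\sum_i f(m_i)g(n_i)=f\!\bigl(\sum_i g(n_i)m_i\bigr)=0$ for all $f$ forces $\sum_i g(n_i)m_i=0$, hence $g(n_i)=0$ for every $i$ and every $g\in N^*$ by linear independence of the $m_i$, hence every $n_i=0$, a contradiction; so $\left(\theta^\mathrm{inv}_{M,N}\right)^\flat$ is injective. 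Conversely, if $0\ne m\in\bigcap_{f\in M^*}\Ker f$ (or symmetrically $0\ne n\in\bigcap_{g\in N^*}\Ker g$) and the other factor is nonzero, then $\mathsf{T}^{-1}(m\otimes n)$ is a nonzero element of $M\otimes N$ annihilated by $\left(\theta^\mathrm{inv}_{M,N}\right)^\flat$, so the map is not injective. This yields the two implications.

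The only real difficulty is bookkeeping in the first step: the braiding of $\mathcal{YD}_H^H$ being nontrivial, both the coaction on $M\otimes N$ and the action on $M^*\otimes N^*$ intervene, so the simplification requires several careful applications of the compatibility condition $\rho(mh)=m_{(0)}h_{(2)}\otimes(Sh_{(1)})m_{(1)}h_{(3)}$ and of $S^{\pm1}$. One could in principle deduce the statement formally from Lemma~\ref{LemmaYDThetaInvFlatInjective} via the braided monoidal identification $\mathcal{YD}_H^H\cong{}_{H^\mathrm{op,cop}}^{H^\mathrm{op,cop}}\mathcal{YD}$ discussed in Section~\ref{SubsectionYDRightSatisfiesPropertiesToo}, but as noted there the description of $M^*$ gets tangled up with the braiding under that identification, so I would instead carry out the direct computation with the explicit formulas above, which is no longer than the translation.
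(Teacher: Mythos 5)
Your proposal is correct and follows essentially the same route as the paper's proof: the paper also unwinds $\left(\theta^\mathrm{inv}_{M,N}\right)^\flat(m\otimes n)(f\otimes g)$ to $(f\otimes g)\bigl(\nu\xi(m\otimes n)\bigr)$ for two explicitly invertible maps $\xi,\nu\colon M\otimes N\to M\otimes N$ (your $\mathsf{T}=\nu\xi$), using the right-handed analogs of the identity $\varphi(ah\otimes b)=\varphi(a\otimes b\,S^{-1}h)$ and of Lemma~\ref{LemmaYDSf(-1)Multiply}, and then invokes the same separation-of-points argument as in Lemma~\ref{LemmaYDThetaInvFlatInjective}. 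Your decision to compute directly rather than transport across the identification $\mathcal{YD}_H^H\cong{}_{H^\mathrm{op,cop}}^{H^\mathrm{op,cop}}\mathcal{YD}$ is also the choice the paper makes, for the same reason.
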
	
\begin{proof} Define linear maps $\xi, \nu \colon M\otimes N \to M\otimes N$
	by $$\xi(m\otimes n)= (m\otimes n)_{(0)} S^{-2}(m\otimes n)_{(1)},$$
	$$\nu(m\otimes n)= m_{(0)}\otimes n\, S^{-1}m_{(1)}$$
	for $m\in M$, $n\in N$.
	
	Fix $m\in M$, $n\in N$, $f\in M^*$, $g\in N^*$. Then 
	\begin{equation*}\begin{split}\left(\theta^\mathrm{inv}_{M,N}\right)^\flat (m\otimes n)(f\otimes g)=
			\theta^\mathrm{inv}_{M,N}\bigl((f\otimes g)S^{-1}(m\otimes n)_{(1)}\bigr)\bigl((m\otimes n)_{(0)}\bigr)
			\\=	\theta^\mathrm{inv}_{M,N}(f\otimes g)\bigl((m\otimes n)_{(0)} S^{-2}(m\otimes n)_{(1)}\bigr)
			=	\theta^\mathrm{inv}_{M,N}(f\otimes g)\bigl(\xi(m\otimes n)\bigr)=
			(f\otimes g)\bigl(\nu\xi(m\otimes n)\bigr).\end{split}\end{equation*}
	Here we have used that for every right $H$-module homomorphism $\varphi \colon A\otimes B \to \mathbbm{k}$
	we have $$\varphi(ah \otimes b) = \varphi(ah_{(1)} \otimes b(S^{-1}h_{(3)})h_{(2)})=
	\varepsilon(h_{(1)}) \varphi\bigl(a \otimes bS^{-1}h_{(2)}) = \varphi(a \otimes b\,S^{-1}h)$$ for all $a\in A$, $b\in B$.	
	By the analog of Lemma~\ref{LemmaYDSf(-1)Multiply},
	the map $\xi$ is bijective. The map $\nu$ is bijective too since 
	$\nu^{-1}(m\otimes n)= m_{(0)}\otimes n m_{(1)}$.
Now we use the same argument as in Lemma~\ref{LemmaYDThetaInvFlatInjective}.
\end{proof}

\subsection{Modules over quasitriangular Hopf algebras}
Let $H$ be a \textit{quasitriangular} Hopf algebra over a field $\mathbbm k$, i.e. the category ${}_H \mathsf{Mod}$ is braided.
Then $c_{M,N}(u)=(Ru)^{21}$ for all left $H$-modules $M,N$ and $u\in M\otimes N$  where $R \in H \otimes H$ is a fixed invertible element called the \textit{$R$-matrix}, $(m\otimes n)^{21}:=n\otimes m$.
We refer to~\cite[Section 8.3]{EGNObook}, \cite[Chapter 2]{Majid} or~\cite[Section 10.1]{Montgomery} for more details.

\begin{example}
	Every cocommutative Hopf algebra $H$ is quasitriangular where $R=1_H \otimes 1_H$
	and $c_{M,N}$ is the ordinary swap.
\end{example}

Recall that in every quasitriangular Hopf
algebra $H$ the antipode is invertible~\cite[Proposition 2.1.8]{Majid}.
Moreover, for every left $H$-module $M$ the map $\delta \colon M \to H \otimes M$, where $\delta m := R^{21}(1_H \otimes m)$
for $m\in M$, defines on $M$ a structure of a left $H$-comodule that turns $M$ into a ${}_H^H\mathcal{YD}$-module.
The braiding induced by the ${}_H^H\mathcal{YD}$-module structure coincides with the original braiding on ${}_H \mathsf{Mod}$.
In other words, ${}_H \mathsf{Mod}$ can be identified with a full subcategory in ${}_H^H\mathcal{YD}$.
The embedding functor ${}_H \mathsf{Mod} \hookrightarrow {}_H^H\mathcal{YD}$ is a strict braided monoidal functor that commutes with the functor $(-)^*$ and taking small limits and colimits. This implies that $\theta_{A,B}$, $\theta^\mathrm{inv}_{A,B}$,
$\varphi^\flat$, $\varphi^\sharp$ in ${}_H \mathsf{Mod}$ coincide with those in ${}_H^H\mathcal{YD}$.
An explicit verification shows that the restriction of the functor $(-)^\circ \colon \mathsf{Mon}\bigl({}_H^H\mathcal{YD}\bigr)\to 
\mathsf{Comon}\bigl({}_H^H\mathcal{YD}\bigr)$
to $\mathsf{Mon}\bigl({}_H \mathsf{Mod}\bigr)$ takes values again in $\mathsf{Mon}\bigl({}_H \mathsf{Mod}\bigr)$. Moreover, for $H$-module algebras $A$
we have $A^\circ = A^\circ_\mathbbm{k}$ as vector spaces, but the comultiplication is still twisted.
Finally, the equality $M^*=M^*_\mathbbm{k}$ for every $H$-module $M$ implies that the functor $(-)^*$ maps monomorphisms to epimorphisms. By Theorem~\ref{TheoremYDSatisfiesProperties}, we get 
\begin{theorem}\label{TheoremHModSatisfiesProperties}
	Let $H$ be a quasitriangular Hopf algebra over a field $\mathbbm k$. 
	Then ${}_H \mathsf{Mod}$ is a braided closed monoidal category satisfying Properties~\ref{PropertySmallLimits}--\ref{PropertyFreeMonoid}, \ref{PropertyExtrMonomorphism} 
	of Section~\ref{SubsectionSupportCoactingConditions} 
	and their duals.
	Moreover, all monomorphisms and epimorphisms in ${}_H \mathsf{Mod}$ are extremal and
	\begin{itemize}
		\item the functor $(-)^*$ maps monomorphisms to epimorphisms;
		\item 
		$\alpha_M$, $\theta_{M,N}$, $\left(\theta^\mathrm{inv}_{M,N}\right)^\flat$ are monomorphisms
		for any $H$-modules $M,N$;
		\item $\varkappa_A$ is a monomorphism for every unital associative $H$-module algebra $A$.
	\end{itemize}
\end{theorem}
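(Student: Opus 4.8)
The strategy is to reduce everything to results already proved for module categories over bialgebras and for Yetter--Drinfel'd module categories, using the identification of ${}_H\mathsf{Mod}$ with a full braided monoidal subcategory of ${}_H^H\mathcal{YD}$ recorded just above the statement. First I would apply Lemma~\ref{LemmaBialgebraMod1-10Star}: for any bialgebra this already yields Properties~\ref{PropertySmallLimits}--\ref{PropertyFreeMonoid} and~\ref{PropertyExtrMonomorphism} of Section~\ref{SubsectionSupportCoactingConditions} together with their duals, and the fact that every monomorphism (resp. epimorphism) of ${}_H\mathsf{Mod}$ is injective (resp. surjective) on underlying spaces, hence extremal. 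Since $H$ is quasitriangular, ${}_H\mathsf{Mod}$ is braided, and it is closed with $[M,N]=\Hom_{\mathbbm k}(M,N)$ carrying the standard $H$-module structure, so in particular pre-rigid with $M^*=M^*_{\mathbbm k}$.

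For the three ``monomorphism'' assertions I would pass along the strict braided monoidal embedding ${}_H\mathsf{Mod}\hookrightarrow{}_H^H\mathcal{YD}$. As noted above, this embedding commutes with $(-)^*$ and with small limits and colimits, hence also with the induced morphisms $\theta_{M,N}$, $\theta^{\mathrm{inv}}_{M,N}$, $(-)^\flat$, $(-)^\sharp$ and $\varkappa$, and it reflects monomorphisms (both categories having injective linear maps as monomorphisms). Because $M^*=M^*_{\mathbbm k}$ for every $H$-module $M$, the space $M^*$ separates points of $M$, i.e. $\bigcap\limits_{f\in M^*}\Ker f=0$; therefore Lemmas~\ref{LemmaYDThetaInjective}, \ref{LemmaYDAlphaInjective} and~\ref{LemmaYDThetaInvFlatInjective}, equivalently Theorem~\ref{TheoremYDSatisfiesProperties}, give that $\theta_{M,N}$, $\alpha_M$ and $\left(\theta^{\mathrm{inv}}_{M,N}\right)^\flat$ are monomorphisms in ${}_H^H\mathcal{YD}$, hence in ${}_H\mathsf{Mod}$. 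For $\varkappa_A$ one uses that, for a unital associative $H$-module algebra $A$, the functor $(-)^\circ$ of ${}_H^H\mathcal{YD}$ restricts to $\mathsf{Mon}({}_H\mathsf{Mod})$ with $A^\circ=A^\circ_{\mathbbm k}$ as a vector space (only the comultiplication being $R$-twisted), so Lemma~\ref{LemmaYDFiniteDual} identifies $\varkappa_A$ with the restriction of the standard embedding $A^\circ_{\mathbbm k}\hookrightarrow A^*_{\mathbbm k}$, which is injective.

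It remains to check that $(-)^*$ sends monomorphisms to epimorphisms; here I would argue directly on underlying spaces. Since $M^*=M^*_{\mathbbm k}$ and $(-)^*$ agrees on morphisms with the ordinary $\mathbbm k$-linear dual, a monomorphism of ${}_H\mathsf{Mod}$ is an injective $\mathbbm k$-linear $H$-module map, its dual is surjective, and a surjective $H$-module map is an epimorphism of ${}_H\mathsf{Mod}$. The only genuinely delicate point in all of this is the verification --- carried out in the paragraph preceding the statement --- that the internal hom, the duality functor and the finite dual really do restrict from ${}_H^H\mathcal{YD}$ to ${}_H\mathsf{Mod}$ along the $R$-matrix embedding with $M^*=M^*_{\mathbbm k}$ and $A^\circ=A^\circ_{\mathbbm k}$; once this is granted, the theorem is a matter of assembling Lemma~\ref{LemmaBialgebraMod1-10Star} and Theorem~\ref{TheoremYDSatisfiesProperties}.
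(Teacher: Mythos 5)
Your proposal is correct and follows essentially the same route as the paper: the paper's proof also rests on the identification of ${}_H\mathsf{Mod}$ with a full braided monoidal subcategory of ${}_H^H\mathcal{YD}$ via the $R$-matrix coaction, the observation that $M^*=M^*_{\mathbbm k}$ (so that $M^*$ separates points and duals of injections are surjective), the restriction of $(-)^\circ$ with $A^\circ=A^\circ_{\mathbbm k}$ as a vector space, and an appeal to Theorem~\ref{TheoremYDSatisfiesProperties}. Your explicit invocation of Lemma~\ref{LemmaBialgebraMod1-10Star} for Properties~\ref{PropertySmallLimits}--\ref{PropertyFreeMonoid}, \ref{PropertyExtrMonomorphism} and extremality is a useful clarification of a step the paper leaves implicit.
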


\begin{remark} By Theorem~\ref{TheoremHModSatisfiesProperties} we may apply to the category $\mathcal C = {}_H \mathsf{Mod}$
	all the results of~\cite[Sections 4 and 5]{AGV3},
	 Sections~\ref{SubsectionMeasuringActingClosed},  \ref{SubsectionComeasuringCoactingClosed} of the present paper
	as well as the duality theorems for (co)measurings (Theorems~\ref{Theorem(Co)monUniv(Co)measDuality}, \ref{Theorem(Co)monUniv(Co)measDualityClosed}, Corollaries~\ref{Corollary(Co)monUniv(Co)measDuality} and~\ref{Corollary(Co)monUniv(Co)measDualityClosed}).
	If, moreover, ${}_H \mathsf{Mod}$ is symmetric, i.e. the Hopf algebra $H$ is \textit{triangular},
	then we may apply to ${}_H \mathsf{Mod}$
	all the results of Sections~\ref{SectionDuality} and~\ref{SectionCosupportDualityInMonoidalClosedCategories},
	in particular, the duality theorems for bimonoid and Hopf monoids (co)actions.
\end{remark}	

\subsection{Comodules over coquasitriangular Hopf algebras}\label{SubsectionComodCoquasitriangularHopfAlgebras}

Let $H$ be a \textit{coquasitriangular} Hopf algebra over a field $\mathbbm k$, i.e. the category $\mathsf{Comod}^H$ is braided.
Then $c_{M,N}(m\otimes n)=R(m_{(1)}, n_{(1)})\, n_{(0)}\otimes m_{(0)}$ for all right $H$-comodules $M,N$, $m\in M$ and $n\in N$ where $R \colon H \otimes H \to \mathbbm k$ is a fixed linear map called the \textit{$R$-form}, which is invertible in $(H\otimes H)^*$.
More details can be found in e.g.~\cite[Section 8.3]{EGNObook}, \cite[Chapter 2]{Majid} or~\cite[Section 10.2]{Montgomery}.

\begin{example}
	Every commutative Hopf algebra $H$ is coquasitriangular where $R(h,t)=\varepsilon(h)\varepsilon(t)$ for all $h,t\in H$
	and $c_{M,N}$ is the ordinary swap.
\end{example}

Recall that in every coquasitriangular Hopf
algebra $H$ the antipode is invertible too~\cite[Proposition 2.2.4]{Majid}.
Moreover, every right $H$-comodule $M$
is simultaneously a right $H$-module $M$ too where $mh := R(m_{(1)}, h) m_{(0)}$ for all $m\in M$, $h\in H$,
which turns $M$ into a $\mathcal{YD}_H^H$-module.
The braiding induced by the $\mathcal{YD}_H^H$-module structure coincides with the original braiding on $\mathsf{Comod}^H$.
In other words, $\mathsf{Comod}^H$ can be identified with a full subcategory in $\mathcal{YD}_H^H$.
The embedding functor $\mathsf{Comod}^H \hookrightarrow \mathcal{YD}_H^H$ is a strict braided monoidal functor that commutes with the functor $(-)^*$ and taking small limits and colimits. This implies that $\theta_{A,B}$, $\theta^\mathrm{inv}_{A,B}$,
$\varphi^\flat$, $\varphi^\sharp$ in $\mathsf{Comod}^H$ coincide with those in $\mathcal{YD}_H^H$.

Note that the embedding $\mathsf{Comod}^H \subseteq \mathcal{YD}_H^H$ admits the left adjoint functor
$M \mapsto M/I(M)$ where $I(M)$ is the $\mathcal{YD}_H^H$-submodule of the $\mathcal{YD}_H^H$-module $M$
generated by all elements $mh-R(m_{(1)}, h) m_{(0)}$ where $m\in M$, $h\in H$.
If $C$ is a $\mathcal{YD}_H^H$-module coalgebra, then $I(C)$ is automatically a coideal.
Therefore, if $C$ is a limit in $\mathsf{Comon}\bigl(\mathcal{YD}_H^H\bigr)$ of a diagram in
$\mathsf{Comon}\bigl(\mathsf{Comod}^H\bigr)$, then $C/I(C)$, which now belongs to $\mathsf{Comon}\bigl(\mathsf{Comod}^H\bigr)$, must be a limit of this diagram too
and the embedding $\mathsf{Comon}\bigl(\mathsf{Comod}^H\bigr) \subseteq \mathsf{Comon}\bigl(\mathcal{YD}_H^H\bigr)$
preserves all limits. Now Remark~\ref{RemarkCircSufficientConditionsForExistence} and the proof of~\cite[Theorem 1.5]{ArdGoyMen1}
imply that the restriction of the functor $(-)^\circ \colon \mathsf{Mon}\bigl(\mathcal{YD}_H^H\bigr)\to 
\mathsf{Comon}\bigl(\mathcal{YD}_H^H\bigr)$
to $\mathsf{Mon}\bigl(\mathsf{Comod}^H\bigr)$ takes values in $\mathsf{Comon}\bigl(\mathsf{Comod}^H\bigr)$. By the remarks made in Section~\ref{SubsectionYDRightSatisfiesPropertiesToo}, the following theorem holds: 
\begin{theorem}\label{TheoremHComodSatisfiesProperties}
	Let $H$ be a coquasitriangular Hopf algebra over a field $\mathbbm k$. 
	Then $\mathsf{Comod}^H$ is a braided closed monoidal category satisfying Properties~\ref{PropertySmallLimits}--\ref{PropertyFreeMonoid}, \ref{PropertyExtrMonomorphism} 
	of Section~\ref{SubsectionSupportCoactingConditions} 
	and their duals.
	Moreover, all monomorphisms and epimorphisms in $\mathsf{Comod}^H$ are extremal and
	\begin{itemize}
	\item $\theta_{M,N}$ is a monomorphism
	for any $H$-comodules $M,N$;
	\item $\alpha_M$ is a monomorphism for an $H$-comodule $M$
	if and only if $\bigcap\limits_{f \in M^*} \Ker f = 0$;
	\item $\left(\theta^\mathrm{inv}_{M,N}\right)^\flat$ is a monomorphism
	for nonzero $H$-comodules $M$ and $N$ if and only if $\bigcap\limits_{f \in M^*} \Ker f = 0$  and $\bigcap\limits_{g \in N^*} \Ker g = 0$;
	\item $\varkappa_A$ is a monomorphism for every unital associative $H$-comodule algebra $A$.
\end{itemize}
\end{theorem}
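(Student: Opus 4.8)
The plan is to obtain Theorem~\ref{TheoremHComodSatisfiesProperties} from the corresponding statements for $\mathcal{YD}_H^H$, in the same way that Theorem~\ref{TheoremHModSatisfiesProperties} was obtained from Theorem~\ref{TheoremYDSatisfiesProperties}. Recall from the discussion preceding the theorem that the $R$-form of $H$ turns each right $H$-comodule $M$ into a right $H$-module by $mh := R(m_{(1)},h)\,m_{(0)}$, and that this realizes $\mathsf{Comod}^H$ as a full braided monoidal subcategory of $\mathcal{YD}_H^H$ whose induced braiding is the original one; moreover $\mathsf{Comod}^H$ is closed, with internal hom and dual objects described explicitly in Section~\ref{SubsectionApplicationsComodules}, and hence pre-rigid since every closed monoidal category is pre-rigid. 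By the remarks of Section~\ref{SubsectionYDRightSatisfiesPropertiesToo}, the analog of Theorem~\ref{TheoremYDSatisfiesProperties} holds for $\mathcal{YD}_H^H$ via the identification $\mathcal{YD}_H^H \cong {}_{H^\mathrm{op,cop}}^{H^\mathrm{op,cop}}\mathcal{YD}$, together with Lemmas~\ref{LemmaYDRightAlphaInjective} and~\ref{LemmaYDRightThetaInvFlatInjective}.

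First I would settle the categorical part. Properties~\ref{PropertySmallLimits}--\ref{PropertyFreeMonoid} and~\ref{PropertyExtrMonomorphism} of Section~\ref{SubsectionSupportCoactingConditions}, their duals, and the fact that all monomorphisms and all epimorphisms in $\mathsf{Comod}^H$ are extremal, follow at once from Lemma~\ref{LemmaBialgebraComod1-10Star} applied to the bialgebra underlying $H$; the braided closed monoidal structure has just been recalled.

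Next come the four displayed properties concerning $\theta$, $\alpha$, $\theta^\mathrm{inv}$ and $\varkappa$. The point is that the embedding $\mathsf{Comod}^H \hookrightarrow \mathcal{YD}_H^H$ is a strict braided monoidal functor which commutes with $(-)^*$ and preserves small limits and colimits, and that, using the left adjoint $M \mapsto M/I(M)$ to this embedding described above the theorem, the restriction of $(-)^\circ$ to $\mathsf{Mon}(\mathsf{Comod}^H)$ again takes values in $\mathsf{Comon}(\mathsf{Comod}^H)$. Consequently the morphisms $\theta_{M,N}$, $\theta^\mathrm{inv}_{M,N}$, $\alpha_M$, $\varphi^\flat$, $\varphi^\sharp$ and $\varkappa_A$ constructed inside $\mathsf{Comod}^H$ coincide with the ones constructed inside $\mathcal{YD}_H^H$; in particular $M^*$ denotes the same object in both categories, so the condition $\bigcap\limits_{f\in M^*}\Ker f = 0$ means the same thing. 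The four bullet points then follow by quoting, for $\mathcal{YD}_H^H$, the analog of Lemma~\ref{LemmaYDThetaInjective} (injectivity of $\theta_{M,N}$), Lemma~\ref{LemmaYDRightAlphaInjective} (injectivity of $\alpha_M$ under the stated condition), Lemma~\ref{LemmaYDRightThetaInvFlatInjective} (injectivity of $(\theta^\mathrm{inv}_{M,N})^\flat$ under the stated condition), and the analog of Lemma~\ref{LemmaYDFiniteDual} (that $A^\circ = A^*\cap A^\circ_\mathbbm{k}$ and that $\varkappa_A$ is a monomorphism).

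I expect the main obstacle to be not any single computation but making the transport along the embedding watertight: one must verify that the finite dual $A^\circ$ formed inside $\mathsf{Comod}^H$ genuinely agrees with the one formed inside $\mathcal{YD}_H^H$, so that $\varkappa_A$ transports correctly. This is exactly where the left-adjoint argument $M \mapsto M/I(M)$ enters, to show that the embedding $\mathsf{Comon}(\mathsf{Comod}^H) \subseteq \mathsf{Comon}(\mathcal{YD}_H^H)$ preserves limits, which, combined with Remark~\ref{RemarkCircSufficientConditionsForExistence} and the proof of~\cite[Theorem~1.5]{ArdGoyMen1}, identifies the two finite duals. Once this identification is in place, the remaining assertions are immediate transcriptions of the already established $\mathcal{YD}_H^H$ facts.
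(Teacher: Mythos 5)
Your proposal is correct and follows essentially the same route as the paper: embedding $\mathsf{Comod}^H$ as a full braided monoidal subcategory of $\mathcal{YD}_H^H$ via the $R$-form, transporting $\theta$, $\theta^{\mathrm{inv}}$, $\alpha$, $(-)^\flat$, $(-)^\sharp$ along the strict braided monoidal embedding, using the left adjoint $M\mapsto M/I(M)$ to identify the finite duals, and then quoting the $\mathcal{YD}_H^H$ analogues of Theorem~\ref{TheoremYDSatisfiesProperties} and Lemmas~\ref{LemmaYDRightAlphaInjective}--\ref{LemmaYDRightThetaInvFlatInjective}. The only cosmetic difference is that you cite Lemma~\ref{LemmaBialgebraComod1-10Star} explicitly for the categorical properties, which the paper subsumes in the reference to Section~\ref{SubsectionYDRightSatisfiesPropertiesToo}.
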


\begin{remarks}\label{Comod_coquasi_rmk}
 \hspace{0.1cm}
 \begin{enumerate}
\item
 		If the Hopf algebra $H$ is finite dimensional, then $M^*=M^*_{\mathbbm k}$ and $\bigcap\limits_{f \in M^*} \Ker f = 0$ for every $H$-comodule $M$.
 \item\label{RemarkPolyMonoStarIsNotEpiRComod}  In $\mathsf{Comod}^H$ the functor $(-)^*$ does not necessarily map monomorphisms to epimorphisms either.	Let $\mathbbm k$ be a field and let $H=\mathbbm k[x_i \mid i\in \mathbb N]$, the algebra of polynomials in the variables $x_i$, where $\Delta_H x_i := x_i \otimes 1_H  + 1_H \otimes x_i$, $\varepsilon_H(x_i):=0$. Then $H$ is a right $H$-comodule where $\rho_H := \Delta_H$. 
	Consider $H$-subcomodules $\mathbbm k 1_H$ and $V:= \left\langle\left. x_i \right| i\in \mathbb N\right\rangle_{\mathbbm k}\oplus \mathbbm k 1_H$ of $H$. Denote by $\varphi$ the  $H$-comodule embedding $\varphi \colon \mathbbm k1_H \hookrightarrow V$.	 	Then the same argument as in Remark~\ref{left_YD_rem} (\ref{RemarkYDPolyMonoStarIsNotEpi})
	shows that $\varphi^*$ is not an epimorphism for the (extremal) monomorphism $\varphi$ in $\mathsf{Comod}^H$.
\item\label{RemarkComodThetaInvFlatIsNotMono} Let $V$ be the same as in~(\ref{RemarkPolyMonoStarIsNotEpiRComod}) above.
	Then the proof of Lemmas~\ref{LemmaYDRightAlphaInjective} and~\ref{LemmaYDRightThetaInvFlatInjective} implies that
	$\alpha_V(1_H)=0$ and $\left(\theta^\mathrm{inv}_{V,V}\right)^\flat(1_H \otimes 1_H)=0$. In particular,
	$\alpha_V$ and $\left(\theta^\mathrm{inv}_{V,V}\right)^\flat$ are not monomorphisms.
\item By Theorem~\ref{TheoremHComodSatisfiesProperties} we may apply to the category $\mathcal C = \mathsf{Comod}^H$
	all the results of~\cite[Sections 4 and 5]{AGV3} and 
	Section~\ref{SubsectionMeasuringActingClosed} of the present paper.
	If, moreover, $\mathsf{Comod}^H$ is symmetric, i.e. the Hopf algebra $H$ is \textit{cotriangular},
and $\bigcap\limits_{f \in M^*} \Ker f = 0$ for every $H$-comodule $M$,
then we may apply to $\mathsf{Comod}^H$ the duality theorems for (co)measurings and (co)actions from Section~\ref{SectionDuality}.
In particular, if $H=\mathbbm k G$ for an abelian group $G$, i.e. $\mathsf{Comod}^H$ is just the category of $G$-graded vector spaces,
then we may apply to $\mathsf{Comod}^{H}$
	all the results of Sections~\ref{SectionDuality} and~\ref{SectionCosupportDualityInMonoidalClosedCategories}.
	By the reasons mentioned in~(\ref{RemarkPolyMonoStarIsNotEpiRComod})
	and~(\ref{RemarkComodThetaInvFlatIsNotMono}) above, we cannot apply to $\mathsf{Comod}^H$ the results of Sections~\ref{SectionDuality}, \ref{SubsectionComeasuringCoactingClosed} and~\ref{SubsectionDualityClosed}
	for arbitrary~$H$.
\end{enumerate}
\end{remarks}

\subsection{Differential graded vector spaces}

Let $\mathbbm{k}$ be a field. Let $\mathbf{dgVect}_\mathbbm{k}$ be the category of differential $\mathbb{Z}$-graded vector spaces (or dg-vector spaces for short) or, in another terminology, chain complexes in $\mathbf{Vect}_\mathbbm{k}$. Objects in $\mathbf{dgVect}_\mathbbm{k}$
are families $(V_n)_{n\in\mathbb Z}$ of vector spaces $V_n$ equipped with linear maps $d \colon V_n \to V_{n-1}$, $n\in\mathbb Z$,
such that $d^2 = 0$. The maps $d$ are called \textit{differentials}. Every family $(V_n)_{n\in\mathbb Z}$ can be identified with its $\mathbb{Z}$-graded total space $V=\bigoplus\limits_{n\in\mathbb Z} V_n$. Moreover $d$ extends to a graded linear map $V \to V$ of degree $(-1)$ such that $d^2 = 0$. Morphisms in $\mathbf{dgVect}_\mathbbm{k}$ are grading preserving (= graded of degree $0$) linear maps commuting with $d$.
Note that $\mathbf{dgVect}_\mathbbm{k}$ is an abelian category where limits and colimits are computed componentwise.

Let $U=\bigoplus\limits_{k\in\mathbb Z} U_k$ and $V=\bigoplus\limits_{m\in\mathbb Z} V_m$ be two dg-vector spaces. Then the monoidal product $W=U\otimes V$ in $\mathbf{dgVect}_\mathbbm{k}$ is defined by $W:=\bigoplus\limits_{n\in\mathbb Z} W_n$ where $$W_n := \bigoplus\limits_{k\in\mathbb Z} U_k \otimes V_{n-k}.$$ The differentials $d \colon W_n \to W_{n-1}$
are defined by $$d(u\otimes v) := du \otimes v + (-1)^{k} u\otimes dv\text{ for }u\in U_k\text{ and }v\in V_m,\ k,m\in\mathbb Z.$$
The monoidal unit in $\mathbf{dgVect}_\mathbbm{k}$ is $\mathbbm{k}$ regarded as a chain complex concentrated in degree $0$ with zero differential.
The category $\mathbf{dgVect}_\mathbbm{k}$ is symmetric where the swap $c \colon U_k \otimes V_m \mathbin{\widetilde\to} V_m \otimes U_k$ is defined by $c(u\otimes v) := (-1)^{km} v \otimes u$ for all $u\in U_k$, $v\in V_m$, $k,m\in\mathbb Z$.

Monoids in $\mathbf{dgVect}_\mathbbm{k}$ are just unital associative differential graded algebras (or dg-algebras for short).

\begin{theorem}\label{TheoremDgVectSatisfiesProperties}
	Let $\mathbbm k$ be a field. 
	Then $\mathbf{dgVect}_\mathbbm{k}$ is a symmetric closed monoidal category satisfying Properties~\ref{PropertySmallLimits}--\ref{PropertyFreeMonoid}, \ref{PropertyExtrMonomorphism} 
	of Section~\ref{SubsectionSupportCoactingConditions} 
	and their duals.
	Moreover, all monomorphisms and epimorphisms in $\mathbf{dgVect}_\mathbbm{k}$ are extremal and
	\begin{itemize}
		\item the functor $(-)^*$ maps monomorphisms to epimorphisms;
		\item 
		$\alpha_V$, $\theta_{U,V}$, $\left(\theta^\mathrm{inv}_{U,V}\right)^\flat$ are monomorphisms
		for any dg-vector spaces $U,V$;
		\item $\varkappa_A$ is a monomorphism for every unital associative dg-algebra $A$.
	\end{itemize}
\end{theorem}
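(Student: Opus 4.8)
The plan is to follow the same strategy used for the Yetter--Drinfel'd and quasitriangular cases, namely to reduce everything to statements about the underlying category $\mathbf{Vect}_\mathbbm k$ via the forgetful functor $\mathbf{dgVect}_\mathbbm k \to \mathbf{Vect}_\mathbbm k$, which is strict monoidal, creates (co)limits, and is faithful. First I would verify Properties~\ref{PropertySmallLimits}--\ref{PropertyEqualizers} together with their duals: since limits and colimits in $\mathbf{dgVect}_\mathbbm k$ are computed componentwise (as stated in the excerpt), all small limits and all small colimits exist, so \ref{PropertySmallLimits} and \ref{PropertyFiniteAndCountableColimits} and their duals hold. The category is abelian, hence (Epi, Mono)-structured with all monos and epis extremal, giving \ref{PropertyEpiExtrMonoFactorizations}, \ref{PropertyExtrMonomorphism} and their duals, and I would note the claimed fact ``all monomorphisms and epimorphisms are extremal'' follows from abelianness. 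Wellpoweredness \ref{PropertySubObjectsSmallSet} holds because a dg-subspace is determined by its underlying graded subspace. For \ref{PropertyMonomorphism}--\ref{PropertyEqualizers} and their duals: because $\mathbf{dgVect}_\mathbbm k$ is closed (with internal hom $[U,V]$ the usual chain-complex Hom with differential $(df)(u)=d(f(u))-(-1)^{|f|}f(du)$), the functor $M\otimes(-)$ has a right adjoint, and by symmetry $(-)\otimes M$ too, so both preserve all colimits; combined with the fact that $M\otimes(-)$ is exact on the abelian category $\mathbf{dgVect}_\mathbbm k$ (tensoring over a field is exact in each degree), it preserves monos, equalizers, preimages, and intersections of subobjects, which yields \ref{PropertyMonomorphism}--\ref{PropertyEqualizers}; the dual properties hold dually. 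Property~\ref{PropertyFreeMonoid} (free dg-algebra on a dg-vector space) is the tensor algebra with the induced grading and differential, and by the remark of Section~\ref{SubsectionMeasuringActingClosed}-style reasoning combined with \cite[Proposition 4.1]{AbdulIovanov} applied to $\mathbf{dgVect}_\mathbbm k$, the dual Property (existence of $(-)^\circ$) also holds; alternatively one invokes Remark~\ref{RemarkCircSufficientConditionsForExistence}.

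Next I would treat the four bulleted claims. The functor $(-)^* = [-,\mathbbm k]$ on a dg-vector space $V$ is the graded dual $(V^*)_n = (V_{-n})^*_\mathbbm k$ with the transposed differential; since $\mathbbm k$ is a field, in each degree $V_{-n}\to V_{-n}'$ injective forces $(V_{-n}')^*_\mathbbm k \to (V_{-n})^*_\mathbbm k$ surjective, so $(-)^*$ maps monomorphisms to epimorphisms, which also confirms the hypothesis ``$(-)^*$ maps extremal monomorphisms to extremal epimorphisms'' needed elsewhere. For $\theta_{U,V}\colon U^*\otimes V^* \to (U\otimes V)^*$ and $\theta^\mathrm{inv}_{U,V}$: these are the graded refinements of the classical injection $U^*_\mathbbm k \otimes V^*_\mathbbm k \hookrightarrow (U\otimes V)^*_\mathbbm k$ (twisted by the Koszul sign coming from the braiding), hence injective. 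For $\alpha_V\colon V\to V^{**}$: in each degree this is the canonical map $V_n \to (V_n)^{**}_\mathbbm k$, which is injective for any vector space, so $\alpha_V$ is a monomorphism; consequently $\alpha_V\otimes\id_N$ is a monomorphism too, using that $\otimes$ is exact. For $(\theta^\mathrm{inv}_{U,V})^\flat\colon U\otimes V \to (U^*\otimes V^*)^*$: I would compute, as in Lemma~\ref{LemmaYDThetaInvFlatInjective}, that this is (up to bijective twist by the symmetry) the classical evaluation pairing $U\otimes V \to (U^*_\mathbbm k\otimes V^*_\mathbbm k)^*_\mathbbm k$ in each total degree, and since $\bigcap_{f\in V^*}\Ker f = 0$ automatically holds for the graded dual over a field (a nonzero homogeneous vector is separated by a coordinate functional in its degree), the argument of Lemma~\ref{LemmaYDThetaInvFlatInjective} gives injectivity. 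Finally, $\varkappa_A\colon A^\circ \to A^*$: here $A^\circ$ is the dg-finite dual, and as in Lemma~\ref{LemmaYDFiniteDual} one identifies $\varkappa_A$ with the restriction to $A^\circ$ of the inclusion $A^\circ \hookrightarrow A^*$ inside the graded dual, which is a monomorphism.

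The assembly is then routine: collect the verified properties and bulleted facts and state the theorem. The main obstacle I anticipate is not any single property but the bookkeeping of Koszul signs when identifying $\theta$, $\theta^\mathrm{inv}$, $(-)^\flat$, $(-)^\sharp$ and $\varkappa$ with their $\mathbf{Vect}_\mathbbm k$ counterparts: one must check that the symmetry $c(u\otimes v) = (-1)^{|u||v|} v\otimes u$ enters these maps only through a degreewise bijection (as in the YD-case, where the braiding enters through a bijection $m\otimes n \mapsto m_{(0)}\otimes m_{(-1)}n$), so that injectivity/surjectivity is unaffected. A clean way to sidestep the sign computations is to observe that $\mathbf{dgVect}_\mathbbm k$ is precisely ${}_H^H\mathcal{YD}$-style braided data coming from the group algebra $\mathbbm k[\mathbb Z]$ with its standard cotriangular structure (equivalently the super/graded sign convention), and then to invoke the transport-of-properties principle spelled out at the start of Section~\ref{SubsectionYDRightSatisfiesPropertiesToo}: a braided strong monoidal functor which is an isomorphism of ordinary categories transports all the relevant properties. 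Either way, once the identifications are in place, the statement follows by citing the componentwise (co)limit description, abelianness, closedness, the tensor algebra construction, Remark~\ref{RemarkCircSufficientConditionsForExistence}, and the elementary linear-algebra facts about graded duals over a field.
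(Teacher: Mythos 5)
Your primary, direct route (componentwise verification in $\mathbf{dgVect}_\mathbbm{k}$ using abelianness, componentwise (co)limits, exactness of $\otimes$ over a field, the tensor dg-algebra, and degreewise linear algebra for the graded dual) is viable and is genuinely different from what the paper does. The paper instead constructs an explicit Hopf algebra $H$ with basis $c^k v^\ell$, $k\in\mathbb Z$, $\ell=0,1$, subject to $vc=-cv$, $v^2=0$, $\Delta c=c\otimes c$, $\Delta v=c\otimes v+v\otimes 1$, identifies $\mathbf{dgVect}_\mathbbm{k}$ with $\mathsf{Comod}^H$ (the grading is encoded by the coaction through powers of $c$ and the differential through $v$), and then simply cites Theorem~\ref{TheoremHComodSatisfiesProperties}; the only items it proves by hand are the description of $V^*$ as the graded dual and the fact that $(-)^*$ sends monomorphisms to epimorphisms (which must be checked separately in any case, since by Remarks~\ref{Comod_coquasi_rmk}~(\ref{RemarkPolyMonoStarIsNotEpiRComod}) this property fails for general coquasitriangular $H$, and since $\bigcap_{f\in V^*}\Ker f=0$ is exactly what is needed to convert the conditional statements about $\alpha_M$ and $(\theta^{\mathrm{inv}}_{M,N})^\flat$ in Theorem~\ref{TheoremHComodSatisfiesProperties} into unconditional ones). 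Your approach buys self-containedness at the cost of re-verifying ten properties, their duals, and the finite-dual lemma in the dg setting; the paper's buys brevity by transporting everything already proved for $\mathsf{Comod}^H$.

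One concrete error: your proposed shortcut identifies $\mathbf{dgVect}_\mathbbm{k}$ with the braided category attached to the group algebra $\mathbbm k[\mathbb Z]$ with its standard cotriangular structure. That category is $\mathbb Z$-graded vector spaces with the Koszul-sign symmetry; it does \emph{not} see the differential, so the identification is false and the transport principle of Section~\ref{SubsectionYDRightSatisfiesPropertiesToo} applied to it would prove the theorem for $\mathbf{grVect}$ rather than $\mathbf{dgVect}$. To encode $d$ as part of a coaction you need the larger Hopf algebra above, in which $v$ is a $(c,1)$-skew-primitive odd element with $v^2=0$; the relation $\lambda_{m-1}\mu=\mu\lambda_m$ recorded in the paper's proof is precisely what makes $d$ lower degree by one, and $\mu^2=0$ is what gives $d^2=0$. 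If you intend to use the reduction, replace $\mathbbm k[\mathbb Z]$ by this $H$; otherwise stick with your direct route, which does not depend on the flawed identification.
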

\begin{proof}
	Consider the Hopf algebra $H$ over $\mathbbm k$ with the basis $c^k v^\ell$, where $k\in \mathbb Z$, $\ell = 0,1$,
	$vc = -cv$, $v^2=0$. $\Delta v = c\otimes v + v \otimes 1$, $\Delta c = c\otimes c$, $Sc= c^{-1}$, $Sv=-c^{-1} v$.
	Then $\mathbf{dgVect}_\mathbbm{k}$ can be identified with $\mathsf{Comod}^H$
	where for every dg-vector space $(V_m)_{m\in\mathbb Z}$ 
	the structure of a right $H$-comodule on $\bigoplus_{m\in \mathbb Z} V_m$
	is given by
	$\rho(a) := a\otimes c^{-m} + da \otimes vc^{-m}$ for $a \in V_m$, $m\in\mathbb Z$,
	and if $V$ is a right $H$-comodule, then $V_m := \lbrace a \in V \mid \lambda_m(a_{(1)}) a_{(0)}=a \rbrace$,
	$da := \mu(a_{(1)}) a_{(0)}$. Here $\lambda_m, \mu \in H^*$ are defined by
	$\mu(c^k v^\ell) := \delta_{\ell 1}$, $\lambda_m(c^k v^\ell) := \delta_{k,-m}\delta_{\ell 0}$.
	(Note that $\mu^2 = 0$ and $\lambda_{m-1} \mu = \mu \lambda_m$.)
	
	Recall that $\mathbf{dgVect}_\mathbbm{k}$ is closed. An explicit formula for $[U,V]$ can be found e.g. in ~\cite{AneJoy},
	but the closedness of $\mathbf{dgVect}_\mathbbm{k}$ follows from the above identification of $\mathbf{dgVect}_\mathbbm{k}$
	with $\mathsf{Comod}^H$ too.
	For a dg-vector space $V$ we obtain that 
	$V^* := [V, \mathbbm k]$ consists of all linear functions $V \to \mathbbm k$ that are nonzero only on a finite number of components.
	Each monomorphism $\varphi \colon U \to V$ in $\mathbf{dgVect}_\mathbbm{k}$ is just an embedding of graded components
	 compatible with the differentials. Since each linear function $f \in U^*$ can be extended to a linear function
	that belongs to $V^*$, the map $\varphi^* \colon V^* \to U^*$ is surjective, i.e. the functor $(-)^*$ maps monomorphisms to epimorphisms.
	
	The rest of the properties follow from Theorem~\ref{TheoremHComodSatisfiesProperties}.
\end{proof}	

\begin{remark} By Theorem~\ref{TheoremDgVectSatisfiesProperties} we may apply to the category $\mathcal C = \mathbf{dgVect}_\mathbbm{k}$
	all the results of~\cite[Sections 4 and 5]{AGV3} 
	and  Sections~\ref{SectionDuality} and~\ref{SectionCosupportDualityInMonoidalClosedCategories}
	of the present paper.
\end{remark}

\end{document}